\documentclass[11pt]{amsart}

\usepackage{mathrsfs}
\usepackage{amsfonts}
\usepackage{amssymb}
\usepackage{amsxtra}
\usepackage{dsfont}
\usepackage[dvipsnames]{xcolor}
\usepackage[english,polish]{babel}
\usepackage{enumerate}

\usepackage{tikz}
\usetikzlibrary{arrows}

\usepackage[compress, sort]{cite}

\usepackage{newtxmath}
\usepackage{newtxtext}

\usepackage[margin=2.5cm, centering]{geometry}
\usepackage[colorlinks,citecolor=blue,urlcolor=blue,bookmarks=true]{hyperref}
\hypersetup{
pdfpagemode=UseNone,
pdfstartview=FitH,
pdfdisplaydoctitle=true,
pdfborder={0 0 0}, 
pdftitle={Orthogonal polynomials with periodically modulated recurrence coefficients in the Jordan block case},
pdfauthor={Grzegorz Świderski and Bartosz Trojan},
pdflang=en-US
}

\newcommand{\CC}{\mathbb{C}}
\newcommand{\ZZ}{\mathbb{Z}}
\newcommand{\sS}{\mathbb{S}}
\newcommand{\NN}{\mathbb{N}}
\newcommand{\RR}{\mathbb{R}}

\newcommand{\calR}{\mathcal{R}}

\newcommand{\calC}{\mathcal{C}}

\newcommand{\calO}{\mathcal{O}}

\newcommand{\calD}{\mathcal{D}}

\newcommand{\calE}{\mathcal{E}}
\newcommand{\frakB}{\mathfrak{B}}
\newcommand{\frakX}{\mathfrak{X}}
\newcommand{\frakA}{\mathfrak{A}}
\newcommand{\frakp}{\mathfrak{p}}

\newcommand{\scrD}{\mathscr{D}}

\newcommand{\vphi}{\varphi}

\newcommand{\Id}{\operatorname{Id}}

\newcommand{\sign}[1]{\operatorname{sign}({#1})}

\newcommand{\pl}[1]{\foreignlanguage{polish}{#1}}

\newcommand{\abs}[1]{\lvert {#1} \rvert}
\newcommand{\sprod}[2]{\langle {#1}, {#2} \rangle}

\newcommand{\tr}{\operatorname{tr}}

\DeclareMathOperator{\sinc}{sinc}

\newcommand{\sym}{\operatorname{sym}}

\newcommand{\GL}{\operatorname{GL}}
\newcommand{\SL}{\operatorname{SL}}
\newcommand{\discr}{\operatorname{discr}}

\newcommand{\Mat}{\operatorname{Mat}}

\newcommand{\sigmaEss}{\sigma_{\mathrm{ess}}}
\newcommand{\sigmaP}{\sigma_{\mathrm{p}}}

\newcommand{\sigmaAC}{\sigma_{\mathrm{ac}}}
\newcommand{\sigmaS}{\sigma_{\mathrm{sing}}}

\newcommand{\ud}{{\: \rm d}}
\newcommand{\ue}{\textrm{e}}
\newcommand{\supp}{\operatornamewithlimits{supp}}

\newtheorem{theorem}{Theorem}[section]

\newtheorem{proposition}[theorem]{Proposition}
\newtheorem{lemma}[theorem]{Lemma}
\newtheorem{corollary}[theorem]{Corollary}
\newtheorem{claim}[theorem]{Claim}

\theoremstyle{plain}
\newcounter{thm}

\newtheorem{main_theorem}[thm]{Theorem}

\numberwithin{equation}{section}

\theoremstyle{definition}
\newtheorem{example}[theorem]{Example}

\title[Orthogonal polynomials in the Jordan block case]
{Orthogonal polynomials with periodically modulated recurrence coefficients in the Jordan block case}

\author{Grzegorz \'Swiderski}
\address{
	Grzegorz \'Swiderski \\
	Faculty of Mathematics and Computer Science \\
	University of Wroc\l{}aw \\
	pl. Grunwaldzki 2/4 \\
	50-384 Wroc\l{}aw \\
	Poland
}
\email{grzegorz.swiderski@math.uni.wroc.pl}

\author{Bartosz Trojan}
\address{
	\pl{
		Bartosz Trojan\\
		Institute of Mathematics\\
		Polish Academy of Sciences\\
        ul. \'Sniadeckich 8\\
        00-696 Warszawa\\
        Poland}
}
\email{btrojan@impan.pl}

\keywords{Orthogonal polynomials, asymptotics, Tur\'an determinants, Christoffel functions, scaling limits}

\subjclass[2020]{Primary 42C05; Secondary 47B36}

\begin{document}
\selectlanguage{english}

\begin{abstract}
	We study orthogonal polynomials with periodically modulated recurrence coefficients when $0$ lies 
	on the hard edge of the spectrum of the corresponding periodic Jacobi matrix. In particular, we show
	that their orthogonality measure is purely absolutely continuous on a real half-line and purely
	discrete on its complement. Additionally, we provide the constructive formula for the density in terms 
	of Tur\'an determinants. Moreover, we determine the exact asymptotic behavior of the orthogonal
	polynomials. Finally, we study scaling limits of the Christoffel--Darboux kernel.
\end{abstract}

\maketitle

\section{Introduction}
Let $\mu$ be a probability measure on the real line with infinite support such that for every $n \in \NN_0$,
\[
	\text{the moments} \quad \int_{\RR} x^n \ud \mu(x) \quad \text{are finite}.
\]
Let us denote by $L^2(\RR, \mu)$ the Hilbert space of square-integrable functions equipped with the scalar product
\[
	\langle f, g \rangle = \int_\RR f(x) \overline{g(x)} \ud \mu(x).
\]
By performing on the sequence of monomials $(x^n : n \in \NN_0)$ the Gram--Schmidt orthogonalization process 
one obtains the sequence of polynomials $(p_n : n \in \NN_0)$ satisfying
\begin{equation}
	\label{eq:1a}
	\langle p_n, p_m \rangle = \delta_{nm}
\end{equation}
where $\delta_{nm}$ is the Kronecker delta. Moreover, $(p_n : n \in \NN_0)$ satisfies the following recurrence relation
\begin{equation}
	\label{eq:100}
	\begin{aligned} 
	p_0(x) &= 1, \qquad p_1(x) = \frac{x - b_0}{a_0}, \\
	x p_n(x) &= a_n p_{n+1}(x) + b_n p_n(x) + a_{n-1} p_{n-1}(x), \qquad n \geq 1
	\end{aligned}
\end{equation}
where
\[
	a_n = \langle x p_n, p_{n+1} \rangle, \qquad
	b_n = \langle x p_n, p_n \rangle, \qquad n \geq 0.
\]
Notice that for every $n$, $a_n > 0$ and $b_n \in \RR$. The pair $(a_n : n \in \NN_0)$ and $(b_n : n \in \NN_0)$
is called the \emph{Jacobi parameters}. Another central object of this article is the \emph{Christoffel--Darboux}
kernel $K_n$ which is defined as
\begin{equation}
	\label{eq:83}
	K_n(x, y) = \sum_{j=0}^n p_j(x) p_j(y).
\end{equation}
The classical topic in analysis is studying the asymptotic behavior of orthogonal polynomials $(p_n : n \in \NN_0)$
which often leads to computing the asymptotics of the Christoffel--Darboux kernel. To motivate the interest in the Christoffel--Darboux kernel see surveys \cite{Lubinsky2016} and \cite{Simon2008}.

When the starting point is the measure $\mu$ there is a rather good understanding of both 
orthogonal polynomials and the Christoffel--Darboux kernel. In particular, for the measures with compact
support, see e.g. \cite{Zhou2011, Kuijlaars2004, Totik2009, Lubinsky2009, Xu2011}; 
for the measures with the support being the whole real line, see e.g. \cite{Levin2009, Deift1999}; 
for the measures with the support being a half-line, see e.g. 
\cite{Vanlessen2007, Dai2014, Chen2019, Clarkson2018, Xu2014}; and for discrete measures, see e.g. the monograph 
\cite{VanAssche2018}.

Instead of taking the measure $\mu$ as the starting point one can consider polynomials $(p_n : n \in \NN_0)$ satisfying
the three-term recurrence relation \eqref{eq:100} for a given sequences $(a_n : n \in \NN_0)$ and $(b_n : n \in \NN_0)$
such that $a_n > 0$ and $b_n \in \RR$. In view of the Favard's theorem (see, e.g. \cite[Theorem II.3.1]{Chihara1978}),
there is a probability measure $\nu$ such that $(p_n : n \in \NN_0)$ is orthonormal in $L^2(\RR, \nu)$. The measure $\nu$
is unique, if and only if there is exactly one measure with the same moments as $\nu$. In such a case the measure
$\nu$ is called \emph{determinate} and will be denoted by $\mu$. A sufficient condition for the determinacy
of $\nu$ is given by the \emph{Carleman's condition}, that is
\begin{equation}
	\label{eq:37}
	\sum_{n = 0}^\infty \frac{1}{a_n} = \infty
\end{equation}
(see, e.g. \cite[Corollary 6.19]{Schmudgen2017}). 
Let us recall that the orthogonality measure has compact support, if and only if the Jacobi parameters are bounded, namely
\begin{equation} \label{eq:130}
	\sup_{n \in \NN_0} a_n < \infty \quad \text{and} \quad
	\sup_{n \in \NN_0} |b_n| < \infty.
\end{equation}
The corresponding theory is very well developed. In particular it covers the cases of constant, periodic, or almost periodic 
Jacobi parameters and compact perturbations thereof, see, e.g. \cite{Christiansen2010, Simon2003, Simon2010, Remling2011}. 
However, to the best of the authors’ knowledge, the most techniques used in the bounded case, compare the recent monograph 
\cite{Simon2010Book}, cannot be adapted when Jacobi parameters are \emph{unbounded}, that is when the condition \eqref{eq:130}
is violated. 

In this article we are exclusively interested in unbounded Jacobi parameters that belong to the class
of periodically modulated sequences. This class was introduced in \cite{JanasNaboko2002}, and it is systematically studied
since then. To be precise, let $N$ be a positive integer. We say that the Jacobi parameters $(a_n : n \in \NN_0)$ and
$(b_n : n \in \NN_0)$ are \emph{$N$-periodically modulated} if there are two $N$-periodic sequences $(\alpha_n : n \in \ZZ)$
and $(\beta_n : n \in \ZZ)$ of positive and real numbers, respectively, such that
\begin{enumerate}[a)]
	\item
	$\begin{aligned}[b]
	\lim_{n \to \infty} a_n = \infty
	\end{aligned},$
	\item
	$\begin{aligned}[b]
	\lim_{n \to \infty} \bigg| \frac{a_{n-1}}{a_n} - \frac{\alpha_{n-1}}{\alpha_n} \bigg| = 0
	\end{aligned},$
	\item
	$\begin{aligned}[b]
	\lim_{n \to \infty} \bigg| \frac{b_n}{a_n} - \frac{\beta_n}{\alpha_n} \bigg| = 0
	\end{aligned}.$
\end{enumerate}
It turns out that the properties of the measure $\mu$ corresponding to $N$-periodically modulated
Jacobi parameters depend on the matrix $\frakX_0(0)$ where
\begin{equation} \label{eq:113}
	\frakX_0(x) = 
	\prod_{j=0}^{N-1}
	\begin{pmatrix}
		0 & 1 \\
		-\frac{\alpha_{j-1}}{\alpha_i} & \frac{x - \beta_j}{\alpha_j}
	\end{pmatrix}.
\end{equation}
More specifically, one can distinguish four cases:
\begin{enumerate}[I.]
\item \label{perMod:I}
if $|\tr \frakX_0(0)|<2$, then, under some regularity assumptions on Jacobi parameters, the measure $\mu$ is purely absolutely continuous on $\RR$ with positive continuous density, see 
\cite{JanasNaboko2002, PeriodicI, PeriodicII, SwiderskiTrojan2019, JanasNaboko2001}; 

\item \label{perMod:II} 
if $|\tr \frakX_0(0)|=2$, then we have two subcases: 
\begin{enumerate}[a)]
\item  \label{perMod:IIa}
if $\frakX_0(0)$ is diagonalizable, then, under some regularity assumptions on Jacobi parameters,  there is a compact interval $I \subset \RR$ such that the measure $\mu$ is purely absolutely continuous on $\RR \setminus I$ with positive continuous density and it is purely discrete in the interior of $I$, see \cite{Dombrowski2004, Dombrowski2009, DombrowskiJanasMoszynskiEtAl2004, DombrowskiPedersen2002a, 
DombrowskiPedersen2002, JanasMoszynski2003, JanasNabokoStolz2004, Sahbani2016, Janas2012, PeriodicII, PeriodicIII,
Discrete};

\item \label{perMod:IIb}
if $\frakX_0(0)$ is \emph{not} diagonalizable, then only certain \emph{examples} have been studied, see 
\cite{Damanik2007, Janas2001, Janas2009, Motyka2015, Naboko2009, Naboko2010, Simonov2007, DombrowskiPedersen1995,
Naboko2019, Yafaev2020a, Pchelintseva2008, Motyka2014, Motyka2015}. Then usually the measure $\mu$ is purely absolutely
continuous on a real half-line and discrete on its complement;
\end{enumerate}

\item \label{perMod:III}
if $|\tr \frakX_0(0)|>2$, then, under some regularity assumptions on Jacobi parameters, the measure $\mu$ is purely
discrete with the support having no finite accumulation points, see \cite{Discrete, JanasNaboko2002, HintonLewis1978,
Szwarc2002}; 
\end{enumerate}
One can describe these four cases geometrically. Specifically, we have 
\[
	(\tr \frakX_0)^{-1} \big( (-2, 2) \big) = \bigcup_{j=1}^N I_j
\]
where $I_j$ are disjoint open non-empty bounded intervals whose closures might touch each other. Let us denote
\begin{equation} \label{eq:1}
	I_j = (x_{2j-1}, x_{2j}) \qquad (j=1,2, \ldots, N),
\end{equation}
where the sequence $(x_k : k = 1, 2, \ldots, 2N)$ is increasing. Then we are in the case \ref{perMod:I} if $0$ belongs to some interval \eqref{eq:1}, in the case \ref{perMod:IIa} if $0$ lies on the boundary of exactly two intervals, in the case \ref{perMod:IIb} if $0$ lies on the boundary of exactly one interval and in \ref{perMod:III} in the remaining cases. An example for $N=4$ is presented in Figure~\ref{img:1}.
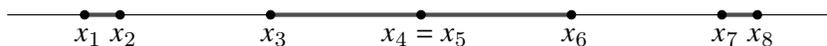
\begin{figure}[h!]
	\centering
	\begin{tikzpicture}
		\draw[->, thin, black] (-5.5,0) -- (5.5, 0);  
		\draw[-,ultra thick, black!70] (-4.472135954,0) -- (-4,0);
		\draw[-,ultra thick, black!70] (-2,0) -- (2,0);
		\draw[-,ultra thick, black!70] (4,0) -- (4.472135954,0);
		\filldraw[black] (0,0) circle (1.6pt);
		\draw (0.05,-0.3) node {$x_4=x_5$};
		\filldraw[black] (-4.472135954,0) circle (1.6pt);
		\draw (-4.422135954,-0.3) node {$x_1$};
		\filldraw[black] (-4,0) circle (1.6pt);
		\draw (-3.95,-0.3) node {$x_2$};		
		\filldraw[black] (-2,0) circle (1.6pt);
		\draw (-1.95,-0.3) node {$x_3$};
		\filldraw[black] (2,0) circle (1.6pt);
		\draw (2.05,-0.3) node {$x_6$};
		\filldraw[black] (4,0) circle (1.6pt);
		\draw (4.05,-0.3) node {$x_7$};
		\filldraw[black] (4.472135954,0) circle (1.6pt);
		\draw (4.522135954,-0.3) node {$x_8$};
	\end{tikzpicture}
	\caption{An example for $N=4$. If $0 = x_4$, then we are in the case \ref{perMod:IIa}, while $0 \in \{x_1, x_2, x_3, x_6, x_7, x_8 \}$ corresponds to the case \ref{perMod:IIb}.}
	\label{img:1}
\end{figure}

In this article we focus on the case \ref{perMod:IIb}. Since nowadays there is a rather good
understanding of the remaining cases, our results complete the study of the basic properties of periodic 
modulations.

Before we go any further let us recall a few definitions. We say that a sequence of real numbers $(x_n : n \in \NN)$
belongs to $\calD_1^N$, if
\[
	\sum_{n=1}^\infty |x_{n+N} - x_n| < \infty.
\]
If $N = 1$ we usually drop the superscript from the notation. For a matrix
\[
	X = 
	\begin{pmatrix}
		x_{1, 1} & x_{1, 2} \\
		x_{2, 1} & x_{2, 2}
	\end{pmatrix}
\]
we set $[X]_{i, j} = x_{i, j}$. Let
\[
	\sinc(x) = 
	\begin{cases}
		\frac{\sin(x)}{x} & \text{if } x \neq 0,\\
		1 & \text{otherwise.}
	\end{cases}
\]
In our first result we identify the location of the discrete part of the measure $\mu$, see Theorem~\ref{thm:8} for
the proof.
\begin{main_theorem} 
	\label{thm:A}
	Suppose that Jacobi parameters $(a_n : n \in \NN_0)$ and $(b_n : n \in \NN_0)$ are $N$-periodically modulated
	and such that $\frakX_0(0)$ is \emph{not} diagonalizable. Assume further that
	\[
		\bigg(\frac{\alpha_{n-1}}{\alpha_n} a_n - a_{n-1} : n \in \NN \bigg),
		\bigg(\frac{\beta_n}{\alpha_n} a_n - b_n : n \in \NN\bigg),
		\bigg(\frac{1}{\sqrt{a_n}} : n \in \NN\bigg) \in \calD_1^N.	
	\]
	Then there is an explicit polynomial $\tau$ of degree $1$ (see \eqref{eq:61b}) such that
	the measure $\mu$ restricted to
	\[
		\Lambda_+ = \tau^{-1} \big( (0, \infty) \big),
	\]
	is purely discrete. More precisely\footnote{For a set $X \subset \RR$ by $X'$ we denote the set of its accumulation points.}, $(\supp \mu)' \cap \Lambda_+ = \emptyset$.
\end{main_theorem}

In the next theorem we study convergence of $N$-shifted Tur\'an determinants. We prove 
that they are related to the density of the measure $\mu$. In this manner, we constructively
prove that $\mu$ is absolutely continuous on the set
\[
	\Lambda_- = \tau^{-1} \big( (-\infty, 0) \big).
\]
Let us recall that $N$-shifted Tur\'an determinant is defined as
\[
	\scrD_n(x) =
	\det
	\begin{pmatrix}
		p_{n+N-1}(x) & p_{n-1}(x) \\
		p_{n+N}(x) & p_n(x)
	\end{pmatrix}
	=
	p_{n}(x) p_{n+N-1}(x) - p_{n-1}(x) p_{n+N}(x).
\]
The approach to proving absolute continuity of the measure $\mu$ with compact support by means of 
Tur\'an determinants has been started in \cite{Nevai1979} and later developed in \cite{Nevai1983, Nevai1987,
GeronimoVanAssche1991}, see also the survey \cite{Nevai1992}. In \cite{PeriodicII, PeriodicIII, SwiderskiTrojan2019} an extension to measures
with unbounded support has been accomplished. For the proof of the following theorem see Theorems~\ref{thm:5}
and \ref{thm:4}.
\begin{main_theorem} 
	\label{thm:B}
	Suppose that the hypotheses of Theorem~\ref{thm:A} is satisfied. Let $i \in \{0, 1, \ldots, N-1 \}$. Then
	the limit
	\begin{equation} 
		\label{eq:95c}
		g_i(x) = \lim_{\stackrel{n \to \infty}{n \equiv i \bmod N}} 
		a_{n+N-1}^{3/2} \big| \scrD_n(x) \big|
	\end{equation}
	exists for any $x \in \Lambda_-$, and defines a continuous positive function. Moreover, the convergence 
	is locally uniform. If 
	\begin{equation}
		\label{eq:95a}
		\lim_{n \to \infty} \big( a_{n+N} - a_n \big) = 0,
	\end{equation}
	then the measure $\mu$ is purely absolutely continuous on $\Lambda_-$ with the density
	\begin{equation}
		\label{eq:95b}
		\mu'(x) = \frac{\sqrt{\alpha_{i-1} |\tau(x)|}}{\pi g_i(x)}, \qquad x \in \Lambda_-.
	\end{equation}
\end{main_theorem}
Let us note that the exponent of $a_{n+N-1}$ in front of the Tur\'an determinant 
in \eqref{eq:95c} equals $\frac{3}{2}$. In the cases \ref{perMod:I} and \ref{perMod:IIa} the exponent equals
$1$ and $2$, respectively (see \cite[Theorem B]{SwiderskiTrojan2019} and \cite[Theorem D]{PeriodicIII}).

However, there are classical orthogonal polynomials with Jacobi parameters which do not satisfy \eqref{eq:95a},
for example, the sequence of Laguerre polynomials for which
\[
	a_n = \sqrt{(n+1)(n + 1 + \lambda)}, \qquad
	b_n = 2n+1+\lambda
\]
where $\lambda>-1$, thus
\[
	\lim_{n \to \infty} ( a_{n+1} - a_n ) = 1.
\]
Let us mention that in the sequel \cite{jordan2} we show extensions of Theorems \ref{thm:A} and \ref{thm:B}
for a larger class of Jacobi parameters without condition \eqref{eq:95a} covering the example above.

We also study asymptotic behavior of the orthogonal polynomials in the form similar to 
\cite{GeronimoVanAssche1991, Totik1985, SwiderskiTrojan2019, ChristoffelII}, see also the survey \cite{VanAssche1990}.
For the proof of the next theorem see Theorem~\ref{thm:6}.
\begin{main_theorem} 
	\label{thm:C}
	Suppose that the hypotheses of Theorem~\ref{thm:A} and \eqref{eq:95a} are satisfied. 
	Let $i \in \{0, 1, \ldots, N-1 \}$. Then for any compact set $K \subset \Lambda_-$, there are a 
	continuous real-valued function $\chi$ and $j_0 \geq 1$ such that for all $j > j_0$,
	\begin{equation}
		\label{eq:96}
		\sqrt[4]{a_{(j+1)N+i-1}} p_{jN+i}(x)
		=
		\sqrt{\frac{\big| [\frakX_i(0)]_{2,1} \big|}{ \pi \mu'(x) \sqrt{\alpha_{i-1} |\tau(x)|}}}
		\sin\Big(\sum_{k=j_0}^{j-1} \theta_k(x) + \chi(x)\Big)
		+ o_K(1)
	\end{equation}
	where $\theta_k : K \to (0, \pi)$ are certain continuous functions, and $o_K(1)$ tends to $0$ uniformly on $K$.
\end{main_theorem}
Let us note that in \eqref{eq:96} the exponent of $a_{(j+1)N+i-1}$ equals $\frac{1}{4}$ and it is different than 
in the cases \ref{perMod:I} and \ref{perMod:IIa} where it equals $\frac{1}{2}$
(see \cite[Theorem C]{SwiderskiTrojan2019} and \cite[Theorem C]{ChristoffelII}).

Finally, we prove scaling limits of the Christoffel--Darboux kernel in the form analogous to \cite{ChristoffelI, ChristoffelII}. 
\begin{main_theorem}
	\label{thm:D}
	Under the hypotheses of Theorem~\ref{thm:C} we have
	\begin{equation} \label{eq:153}
		\lim_{n \to \infty} 
		\frac{1}{\rho_n} K_n \Big( x+\frac{u}{\rho_n}, x + \frac{v}{\rho_n} \Big) 
		=
		\frac{\upsilon(x)}{\mu'(x)} \sinc \big( (u-v) \pi \upsilon(x) \big)
	\end{equation}
	locally uniformly with respect to $(x, u, v) \in \Lambda_- \times \RR^2$ where
	\[
		\rho_n = \sum_{j=0}^n \sqrt{\frac{\alpha_j}{a_j}},
		\quad
		\text{and}\quad
		\upsilon(x) = \frac{|\tr \frakX_0'(0)|}{2 \pi N \sqrt{|\tau(x)|}}.
	\]
\end{main_theorem}
For the proof of Theorem \ref{thm:D}, see Theorem \ref{thm:7}. The definition of $\rho_n$ reflects the unusual asymptotic
behavior of the orthogonal polynomials. Indeed, in the cases \ref{perMod:I} and \ref{perMod:IIa} we have
\[
	\rho_n = \sum_{j=0}^n \frac{\alpha_j}{a_j}
\] 
(see, \cite[Theorem C]{ChristoffelI} and \cite[Theorem D]{ChristoffelII}). Let us note that the 
definitions of $\upsilon$ are different in all of the three cases.

The study of sequences of the form \eqref{eq:153} originates in Random Matrix Theory. They describes local fluctuations of eigenvalues of large random Hermitian matrices whose distribution is invariant under unitary transformations (the so-called \emph{unitary invariant ensembles}), see e.g. the recent monograph \cite{Anderson2010} and the survey \cite{Lubinsky2016} for more details. In fact, the natural framework of the limits \eqref{eq:153} is the subclass of determinantal point processes, called \emph{orthogonal polynomial ensembles}, which contains many models considered in physics, statistical mechanics, probability theory and combinatorics, see the survey \cite{Konig2005} for more details. For such processes the correlation functions can be expressed in terms of determinants of the Christoffel--Darboux kernels. Then the limits \eqref{eq:153} imply that around any point $x \in \Lambda_-$ the fluctuations at the scale $\rho_n$ are weakly convergent to the sine process, see e.g. \cite[Proposition 3.10]{Shirai2003}. Let us mention that we can apply Theorem \ref{thm:D} to measures studied in \cite{Vanlessen2007} using Riemann--Hilbert techniques, 
which results in the new and previously non-considered regime, namely when $x$ stays in a compact subset of $(0, \infty)$. 

Apart from the application to Random Matrix Theory, we can present a standard consequence of Theorem \ref{thm:D} that is
description of the spacing of zeros of orthogonal polynomials. To be more precise, for any $x_0 \in \supp(\mu)$ let us denote by 
$x^{(n)}_j(x_0)$ the zeros of $p_n$ labeled so that
\[
	\ldots < x_{-1}^{(n)}(x_0) < x_0 \leq x_{0}^{(n)}(x_0) < x_{1}^{(n)}(x_0) < \ldots.
\]
Then Freud--Levin theorem (see \cite{Levin2008a} and \cite[Theorem 23.1]{Simon2008}) stems that \eqref{eq:153} implies that for any $x_0 \in \Lambda_-$ and any $j \in \ZZ$ we have the following spacing of zeros
\[
	\lim_{n \to \infty} \rho_n \Big( x_{j+1}^{(n)}(x_0) - x_{j}^{(n)}(x_0) \Big) =
	\frac{1}{\upsilon(x_0)}.
\]
Observe that the limit does not involve the value of $\mu'$ at all.

Next, let us briefly describe a class of \emph{bounded} Jacobi parameters which is closely related to periodically modulated
Jacobi parameters considered in this article. Let $N$ be a positive integer. We say that the Jacobi parameters 
$(a_n : n \in \NN_0)$ and $(b_n : n \in \NN_0)$ are \emph{$N$-asymptotically periodic} if there are two $N$-periodic sequences  
$(\alpha_n : n \in \NN_0)$ and $(\beta_n : n \in \NN_0)$ of positive and real numbers, respectively, such that
\begin{equation} \label{eq:127}
	\lim_{n \to \infty} |a_n - \alpha_n| = 0 \quad \text{and} \quad
	\lim_{n \to \infty} |b_n - \beta_n| = 0.
\end{equation}
A brief description of the current state of the art for the class of asymptotically periodic Jacobi parameters can be found in
\cite[Section 7.1]{SwiderskiTrojan2019}. In particular, we always have $(\supp \mu)' = (\tr \frakX_0)^{-1} \big( [-2,2] \big)$
where $\frakX_0$ is the matrix defined in \eqref{eq:113}. Moreover, we can distinguish three subsets of the real line:
\begin{enumerate}[A.]
\item \label{per:A}
	$(\tr \frakX_0)^{-1} \big( (-2,2) \big)$: Under some regularity assumptions on Jacobi parameters, e.g. 
	$(a_n : n \in \NN_0), (b_n : n \in \NN_0) \in \calD_1^N$, the measure $\mu$ is absolutely continuous with positive density
	on this set, see \cite[Theorem 6]{GeronimoVanAssche1991};
\item \label{per:B}
	$(\tr \frakX_0)^{-1} \big( \{ -2,2 \} \big)$: The set contains at most $2N$ points. In fact, this set is not sufficiently
	well-understood, see e.g. \cite{Lukic2019};
\item \label{per:C}
	$\RR \setminus (\tr \frakX_0)^{-1} \big( [-2,2] \big)$: The measure $\mu$ restricted to this set is always discrete.
\end{enumerate}
Observe that \ref{per:A}, \ref{per:B} and \ref{per:C} is an analogue of \ref{perMod:I}, \ref{perMod:II} and \ref{perMod:III}, 
respectively. However, there is one crucial difference: for periodic modulations everything depends only on the properties of 
$\frakX_0(0)$ whereas for asymptotically periodic case all of the sets \ref{per:A}--\ref{per:C} are always present. The proof
of discreteness of $\mu$ in the set \ref{per:C} is an easy consequence of the Weyl perturbation theorem applied to the
associated Jacobi matrix and it uses \eqref{eq:127} only. In comparison, for unbounded Jacobi parameters usually it is not
possible to use this technique, see \cite{Kupin2018}. Instead to prove discreteness of the measure $\mu$ one uses the asymptotic
analysis, see \cite{Discrete}. We implement this idea in the proof of Theorem~\ref{thm:A}. The analogues of Theorems \ref{thm:B}
and \ref{thm:C} in the set~\ref{per:A} are proven in \cite{GeronimoVanAssche1991} by means of discrete Green functions. 
Recently, in \cite{SwiderskiTrojan2019} we found an alternative approach based on diagonalization of transfer matrices. We extend
this idea in Theorems \ref{thm:B} and \ref{thm:C}. Finally, the analogue of Theorem~\ref{thm:D} in the set \ref{per:A}
has been proven in \cite[Theorem 2.2]{Totik2009} with a help of logarithmic potential theory. Since this technique is not
available for unbounded Jacobi parameters, in \cite{ChristoffelI, ChristoffelII} we took a different approach exploiting
asymptotics of orthogonal polynomials. In Theorem~\ref{thm:D} we adapt this idea to the current setup.

Let us comment the relation of our results to the available literature. In Theorems~\ref{thm:A}--\ref{thm:D}
we consider a wide class of $N$-periodically modulated Jacobi parameters satisfying regularity conditions expressed
in terms of the total variation of certain sequences. 
In the case $N = 1$, the most general class of Jacobi parameters has been studied in the articles
\cite{Motyka2014,Motyka2015} where $\calD_1$-type condition has been combined with $\ell^1$-type condition.
Under certain additional hypotheses, the author obtained a weaker variant of Theorem~\ref{thm:A} as well as asymptotics of
generalized eigenvectors and absolute continuity of the measure $\mu$. However, there are no analogues of
Theorems~\ref{thm:B}, \ref{thm:C} nor \ref{thm:D}. In a recent preprint \cite{Naboko2019} the authors proved a variant of
Theorem~\ref{thm:C} for $\ell^1$-type perturbations of the sequences $a_n=(n+1)^\gamma$, $b_n = -2(n+1)^\gamma$
for $\gamma \in (0,1)$. Since in this context $\calD_1$-type conditions do not cover $\ell^1$-type perturbations, 
in Section \ref{sec:9} we generalize Theorems~\ref{thm:A}--\ref{thm:D} to $\ell^1$-type perturbations of sequences
satisfying $\calD_1$-type conditions. In particular, our class of Jacobi parameters properly contains those investigated
in \cite{Motyka2014, Motyka2015, Naboko2019}, see Section~\ref{sec:10:1} for details. Moreover, the way we deal with
$\ell^1$-type perturbations may have applications beyond the current setup. We also believe that $\ell^1$-type
perturbations are rather straightforward to obtain provided that one has a good understanding of the unperturbed sequences.
For this reason we consider $\calD_1$-type regularity as genuinely more natural. 
Lastly, in the case $N > 1$, only specific \emph{examples} have been studied for $N = 2$ giving variants of Theorem 
\ref{thm:A} and the absolute continuity of the measure $\mu$ with a help of subordinacy theory, see Section
\ref{sec:10:2} for details.

We briefly outline the proofs: In this article we adapt techniques that were successful in the generic case
\ref{perMod:I} as well as in the soft edge regime \ref{perMod:IIa}. However before adapting them we have to introduce
a proper modification to the recurrence system to obtain a sequence of transfer matrices which is uniformly
diagonalizable. This is the main novelty of the paper. We call it \emph{shifted conjugation.} The resulting transfer
matrices have a form similar to that appearing in the soft edge regime but with $a_n$ replaced by $\sqrt{a_n}$, 
see Section~\ref{sec:3} for details. Our method is simpler than discrete variants of Wentzel--Kramers--Brillouin
approximation which is the standard technique used in the case of Jordan block \ref{perMod:IIb} (see \cite{Motyka2014} and the references therein). 

Going back to the description of the proofs, for any compact set $K \subset \Lambda_+$, to the conjugated system we apply
the recently developed Levinson's type theorem, see \cite{Discrete}, to produce a family of generalized eigenvectors 
(see Section~\ref{sec:2b} for the definition) $(u_n(x) : n \in \NN_0)$, $x \in K$, such that
\[
	\sum_{n = 0}^\infty \sup_{x \in K}{|u_n(x)|^2} < \infty.
\]
Using the arguments as in \cite{Silva2007}, we deduce that the measure $\mu$ restricted to $\Lambda_+$ is purely
atomic and all accumulation points of its support are on the boundary of $\Lambda_+$, see Theorem \ref{thm:8}.
To study the convergence of $N$-shifted Tur\'an determinants, first we show that the corresponding objects defined for
the conjugated system multiplied by the correcting factor are close to Tur\'an determinants for the original system.
Then we prove that they constitute a uniform Cauchy sequence, see Theorem \ref{thm:5}. To identify the limit we adapt
the approximation procedure used in \cite{SwiderskiTrojan2019}, which is inspired by \cite{AptekarevGeronimo2016} and 
further developed in \cite{PeriodicII,ChristoffelII}, see Theorem \ref{thm:4}. We observe that the conjugated system
satisfies uniform diagonalization, thus motivated by the techniques developed in \cite{ChristoffelII}, we manage to
describe the asymptotic behavior of the generalized eigenvectors, see Theorem \ref{thm:3}.
However, by this method we cannot determine the factor $|\vphi|$ which is computed in Theorem \ref{thm:6} once again
with the help of the approximation procedure. In the presence of $\ell^1$-perturbation we show that orthogonal
polynomials can be expressed as generalized eigenvectors for unperturbed Jacobi parameters for a certain initial
conditions. In Section \ref{sec:9} we explicitly construct the mapping which describes how to choose the initial 
conditions. It turns out that the shifted conjugation can be performed with matrices constructed for unperturbed system.
All of this allows us to approximate Tur\'an determinants by generalized Tur\'an determinants for unperturbed sequences,
as well as to find the asymptotic behavior of orthogonal polynomials.

The paper is organized as follows: In Section~\ref{sec:2} we fix notation and formulate basic definitions.
Section~\ref{sec:3} is devoted to shifted conjugation. In Section \ref{sec:4} we study the measure $\mu$ restricted
to $\Lambda_+$. The convergence of $N$-shifted generalized Tur\'an determinants is proved in Section~\ref{sec:5}.
In the following section, we study the asymptotic behavior of the orthogonal polynomials. In Section~\ref{sec:7}
we describe the approximation procedure which is used in determining the limit of Tur\'an determinants and the exact
asymptotics of the polynomials. In Section~\ref{sec:8}, we investigate the convergence of the Christoffel--Darboux kernel.
In Section~\ref{sec:9} we show how to extend these results in the presence of the $\ell^1$ perturbation.
Finally, Section~\ref{sec:10} contains several examples illustrating the results of this paper and discuss how they
are related to the available literature.

\subsection*{Notation}
By $\NN$ we denote the set of positive integers and $\NN_0 = \NN \cup \{0\}$. Throughout the whole article, we write 
$A \lesssim B$ if there is an absolute constant $c>0$ such that $A \le cB$. We write $A \asymp B$ if $A \lesssim B$ and
$B \lesssim A$. Moreover, $c$ stands for a positive constant whose value may vary from occurrence to occurrence. For any
compact set $K$, by $o_K(1)$ we denote the class of functions $f_n : K \rightarrow \RR$ such that
$\lim_{n \to \infty} f_n(x) = 0$ uniformly with respect to $x \in K$.

\subsection*{Acknowledgment}
The first author was supported by long term structural funding -- Methusalem grant of the Flemish Government.
This work was completed while the first author was a postdoctoral fellow at KU Leuven.

\section{Preliminaries}
\label{sec:2}

\subsection{Stolz class}
Let $N$ be a positive integer. We say that a sequence $(x_n : n \in \NN)$ of vectors from a normed 
vector space $V$ belongs to $\calD_1^N (V)$, if
\[
	\sum_{n=1}^\infty \| x_{n+N} - x_n \| < \infty.
\]
Let us recall that $\calD_1^N(V)$ is an algebra provided $V$ is a normed algebra.
If $N=1$, then we usually omit the superscript. If $V$ is the real line with Euclidean norm we abbreviate 
$\calD_{1} = \calD_{1}(V)$. Given a compact set $K \subset \CC$ and a normed vector space $R$, we 
denote by $\calD_{1}(K, R)$ the case when $V$ is the space of all continuous mappings from $K$ to 
$R$ equipped with the supremum norm. 

\subsection{Finite matrices}
\label{sec:2a}
By $\Mat(2, \CC)$ and $\Mat(2, \RR)$ we denote the space of $2 \times 2$ matrices with complex and real entries, respectively, equipped with the spectral norm. Next, $\GL(2, \RR)$ and $\SL(2, \RR)$ consist of all matrices from $\Mat(2, \RR)$ which are invertible and of determinant equal $1$, respectively.

Let us recall that symmetrization and the discriminant of a matrix $A \in \Mat(2, \CC)$, are defined as
\[
	\sym(A) = \frac{1}{2} A + \frac{1}{2} A^*, \quad\text{and}\quad
	\discr(A) = (\tr A)^2 - 4 \det A,
\]
respectively. Here $A^*$ denotes the Hermitian transpose of the matrix $A$.

By $\{ e_1, e_2 \}$ we denote the standard orthonormal basis of $\CC^2$, i.e.
\begin{equation} \label{eq:109}
	e_1 =
	\begin{pmatrix}
		1 \\
		0
	\end{pmatrix} \quad \text{and} \quad
	e_2 =
	\begin{pmatrix}
		0 \\
		1
	\end{pmatrix}.
\end{equation}
Let $\sprod{\cdot}{\cdot}$ be the standard Hermitian inner product in $\CC^2$.

For a sequence of square matrices $(C_n : n_0 \leq n \leq n_1)$ we set
\[
	\prod_{k = n_0}^{n_1} C_k = C_{n_1} C_{n_1-1} \cdots C_{n_0}.
\]
Moreover, if $n_0 > n_1$, we set
\[
	\prod_{k = n_0}^{n_1} C_k = \Id.
\]
A matrix $X \in \SL(2, \RR)$ is \emph{non-trivial parabolic} if it is not a multiple of the identity and
$|\tr X| = 2$. Then $X$ is conjugated to
\[
	\varepsilon
	\begin{pmatrix}
		0 & 1 \\
		-1 & 2
	\end{pmatrix}
\]
where $\varepsilon = \sign{\tr X}$. Moreover, if
\[
	X = \varepsilon T 
	\begin{pmatrix}
		0 & 1 \\
		-1 & 2
	\end{pmatrix}
	T^{-1}
\]
then $X$ equals
\[
	X = \frac{\varepsilon}{\det T}
	\begin{pmatrix}
		\det T - (T_{11} + T_{12})(T_{21} + T_{22}) & (T_{11}+T_{12})^2 \\
		-(T_{21} + T_{22})^2 & \det T + (T_{11} + T_{12})(T_{21} + T_{22})
	\end{pmatrix}.
\]
In particular,
\begin{equation}
	\label{eq:12}
	\frac{(T_{11} + T_{12})(T_{21} +T_{22})}{\det T} = 1 - \varepsilon X_{11}	
\end{equation}
and
\begin{equation}
	\label{eq:11}
	\frac{(T_{21} + T_{22})^2}{\det T} = -\varepsilon X_{21}.
\end{equation}

\subsection{Jacobi matrices}
\label{sec:2b}
Given two sequences $a = (a_n : n \in \NN_0)$ and $b = (b_n : n \in \NN_0)$ of positive and real numbers, respectively, by $A$ we define the closure in $\ell^2$ of the operator acting on sequences having finite support by the matrix
\[
	\begin{pmatrix}
		b_0 & a_0 & 0   & 0      &\ldots \\
		a_0 & b_1 & a_1 & 0       & \ldots \\
		0   & a_1 & b_2 & a_2     & \ldots \\
		0   & 0   & a_2 & b_3   &  \\
		\vdots & \vdots & \vdots  &  & \ddots
	\end{pmatrix}.
\]
The operator $A$ is called \emph{Jacobi matrix}. If the Carleman condition \eqref{eq:37}
is satisfied then the operator $A$ is self-adjoint (see e.g. \cite[Corollary 6.19]{Schmudgen2017}).
Let us denote by $E_A$ its spectral resolution of the identity. Then for any Borel subset $B \subset \RR$, we set
\[
	\mu(B) = \langle E_A(B) \delta_0, \delta_0 \rangle_{\ell^2}
\]
where $\delta_0$ is the sequence having $1$ on the $0$th position and $0$ elsewhere. The polynomials $(p_n : n \in \NN_0)$
form an orthonormal basis of $L^2(\RR, \mu)$. By $\sigma(A), \sigmaP(A), \sigmaS(A), \sigmaAC(A)$ and $\sigmaEss(A)$ we denote the spectrum, the point spectrum, the singular spectrum, the absolutely continuous spectrum and the essential spectrum of $A$, respectively.

A sequence $(u_n : n \in \NN_0)$ is a \emph{generalized eigenvector} associated to
$x \in \CC$ and corresponding to $\eta \in \RR^2 \setminus \{0\}$, if the sequence of vectors
\begin{align*}
	\vec{u}_0 &= \eta, \\
	\vec{u}_n &= 
	\begin{pmatrix}
		u_{n-1} \\
		u_n
	\end{pmatrix}, \quad n \geq 1,
\end{align*}
satisfies
\begin{equation} 
	\label{eq:108a}
	\vec{u}_{n+1} = B_n(x) \vec{u}_n, \quad n \geq 0,
\end{equation}
where $B_n$ is the \emph{transfer matrix} defined as
\begin{equation} 
	\label{eq:108}
	\begin{aligned}
	B_0(x) &= 
	\begin{pmatrix}
		0 & 1 \\
		-\frac{1}{a_0} & \frac{x-b_0}{a_0}
	\end{pmatrix} \\
	B_n(x) &= 
	\begin{pmatrix}
		0 & 1 \\
		-\frac{a_{n-1}}{a_n} & \frac{x - b_n}{a_n}
	\end{pmatrix}
	,
	\quad n \geq 1.
	\end{aligned}
\end{equation}
Sometimes we write $(u_n(\eta, x) : n \in \NN_0)$ to indicate the dependence on the parameters.
In particular, the sequence of orthogonal polynomials $(p_n(x) : n \in \NN_0)$ is the generalized eigenvector associated to $x \in \CC$ and corresponding to $\eta = e_2$.

\subsection{Periodic Jacobi parameters}
\label{sec:2c}
By $(\alpha_n : n \in \ZZ)$ and $(\beta_n : n \in \ZZ)$ we denote
$N$-periodic sequences of real and positive numbers, respectively. For each $k \geq 0$, let us define polynomials
$(\mathfrak{p}^{[k]}_n : n \in \NN_0)$ by relations
\[
	\begin{gathered}
		\mathfrak{p}_0^{[k]}(x) = 1, \qquad \mathfrak{p}_1^{[k]}(x) = \frac{x-\beta_k}{\alpha_k}, \\
		\alpha_{n+k-1} \mathfrak{p}^{[k]}_{n-1}(x) + \beta_{n+k} \mathfrak{p}^{[k]}_n(x) 
		+ \alpha_{n+k} \mathfrak{p}^{[k]}_{n+1}(x)
		= x \mathfrak{p}^{[k]}_n(x), \qquad n \geq 1.
	\end{gathered}
\]
Let
\[
	\frakB_n(x) = 
	\begin{pmatrix}
		0 & 1 \\
		-\frac{\alpha_{n-1}}{\alpha_n} & \frac{x - \beta_n}{\alpha_n}
	\end{pmatrix},
	\qquad\text{and}\qquad
	\frakX_n(x) = \prod_{j = n}^{N+n-1} \mathfrak{B}_j(x), \qquad n \in \ZZ.
\]
By $\frakA$ we denote the Jacobi matrix corresponding to 
\begin{equation*}
	\begin{pmatrix}
		\beta_0 & \alpha_0 & 0   & 0      &\ldots \\
		\alpha_0 & \beta_1 & \alpha_1 & 0       & \ldots \\
		0   & \alpha_1 & \beta_2 & \alpha_2     & \ldots \\
		0   & 0   & \alpha_2 & \beta_3   &  \\
		\vdots & \vdots & \vdots  &  & \ddots
	\end{pmatrix}.
\end{equation*}

We start by showing the following identity.
\begin{proposition} 
	\label{prop:3}
	For all $x \in \RR$,
	\[
		\tr \frakX_0'(x) = 
		-\sum_{i=1}^N \frac{[\frakX_i(x)]_{2,1}}{\alpha_{i-1}}.
	\]
\end{proposition}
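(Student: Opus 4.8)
The plan is to differentiate the matrix product $\frakX_0(x) = \frakB_{N-1}(x) \frakB_{N-2}(x) \cdots \frakB_0(x)$ factor by factor and to exploit that each $\frakB_i$ depends on $x$ only through its $(2,2)$ entry. Reading off the definition of $\frakB_i$, the derivative is the rank-one matrix
\[
	\frakB_i'(x) = \frac{1}{\alpha_i} e_2 e_2^{*}.
\]
Since the product defining $\frakX_0$ runs over the factors indexed by $i = 0, 1, \ldots, N-1$, the Leibniz rule gives
\[
	\frakX_0'(x) = \sum_{i=0}^{N-1} \frac{1}{\alpha_i}
	\big( \frakB_{N-1} \cdots \frakB_{i+1} \big) \, e_2 e_2^{*} \, \big( \frakB_{i-1} \cdots \frakB_0 \big).
\]

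Next I would take the trace of each summand and use its cyclicity to bring the two outer blocks together. Writing $P_i = \frakB_{N-1} \cdots \frakB_{i+1}$ and $Q_i = \frakB_{i-1} \cdots \frakB_0$ (with the convention that empty products equal $\Id$), cyclicity gives
\[
	\tr\big( P_i \, e_2 e_2^{*} \, Q_i \big) = e_2^{*} (Q_i P_i) e_2 = [Q_i P_i]_{2,2}.
\]
The key step is to recognize $Q_i P_i$ as a cyclic rearrangement of $\frakX_i$. Using the $N$-periodicity of $(\alpha_n)$ and $(\beta_n)$, hence of $(\frakB_n)$, I rewrite the tail block as $Q_i = \frakB_{i-1} \cdots \frakB_0 = \frakB_{N+i-1} \cdots \frakB_N$, so that
\[
	Q_i P_i = \frakB_{N+i-1} \cdots \frakB_N \frakB_{N-1} \cdots \frakB_{i+1} = \frakB_{N+i-1} \cdots \frakB_{i+1},
\]
which is exactly $\frakX_i$ with its rightmost factor removed, i.e. $\frakX_i = (Q_i P_i) \frakB_i$.

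It remains to relate $[Q_i P_i]_{2,2}$ to $[\frakX_i]_{2,1}$. Since $\frakB_i e_1 = -\frac{\alpha_{i-1}}{\alpha_i} e_2$, applying the identity $\frakX_i = (Q_i P_i)\frakB_i$ to $e_1$ and reading off the second coordinate yields
\[
	[\frakX_i]_{2,1} = e_2^{*} (Q_i P_i) \frakB_i e_1 = -\frac{\alpha_{i-1}}{\alpha_i}\, [Q_i P_i]_{2,2}.
\]
Substituting back, each summand simplifies to $\frac{1}{\alpha_i}[Q_i P_i]_{2,2} = -\frac{1}{\alpha_{i-1}}[\frakX_i]_{2,1}$, so that $\tr \frakX_0'(x) = -\sum_{i=0}^{N-1} \alpha_{i-1}^{-1}[\frakX_i]_{2,1}$. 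Finally I shift the summation index by periodicity: the $i=0$ term coincides with the $i=N$ term, because $\frakX_N = \frakX_0$ and $\alpha_{-1} = \alpha_{N-1}$, which turns the sum over $\{0, \ldots, N-1\}$ into the sum over $\{1, \ldots, N\}$ asserted in the statement. I do not expect a genuine obstacle here; the only delicate points are keeping the product-ordering convention $\prod_{k=n_0}^{n_1} C_k = C_{n_1}\cdots C_{n_0}$ consistent throughout and carrying out the periodic index shift correctly.
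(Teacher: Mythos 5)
Your proof is correct. It shares its skeleton with the paper's argument -- both apply the Leibniz rule to the product $\frakB_{N-1}\cdots\frakB_0$ and use cyclicity of the trace to reduce to evaluating $\tr\bigl(\frakB_i'\cdot\frakB_{N+i-1}\cdots\frakB_{i+1}\bigr)$ -- but you finish differently. The paper at this point invokes an entry-wise formula from an earlier work (expressing the entries of $\frakX_i$ and of the truncated product in terms of the associated periodic orthogonal polynomials $\frakp^{[k]}_n$) and then a second identity from the same source to convert $\frakp^{[k+1]}_{N-1}$ into $[\frakX_k]_{2,1}$. You instead exploit the rank-one structure $\frakB_i'=\tfrac{1}{\alpha_i}e_2e_2^*$ to write the trace as $[Q_iP_i]_{2,2}$ and then relate it to $[\frakX_i]_{2,1}$ directly via $\frakX_i=(Q_iP_i)\frakB_i$ and $\frakB_ie_1=-\tfrac{\alpha_{i-1}}{\alpha_i}e_2$. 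This makes the proof self-contained, avoiding the external polynomial identities entirely, at the cost of nothing; the index bookkeeping (the periodic shift identifying the $i=0$ and $i=N$ terms) is handled correctly, and it is the same implicit shift the paper performs in its last line.
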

\begin{proof}
	By the Leibniz's rule
	\begin{align*}
		\frakX_0'(x) 
		&= \big(\frakB_{N-1} \ldots \frakB_1 \frakB_0\big)'(x) \\
		&= 
		\sum_{k=0}^{N-1} \bigg( \prod_{j=k+1}^{N-1} \frakB_j(x) \bigg) 
		\frakB_k'(x) \bigg( \prod_{j=0}^{k-1} \frakB_j(x) \bigg).
	\end{align*}
	Thus by linearity of the trace and its invariance on cyclic permutations
	\begin{align*}
		\tr \frakX_0'(x) &= 
		\sum_{k=0}^{N-1} 
		\tr \bigg\{ \bigg( \prod_{j=k+1}^{N-1} \frakB_j(x) \bigg) 
		\frakB_k'(x) \bigg( \prod_{j=0}^{k-1} \frakB_j(x) \bigg) \bigg\} \\
		&= 
		\sum_{k=0}^{N-1} 
		\tr \bigg\{ \frakB_k'(x) \prod_{j=k+1}^{N+k-1} \frakB_j(x) \bigg\}.
	\end{align*}
	In view of \cite[Proposition 3]{PeriodicIII},
	\begin{align*}
		\frakB_k'(x) \prod_{j=k+1}^{N+k-1} \frakB_j(x) &=
		\frac{1}{\alpha_k}
		\begin{pmatrix}
			0 & 0 \\
			0 & 1
		\end{pmatrix}
		\begin{pmatrix}
			-\frac{\alpha_{k}}{\alpha_{k+1}} \frakp^{[k+2]}_{N-3}(x) & \frakp^{[k+1]}_{N-2}(x) \\
			-\frac{\alpha_{k}}{\alpha_{k+1}} \frakp^{[k+2]}_{N-2}(x) & \frakp^{[k+1]}_{N-1}(x)
		\end{pmatrix} \\
		&=
		\begin{pmatrix}
			0 & 0 \\
			-\frac{1}{\alpha_{k+1}} \frakp_{N-2}^{[k+2]}(x) & \frac{1}{\alpha_k} \frakp_{N-1}^{[k+1]}(x)
		\end{pmatrix},
	\end{align*}
	thus
	\[
		\tr \frakX_0'(x) = 
		\sum_{k=0}^{N-1} \frac{1}{\alpha_k} \frakp_{N-1}^{[k+1]}(x).
	\]
	Since by \cite[Proposition 3]{PeriodicIII}
	\[
		\frac{1}{\alpha_{k-1}} [\frakX_k(x)]_{2,1} =
		-\frac{1}{\alpha_k} \frakp^{[k+1]}_{N-1}(x)
	\]
	we conclude the proof.
\end{proof}

\begin{proposition}
	\label{prop:4}
	If $|\tr \frakX_0(x)| \leq 2$, then
	\begin{equation}
		\label{eq:64}
		\sum_{i=1}^N \frac{|[\frakX_i(x)]_{2,1}|}{\alpha_{i-1}} =
		\bigg| \sum_{i=1}^N \frac{[\frakX_i(x)]_{2,1}}{\alpha_{i-1}} \bigg|.
	\end{equation}
\end{proposition}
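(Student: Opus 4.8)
The plan is to turn the identity into a statement about signs and then exploit the conjugacy between the matrices $\frakX_i(x)$. For real numbers $w_i$ one has $\sum_i |w_i| = |\sum_i w_i|$ exactly when all nonzero $w_i$ share a common sign, and since $\alpha_{i-1} > 0$, proving \eqref{eq:64} amounts to showing that the entries $[\frakX_i(x)]_{2,1}$, $i = 1, \dots, N$, never change sign. First I would record the structure. By $N$-periodicity we have $\frakB_{i+N} = \frakB_i$, hence $\frakX_{i+1}(x) = \frakB_i(x) \frakX_i(x) \frakB_i(x)^{-1}$ and $\frakX_{i+N}(x) = \frakX_i(x)$. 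Thus all $\frakX_i(x)$ are conjugate, they share the trace $\tr \frakX_0(x)$, and since $\det \frakB_j = \alpha_{j-1}/\alpha_j$ telescopes to $\det \frakX_i = 1$, each $\frakX_i(x) \in \SL(2,\RR)$. Under the hypothesis $|\tr \frakX_0(x)| \le 2$ every $\frakX_i(x)$ therefore satisfies $\discr(\frakX_i(x)) = (\tr \frakX_i(x))^2 - 4 \le 0$.

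The core step is the inequality $[\frakX_i(x)]_{2,1}\,[\frakX_{i+1}(x)]_{2,1} \ge 0$ for each $i$. To obtain it, write the entries of $\frakX_i(x)$ as $a, b, e, f$ (so $e = [\frakX_i(x)]_{2,1}$ and $af - be = 1$), and recall $\frakB_i(x)$ has lower-left entry $-c$ with $c = \alpha_{i-1}/\alpha_i > 0$ and lower-right entry $d = (x-\beta_i)/\alpha_i$. A direct computation of $\frakB_i \frakX_i \frakB_i^{-1}$ gives $[\frakX_{i+1}(x)]_{2,1} = \tfrac{e}{c} d^2 + (f-a) d - c b$. Multiplying by $e$ and substituting $be = af - 1$ produces a quadratic in $d$ with non-negative leading coefficient $e^2/c$ and discriminant $e^2\big((a+f)^2 - 4\big) = e^2 \discr(\frakX_i(x)) \le 0$; hence $e\,[\frakX_{i+1}(x)]_{2,1} \ge 0$, which is the claimed inequality.

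The hard part is that this inequality alone is insufficient, because interspersed zeros could in principle allow a pattern such as $+, 0, -$. The remedy is to analyze the zeros. If $[\frakX_{i_0}(x)]_{2,1} = 0$ while $|\tr \frakX_{i_0}(x)| \le 2$, then $\det = 1$ forces $\frakX_{i_0}(x)$ to be upper triangular with equal diagonal entries $\pm 1$, i.e. $\frakX_{i_0}(x) = \pm\left(\begin{smallmatrix} 1 & B \\ 0 & 1 \end{smallmatrix}\right)$; the elliptic case $|\tr|<2$ is excluded here, since there $\discr = (a-f)^2 + 4be < 0$ forces $e \ne 0$. If $B = 0$ then $\frakX_{i_0}(x) = \pm \operatorname{Id}$, so by conjugacy every $\frakX_j(x) = \pm\operatorname{Id}$ and \eqref{eq:64} reads $0 = 0$. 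If $B \ne 0$, short computations of $\frakB_{i_0} \frakX_{i_0} \frakB_{i_0}^{-1}$ and of $\frakB_{i_0-1}^{-1} \frakX_{i_0} \frakB_{i_0-1}$ give
\[
	[\frakX_{i_0+1}(x)]_{2,1} = -\frac{\alpha_{i_0-1}}{\alpha_{i_0}} B,
	\qquad
	[\frakX_{i_0-1}(x)]_{2,1} = -\frac{\alpha_{i_0-2}}{\alpha_{i_0-1}} B,
\]
so both neighbours are nonzero and carry the common sign $-\sign{B}$. A zero is thus always flanked by two equal nonzero signs, and in particular two zeros are never adjacent.

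Finally I would combine these facts on the cyclic sequence $e_i = [\frakX_i(x)]_{2,1}$ with indices read modulo $N$ (using $\frakX_{i+N} = \frakX_i$). Two adjacent nonzero terms have equal sign by the product inequality; an isolated zero is flanked by equal nonzero signs by the computation above; and no two zeros are adjacent. Hence, traversing the cycle, the sign of the nonzero terms can never change, so all nonzero $e_i$ share one sign. This is exactly the absence of cancellation that yields \eqref{eq:64}.
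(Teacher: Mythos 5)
Your proof is correct, and while it rests on the same core inequality as the paper's -- that consecutive entries satisfy $[\frakX_i(x)]_{2,1}\,[\frakX_{i+1}(x)]_{2,1}\ge 0$ -- it reaches that inequality and handles the degenerate case by a genuinely different route. The paper obtains the product inequality indirectly: it combines the elementary $\SL(2,\RR)$ bound $-A_{1,2}A_{2,1}\ge 1-\tfrac14(\tr A)^2$ with the cited identity $[\frakX_i(x)]_{1,2}=-\tfrac{\alpha_{i-1}}{\alpha_{i-2}}[\frakX_{i-1}(x)]_{2,1}$, which immediately gives a \emph{strictly} positive lower bound when $|\tr\frakX_0(x)|<2$, so all the $(2,1)$ entries are nonzero of one sign and \eqref{eq:64} follows at once; the boundary case $|\tr\frakX_0(x)|=2$ is then settled by approximating $x$ with points $x_n$ where $|\tr\frakX_0(x_n)|<2$ and passing to the limit. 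You instead verify the product inequality by a direct computation of the conjugation $\frakB_i\frakX_i\frakB_i^{-1}$ (your quadratic-in-$d$ argument is correct: leading coefficient $e^2/c\ge0$ and discriminant $e^2\discr(\frakX_i(x))\le0$), and you resolve the parabolic boundary case \emph{pointwise} by classifying the possible zeros: a vanishing $(2,1)$ entry forces $\frakX_{i_0}(x)=\pm\bigl(\begin{smallmatrix}1&B\\0&1\end{smallmatrix}\bigr)$, and either $B=0$ (all conjugates are $\pm\Id$ and both sides of \eqref{eq:64} vanish) or both neighbours equal $-\tfrac{\alpha_{\cdot}}{\alpha_{\cdot}}(\pm B)$ and carry a common nonzero sign, so the sign cannot flip across a zero on the cycle. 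What your approach buys is self-containedness: the paper's limiting step tacitly uses the structural fact that every point with $|\tr\frakX_0|=2$ is a limit of points with $|\tr\frakX_0|<2$ (true for the trace polynomial of a periodic Jacobi matrix, but unproved there), and it also leans on an external reference for the $[X]_{1,2}$--$[X]_{2,1}$ relation; your argument needs neither. What the paper's approach buys is brevity, and a quantitative strict positivity of the consecutive products in the interior of the bands, which you only recover as a nonnegative bound.
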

\begin{proof}
	Let us first consider a matrix $A \in \SL(2, \RR)$. We have
	\[
		A_{1,1} A_{2,2} - A_{1,2} A_{2,1} = 1,
	\]
	thus
	\[
		A_{1,1}^2 - (\tr A) A_{1, 1} + 1 + A_{1,2}A_{2,1} = 0.
	\]
	Since $A_{1,1} \in \RR$, we have
	\[
		(\tr A)^2 - 4 (1 + A_{1,2} A_{2, 1}) \geq 0,
	\]
	that is
	\[
		- A_{1, 2} A_{2, 1} \geq 1 - \tfrac{1}{4} (\tr A)^2.
	\]
	Taking for $A = \frakX_i(x)$, by \cite[Proposition 3]{PeriodicIII}, we get
	\[
		\frac{\alpha_{i-1}}{\alpha_{i-2}} [\frakX_{i-1}(x)]_{2,1} [\frakX_i(x)]_{2,1} 
		=
		-[\frakX_i(x)]_{1,2} [\frakX_i(x)]_{2,1}
		\geq
		1 - \tfrac{1}{4} (\tr \frakX_i(x))^2 =  1 - \tfrac{1}{4} (\tr \frakX_0(x))^2,
	\]
	which easily leads to \eqref{eq:64} provided that $|\tr \frakX_0(x)| < 2$. If $|\tr \frakX_0(x)| = 2$, 
	we select a sequence $(x_n)$ tending to $x$ and such that $|\tr \frakX_0(x_n)| < 2$ for each $n$. By the continuity
	of $\frakX_i$,
	\begin{align*}
		\sum_{i=1}^N \frac{|[\frakX_i(x)]_{2,1}|}{\alpha_{i-1}}
		&=
		\lim_{n \to \infty}
		\sum_{i=1}^N \frac{|[\frakX_i(x_n)]_{2,1}|}{\alpha_{i-1}} \\
		&=
		\lim_{n \to \infty}
		\bigg| \sum_{i=1}^N \frac{[\frakX_i(x_n)]_{2,1}}{\alpha_{i-1}} \bigg| 
		=
		\bigg| \sum_{i=1}^N \frac{[\frakX_i(x)]_{2,1}}{\alpha_{i-1}} \bigg|
	\end{align*}
	which finishes the proof.
\end{proof}

\subsection{Periodic modulations}
\label{sec:2d}
In this article we are interested in $N$-periodically modulated Jacobi parameters, $N \in \NN$. We say that 
$(a_n : n \in \NN_0)$ and $(b_n : n \in \NN_0)$ are periodically modulated if there are two $N$-periodic
sequences $(\alpha_n : n \in \ZZ)$ and $(\beta_n  : n \in \ZZ)$ of positive and real numbers, respectively, such that
\begin{enumerate}[(a)]
	\item
	$\begin{aligned}[b]
	\lim_{n \to \infty} a_n = \infty
	\end{aligned},$
	\item
	$\begin{aligned}[b]
	\lim_{n \to \infty} \bigg| \frac{a_{n-1}}{a_n} - \frac{\alpha_{n-1}}{\alpha_n} \bigg| = 0
	\end{aligned},$
	\item
	$\begin{aligned}[b]
	\lim_{n \to \infty} \bigg| \frac{b_n}{a_n} - \frac{\beta_n}{\alpha_n} \bigg| = 0
	\end{aligned}.$
\end{enumerate}
We are mostly interested in periodically modulated parameters so that
\begin{equation} 
	\label{eq:40b}
	\bigg(\frac{\alpha_{n-1}}{\alpha_n} a_n - a_{n-1} : n \in \NN \bigg),
	\bigg(\frac{\beta_n}{\alpha_n} a_n - b_n : n \in \NN\bigg),
	\bigg(\frac{1}{\sqrt{a_n}} : n \in \NN\bigg) \in \calD_1^N.
\end{equation}
In view of \eqref{eq:40b}, there are two $N$-periodic sequences $(s_n : n \in \NN_0)$ and $(r_n : n \in \NN_0)$ 
such that
\begin{equation} 
	\label{eq:40c}
	\lim_{n \to \infty} \bigg| \frac{\alpha_{n-1}}{\alpha_n} a_{n} - a_{n-1} - s_n \bigg| = 0 
	\qquad \text{and} \qquad
	\lim_{n \to \infty} \bigg| \frac{\beta_n}{\alpha_n} a_{n} - b_{n} -r_n \bigg| = 0.
\end{equation}
By \cite[Proposition 4]{ChristoffelII}, for each $i \in \{0, 1, \ldots, N-1 \}$,
\begin{equation}
	\label{eq:25}
	\lim_{j \to \infty} \big( a_{(j+1)N+i} - a_{jN+i} \big) = 
	\alpha_i \sum_{k = 0}^{N-1} \frac{s_k}{\alpha_{k-1}}.
\end{equation}
We define the $N$-step transfer matrix by
\[
	X_n = B_{n+N-1} B_{n+N-2} \cdots B_{n+1} B_n,
\]
where $B_n$ is defined in \eqref{eq:108}.
Let us observe that for each $i \in \{0, 1, \ldots, N-1\}$,
\[
	\lim_{j \to \infty} B_{jN+i}(x) = \frakB_i(0)
\]
and
\[
	\lim_{j \to \infty} X_{jN+i}(x) = \frakX_i(0)
\]
locally uniformly with respect to $x \in \CC$. We always assume that the matrix $\frakX_0(0)$ is a non-trivial parabolic element of $\SL(2, \RR)$. Let $T_0$ be a matrix so that
\[
	\frakX_0(0) = \varepsilon 
	T_0 \begin{pmatrix}
		0 & 1 \\
		-1 & 2
	\end{pmatrix}
	T_0^{-1}
\]
where
\begin{equation} 
	\label{eq:61a}
	\varepsilon = \sign{\tr \frakX_0(0)}.
\end{equation}
Since
\[
	\frakX_i(0) = \frakB_{i-1}(0) \cdots \frakB_0(0) \frakX_0(0) \frakB_0^{-1} (0) \cdots \frakB_{i-1}^{-1}(0),
\]
by taking
\[
	T_i = \frakB_{i-1}(0) \cdots \frakB_0(0) T_0,
\]
we obtain
\[
	\frakX_i(0) = \varepsilon 
	T_i 
	\begin{pmatrix}
		0 & 1 \\
		-1 & 2
	\end{pmatrix}
	T_i^{-1}.
\]

\section{The shifted conjugation}
\label{sec:3}
In this section we introduce the shifted conjugation of $N$-step transfer matrix $X_n$ which produces matrices that
are uniformly diagonalizable. First, by the direct computations we can find that for any $T \in \GL(2, \RR)$,
\[
	\begin{pmatrix}
		1 & -1\\
		-1 & 1
	\end{pmatrix}
	T^{-1} 
	\begin{pmatrix}
		1 & 0 \\
		0 & 0
	\end{pmatrix}
	T
	\begin{pmatrix}
		1 & 1 \\
		1 & 1
	\end{pmatrix}
	=
	\frac{(T_{11}+T_{12})(T_{21}+T_{22})}{\det T}
	\begin{pmatrix}
		1 & 1\\
		-1 & -1
	\end{pmatrix}
\]
and
\[
	\begin{pmatrix}
		1 & -1\\
		-1 & 1
	\end{pmatrix}
	T^{-1} 
	\begin{pmatrix}
		0 & 1 \\
		0 & 0
	\end{pmatrix}
	T
	\begin{pmatrix}
		1 & 1 \\
		1 & 1
	\end{pmatrix}
	=
	\frac{(T_{21}+T_{22})^2}{\det T}
	\begin{pmatrix}
		1 & 1\\
		-1 & -1
	\end{pmatrix}.
\]
Let the sequences $(s_{i'}), (r_{i'})$ be defined in \eqref{eq:40c} and the number $\varepsilon$ 
defined in \eqref{eq:61a}. Hence, by \eqref{eq:12} and \eqref{eq:11} we obtain
\begin{align*}
	&
	\sum_{i' = i}^{N+i-1}
	\frac{1}{\alpha_{i'-1}}
	\begin{pmatrix}
		1 & -1 \\
		-1 & 1
	\end{pmatrix}
	T_{i'}^{-1} 
	\begin{pmatrix}
		s_{i'} & x + r_{i'} \\
		0 & 0
	\end{pmatrix}
	T_{i'}
	\begin{pmatrix}
		1 & 1 \\
		1 & 1
	\end{pmatrix} \\
	&\qquad\qquad=
	\begin{pmatrix}
		 1 & 1\\
		 -1 & -1
	\end{pmatrix}
	\sum_{i' = 0}^{N-1} 
	\bigg(
	\frac{s_{i'}}{\alpha_{i'-1}} \Big(1 - \varepsilon [\frakX_{i'}(0)]_{1,1} \Big)
	-
	\frac{x+r_{i'}}{\alpha_{i'-1}} \varepsilon [\frakX_{i'}(0)]_{2,1}
	\bigg) \\
	&\qquad\qquad=
	\begin{pmatrix}
		 1 & 1\\
		 -1 & -1
	\end{pmatrix}
	\tau(x)
\end{align*}
where we have set
\begin{equation} 
	\label{eq:61b}
	\tau(x) = 
	\sum_{i' = 0}^{N-1} 
	\bigg(
	\frac{s_{i'}}{\alpha_{i'-1}} \Big( 1 - \varepsilon [\frakX_{i'}(0)]_{1,1} \Big)
	-
	\frac{x+r_{i'}}{\alpha_{i'-1}} \varepsilon [\frakX_{i'}(0)]_{2,1} 
	\bigg).
\end{equation}
Let us observe that by Proposition \ref{prop:3},
\[
	\tau(x) = 
	\varepsilon \tr \frakX_0'(0) \cdot x +
	\sum_{i' = 0}^{N-1} 
	\bigg(
	\frac{s_{i'}}{\alpha_{i'-1}} \Big( 1 - \varepsilon [\frakX_{i'}(0)]_{1,1} \Big)
	-
	\frac{r_{i'}}{\alpha_{i'-1}} \varepsilon [\frakX_{i'}(0)]_{2,1} 
	\bigg).
\]
Since $\frakX_0(0)$ is a non-trivial parabolic element of $\SL(2, \RR)$, $\tr \frakX_0'(0) \neq 0$. To see this,
let us suppose, contrary to our claim, that $\tr \frakX_0'(0) = 0$. Then by Propositions \ref{prop:3} and 
\ref{prop:4}, for each $i \in \{0, 1, \ldots, N-1\}$,
\[
	[\frakX_i(0)]_{2, 1} = 0.
\]
Hence, by \cite[Proposition 3]{PeriodicIII},
\[
	[\frakX_i(0)]_{1, 2} = 0,
\]
which is impossible. Knowing that $\tr \frakX_0'(0) \neq 0$, we conclude that
\begin{equation} 
	\label{eq:61d}
	x_0 =
	\frac{1}{\varepsilon \tr \frakX_0'(0)} 
	\sum_{i' = 0}^{N-1} 
	\bigg(
	\frac{r_{i'}}{\alpha_{i'-1}} \varepsilon [\frakX_{i'}(0)]_{2,1} 
	-
	\frac{s_{i'}}{\alpha_{i'-1}} \Big( 1 - \varepsilon [\frakX_{i'}(0)]_{1,1} \Big)
	\bigg).
\end{equation}
is the only solution to $\tau(x) = 0$.

Now, let us fix $i \in \{0, 1, \ldots, N-1\}$ and set
\begin{equation} 
	\label{eq:46}
	Z_j = 
	T_{i}
	\begin{pmatrix}
		1 & 1\\
		\ue^{\vartheta_j} & \ue^{-\vartheta_j}
	\end{pmatrix}
\end{equation}
where
\begin{equation} 
	\label{eq:68}
	\vartheta_j(x) =
	\sqrt{\frac{\alpha_{i-1} |\tau(x)|}{a_{(j+1)N+i-1}}}.
\end{equation}
Then
\begin{equation}
	\label{eq:16}
	\frac{\alpha_{i-1}}{a_{(j+1)N+i-1}}
	\sum_{i' = i}^{N+i-1}
	\frac{1}{\alpha_{i'-1}}
	\begin{pmatrix}
		1 & -1 \\
		-1 & 1
	\end{pmatrix}
	T_{i'}^{-1} 
	\begin{pmatrix}
		s_{i'} & x + r_{i'} \\
		0 & 0
	\end{pmatrix}
	T_{i'}
	\begin{pmatrix}
		1 & 1 \\
		1 & 1
	\end{pmatrix}
	=
	\sigma \vartheta_j^2 
	\begin{pmatrix}
		1 & 1 \\
		-1 & -1
	\end{pmatrix}
\end{equation}
where 
\begin{equation}
	\label{eq:61c}
	\sigma(x) = \sign{\tau(x)}.
\end{equation}

Before we proceed let us recall that the set $\calD_1$ is an algebra over $\RR$. Moreover, we have the following
lemma.
\begin{lemma}
	\label{lem:4}
	If $(a_n : n \in \NN_0)$ is a sequence of positive numbers such that
	\begin{enumerate}[(a)]
		\item \label{eq:2}
		$\begin{aligned}[b] 
		\lim_{n \to \infty} a_n = \infty
		\end{aligned},$
		\item \label{eq:5}
		$\begin{aligned}[b] 
		\big(a_{n+1} - a_n : n \in \NN_0 \big) \in \calD_1
		\end{aligned},$
		\item \label{eq:3}
		$\begin{aligned}[b]
		\bigg(\frac{1}{\sqrt{a_n}} : n \in \NN_0\bigg) \in \calD_1
		\end{aligned},$
	\end{enumerate}
	then 
	\begin{equation*}
		\bigg(\sqrt{\frac{a_{n+1}}{a_n}} : n \in \NN\bigg),
		\big( \sqrt{a_{n+1}} - \sqrt{a_n} : n \in \NN \big), \\
		\bigg(a_n\bigg(\frac{1}{\sqrt{a_n}} - \frac{1}{\sqrt{a_{n+1}}}\bigg) : n \in \NN \bigg)
		\in
		\calD_1.
	\end{equation*}
	Moreover,
	\[
		\lim_{n \to \infty} \sqrt{\frac{a_{n+1}}{a_n}} = 1, \qquad
		\lim_{n \to \infty} \big( \sqrt{a_{n+1}} - \sqrt{a_n} \big) = 0, \qquad
		\lim_{n \to \infty} a_n\bigg(\frac{1}{\sqrt{a_n}} - \frac{1}{\sqrt{a_{n+1}}}\bigg) = 0.
	\]
\end{lemma}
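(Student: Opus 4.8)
The plan is to reduce all three assertions to the algebra structure of $\calD_1$ together with a single composition step, by rewriting each target quantity as a product or quotient of sequences that remain bounded (and bounded away from $0$ wherever they sit in a denominator). Throughout set $d_n = a_{n+1} - a_n$ and $c_n = 1/\sqrt{a_n}$; by hypotheses \eqref{eq:5} and \eqref{eq:3} both sequences lie in $\calD_1$, hence each is bounded and convergent, with $c_n \to 0$ by \eqref{eq:2}. I will use three closure properties of $\calD_1$: that it is an algebra, so a product of bounded $\calD_1$ sequences is again in $\calD_1$ (in particular $(c_n^2) = (1/a_n) \in \calD_1$); that the reciprocal of a $\calD_1$ sequence bounded away from $0$ lies in $\calD_1$; and that $\varphi(x_n) \in \calD_1$ whenever $(x_n) \in \calD_1$ converges into a region where $\varphi$ is Lipschitz. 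The first two follow by telescoping identities, and the last from the mean value theorem, using that membership in $\calD_1$ is a tail property so only the asymptotic regime matters.

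First I would record the auxiliary fact that $(d_n/a_n) \in \calD_1$ with $\lim_{n\to\infty} d_n/a_n = 0$: this is immediate from writing $d_n/a_n = d_n \cdot c_n^2$ as a product of bounded $\calD_1$ sequences, the limit being $0$ because $c_n \to 0$ while $d_n$ stays bounded. Applying the composition principle with the map $t \mapsto \sqrt{1+t}$, which is Lipschitz near $0$, to the identity $\sqrt{a_{n+1}/a_n} = \sqrt{1 + d_n/a_n}$ then yields the first assertion, namely $(\sqrt{a_{n+1}/a_n}) \in \calD_1$ with limit $1$. Writing $g_n = \sqrt{a_{n+1}/a_n}$, the convergence $g_n \to 1$ guarantees that for large $n$ the sequence is bounded and bounded away from $0$, so both $(1/g_n)$ and $(1/(1+g_n))$ belong to $\calD_1$.

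The remaining two assertions follow from explicit algebraic rearrangements chosen to keep every factor in the admissible range. Rationalizing gives
\[
	\sqrt{a_{n+1}} - \sqrt{a_n} = \sqrt{a_n}\,(g_n - 1) = \sqrt{a_n}\, \frac{g_n^2 - 1}{g_n + 1} = \frac{d_n c_n}{1 + g_n},
\]
whose right-hand side is a product and quotient of bounded $\calD_1$ sequences, hence lies in $\calD_1$, with limit $0$ since $d_n$ is bounded and $c_n \to 0$. Similarly,
\[
	a_n \bigg( \frac{1}{\sqrt{a_n}} - \frac{1}{\sqrt{a_{n+1}}} \bigg) = \sqrt{\frac{a_n}{a_{n+1}}}\, \big( \sqrt{a_{n+1}} - \sqrt{a_n} \big) = \frac{1}{g_n}\, \big( \sqrt{a_{n+1}} - \sqrt{a_n} \big),
\]
which is a product of two $\calD_1$ sequences, hence in $\calD_1$, with limit $0$. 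Thus all three membership statements and all three limits are obtained factor by factor.

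The only genuine subtlety, and the step I expect to demand the most care, is that the naive intermediate quantities ($\sqrt{a_n}$, $1/a_n$, $g_n - 1$, or $\sqrt{a_{n+1}} + \sqrt{a_n}$) either blow up or tend to $0$, so the algebra of $\calD_1$ cannot be applied to them directly; the essential move is the splitting of each target into factors that are simultaneously in $\calD_1$, bounded, and bounded away from $0$ in every denominator. Once the three closure properties are established, which is routine, the displayed identities close the argument.
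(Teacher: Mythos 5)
Your proposal is correct and follows essentially the same route as the paper: it uses the same factorizations (the paper writes $\sqrt{a_{n+1}}-\sqrt{a_n}=\tfrac{1}{\sqrt{a_n}}\tfrac{a_{n+1}-a_n}{\sqrt{a_{n+1}/a_n}+1}$ and $a_n(\tfrac{1}{\sqrt{a_n}}-\tfrac{1}{\sqrt{a_{n+1}}})=\tfrac{\sqrt{a_{n+1}}-\sqrt{a_n}}{\sqrt{a_{n+1}/a_n}}$, which are exactly your $\tfrac{d_nc_n}{1+g_n}$ and $\tfrac{1}{g_n}(\sqrt{a_{n+1}}-\sqrt{a_n})$), together with the algebra structure of $\calD_1$ and the observation $\tfrac{1}{a_n}=c_n^2$. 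Your explicit appeal to Lipschitz composition for $\sqrt{1+d_n/a_n}$ only makes precise a step the paper leaves implicit, so there is nothing further to add.
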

\begin{proof}
	We notice that
	\[
		\frac{1}{a_n} = \frac{1}{\sqrt{a_n}} \cdot \frac{1}{\sqrt{a_n}},
	\]
	thus by \eqref{eq:3}
	\begin{equation} 
		\label{eq:26}
		\bigg(\frac{1}{a_n} : n \in \NN\bigg) \in \calD_1.
	\end{equation}
	Since
	\[
		\frac{1}{a_n} \big(a_{n+1} - a_n\big) = \frac{a_{n+1}}{a_n} - 1,
	\]
	by \eqref{eq:26}, \eqref{eq:5} and \eqref{eq:2}, we conclude that
	\begin{equation}
		\label{eq:29}
		\bigg(\sqrt{\frac{a_{n+1}}{a_n}} : n \in \NN\bigg) \in \calD_1.
	\end{equation}
	Next, observe that
	\[
		\sqrt{a_{n+1}} - \sqrt{a_n} = 
		\frac{1}{\sqrt{a_n}} \frac{a_{n+1} - a_n}{\sqrt{\tfrac{a_{n+1}}{a_n}} + 1},
	\]
	hence, by \eqref{eq:3}, \eqref{eq:5} and \eqref{eq:29} it follows that
	\begin{equation} 
		\label{eq:34}
		\big( \sqrt{a_{n+1}} - \sqrt{a_n} : n \in \NN_0 \big) \in \calD_1.
	\end{equation}
	Finally, we have
	\begin{align*}
		a_n \bigg(\frac{1}{\sqrt{a_n}} - \frac{1}{\sqrt{a_{n+1}}}\bigg)
		&= \frac{\sqrt{a_{n+1}} - \sqrt{a_n}}{\sqrt{\tfrac{a_{n+1}}{a_n}}},
	\end{align*}
	which by \eqref{eq:34} and \eqref{eq:29} belongs to $\calD_1$.
\end{proof}

\begin{theorem}
	\label{thm:2}
	Let $N$ be a positive integer and $i \in \{0, 1, \ldots N-1\}$. Suppose that $(a_n : n \in \NN_0)$ and
	$(b_n : n \in \NN_0)$ are $N$-periodically modulated Jacobi parameters such that $\frakX_0(0)$ is 
	a non-trivial parabolic element. If
	\[
		\bigg(\frac{\alpha_{n-1}}{\alpha_n} a_n - a_{n-1} : n \in \NN \bigg),
		\bigg(\frac{\beta_n}{\alpha_n} a_n - b_n : n \in \NN\bigg),
		\bigg(\frac{1}{\sqrt{a_n}} : n \in \NN\bigg) \in \calD_1^N,
	\]
	then for any compact interval $K \subset \RR \setminus \{x_0\}$,
	\begin{equation} 
		\label{eq:15}
		Z_j^{-1} Z_{j+1} = \Id + \vartheta_j Q_j,
	\end{equation}
	where $x_0$ is defined in \eqref{eq:61d}, and $(Q_j)$ is a sequence from 
	$\calD_1\big(K, \Mat(2, \RR)\big)$ convergent uniformly on $K$ to zero.
\end{theorem}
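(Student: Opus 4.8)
The plan is to reduce everything to a scalar computation by exploiting that $T_i$ does not depend on $j$. Setting $M_j=\begin{pmatrix}1&1\\ \ue^{\vartheta_j}&\ue^{-\vartheta_j}\end{pmatrix}$, so that $Z_j=T_iM_j$, the factor $T_i$ cancels and $Z_j^{-1}Z_{j+1}=M_j^{-1}M_{j+1}$; hence the assertion depends on $x$ only through the scalars $\vartheta_j(x),\vartheta_{j+1}(x)$. Here $M_j$ is invertible precisely because $\det M_j=-2\sinh\vartheta_j\neq 0$ for $x\in K\subset\RR\setminus\{x_0\}$, since there $\tau(x)\neq 0$ and thus $\vartheta_j(x)>0$; this is exactly why $x_0$ must be excluded.

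First I would transfer the hypotheses to the step-$N$ subsequence $\tilde a_j:=a_{(j+1)N+i-1}$ and apply Lemma~\ref{lem:4} to it. Hypotheses (a) and (c) of that lemma are immediate: $\tilde a_j\to\infty$, and the series defining $(1/\sqrt{\tilde a_j})\in\calD_1$ is a subseries of the convergent non-negative series witnessing $(1/\sqrt{a_n})\in\calD_1^N$. The only point requiring the periodic structure is hypothesis (b), that $(\tilde a_{j+1}-\tilde a_j)\in\calD_1$. Putting $d_n:=a_{n+N}-a_n$, the assumption $(\tfrac{\alpha_{n-1}}{\alpha_n}a_n-a_{n-1})\in\calD_1^N$ together with $N$-periodicity of $\alpha$ yields $\sum_n |\tfrac{\alpha_{n-1}}{\alpha_n}d_n-d_{n-1}|<\infty$; writing $d_n=\alpha_n g_n$ this telescopes to $\sum_n\alpha_{n-1}|g_n-g_{n-1}|<\infty$, so $(g_n)\in\calD_1$, and passing to the residue class $i-1$ (a step-$N$ subsequence of a $\calD_1$ sequence is again in $\calD_1$) gives $(\tilde a_{j+1}-\tilde a_j)=\alpha_{i-1}(g_{(j+1)N+i-1})_j\in\calD_1$. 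Lemma~\ref{lem:4} then provides, in $\calD_1$ and with the stated limits, the sequences $(\sqrt{\tilde a_{j+1}/\tilde a_j})$, $(\sqrt{\tilde a_{j+1}}-\sqrt{\tilde a_j})$ and, most importantly, $\big(\tilde a_j(\tilde a_j^{-1/2}-\tilde a_{j+1}^{-1/2})\big)$.

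Next I would compute $M_j^{-1}M_{j+1}-\Id$ explicitly. With $t=\vartheta_j$, $s=\vartheta_{j+1}$ and $\delta=s-t$, a direct calculation gives
\[
M_j^{-1}M_{j+1}-\Id
=\frac{1}{-2\sinh t}
\begin{pmatrix}
\ue^{t}(1-\ue^{\delta}) & \ue^{-t}(1-\ue^{-\delta})\\
-\ue^{t}(1-\ue^{\delta}) & -\ue^{-t}(1-\ue^{-\delta})
\end{pmatrix},
\]
so the second row of $Q_j=\vartheta_j^{-1}(M_j^{-1}M_{j+1}-\Id)$ is minus its first. The decisive observation is that the correct small parameter is $R_j:=\delta/\vartheta_j^{2}$: since $\vartheta_j=g(x)/\sqrt{\tilde a_j}$ with $g(x):=\sqrt{\alpha_{i-1}|\tau(x)|}$, one computes $R_j=-g(x)^{-1}\,\tilde a_j(\tilde a_j^{-1/2}-\tilde a_{j+1}^{-1/2})$, i.e. exactly the third $\calD_1$-sequence from Lemma~\ref{lem:4} divided by the bounded, non-vanishing factor $g$. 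Factoring $1-\ue^{\pm\delta}=\mp\delta\,H_\pm(\delta)$ and $t/\sinh t=G(t)$, with $H_\pm,G$ smooth and equal to $1$ at $0$, the two independent entries become $\tfrac12\ue^{t}R_jH_+(\delta)G(t)$ and $-\tfrac12\ue^{-t}R_jH_-(\delta)G(t)$.

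Finally I would close the argument with the algebra and composition properties of $\calD_1(K,\RR)$. On $K$ the factor $1/g$ is bounded, so $R_j\in\calD_1(K)$ with $R_j\to0$; similarly $\vartheta_j\in\calD_1(K)$ with $\vartheta_j\to0$, whence $\delta=\vartheta_j^2R_j\in\calD_1(K)$ with $\delta\to0$. Since $t/\sinh t$, $\ue^{\pm t}$ and $H_\pm$ are Lipschitz near $0$, composing them with these uniformly convergent $\calD_1(K)$ sequences keeps them in $\calD_1(K)$ with limits $1$; because $\calD_1(K,\Mat(2,\RR))$ is an algebra, each entry of $Q_j$ is a product of such sequences with the factor $R_j\to0$, giving $Q_j\in\calD_1(K,\Mat(2,\RR))$ and $Q_j\to0$ uniformly on $K$. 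I expect the main obstacle to be neither the matrix algebra nor the closed-form computation, but recognizing that the leading behaviour of $Q_j$ is governed by $\delta/\vartheta_j^2$ and matching it precisely with the third conclusion of Lemma~\ref{lem:4}; the subsidiary technical step is the telescoping reduction that carries the $\calD_1^N$ hypotheses down to the step-$N$ subsequence needed to invoke that lemma.
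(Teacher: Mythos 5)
Your proposal is correct and takes essentially the same route as the paper: after cancelling $T_i$ the problem reduces to the $2\times 2$ product $M_j^{-1}M_{j+1}$, and both arguments control it via Lemma~\ref{lem:4} applied to the residue-class subsequence together with the $t/\sinh t$ device, the only difference being that you factor the entries exactly through the parameter $\delta/\vartheta_j^2=-g^{-1}\,\tilde a_j\big(\tilde a_j^{-1/2}-\tilde a_{j+1}^{-1/2}\big)$ while the paper reaches the same conclusion by second-order Taylor expansion of the exponentials with $\calD_1$-controlled remainders. Your telescoping verification that the step-$N$ differences of $(a_n)$ along a fixed residue class lie in $\calD_1$ is a correct, self-contained substitute for the external proposition the paper invokes for that fact.
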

\begin{proof}
	In the proof we denote by $(\delta_j)$ a generic sequence from $\calD_1$ tending to zero which may
	change from line to line.
	
	By a straightforward computation we obtain
	\begin{align*}
		Z_j^{-1} Z_{j+1} 
		&=
		\frac{1}{\det Z_j} 
		\begin{pmatrix}
			\ue^{-\vartheta_j} & -1\\
			-\ue^{\vartheta_j} & 1
		\end{pmatrix}
		\begin{pmatrix}
			1 & 1\\
			\ue^{\vartheta_{j+1}} & \ue^{-\vartheta_{j+1}}
		\end{pmatrix} \\
		&=
		\frac{1}{\ue^{-\vartheta_{j}} - \ue^{\vartheta_{j}}}
		\begin{pmatrix}
			f_j & g_j \\
			\tilde{g}_j & \tilde{f}_j
		\end{pmatrix}
	\end{align*}
	where
	\begin{alignat*}{3}
		&f_j = \ue^{-\vartheta_j} - \ue^{\vartheta_{j+1}}, &\qquad
		&g_j = \ue^{-\vartheta_j} - \ue^{-\vartheta_{j+1}} \\
		&\tilde{g}_j = - \ue^{\vartheta_j} +  \ue^{\vartheta_{j+1}} &
		&\tilde{f}_j = - \ue^{\vartheta_j} +  \ue^{-\vartheta_{j+1}}.
	\end{alignat*}
	Since
	\[
		(a_{(j+1)N+i} - a_{jN+i} : j \in \NN_0), \bigg(\frac{1}{\sqrt{a_{jN+i}}} : j \in \NN_0\bigg)
		\in \calD_1, 
	\]
	by Lemma \ref{lem:4},
	\[
		\bigg(\frac{a_{jN}}{a_{jN+i}} : j \in \NN_0 \bigg) \in \calD_1,
	\]
	and
	\[
		\vartheta_{j+1} = \vartheta_j + \frac{1}{a_{jN}} \delta_j.
	\]
	Moreover,
	\[
		\ue^{\vartheta_{j+1}} = 1 + \vartheta_{j+1} + \frac{1}{2} \vartheta_{j+1}^2 + \frac{1}{a_{jN}} \delta_j,
	\]
	and
	\[
		\ue^{-\vartheta_j} = 1 - \vartheta_j + \frac{1}{2} \vartheta_j^2 + \frac{1}{a_{jN}} \delta_j,
	\]
	Hence,
	\begin{align*}
		f_j 
		&=1 - \vartheta_j + \frac{1}{2} \vartheta_j^2 
		-\bigg(1 + \vartheta_{j+1} + \frac{1}{2} \vartheta_{j+1}^2\bigg)
		+ \frac{1}{a_{jN}} \delta_j\\
		&=
		-2 \vartheta_j + \frac{1}{a_{jN}} \delta_j.
	\end{align*}
	Since $\frac{x}{\sinh (x)}$ is an even $\calC^2(\RR)$ function, we have
	\[
		\frac{\vartheta_{j}}{\sinh( \vartheta_{j} )} = 1 + \frac{1}{\sqrt{a_{jN}}} \delta_j.
	\]
	Therefore,
	\begin{align*}
		\frac{1}{\ue^{-\vartheta_{j}} - \ue^{\vartheta_{j}}} f_j
		&=
		\frac{f_j}{-2\vartheta_{j}} \frac{\vartheta_j}{\sinh(\vartheta_{j})} \\
		&=
		\bigg(1 + \frac{1}{\sqrt{a_{jN}}} \delta_j \bigg)
		\bigg(1 + \frac{1}{\sqrt{a_{jN}}}\delta_j \bigg) \\
		&=
		1 + \frac{1}{\sqrt{a_{jN}}} \delta_j.
	\end{align*}
	Analogously, we treat $g_j$. Namely, we write
	\begin{align*}
		g_j &= 
		1 - \vartheta_j + \frac{1}{2} \vartheta_j^2 
		-
		\bigg(1 - \vartheta_{j+1} + \frac{1}{2}\vartheta_{j+1}^2\bigg)
		+
		\frac{1}{a_{jN}} \delta_j \\
		&=
		\frac{1}{a_{jN}} \delta_j.
	\end{align*}
	Hence,
	\begin{align*}
		\frac{1}{\ue^{-\vartheta_{j}} - \ue^{\vartheta_{j}}} g_j
		&=
		\frac{1}{\sqrt{a_{jN}}} \delta_j. 
	\end{align*}
	Similarly, we can find that
	\begin{alignat*}{2}
		\frac{1}{\ue^{-\vartheta_{j}} - \ue^{\vartheta_{j}}} \tilde{f}_j
		&=
		1 + \frac{1}{\sqrt{a_{jN}}} \delta_j,\\
		\frac{1}{\ue^{-\vartheta_{j}} - \ue^{\vartheta_{j}}} \tilde{g}_j
		&=
		\frac{1}{\sqrt{a_{jN}}} \delta_j.
	\end{alignat*}
	Hence,
	\[
		Z_{j}^{-1} Z_{j+1} = \Id + \vartheta_j Q_j
	\]
	where $(Q_j)$ is a sequence from $\calD_1\big(K, \Mat(2, \RR)\big)$ for any compact interval
	$K \subset \RR \setminus \{ x_0 \}$ convergent to the zero matrix proving the formula \eqref{eq:15}. 
\end{proof}

\begin{theorem} 
	\label{thm:1}
	Let $N$ be a positive integer and $i \in \{0, 1, \ldots N-1\}$. Suppose that $(a_n : n \in \NN_0)$ and
	$(b_n : n \in \NN_0)$ are $N$-periodically modulated Jacobi parameters such that $\frakX_0(0)$ is 
	a non-trivial parabolic element. If
	\[
		\bigg(\frac{\alpha_{n-1}}{\alpha_n} a_n - a_{n-1} : n \in \NN \bigg),
		\bigg(\frac{\beta_n}{\alpha_n} a_n - b_n : n \in \NN\bigg),
		\bigg(\frac{1}{\sqrt{a_n}} : n \in \NN\bigg) \in \calD_1^N,
	\]
	then for any compact interval $K \subset \RR \setminus \{ x_0 \}$,
	\[
		Z_{j+1}^{-1} X_{jN+i} Z_{j} = \varepsilon \big( \Id + \vartheta_j R_j \big)
	\]
	where $\varepsilon$ and $x_0$ are defined in \eqref{eq:61a} and \eqref{eq:61d}, respectively and 
	$(R_j)$ is a sequence from $\calD_1\big(K, \Mat(2, \RR)\big)$ convergent uniformly on $K$ to
	\[
		\calR_i = 
		\frac{1}{2}
		\begin{pmatrix}
			1 + \sigma & -1 + \sigma  \\
			1 - \sigma & -1 - \sigma 
		\end{pmatrix}
	\]
	where $\sigma$ is defined in \eqref{eq:61c}. In particular, $\discr \calR_i = 4 \sigma$.
\end{theorem}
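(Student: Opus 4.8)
The goal is to conjugate the $N$-step transfer matrix $X_{jN+i}$ by the matrices $Z_j$ defined in \eqref{eq:46}, and to show that the result is $\varepsilon(\Id + \vartheta_j R_j)$ with $R_j$ converging to $\calR_i$. The plan is to combine two ingredients: the known limit $\lim_{j\to\infty} X_{jN+i}(x) = \frakX_i(0)$ (which holds locally uniformly and is recalled in Section~\ref{sec:2c}), and the explicit parabolic structure of $\frakX_i(0) = \varepsilon T_i \left(\begin{smallmatrix} 0 & 1 \\ -1 & 2\end{smallmatrix}\right) T_i^{-1}$. Writing $X_{jN+i}$ as $\frakX_i(0)$ plus a perturbation, the conjugation by $Z_j$ naturally splits into a \emph{leading} term coming from $\frakX_i(0)$ itself and an \emph{error} term from the deviation $X_{jN+i} - \frakX_i(0)$.

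First I would analyze the leading term. Since $Z_j = T_i \left(\begin{smallmatrix} 1 & 1 \\ \ue^{\vartheta_j} & \ue^{-\vartheta_j}\end{smallmatrix}\right)$, conjugating the parabolic matrix $\frakX_i(0)$ amounts to conjugating $\varepsilon\left(\begin{smallmatrix} 0 & 1 \\ -1 & 2\end{smallmatrix}\right)$ by $\left(\begin{smallmatrix} 1 & 1 \\ \ue^{\vartheta_j} & \ue^{-\vartheta_j}\end{smallmatrix}\right)$. This is an explicit computation: expanding $\ue^{\pm\vartheta_j} = 1 \pm \vartheta_j + \tfrac12\vartheta_j^2 + \calO(\vartheta_j^3)$ and using $\det\left(\begin{smallmatrix} 1 & 1 \\ \ue^{\vartheta_j} & \ue^{-\vartheta_j}\end{smallmatrix}\right) = \ue^{-\vartheta_j} - \ue^{\vartheta_j} = -2\sinh(\vartheta_j)$, one finds that the conjugate equals $\varepsilon(\Id + \vartheta_j M + \calO(\vartheta_j^2))$ for an explicit constant matrix $M$; matching the first-order term should reproduce $\calR_i$ in the case $\sigma=+1$. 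The general $\sigma$ dependence enters through the perturbation, so the leading term alone yields the ``$\sigma=1$ part'' $\tfrac12\left(\begin{smallmatrix} 1 & 0 \\ 0 & -1\end{smallmatrix}\right)$ of $\calR_i$.

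The second and main step is to control the error term and extract its contribution to $\calR_i$. Here I would use Theorem~\ref{thm:2}, which already gives $Z_j^{-1}Z_{j+1} = \Id + \vartheta_j Q_j$ with $Q_j \to 0$ in $\calD_1(K,\Mat(2,\RR))$. The natural route is to factor $Z_{j+1}^{-1} X_{jN+i} Z_j = (Z_{j+1}^{-1} Z_j)(Z_j^{-1} X_{jN+i} Z_j)$ and handle the two factors separately, or alternatively to write $X_{jN+i}$ in the additive form $\frakX_i(0) + \tfrac{1}{a_{(j+1)N+i-1}}(\cdots)$, where the correction is governed by the identity \eqref{eq:16}. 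Indeed, \eqref{eq:16} is precisely designed so that the $\calD_1^N$-deviation of the Jacobi parameters from their periodic model, after conjugation by $Z_j$ and scaling, produces the matrix $\sigma\vartheta_j^2\left(\begin{smallmatrix} 1 & 1 \\ -1 & -1\end{smallmatrix}\right)$. Dividing by $\vartheta_j$ and recalling $\vartheta_j^2 = \alpha_{i-1}|\tau(x)|/a_{(j+1)N+i-1}$, this contributes the term $\tfrac{\sigma}{2}\left(\begin{smallmatrix} 1 & 1 \\ -1 & -1\end{smallmatrix}\right)$, which supplies exactly the $\sigma$-dependent part of $\calR_i$. Adding the two contributions gives $\calR_i = \tfrac12\left(\begin{smallmatrix} 1+\sigma & -1+\sigma \\ 1-\sigma & -1-\sigma\end{smallmatrix}\right)$, and a direct $2\times 2$ determinant computation yields $\discr \calR_i = (\tr \calR_i)^2 - 4\det\calR_i = 0 - 4(-\sigma) = 4\sigma$.

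The main obstacle, and the place requiring genuine care, is bookkeeping the different orders of smallness and verifying that all remainder sequences lie in $\calD_1(K,\Mat(2,\RR))$ rather than merely tending to zero. The quantity $\vartheta_j \asymp a_{(j+1)N+i-1}^{-1/2}$ is the ``large'' small parameter, while the parameter deviations $\tfrac{\alpha_{n-1}}{\alpha_n}a_n - a_{n-1} - s_n$ and $\tfrac{\beta_n}{\alpha_n}a_n - b_n - r_n$ are $o(1)$ with summable $N$-step differences; one must confirm that every product and Taylor remainder that appears is either $\vartheta_j$ times a $\calD_1$ null sequence or lower order, exactly as in the proof of Theorem~\ref{thm:2}. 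The delicate point is that the first-order-in-$\vartheta_j$ term must be isolated \emph{exactly}, separating the $\frac{1}{\sqrt{a_{jN}}}\delta_j$-type errors (already shown to be $\calD_1$ and null in Theorem~\ref{thm:2}) from the genuine $\calR_i$ contribution; Lemma~\ref{lem:4} provides the needed closure of $\calD_1$ under the relevant algebraic operations to guarantee that $R_j \in \calD_1(K,\Mat(2,\RR))$ with the stated limit.
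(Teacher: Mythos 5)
Your overall strategy is the same as the paper's: write $X_{jN+i}$ as $\frakX_i(0)$ times (or plus) a correction of size $a_{(j+1)N+i-1}^{-1}$, conjugate, read off the parabolic leading term and the perturbation term governed by \eqref{eq:16}, and do the $\calD_1$ bookkeeping as in Theorem~\ref{thm:2} and Lemma~\ref{lem:4}. The perturbation half of your computation is correct: \eqref{eq:16} does contribute $\tfrac{\sigma}{2}\vartheta_j\left(\begin{smallmatrix}1&1\\-1&-1\end{smallmatrix}\right)$ at first order.

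However, there is a concrete error in the leading term, and with your stated values the two pieces do not sum to $\calR_i$. You claim that $Z_{j+1}^{-1}\frakX_i(0)Z_j=\varepsilon\big(\Id+\vartheta_j M+\calO(\vartheta_j^2)\big)$ with $M=\tfrac12\left(\begin{smallmatrix}1&0\\0&-1\end{smallmatrix}\right)$. Carrying out the expansion you describe (conjugating $\varepsilon\left(\begin{smallmatrix}0&1\\-1&2\end{smallmatrix}\right)$ by $\left(\begin{smallmatrix}1&1\\ \ue^{\vartheta_j}&\ue^{-\vartheta_j}\end{smallmatrix}\right)$ and dividing by $\ue^{-\vartheta_j}-\ue^{\vartheta_j}=-2\sinh\vartheta_j$) gives $M=\tfrac12\left(\begin{smallmatrix}1&-1\\1&-1\end{smallmatrix}\right)$, which is nilpotent. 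A quick sanity check shows your candidate cannot be right: since $Z_j^{-1}Z_{j+1}=\Id+\vartheta_jQ_j$ with $Q_j\to0$, the matrix $Z_{j+1}^{-1}\frakX_i(0)Z_j$ has, to leading order, the same first-order coefficient as the genuine conjugate $Z_j^{-1}\frakX_i(0)Z_j$, whose discriminant equals $\discr\frakX_i(0)=0$; expanding $\discr\big(\Id+\vartheta_jM+\cdots\big)=\vartheta_j^2\discr M+\cdots$ forces $\discr M=0$, which holds for $\tfrac12\left(\begin{smallmatrix}1&-1\\1&-1\end{smallmatrix}\right)$ but fails for $\tfrac12\left(\begin{smallmatrix}1&0\\0&-1\end{smallmatrix}\right)$ (discriminant $1$). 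Moreover, with your $M$ the sum $M+\tfrac{\sigma}{2}\left(\begin{smallmatrix}1&1\\-1&-1\end{smallmatrix}\right)$ equals $\tfrac12\left(\begin{smallmatrix}1+\sigma&\sigma\\-\sigma&-1-\sigma\end{smallmatrix}\right)$, not the asserted $\calR_i$; with the correct $M=\tfrac12\left(\begin{smallmatrix}1&-1\\1&-1\end{smallmatrix}\right)$ one gets exactly $\tfrac12\left(\begin{smallmatrix}1+\sigma&-1+\sigma\\1-\sigma&-1-\sigma\end{smallmatrix}\right)$. The error is localized and the method is sound, but as written the decomposition does not produce the stated limit, so the central computation needs to be redone.
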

\begin{proof}
	In the following argument, we denote by $(\delta_j)$ and $(\calE_j)$ generic sequences tending to zero
	from $\calD_1$ and $\calD_1\big(K, \Mat(2, \RR)\big)$, respectively, which may change from line to line.

	Since
	\begin{align*}
		\frac{a_{n-1}}{a_n} &= \frac{\alpha_{n-1}}{\alpha_n} -
		\frac{1}{a_n} \bigg( \frac{\alpha_{n-1}}{\alpha_n} a_n - a_{n-1}\bigg), \\
	\intertext{and}
		\frac{b_n}{a_n} &= 
		\frac{\beta_n}{\alpha_n} - \frac{1}{a_n} \bigg( \frac{\beta_n}{\alpha_n} a_n - b_n\bigg),
	\end{align*}
	by \eqref{eq:40b} and \eqref{eq:40c}, for each $i' \in \{0, 1, \ldots, N-1\}$, we obtain
	\begin{align*}
		\frac{a_{jN+i'-1}}{a_{jN+i'}} &= \frac{\alpha_{i'-1}}{\alpha_{i'}} 
		- \frac{s_{i'}}{a_{jN+i'}} + \frac{1}{a_{jN+i'}} \delta_j,\\
	\intertext{and}
		\frac{b_{jN+i'}}{a_{jN+i'}} &= \frac{\beta_{i'}}{\alpha_{i'}} - \frac{r_{i'}}{a_{jN+i'}} 
		+ \frac{1}{a_{jN+i'}} \delta_j.
	\end{align*}
	We also have
	\begin{align*}
		\frac{1}{a_{jN+i'}}
		&= \frac{1}{a_{jN+i'-1}} \frac{a_{jN+i'-1}}{a_{jN+i'}}  \\
		&= \frac{1}{a_{jN+i'-1}} \bigg(\frac{\alpha_{i'-1}}{\alpha_{i'}} - \frac{s_{i'}}{a_{jN+i'}} 
		+ \frac{1}{a_{jN+i'}} \delta_j \bigg)\\
		&= \frac{1}{a_{jN+i'-1}} \frac{\alpha_{i'-1}}{\alpha_{i'}} + \frac{1}{a_{jN+i'-1}} \delta_j,
	\end{align*}
	thus
	\[
		\frac{1}{a_{jN+i'}} = \frac{1}{a_{jN}} \frac{\alpha_0}{\alpha_{i'}} + \frac{1}{a_{jN}} \delta_j.
	\]
	Therefore,
	\begin{align*}
		\sum_{i' = 0}^{N-1} \frac{1}{a_{jN+i'}} \frac{\alpha_{i'}}{\alpha_{i'-1}} s_{i'}
		&=
		\sum_{i' = 0}^{N-1} \frac{1}{a_{jN}} \frac{\alpha_0}{\alpha_{i'-1}} s_{i'} + \frac{1}{a_{jN}} \delta_j \\
		&=
		\alpha_0 \sum_{i' = 0}^{N-1} \frac{s_{i'}}{\alpha_{i'-1}} +
		\frac{1}{a_{jN}} \delta_j.
	\end{align*}
	Next, we write
	\begin{align*}
		B_{jN+i'}
		&=
		\begin{pmatrix}
			0 & 1 \\
			-\frac{\alpha_{i'-1}}{\alpha_{i'}} + \frac{s_{i'}}{a_{jN+i'}} + \frac{1}{a_{jN+i'}} \delta_j
			& \frac{x+r_{i'}}{a_{jN+i'}} - \frac{\beta_{i'}}{\alpha_{i'}} + \frac{1}{a_{jN+i'}} \delta_j
		\end{pmatrix} \\
		&=
		\begin{pmatrix}
			0 & 1 \\
			-\frac{\alpha_{i'-1}}{\alpha_{i'}} & - \frac{\beta_{i'}}{\alpha_{i'}}
		\end{pmatrix}
		+
		\frac{1}{a_{jN+i'}}
		\begin{pmatrix}
			0 & 0 \\
			s_{i'} & x+r_{i'}
		\end{pmatrix}
		+
		\frac{1}{a_{jN+i'}} \calE_j \\
		&=
		\begin{pmatrix}
			0 & 1 \\
			-\frac{\alpha_{i'-1}}{\alpha_{i'}} & - \frac{\beta_{i'}}{\alpha_{i'}}
		\end{pmatrix}
		\left\{
		\Id
		+
		\frac{1}{a_{jN+i'}}
		\frac{\alpha_j}{\alpha_{i'-1}}
		\begin{pmatrix}
			-\frac{\beta_{i'}}{\alpha_{i'}} & -1 \\
			\frac{\alpha_{i'-1}}{\alpha_{i'}} & 0
		\end{pmatrix}
		\begin{pmatrix}
			0 & 0 \\
			s_{i'} & x + r_{i'}
		\end{pmatrix}
		+
		\frac{1}{a_{jN+i'}} \calE_j \right\} \\
		&=
		\begin{pmatrix}
			0 & 1 \\
			-\frac{\alpha_{i'-1}}{\alpha_{i'}} & - \frac{\beta_{i'}}{\alpha_{i'}}
		\end{pmatrix}
		\left\{
		\Id - 
		\frac{1}{a_{jN+i'}} \frac{\alpha_{i'}}{\alpha_{i'-1}}
		\begin{pmatrix}
			s_{i'} & x+r_{i'} \\
			0 & 0
		\end{pmatrix}
		+
		\frac{1}{a_{jN}} \calE_j
		\right\} \\
		&=
		\begin{pmatrix}
			0 & 1 \\
			-\frac{\alpha_{i'-1}}{\alpha_{i'}} & - \frac{\beta_{i'}}{\alpha_{i'}}
		\end{pmatrix}
		\left\{
		\Id - 
		\frac{1}{a_{(j+1)N+i-1}} \frac{\alpha_{i-1}}{\alpha_{i'-1}}
		\begin{pmatrix}
			s_{i'} & x+r_{i'} \\
			0 & 0
		\end{pmatrix}
		+
		\frac{1}{a_{jN}} \calE_j
		\right\}
	\end{align*}
	where we have used that
	\[
		\frac{\alpha_{i'}}{a_{jN+{i'}}} = \frac{\alpha_{i-1}}{a_{(j+1)N+i-1}} + \frac{1}{a_{jN}} \delta_j.
	\]
	Next, we compute
	\begin{align*}
		X_{jN+i}
		&=
		B_{jN+i+N-1} \cdots B_{jN+i+1} B_{jN+i} \\
		&=
		\frakX_{i}(0)
		\Bigg\{
		\Id
		-
		\frac{\alpha_{i-1}}{a_{(j+1)N+i-1}}
		\sum_{i' = i}^{N+i-1} 
		\frac{1}{\alpha_{i'-1}}
		\Big(
		\frakB_{i'-1}(0) \cdots  \frakB_i(0)\Big)^{-1}
		\begin{pmatrix}
			s_{i'} & x + r_{i'} \\
			0 & 0
		\end{pmatrix}
		\Big(\frakB_{i'-1}(0) \cdots \frakB_i(0)\Big) \\
		&\phantom{\frakX_i(0)\Bigg\{\Id}+
		\frac{1}{a_{jN}} \calE_j
		\Bigg\}.
	\end{align*}
	Thus,
	\begin{align*}
		&Z_{j+1}^{-1} X_{jN+i} Z_j \\
		&\qquad=
		Z_{j+1}^{-1} \mathfrak{X}_{i}(0) Z_j
		\Bigg\{
		\Id
		-
		\frac{\alpha_{i-1}}{a_{(j+1)N+i-1}}
		\sum_{i' = i}^{N+i-1} \frac{1}{\alpha_{i'-1}}
		\begin{pmatrix}
			1 & 1 \\
			\ue^{\vartheta_j} & \ue^{-\vartheta_j}
		\end{pmatrix}^{-1}
		T_{i'}^{-1}
		\begin{pmatrix}
			s_{i'}& x + r_{i'} \\
			0 & 0
		\end{pmatrix}
		T_{i'}
		\begin{pmatrix}
			1 & 1 \\
			\ue^{\vartheta_j} & \ue^{-\vartheta_j}
		\end{pmatrix} \\
		&\qquad\phantom{=Z_{j+1}^{-1} \mathfrak{X}_{i}(0) Z_{j} \Bigg\{}+
		\frac{1}{\sqrt{a_{jN}}} \calE_j
		\Bigg\}.
	\end{align*}
	To find the asymptotics of the first factor, we write
	\[
		Z_{j+1}^{-1} \mathfrak{X}_{i}(0) Z_j
		=
		\frac{\varepsilon}{\ue^{-\vartheta_{j+1}} - \ue^{\vartheta_{j+1}}}
		\begin{pmatrix}
			f_j & g_j \\
			\tilde{g}_j & \tilde{f}_j
		\end{pmatrix}
	\]
	where
	\begin{alignat*}{3}
		f_j &= \ue^{{\vartheta_{j}} - \vartheta_{j+1}} + 1 - 2\ue^{\vartheta_{j}}, &\qquad
		g_j &= \ue^{{-\vartheta_{j}} - \vartheta_{j+1}} + 1 - 2\ue^{-\vartheta_{j}} \\
		\tilde{g}_j &= -\ue^{\vartheta_{j}+\vartheta_{j+1}} -1 +2\ue^{\vartheta_{j}}, &\qquad
		\tilde{f}_j &= -\ue^{-\vartheta_{j}+\vartheta_{j+1}} -1 + 2 \ue^{-\vartheta_{j}}.
	\end{alignat*}
	Since
	\[
		\ue^{{\vartheta_{j}} - \vartheta_{j+1}} = 1 + \frac{1}{a_{jN}} \delta_j,
		\qquad\text{and}\qquad
		\ue^{\vartheta_{j}} = 1 + \vartheta_j + \frac{1}{2} \vartheta_j^2 + \frac{1}{a_{jN}} \delta_j,
	\]
	we get
	\begin{align*}
		f_n 
		&= 1 + 1 - 2 \bigg(1 + \vartheta_{j} + \frac{1}{2}\vartheta_{j}^2 \bigg)+ \frac{1}{a_{jN}} \delta_j \\
		&= -2 \vartheta_{j} - \vartheta_{j}^2 + \frac{1}{a_{jN}} \delta_j.
	\end{align*}
	Thus
	\begin{align}
		\nonumber
		\frac{1}{\ue^{-\vartheta_{j+1}} - \ue^{\vartheta_{j+1}}} f_j
		&=
		\frac{f_j}{-2\vartheta_{j}} \frac{\vartheta_{j}}{\sinh \vartheta_{j}} \\
		\label{eq:14}
		&=
		1 + \frac{1}{2} \vartheta_{j} + \frac{1}{\sqrt{a_{jN}}} \delta_j.
	\end{align}
	Analogously, we can find that
	\[
		\tilde{f}_j = -2 \vartheta_{j} + \vartheta_{j}^2 + \frac{1}{a_{jN}} \delta_j,
	\]
	and
	\begin{equation}
		\label{eq:18}
		\frac{1}{\ue^{-\vartheta_{j+1}} - \ue^{\vartheta_{j+1}}} \tilde{f}_{j} 
		= 1 -\frac{1}{2} \vartheta_{j}  + \frac{1}{\sqrt{a_{jN}}} \delta_j.
	\end{equation}
	Next, we write
	\begin{align*}
		g_j &= 1 - 2\vartheta_{j} + 2 \vartheta_{j}^2 + 1 - 2 \bigg(1 -\vartheta_{j} + \frac{1}{2} \vartheta_{j}^2\bigg) 
		+ \frac{1}{a_{jN}} \delta_j \\
		&= \vartheta_{j}^2 + \frac{1}{a_{jN}} \delta_j,
	\end{align*}
	thus
	\begin{equation}
		\label{eq:19}
		\frac{1}{\ue^{-\vartheta_{j+1}} - \ue^{\vartheta_{j+1}}} g_j 
		= -\frac{1}{2} \vartheta_{j} + \frac{1}{\sqrt{a_{jN}}} \delta_j.
	\end{equation}
	Similarly, we get
	\[
		\tilde{g}_j = -\vartheta_{j}^2 + \frac{1}{a_{jN}} \delta_j,
	\]
	and so
	\begin{equation}
		\label{eq:27}
		\frac{1}{\ue^{-\vartheta_{j+1}} - \ue^{\vartheta_{j+1}}} \tilde{g}_j = 
		 \frac{1}{2} \vartheta_{j} + \frac{1}{\sqrt{a_{jN}}} \delta_j.
	\end{equation}
	Consequently, by \eqref{eq:14}--\eqref{eq:27} we obtain
	\[
		Z_{j+1}^{-1} \mathfrak{X}_{i}(0) Z_{j}
		=
		\varepsilon 
		\left\{
		\Id + \frac{1}{2} \vartheta_{j} 
		\begin{pmatrix}
			1 & -1 \\
			1 & -1
		\end{pmatrix} + \frac{1}{\sqrt{a_{jN}}} \calE_j
		\right\}.
	\]
	Since
	\[
		\ue^{\vartheta_{j}} = 1 + \delta_j, \qquad
		\frac{\vartheta_{j}}{\sinh \vartheta_{j}} = 1 + \delta_j,
	\]
	for each $i' \in \{0, 1, \ldots, N-1\}$, we have
	\begin{align*}
		&
		\begin{pmatrix}
			1 & 1 \\
			\ue^{\vartheta_j} & \ue^{-\vartheta_j}
		\end{pmatrix}^{-1}
		T_{i'}^{-1}
		\begin{pmatrix}
			s_{i'} & x + r_{i'} \\
			0 & 0
		\end{pmatrix}
		T_{i'}
		\begin{pmatrix}
			1 & 1 \\
			\ue^{\vartheta_j} & \ue^{-\vartheta_j}
		\end{pmatrix}  \\
		&\qquad=
		-
		\frac{1}{2\vartheta_j}
		\begin{pmatrix}
			1 & -1\\
			-1 & 1
		\end{pmatrix}
		T_{i'}^{-1}
		\begin{pmatrix}
			s_{i'} & x + r_{i'} \\
			0 & 0
		\end{pmatrix}
		T_{i'}
		\begin{pmatrix}
			1 & 1 \\
			1 & 1
		\end{pmatrix} 
		+
		\sqrt{a_{jN}} \calE_j.
	\end{align*}
	Hence, by \eqref{eq:16}
	\begin{align*}
		&
		\frac{\alpha_{i-1}}{a_{(j+1)N+i-1}}
		\sum_{i' = i}^{N+i-1}
		\frac{1}{\alpha_{i'-1}}
		\begin{pmatrix}
			1 & 1 \\
			\ue^{\vartheta_j} & \ue^{-\vartheta_j}
		\end{pmatrix}^{-1}
		T_{i'}^{-1}
		\begin{pmatrix}
			s_{i'} & x + r_{i'} \\
			0 & 0
		\end{pmatrix}
		T_{i'}
		\begin{pmatrix}
			1 & 1 \\
			\ue^{\vartheta_j} & \ue^{-\vartheta_j}
		\end{pmatrix} \\
		&\qquad\qquad=
		\frac{\sigma}{2} \vartheta_j 
		\begin{pmatrix}
			1 & 1 \\
			-1 & -1
		\end{pmatrix}
		+
		\frac{1}{\sqrt{a_{jN}}} \calE_j.
	\end{align*}
	Finally, we get
	\begin{align*}
		Z_{j+1}^{-1} X_{jN+i} Z_j
		&=
		\varepsilon
		\left\{
		\Id
		+
		\frac{1}{2}
		\vartheta_{j}
		\begin{pmatrix}
			1 + \sigma  & -1 + \sigma \\
			1 - \sigma  & -1 - \sigma 
		\end{pmatrix}
		+
		\frac{1}{\sqrt{a_{jN}}} \calE_j
		\right\}
	\end{align*}
	which finishes the proof.
\end{proof}

\begin{corollary}
	\label{cor:2}
	Let the hypotheses of Theorem \ref{thm:1} be satisfied. Then
	\begin{align*} 
		\lim_{j \to \infty} a_{(j+1)N+i-1} \discr \big( X_{jN+i}\big) &= 
		\alpha_{i-1} |\tau| \discr(\calR_i) \\
		&=
		4 \alpha_{i-1} \tau
	\end{align*}
	locally uniformly on $\RR \setminus \{ x_0 \}$, where $\tau$ and $x_0$ are defined 
	in \eqref{eq:61b} and \eqref{eq:61d}, respectively.
\end{corollary}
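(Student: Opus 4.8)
The plan is to exploit two elementary invariance properties of the discriminant: it is unchanged under similarity (since it depends only on the trace and the determinant, both of which are conjugation invariants), and it scales quadratically, $\discr(cA) = c^2 \discr(A)$ for a scalar $c$. The obstacle is that Theorem~\ref{thm:1} does \emph{not} exhibit $X_{jN+i}$ as a conjugate of a convenient matrix: the product $Z_{j+1}^{-1} X_{jN+i} Z_j$ carries $Z_{j+1}$ on the left and $Z_j$ on the right, so its trace and determinant are not directly comparable with those of $X_{jN+i}$. The first step is therefore to convert this one-sided product into a genuine conjugation.

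To this end I would invoke Theorem~\ref{thm:2}, which gives $Z_j^{-1} Z_{j+1} = \Id + \vartheta_j Q_j$, i.e.\ $Z_{j+1} = Z_j(\Id + \vartheta_j Q_j)$. Inserting this into the identity of Theorem~\ref{thm:1} produces a true conjugate of $X_{jN+i}$,
\begin{align*}
	Z_j^{-1} X_{jN+i} Z_j
	&= (\Id + \vartheta_j Q_j)\, Z_{j+1}^{-1} X_{jN+i} Z_j \\
	&= \varepsilon (\Id + \vartheta_j Q_j)(\Id + \vartheta_j R_j)
	= \varepsilon(\Id + \vartheta_j S_j),
\end{align*}
where $S_j = Q_j + R_j + \vartheta_j Q_j R_j$. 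On any compact $K \subset \RR \setminus \{x_0\}$ the matrix $Z_j$ is invertible, since $\det Z_j = -2\sinh(\vartheta_j)\,\det T_i \neq 0$ there; hence $\discr(X_{jN+i}) = \discr\big(\varepsilon(\Id + \vartheta_j S_j)\big)$. Moreover $\vartheta_j \to 0$, $Q_j \to 0$ and $R_j \to \calR_i$ uniformly on $K$, so $S_j \to \calR_i$ uniformly on $K$.

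The remaining step is a direct computation using $\varepsilon^2 = 1$, the quadratic scaling, and the $2\times 2$ identity $\det(\Id + tA) = 1 + t\tr A + t^2 \det A$:
\begin{align*}
	\discr(X_{jN+i})
	&= \discr(\Id + \vartheta_j S_j) \\
	&= (2 + \vartheta_j \tr S_j)^2 - 4\big(1 + \vartheta_j \tr S_j + \vartheta_j^2 \det S_j\big)
	= \vartheta_j^2 \discr(S_j).
\end{align*}
Multiplying by $a_{(j+1)N+i-1}$ and using that $a_{(j+1)N+i-1}\,\vartheta_j^2 = \alpha_{i-1}|\tau|$ by \eqref{eq:68} gives
\[
	a_{(j+1)N+i-1}\discr(X_{jN+i}) = \alpha_{i-1}|\tau|\,\discr(S_j).
\]

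Finally, letting $j \to \infty$ and using the continuity of $\discr$ together with $S_j \to \calR_i$ yields the first claimed limit $\alpha_{i-1}|\tau|\,\discr(\calR_i)$; the second equality follows from $\discr\calR_i = 4\sigma$ (Theorem~\ref{thm:1}) and $|\tau|\sigma = |\tau|\sign{\tau} = \tau$ on $K$. Local uniformity is inherited from the uniform convergence of $Q_j$ and $R_j$, the boundedness and continuity of $\tau$, and the continuity of the discriminant, since the right-hand side $\alpha_{i-1}|\tau|\,\discr(S_j)$ is then a uniformly convergent product on $K$. I expect no genuine obstacle beyond the initial manoeuvre of invoking Theorem~\ref{thm:2} to replace the one-sided product by a conjugation; everything afterward is routine bookkeeping.
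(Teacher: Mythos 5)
Your proposal is correct and follows essentially the same route as the paper: both convert the one-sided product into the genuine conjugation $Z_j^{-1}X_{jN+i}Z_j=(\Id+\vartheta_jQ_j)(\Id+\vartheta_jR_j)$ via Theorem~\ref{thm:2}, use invariance of the discriminant under conjugation together with its quadratic scaling and $a_{(j+1)N+i-1}\vartheta_j^2=\alpha_{i-1}|\tau|$, and pass to the limit by continuity of $\discr$ with $Q_j\to 0$, $R_j\to\calR_i$.
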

\begin{proof}
	Since 
	\[
		Z_j^{-1} X_{jN+i} Z_j = \big( Z_j^{-1} Z_{j+1} \big) \big( Z_{j+1}^{-1} X_{jN+i} Z_j \big),
	\]
	by Theorems \ref{thm:2} and \ref{thm:1}, we obtain
	\[
		\varepsilon Z_j^{-1} X_{jN+i} Z_j 
		= 
		\big( \Id + \vartheta_j Q_j \big) \big(\Id + \vartheta_j R_j \big) 
		= 
		\Id + \vartheta_j R_j + \vartheta_j Q_j + \vartheta_j^2 Q_j R_j.
	\]
	Thus
	\[
		\discr \big( \vartheta_j^{-1} X_{jN+i} \big) = \discr \big( R_j + Q_j + \vartheta_j Q_j R_j \big),
	\]
	and consequently,
	\[
		\lim_{j \to \infty} \discr ( \vartheta_j^{-1} X_{jN+i}) 
		= \discr (\calR_i) 
		= 4 \sign{\tau}.
	\]
	Since
	\begin{align*}
		\discr \big( \sqrt{a_{(j+1)N+i-1}} X_{jN+i} \big) 
		&=
		\discr \big( \sqrt{\alpha_{i-1} |\tau|} \vartheta_j^{-1} X_{jN+i} \big) \\
		&=
		\alpha_{i-1} |\tau| \discr \big( \vartheta_j^{-1} X_{jN+i} \big)
	\end{align*}
	the conclusion follows.
\end{proof}

\section{Essential spectrum}
\label{sec:4}
In this section we start the analysis of the measure $\mu$. To do so, we shall use the Jacobi 
matrix associated to the sequence $(a_n : n \in \NN_0)$ and $(b_n : n \in \NN_0)$, see Section~\ref{sec:2c} for details. 
From \eqref{eq:25} we can easily deduce that the Carleman's condition \eqref{eq:37} is satisfied and consequently 
the operator $A$ is self-adjoint. Moreover, the measure $\mu$ is the spectral measure of $A$. We set
\begin{equation}
	\label{eq:56}
	\Lambda_- = \tau^{-1} \big( (-\infty, 0)  \big), \quad\text{and}\quad
	\Lambda_+ = \tau^{-1} \big( (0, \infty) \big)
\end{equation}
where $\tau$ is given by \eqref{eq:61b}. In Theorem \ref{thm:8} we prove that $\sigmaEss(A)$ is contained in
$\RR \setminus \Lambda_+$ which implies that the measure $\mu$ restricted to $\Lambda_+$ is purely atomic and all accumulation
points of its support are on the boundary of $\Lambda_+$.

\begin{theorem}
	\label{thm:8}
	Let $N$ be a positive integer. Let $A$ be a Jacobi matrix with $N$-periodically
	modulated entries so that $\frakX_0(0)$ is a non-trivial parabolic element. If
	\[
		\bigg(\frac{\alpha_{n-1}}{\alpha_n} a_n - a_{n-1} : n \in \NN \bigg),
		\bigg(\frac{\beta_n}{\alpha_n} a_n - b_n : n \in \NN\bigg),
		\bigg(\frac{1}{\sqrt{a_n}} : n \in \NN\bigg) \in \calD_1^N,
	\]
	then
	\[
		\sigmaEss(A) \cap \Lambda_+ = \emptyset.
	\]
\end{theorem}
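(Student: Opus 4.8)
The goal is to show that the essential spectrum of $A$ avoids $\Lambda_+$, where $\Lambda_+ = \tau^{-1}((0,\infty))$. On $\Lambda_+$ we have $\sigma(x) = \sign{\tau(x)} = +1$, so by Theorem~\ref{thm:1} the limiting matrix $\calR_i$ has $\discr \calR_i = 4\sigma = 4 > 0$, meaning the shifted-conjugated transfer matrices have \emph{real distinct} eigenvalues. This is precisely the situation in which one expects exponential dichotomy and hence a discrete spectrum.

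The plan is to work with the conjugated $N$-step transfer matrices. Fix a compact interval $K \subset \Lambda_+$ (staying away from $x_0$, which lies on the boundary $\tau^{-1}(0)$). By Theorem~\ref{thm:1}, for each residue $i \in \{0,1,\dots,N-1\}$ we have
\[
	Z_{j+1}^{-1} X_{jN+i} Z_j = \varepsilon\big(\Id + \vartheta_j R_j\big),
\]
with $R_j \to \calR_i$ uniformly on $K$ and $(R_j) \in \calD_1(K,\Mat(2,\RR))$. First I would diagonalize $\calR_i$: since $\sigma = 1$ on $K$, one has $\calR_i = \begin{pmatrix} 1 & 0 \\ 0 & -1 \end{pmatrix}$, whose eigenvalues $\pm 1$ are real and distinct. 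The factor $\Id + \vartheta_j R_j$ therefore has eigenvalues $1 \pm \vartheta_j(1+o(1))$, one exceeding $1$ and one below $1$, so the products $\prod_k(\Id + \vartheta_k R_k)$ exhibit a genuine splitting into exponentially growing and exponentially decaying directions. The key quantitative input is that $\sum_j \vartheta_j = \infty$ (since $\vartheta_j \asymp a_{jN}^{-1/2}$ and the Carleman-type divergence $\sum a_n^{-1/2} = \infty$ holds, which follows from the growth hypotheses via \eqref{eq:25} and Lemma~\ref{lem:4}), so the dichotomy is effective: the decaying solution is genuinely $\ell^2$.

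The natural tool is the discrete Levinson-type theorem from \cite{Discrete}, which the authors' introduction explicitly flags for this step. Applying it to the conjugated system $\vec{u}_{j+1} = (Z_{j+1}^{-1} X_{jN+i} Z_j)\vec{u}_j$ on $K$, I would produce, for each $x \in K$, a basis of generalized eigensolutions whose asymptotics are governed by $\prod_k(1 \pm \vartheta_k)$; in particular there is a subdominant solution $(u_n(x))$ satisfying
\[
	\sum_{n=0}^\infty \sup_{x \in K} |u_n(x)|^2 < \infty,
\]
the $\ell^2$-summability being exactly the summability stated in the introduction. The $\calD_1$ regularity of $(R_j)$, together with the convergence $\sum \vartheta_j^2 < \infty$ (from $\vartheta_j^2 \asymp a_{jN}^{-1}$ and \eqref{eq:26}), is what lets the Levinson theorem apply: the perturbation is of bounded variation relative to the leading diagonal dynamics, so no resonance spoils the dichotomy. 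Transferring back through the uniformly bounded conjugating matrices $Z_j$ (bounded on $K$ since $\vartheta_j \to 0$ keeps $Z_j$ near the fixed matrix $T_i(\begin{smallmatrix} 1 & 1 \\ 1 & 1\end{smallmatrix})$, though one must check $Z_j^{-1}$ stays controlled away from $x_0$) yields an $\ell^2$ generalized eigenvector for the original transfer matrices $B_n$.

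Once every $x \in K$ admits a square-summable generalized eigenvector, the standard subordinacy/spectral argument (as in \cite{Silva2007}, again flagged in the introduction) shows that no point of $K$ can belong to $\sigmaEss(A)$: an $\ell^2$ solution at $x$ forces either $x \notin \sigma(A)$ or $x$ to be an isolated eigenvalue of finite multiplicity. Covering $\Lambda_+$ by countably many such compact sets $K$ (each avoiding $x_0$) then gives $\sigmaEss(A) \cap \Lambda_+ = \emptyset$, which is the assertion; the remaining statement that $\mu$ is purely atomic on $\Lambda_+$ with accumulation only on $\partial\Lambda_+$ follows since a spectrum with empty essential part consists of isolated finite-multiplicity eigenvalues. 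I expect the main obstacle to be the careful verification that the hypotheses of the Levinson theorem of \cite{Discrete} are met \emph{uniformly} on $K$ — in particular packaging the $\calD_1^N$-regularity of the Jacobi data into $\calD_1$-control of $(R_j)$ after the shifted conjugation, and confirming the divergence $\sum_j \vartheta_j = \infty$ that upgrades the formal dichotomy into genuine $\ell^2$-decay; the uniform control degenerates as $K$ approaches $x_0$, which is why $K$ must be kept compactly inside $\Lambda_+$.
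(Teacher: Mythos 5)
Your proposal follows the paper's proof essentially step for step: shifted conjugation via Theorem~\ref{thm:1}, identification of $\discr \calR_i = 4$ on $\Lambda_+$ so that the conjugated system has a genuine real splitting, the Levinson-type theorem of \cite{Discrete} to extract the subdominant solution, transfer back through the (uniformly bounded) $Z_j$, and the argument of \cite{Silva2007} to exclude $K$ from $\sigmaEss(A)$. All of that matches.

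There is, however, one genuine gap in the quantitative step. You assert that the key input making the subdominant solution square-summable is $\sum_j \vartheta_j = \infty$. That implication is false in general: the subdominant solution decays like $\prod_{k \le j}\bigl(1 - \vartheta_k(1+o(1))\bigr)$, and divergence of $\sum_k \vartheta_k$ only forces this product to tend to $0$, not to be $\ell^2$. For instance $\vartheta_k = \tfrac{1}{2k}$ has divergent sum but yields decay of order $j^{-1/2}$, which is not square-summable. What is actually needed — and what the paper proves — is the stronger statement $j\vartheta_j \to \infty$, equivalently $\sqrt{a_{(j+1)N+i-1}} = o(j)$. The paper obtains this by Stolz--Ces\`aro from \eqref{eq:25}: since $a_{(j+1)N+i-1} - a_{jN+i-1}$ converges, $\sqrt{a_{(j+1)N+i-1}} - \sqrt{a_{jN+i-1}} \to 0$, hence $\sqrt{a_{(j+1)N+i-1}}/j \to 0$. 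Combined with $\xi_j^- \to -1$ this gives $j\vartheta_j \xi_j^- \le -1$ eventually, so $\sup_K |\lambda_j^-| \le 1 - \tfrac{1}{j}$, the subdominant solution is $O(1/j)$, and $\sum_j j^{-2} < \infty$ closes the argument. Your hypotheses do supply this stronger estimate (indeed $a_n = O(n)$ here, so $\vartheta_j \gtrsim j^{-1/2}$ and the decay is in fact super-polynomial), but the justification as you wrote it would not. Two smaller points: the concern you raise about $Z_j^{-1}$ is harmless because the transfer back only ever applies $Z_j$ (and finitely many $B_n^{-1}$) to $\Psi_j^-$, never $Z_j^{-1}$ along the whole sequence; and one should also check that the resulting initial data $(u_0,u_1)$ is nonzero, which the paper does by noting that otherwise the limit $T_0(e_1+e_2)$ in the asymptotics would vanish, contradicting invertibility of $T_0$.
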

\begin{proof}
	Let $K$ be a compact interval contained in $\Lambda_+$ with non-empty interior and $i \in \{0, 1, \ldots, N-1\}$. We set
	\[
		Y_j = Z_{j+1}^{-1} X_{jN+i} Z_j
	\]
	where $Z_j$ is the matrix defined in \eqref{eq:46}. In view of Theorem~\ref{thm:1}, we have
	\begin{equation}
		\label{eq:10}
		Y_j = \varepsilon \big( \Id + \vartheta_j R_j \big)
	\end{equation}
	where $(R_j : j \in \NN_0)$ is a sequence from $\calD_1\big(K, \Mat(2, \RR)\big)$ convergent to
	\[
		\calR_i = 
		\begin{pmatrix}
			1 & 0\\
			0 & -1
		\end{pmatrix}
	\]
	uniformly on $K$. Since
	\begin{equation}
		\label{eq:73}
		\big\{ x \in \RR : \discr \calR_i(x) > 0 \big\} = \Lambda_+,
	\end{equation}
	there are $j_0 \geq 1$ and $\delta > 0$, so that for all $j \geq j_0$, and all $x \in K$,
	\begin{equation} 
		\label{eq:17}
		\discr R_j(x) \geq \delta.
	\end{equation}
	In particular, the matrix $R_j(x)$ has two eigenvalues
	\[
		\xi_j^+ = \frac{\tr R_j(x) + \varepsilon \sqrt{\discr R_j(x)}}{2},
		\qquad\text{and}\qquad
		\xi_j^- = \frac{\tr R_j(x) - \varepsilon \sqrt{\discr R_j(x)}}{2}.
	\]
	By \eqref{eq:10}, for each $x \in K$ and $j \geq j_0$, the matrix $Y_j(x)$ has two eigenvalues
	\[
		\lambda^+_j(x) = \varepsilon \big( 1 + \vartheta_j(x) \xi_j^+(x) \big),
		\qquad\text{and}\qquad
		\lambda^-_j(x) = \varepsilon \big( 1 + \vartheta_j(x) \xi_j^-(x) \big).
	\]
	In view of \eqref{eq:17} and Theorem~\ref{thm:1}, we can apply \cite[Theorem 4.4]{Discrete} to the system
	\[
		\Psi_{j+1} = Y_j \Psi_j.
	\]
	Therefore, there is $(\Psi_j^- : j \geq j_0)$, so that 
	\[
		\sup_{x \in K}{\bigg\|\frac{\Psi_j^-(x)}{\prod_{k = j_0}^{j-1} \lambda^-_k(x)} - e_2 \bigg\|} =0
	\]
	(cf. \eqref{eq:109}).
	Then the sequence $\Phi_j^- = Z_j \Psi_j^-$ satisfies
	\[
		\Phi_{j+1} = X_{jN+i} \Phi_j
	\]
	for $j \geq j_0$. We set
	\[
		\phi_1 = B_1^{-1} \cdots B^{-1}_{j_0} \Phi_{j_0}^-,
	\]
	and
	\begin{equation}
		\label{eq:22}
		\phi_{n+1} = B_n \phi_n,
	\end{equation}
	for $n > 1$. Then, for $jN+i' > j_0N+i$ with $i' \in \{0, 1, \ldots, N-1\}$, we get
	\[
		\phi_{jN+i'} = 
		\begin{cases}
			B_{jN+i'}^{-1} B_{jN+i'+1}^{-1} \cdots B_{jN+i-1}^{-1} \Phi_j^-
			&\text{if } i' \in \{0, 1, \ldots, i-1\}, \\
			\Phi_j^- & \text{if } i ' = i,\\\
			B_{jN+i'-1} B_{jN+i'-2} \cdots B_{jN+i} \Phi_j^- &
			\text{if } i' \in \{i+1, \ldots, N-1\}.
		\end{cases}
	\]
	Since for $i' \in \{0, 1, \ldots, i-1\}$,
	\[
		\lim_{j \to \infty} B_{jN+i'}^{-1} B_{jN+i'+1}^{-1} \cdots B_{jN+i-1}^{-1}
		=
		\frakB_{i'}^{-1}(0) \frakB_{i'+1}^{-1}(0) \cdots \frakB_{i-1}^{-1}(0),
	\]
	and
	\[
		\lim_{j \to \infty} Z_j e_2 = T_i (e_1 + e_2),
	\]
	we obtain
	\begin{equation}
		\label{eq:23}
		\lim_{j \to \infty}
		\sup_K{
		\bigg\|
		\frac{\phi_{jN+i'}}{\prod_{k = j_0}^{j-1} \lambda^-_k} - 
		T_{i'} (e_1 + e_2)
		\bigg\|
		} = 0.
	\end{equation}
	Analogously, we can show that \eqref{eq:23} holds true also for $i' \in \{i+1, \ldots, N-1\}$.

	Let us recall that a non-zero sequence $(u_n(x) : n \in \NN_0)$ is generalized eigenvector 
	associated with $x \in \RR$, if it satisfies \eqref{eq:108a}.

	Since $(\phi_n : j \in \NN)$ satisfies \eqref{eq:22}, the sequence $(u_n(x) : n \in \NN_0)$ defined as
	\[
		u_n(x) = 
		\begin{cases}
			\langle \phi_1(x), e_1 \rangle & \text{if } n = 0, \\
			\langle \phi_n(x), e_2 \rangle & \text{if } n \geq 1,
		\end{cases}
	\]
	is a generalized eigenvector associated to $x \in K$, provided that $(u_0, u_1) \neq 0$ on $K$. Suppose on the
	contrary that there is $x \in K$ such that $\phi_1(x) = 0$. Hence, $\phi_n(x) = 0$ for all $n \in \NN$, thus
	by \eqref{eq:23} we must have $T_0(e_1 + e_2) = 0$ which is impossible since $T_0$ is invertible.

	Next, let us observe that, by \eqref{eq:23}, for each $i' \in \{0, 1, \ldots, N-1\}$, $j > j_0$, and $x \in K$,
	\begin{equation}
		\label{eq:24}
		|u_{jN+i'}(x)|
		\leq
		c 
		\prod_{k = j_0}^{j-1} |\lambda_k^-(x)|.
	\end{equation}
	Since $(R_j : j \in \NN)$ converges to $\calR_i$ uniformly on $K$, and
	\[
		\lim_{n \to \infty} a_n = \infty,
	\]
	there is $j_1 \geq j_0$, such that for $j \geq j_1$,
	\[
		\abs{\vartheta_j} \big( |\tr R_j(x)| + \sqrt{\discr R_j(x)} \big) \leq 1.
	\]
	Therefore, for $j \geq j_1$,
	\[
		|\lambda_j^-(x)|
		=
		1 +
		\vartheta_j \frac{ \tr R_j(x) - \sqrt{\discr R_j(x)} }{2}.
	\]
	Next by the Stolz--Ces\'aro theorem and \eqref{eq:25}, we get
	\begin{align*}
		\lim_{j \to \infty} \frac{\sqrt{a_{(j+1)N+i-1}}}{j}
		&=
		\lim_{j \to \infty} \big(\sqrt{a_{(j+1)N+i-1}} - \sqrt{a_{jN+i-1}}\big) \\
		&=
		\lim_{j \to \infty} \frac{a_{(j+1)N+i-1} - a_{jN+i-1}}{\sqrt{a_{(j+1)N+i-1}} + \sqrt{a_{jN+i-1}}} 
		= 0.
	\end{align*}
	Since $\tr \calR_i = 0$, there is $j_2 \geq j_1$ such that for all $j \geq j_2$ and $x \in K$,
	\[
		j \vartheta_j \frac{\tr R_j(x) - \sqrt{\discr R_j(x)}} {2} \leq -1,
	\]
	and thus
	\[
		\sup_{x \in K} {|\lambda_j^-(x)|} \leq 1 - \frac{1}{j}.
	\]
	Consequently, by \eqref{eq:24}, there is $c' > 0$ such that for all $i' \in \{0, 1, \ldots, N-1\}$ and
	$j \geq j_2$,
	\[
		\sup_{x \in K}{|u_{jN+i'}(x)|} 
		\leq c \prod_{k = j_0}^{j-1} \bigg(1 - \frac{1}{k} \bigg) \leq \frac{c'}{j},
	\]
	hence
	\[
		\sum_{n = 0}^\infty \sup_{x \in K}{|u_n(x)|^2} < \infty.
	\]
	Now, by the proof of \cite[Theorem 5.3]{Silva2007} we conclude that $\sigmaEss(A) \cap K = \emptyset$. Since
	$K$ was arbitrary compact subinterval of $\Lambda_+$ the theorem follows.
\end{proof}

\section{Generalized Tur\'an determinants}
\label{sec:5}
In this section we study behavior of $N$-shifted generalized Tur\'an determinants on $\Lambda_-$. The good understanding
of them allows us to deduce that the measure $\mu$ restricted to $\Lambda_-$ is absolutely continuous, see
Theorem \ref{thm:4} for details. Let us recall that $N$-shifted generalized Tur\'an determinant $S_n(\eta, x)$
where $\eta \in \RR^2 \setminus \{0\}$ and $x \in \RR$, is defined as
\begin{equation}
	\label{eq:116}
	S_n(\eta, x) = a_{n+N-1}^{3/2}
	\big\langle E
	\vec{u}_{n+N},
	\vec{u}_n
	\big\rangle
\end{equation}
where $(u_n : n \in \NN_0)$ is a generalized eigenvector associated to $x$ and corresponding to $\eta$, and
\[
	E =
	\begin{pmatrix}
		0 & -1 \\
		1 & 0
	\end{pmatrix}.
\]
\begin{theorem}
	\label{thm:5}
	Let $N$ be a positive integer and $i \in \{0, 1, \ldots, N-1\}$. Let $(a_n : n \in \NN_0)$ and $(b_n : n \in \NN_0)$
	be $N$-periodically modulated Jacobi parameters so that $\frakX_0(0)$ is a non-trivial 
	parabolic element.
	If
	\[
		\bigg(\frac{\alpha_{n-1}}{\alpha_n} a_n - a_{n-1} : n \in \NN \bigg),
		\bigg(\frac{\beta_n}{\alpha_n} a_n - b_n : n \in \NN\bigg),
		\bigg(\frac{1}{\sqrt{a_n}} : n \in \NN\bigg) \in \calD_1^N,
	\]
	then the sequence $(|S_{nN+i}| : n \in \NN)$ converges locally uniformly on\footnote{By $\sS^1$ we denote the unit
	sphere in $\RR^2$.} $\sS^1 \times \Lambda_-$ to a positive continuous function.
\end{theorem}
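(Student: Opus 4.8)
Since $\vec u_{jN+i}$ depends linearly on $\eta=\vec u_0$, the map $\eta\mapsto S_{jN+i}(\eta,x)$ defined in \eqref{eq:116} is a quadratic form on $\RR^2$, so it suffices to show that the associated symmetric matrices converge locally uniformly on $\Lambda_-$ to a positive-definite limit; this yields at once the convergence on $\sS^1\times\Lambda_-$ and the positivity of the limit. The plan is to transport the recurrence to the conjugated coordinates of Section~\ref{sec:3}. Fix $i$ and a compact $K\subset\Lambda_-$, and set $\vec v_j=Z_j^{-1}\vec u_{jN+i}$ with $Z_j=T_iD_j$, $D_j=\left(\begin{smallmatrix}1&1\\ \ue^{\vartheta_j}&\ue^{-\vartheta_j}\end{smallmatrix}\right)$ as in \eqref{eq:46}. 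By Theorem~\ref{thm:1}, $\vec v_{j+1}=\varepsilon(\Id+\vartheta_j R_j)\vec v_j$ with $R_j\to\calR_i$ in $\calD_1\big(K,\Mat(2,\RR)\big)$; since $\sigma\equiv-1$ on $\Lambda_-$ we have $\calR_i=E$ and $\discr\calR_i=4\sigma<0$, i.e. the elliptic (oscillatory) regime. Using $M^{T}EM=(\det M)E$ one gets $Z_j^{T}EZ_{j+1}=\det(T_i)\,D_j^{T}ED_{j+1}$, whence
\[
	S_{jN+i}(\eta,x)
	=\varepsilon\det(T_i)\,a_{(j+1)N+i-1}^{3/2}\,
	\vec v_j^{T}\sym(M_j)\,\vec v_j,
	\qquad
	M_j=\big(D_j^{T}ED_{j+1}\big)\big(\Id+\vartheta_j R_j\big).
\]
I would then expand $\sym(M_j)$ down to order $\vartheta_j^2\asymp a_{jN}^{-1}$ (retaining the increments $\vartheta_{j+1}-\vartheta_j$ and the subleading part of $R_j$): its leading term is $2\vartheta_j\sinh(\vartheta_j)\,\Id$, so that $a_{(j+1)N+i-1}^{3/2}\sym(M_j)$ is, to leading order, $2\alpha_{i-1}|\tau|\,a_{(j+1)N+i-1}^{1/2}\,\Id$.

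The core of the proof is to show that the symmetric matrices in the last display form a locally uniform Cauchy sequence. For this I would pass to a one-step (in $j$) recursion $S_{(j+1)N+i}=\varrho_j\,S_{jN+i}+\mathcal E_j$, in which the scalar $\varrho_j$ is produced by the $N$-step determinant $\det X_{jN+i}=a_{jN+i-1}/a_{(j+1)N+i-1}$ together with the ratio $\big(a_{(j+2)N+i-1}/a_{(j+1)N+i-1}\big)^{3/2}$ of the normalising factors, while $\mathcal E_j$ gathers the contributions of $R_j-\calR_i$, of $\vartheta_{j+1}-\vartheta_j$, and of the off-diagonal entries of $D_j^{T}ED_{j+1}$. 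Every ingredient here is governed by $\calD_1$ data — the hypotheses $\big(\tfrac{\alpha_{n-1}}{\alpha_n}a_n-a_{n-1}\big),\big(\tfrac{\beta_n}{\alpha_n}a_n-b_n\big),\big(a_n^{-1/2}\big)\in\calD_1^N$, Lemma~\ref{lem:4}, and the $\calD_1$-convergence $R_j\to\calR_i$ — so that, once the prefactor is tamed, $\sum_j\sup_{\sS^1\times K}|S_{(j+1)N+i}-S_{jN+i}|<\infty$ and the limit exists.

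The main obstacle is the taming of $\varrho_j$. Writing $L=\alpha_{i-1}\sum_{k}s_k/\alpha_{k-1}$ one finds from \eqref{eq:25} that $a_{(j+1)N+i-1}-a_{jN+i-1}\to L$, hence $\varrho_j=1+\tfrac{L}{2}a_{(j+1)N+i-1}^{-1}+o\big(a_{(j+1)N+i-1}^{-1}\big)$. Because the Carleman condition holds, $\sum_j a_{jN+i}^{-1}=\infty$, so this drift is \emph{not} summable and, taken alone, would force $|S_{jN+i}|$ to diverge. The resolution — which is exactly what the shifted conjugation is designed to expose — is that the drift is cancelled to order $a_{jN}^{-1}$ by the matching $s_k$-dependent part of $\mathcal E_j$; in the conjugated variables this is encoded in the exact identity $\det(\Id+\vartheta_j R_j)=\det(X_{jN+i})\det(Z_j)/\det(Z_{j+1})$, which forces a hidden $O(\vartheta_j)$ contribution to $\tr R_j$ that the crude replacement $R_j\approx\calR_i$ would miss. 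Establishing this cancellation to the required order, uniformly on $K$ and in $\eta\in\sS^1$, and checking that the net remainder is genuinely summable, is the technical heart of the argument.

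Positivity and uniformity in $\eta$ then come from the elliptic regime: the leading form $2\vartheta_j\sinh(\vartheta_j)\,\Id$ is a positive multiple of the identity, so $\vec v_j^{T}\sym(M_j)\vec v_j>0$ for every $\vec v_j\neq0$ once $j$ is large, and $Z_j$ invertible together with $\eta\neq0$ forces $\vec v_j\neq0$. Thus the limiting quadratic form is positive definite, uniformly on $\sS^1$, and $(|S_{nN+i}|)$ converges locally uniformly on $\sS^1\times\Lambda_-$ to a positive continuous function. The precise value of this limit is not needed here; it is identified separately in Theorem~\ref{thm:4}.
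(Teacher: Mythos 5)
Your setup is sound and matches the paper's strategy up to a point: the conjugation $\vec v_j=Z_j^{-1}\vec u_{jN+i}$, the identification $\calR_i=E$ on $\Lambda_-$, the exact identity $Z_j^{T}EZ_{j+1}=\det(T_i)\,D_j^{T}ED_{j+1}$, and the leading term $2\vartheta_j\sinh(\vartheta_j)\Id$ of $\sym(M_j)$ are all correct. But the argument has a genuine gap exactly where you place ``the technical heart'': you correctly observe that the scalar $\varrho_j=1+\tfrac{L}{2}a_{(j+1)N+i-1}^{-1}+o(a^{-1})$ produces a non-summable drift, you assert that it is cancelled by an $s_k$-dependent part of $\mathcal E_j$ traced through the hidden $O(\vartheta_j)$ contribution to $\tr R_j$, and you stop there. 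Nothing in the proposal actually establishes this cancellation, and verifying it along the route you describe (expanding $\tr R_j$ to order $\vartheta_j$ and matching coefficients against the drift) would be delicate: an error of size $o(\vartheta_j)$ in that expansion is still far from summable, so you would need quantitative control that the $\calD_1$ hypotheses do not obviously give in that form.

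The paper avoids this difficulty entirely by an algebraic device you do not use. Setting $\tilde S_j=a_{(j+1)N+i-1}^{3/2}(\det Z_j)\langle E\vec v_{j+1},\vec v_j\rangle$ (and first checking $|S_{jN+i}-\tilde S_j|\to0$ via Theorem~\ref{thm:2}), one rewrites $\vec v_j=Y_j^{-1}\vec v_{j+1}$ so that \emph{both} $\tilde S_{j+1}$ and $\tilde S_j$ become quadratic forms in the \emph{same} vector $\vec v_{j+1}$, namely multiples of $\langle EY_{j+1}\vec v_{j+1},\vec v_{j+1}\rangle$ and $\langle EY_j\vec v_{j+1},\vec v_{j+1}\rangle$. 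Since $\langle E\vec v,\vec v\rangle=0$, the identity parts of $Y_{j+1}=\varepsilon(\Id+\vartheta_{j+1}R_{j+1})$ and $Y_j=\varepsilon(\Id+\vartheta_jR_j)$ — which are exactly the carriers of your drift — drop out identically, and using the exact relation $\sqrt{a_{(j+2)N+i-1}/a_{(j+1)N+i-1}}\,\vartheta_{j+1}=\vartheta_j$ the difference reduces to $\vartheta_j$ times the increment of the $\calD_1$ sequence $\big(\tfrac{a_{(j+1)N+i-1}}{a_{jN+i-1}}R_j\big)$. Combined with the lower bound $|\tilde S_j|\gtrsim a_{(j+1)N+i-1}^{3/2}\vartheta_j|\det Z_{j+1}|\,\|\vec v_{j+1}\|^2$ (from $\sym(ER_j)\to-\Id$), this yields $|\tilde S_{j+1}-\tilde S_j|\leq\epsilon_j|\tilde S_j|$ with $\sum_j\epsilon_j<\infty$, whence convergence to a \emph{positive} limit. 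Note also that the relative (multiplicative) form of this estimate is what gives positivity of the limit; plain absolute summability of the increments, which is what your sketch aims at, would give convergence but not rule out a zero limit. So the missing cancellation is not something to be verified order by order — it is an exact consequence of the antisymmetry of $E$ once the two consecutive Tur\'an determinants are expressed in the same vector, and your proof is incomplete without this (or an equivalent) step.
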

\begin{proof}
	We start by describing the uniform diagonalization under the assumptions of the theorem. For matrices defined in
	\eqref{eq:46}, we set
	\begin{equation}
		\label{eq:35}
		Y_j = Z_{j+1}^{-1} X_{jN+i} Z_j,  
	\end{equation}
	and
	\begin{equation}
		\label{eq:30}
		\vec{v}_j(x) = Z_j^{-1}(x) \vec{u}_{jN+i}(x).
	\end{equation}
	Then
	\begin{equation}
		\label{eq:20}
	 	\vec{v}_{j+1} = Y_j \vec{v}_j.
	\end{equation}
	Fix a compact subset $K \subset \Lambda_-$. By Theorem \ref{thm:1}, we have
	\begin{equation}
		\label{eq:40}
		Y_j = \varepsilon \big(\Id + \vartheta_j R_j\big)
	\end{equation}
	where $(R_j : j \in \NN_0)$ is a sequence from $\calD_1\big(K, \Mat(2, \RR)\big)$ convergent to
	\begin{equation}
		\label{eq:33}
		\calR_i = 
		\begin{pmatrix}
			0 & -1 \\
			1 & 0
		\end{pmatrix}
	\end{equation}
	uniformly on $K$. Since
	\[
		\big\{ x \in \RR : \discr \calR_i(x) < 0 \big\} = \Lambda_-
	\]
	there are $\delta > 0$ and $j_0 \geq 1$ such that for all $j \geq j_0$ and $x \in K$,
	\[
		\discr R_j(x) \leq -\delta,
		\qquad\text{and}\qquad
		[R_j(x)]_{1, 2} < -\delta.
	\]
	Thus $R_j(x)$ has two eigenvalues $\xi_j(x)$ and $\overline{\xi_j(x)}$ where
	\begin{equation} 
		\label{eq:41}
		\xi_j(x) = \frac{\tr R_j(x) + i \sqrt{-\discr R_j(x)}}{2}.
	\end{equation}
	Moreover,
	\[
		R_j = C_j
		\begin{pmatrix}
			\xi_j & 0 \\
			0 & \overline{\xi}_j
		\end{pmatrix}
		C_j^{-1}
	\]
	where
	\[
		C_j = 
		\begin{pmatrix}
			1 & 1 \\
			\frac{\xi_j - [R_j]_{1,1}}{[R_j]_{1, 2}} & \frac{\overline{\xi_j} - [R_j]_{1,1}}{[R_j]_{1, 2}}
		\end{pmatrix}.
	\]
	In view of \eqref{eq:40}, $Y_j(x)$ has two eigenvalues $\lambda_j(x)$ and $\overline{\lambda_j(x)}$ where
	\begin{equation}
		\label{eq:65}
		\lambda_j(x) = \varepsilon \big( 1 + \vartheta_j(x) \xi_j(x) \big).
	\end{equation}
	Moreover,
	\begin{equation} 
		\label{eq:69}
		Y_j = C_j D_j C_j^{-1}
	\end{equation}
	where
	\begin{equation} 
		\label{eq:32}
		D_j = 
		\begin{pmatrix}
			\lambda_j & 0 \\
			0 & \overline{\lambda_j}
		\end{pmatrix}.
	\end{equation}
	Let us observe that, by Theorem \ref{thm:1}, both $(C_j: j \geq j_0)$ and $(D_j : j \geq j_0)$ belong to
	$\calD_1\big(K, \Mat(2, \CC)\big)$. By \eqref{eq:33}, we have
	\begin{equation}
		\label{eq:36}
		\lim_{j \to \infty} C_j = C_\infty = 
		\begin{pmatrix}
			1 & 1 \\
			-i & i
		\end{pmatrix}
	\end{equation}
	uniformly on $K$.

	Before we embark on the proof of the theorem, we show the following claim.
	\begin{claim}
		\label{clm:2}
		There is $c > 0$ so that for all $j \geq j_0$,
		\[
			\|\vec{v}_j\|
			\leq 
			c \Big(\prod_{k = j_0}^{j-1} \|D_k\| \Big) \|\vec{v}_{j_0}\|
		\]
		uniformly on $K$.
	\end{claim}
	Using \eqref{eq:32}, we have
	\[
		\vec{v}_j = Y_{j-1} \cdots Y_{j_0} \vec{v}_{j_0},
	\]
	thus
	\[
		\|\vec{v}_j\| \leq \|Y_{j-1} \cdots Y_{j_0} \| \|\vec{v}_{j_0}\|.
	\]
	Next, we write
	\begin{align*}
		Y_{j-1} Y_{j-2} \cdots Y_{j_0}
		= C_{j-1} \big(D_{j-1} C_{j-1}^{-1} C_{j-2}\big) \big(D_{j-2} C_{j-2}^{-1} C_{j-3}\big)
		\cdots \big(D_{j_0} C_{j_0}^{-1} C_{j_0-1}\big) C_{j_0-1}^{-1},
	\end{align*}
	and so
	\begin{align*}
		\big\|
		Y_{j-1} Y_{j-2} \cdots Y_{j_0}
		\big\|
		&\leq
		\|C_{j-1}\| 
		\Big\|
		\big(D_{j-1} C_{j-1}^{-1} C_{j-2}\big) \big(D_{j-2} C_{j-2}^{-1} C_{j-3}\big)
		\cdots \big(D_{j_0} C_{j_0}^{-1} C_{j_0-1}\big)
		\Big\|
		\big\|C_{j_0-1}^{-1}\big\|\\
		&\leq
		c \prod_{k = j_0}^{j-1} \|D_k\|,
	\end{align*}
	where the last estimate is the consequence of \cite[Proposition 1]{SwiderskiTrojan2019} and \eqref{eq:36},
	proving the claim.

	Now, let us define
	\begin{equation}
		\label{eq:66}
		\tilde{S}_j = a_{(j+1)N+i-1}^{3/2} (\det Z_j) \langle E \vec{v}_{j+1}, \vec{v}_j \rangle.
	\end{equation}
	Our next step is to show that $(\tilde{S}_j : j \geq j_0)$ is asymptotically close to $(S_{jN+i} : j \geq j_0)$.
	\begin{claim}
		\label{clm:6}
		We have
		\[
			\lim_{j \to \infty} \big| S_{jN+i} - \tilde{S}_j \big|= 0
		\]
		uniformly on $\sS^1 \times K$.
	\end{claim}
	For the proof we write
	\begin{align*}
		S_{jN+i} 
		&= a_{(j+1)N+i-1}^{3/2} \langle E \vec{u}_{(j+1)N+i}, \vec{u}_{jN+i} \rangle \\
		&= a_{(j+1)N+i-1}^{3/2} \langle Z_j^* E Z_{j+1} \vec{v}_{j+1}, \vec{v}_j \rangle \\
		&= a_{(j+1)N+i-1}^{3/2} (\det Z_j) \langle E Z_j^{-1} Z_{j+1} \vec{v}_{j+1}, \vec{v}_j \rangle
	\end{align*}
	where we have used that for any $Y \in \GL(2, \RR)$, 
	\[
		(Y^{-1})^* E = \frac{1}{\det Y} E Y.
	\]
	Now, by Theorem~\ref{thm:2}
	\begin{align*}
		S_{jN+i} - \tilde{S}_j &=
		a_{(j+1)N+i-1}^{3/2} (\det Z_j) 
		\big\langle E (Z_j^{-1} Z_{j+1} - \Id) \vec{v}_{j+1}, \vec{v}_j \big\rangle \\
		&= a_{(j+1)N+i-1}^{3/2} (\det Z_j) \vartheta_j 
		\big\langle E Q_j \vec{v}_{j+1}, \vec{v}_j \big\rangle.
	\end{align*}
	Observe that by \eqref{eq:32} and \eqref{eq:69}
	\[
		\|D_k\|^2 = \abs{\lambda_k}^2 = \lambda_k \overline{\lambda_k} = \det Y_k.
	\]
	Therefore, by \eqref{eq:35}, 
	\[
		\prod_{k = j_0}^{j-1} \|D_k\|^2 = \frac{\det Z_{j_0}}{\det Z_j} \frac{a_{j_0 N+i-1}}{a_{jN+i-1}}.
	\]
	Next, in view of Claim \ref{clm:2}, for $j \geq j_0$,
	\[
		\|\vec{v}_j\|^2 \lesssim \prod_{k = j_0}^{j-1} \|D_k\|^2 \lesssim \frac{1} {a_{jN+i-1} |\det Z_j|}.
	\]
	Hence, 
	\[
		\big|a_{(j+1)N+i-1}^{3/2} (\det Z_j) \vartheta_j \langle E Q_j \vec{v}_{j+1}, \vec{v}_j \rangle\big|
		\lesssim
		a_{(j+1)N+i-1}^{3/2} \vartheta_j |\det Z_j| \cdot\|Q_j\| \cdot \|\vec{v}_j\|^2,
	\]
	which is bounded by a constant multiple of $\|Q_j\|$, and the claim follows by Theorem \ref{thm:2}.

	Next we show that the sequence $(\tilde{S}_j : j \geq j_0)$ converges uniformly on $\sS^1 \times K$ to a positive
	continuous function. By \eqref{eq:66} and \eqref{eq:20}, we have
	\begin{align*}
		\tilde{S}_j 
		&= a_{(j+1)N+i-1}^{3/2} (\det Z_j) \langle E \vec{v}_{j+1}, Y_j^{-1} \vec{v}_{j+1} \rangle \\
		&= a_{(j+1)N+i-1}^{3/2} (\det Z_j) \langle (Y_j^{-1})^* E \vec{v}_{j+1}, \vec{v}_{j+1} \rangle \\
		&= a_{(j+1)N+i-1}^{3/2} (\det Z_j) (\det Y_j^{-1}) \langle E Y_j \vec{v}_{j+1}, \vec{v}_{j+1} \rangle,
	\end{align*}
	and since
	\[
		\det Y_j = \det \big( Z_{j+1}^{-1} X_{jN+i} Z_j \big),
	\]
	we obtain
	\begin{equation}
		\label{eq:43}
		\tilde{S}_j = a_{(j+1)N+i-1}^{3/2} (\det Z_{j+1}) \frac{a_{(j+1)N+i-1}}{a_{jN+i-1}} 
		\langle E Y_j \vec{v}_{j+1}, \vec{v}_{j+1} \rangle.
	\end{equation}
	By \eqref{eq:66} and \eqref{eq:20} we have
	\[
		\tilde{S}_{j+1} = a_{(j+2)N+i-1}^{3/2} (\det Z_j) 
		\langle E Y_{j+1} \vec{v}_{j+1}, \vec{v}_{j+1} \rangle.
	\]
	Therefore, by Theorem \ref{thm:1}
	\begin{align*}
		\tilde{S}_{j+1} - \tilde{S}_j
		=
		\varepsilon
		a_{(j+1)N+i-1}^{3/2} (\det Z_{j+1}) 
		\left\langle
			E W_j \vec{v}_{j+1}, \vec{v}_{j+1}
		\right\rangle
	\end{align*}
	where
	\[
		W_j =
		\sqrt{\frac{a_{(j+2)N+i-1}}{a_{(j+1)N+i-1}}} \frac{a_{(j+2)N+i-1}}{a_{(j+1)N+i-1}} \vartheta_{j+1} 
		R_{j+1} 
		-
		\frac{a_{(j+1)N+i-1}}{a_{jN+i-1}} \vartheta_j R_j.
	\]
	Since
	\[
		\sqrt{\frac{a_{(j+2)N+i-1}}{a_{(j+1)N+i-1}}} \vartheta_{j+1} = \vartheta_j
	\]
	we have
	\begin{align*}
		W_j = \vartheta_j
		\bigg(  
		\frac{a_{(j+2)N+i-1}}{a_{(j+1)N+i-1}} R_{j+1} - \frac{a_{(j+1)N+i-1}}{a_{jN+i-1}} R_j \bigg),
	\end{align*}
	and so
	\[
		\| W_j \|
		\lesssim 
		\vartheta_j
		\Big( 
		\Big| \Delta \Big( \frac{a_{jN+i-1}}{a_{jN+i}} \Big) \Big| +
		\big\| \Delta R_j \big\|
		\Big)
	\]
	where for a sequence $(x_n : n \in \NN)$ we have set
	\[
		\Delta x_n = x_{n+1} - x_n.
	\]
	On the other hand, by \eqref{eq:43},
	\[
		\tilde{S}_j = 
		\varepsilon a_{(j+1)N+i-1}^{3/2} \frac{a_{(j+1)N+i-1}}{a_{jN+i-1}} (\det Z_{j+1}) \vartheta_j
		\big\langle 
			E R_j \vec{v}_{j+1}, \vec{v}_{j+1} 
		\big\rangle,
	\]
	and since 
	\begin{equation}
		\label{eq:28}
		\lim_{j\to \infty} \sym (E R_j) =
		\sym(E \calR_i) = -\Id,
	\end{equation}
	we get
	\[
		|\tilde{S}_j| \gtrsim 
		a_{(j+1)N+i-1}^{3/2} \frac{a_{(j+1)N+i-1}}{a_{jN+i-1}} \vartheta_j
		|\det Z_{j+1}| \cdot
		\|\vec{v}_{j+1} \|^2.
	\]
	Consequently, we arrive at
	\[
		|\tilde{S}_{j+1} - \tilde{S}_j| \lesssim 
		\Big( 
		\Big| \Delta \Big( \frac{a_{jN+i-1}}{a_{jN+i}} \Big) \Big| +
		\| \Delta R_j \|
		\Big) |\tilde{S}_j|.
	\]
	Since $\tilde{S}_j \neq 0$ on $K$, we get
	\[
		\sum_{j = j_0}^\infty
		\sup_{\eta \in \sS^1} \sup_{x \in K} {
		\bigg|
		\frac{\abs{\tilde{S}_{j+1}(\eta, x)}}{\abs{\tilde{S}_j(\eta, x)}} - 1 
		\bigg|}
		\lesssim
		\sum_{j = j_0}^\infty
		\Big| \Delta \Big( \frac{a_{jN+i-1}}{a_{jN+i}} \Big) \Big| + 
		\sup_{x \in K}{\| \Delta R_j(x) \|},
	\]
	which implies that the product
	\[
		\prod_{k = j_0}^\infty \bigg(1 + \frac{\abs{\tilde{S}_{k+1}} - \abs{\tilde{S}_k}}{\abs{\tilde{S}_k}}\bigg)
	\]
	converges uniformly on $\sS^1 \times K$ to a positive continuous function. Because
	\[
		\bigg| \frac{\tilde{S}_j}{\tilde{S}_{j_0}} \bigg|
		=
		\prod_{k = j_0}^{j-1} \bigg(1 + \frac{\abs{\tilde{S}_{k+1}} - \abs{\tilde{S}_k}}{\abs{\tilde{S}_k}}\bigg),
	\]
	the same holds true for the sequence $(\tilde{S}_j : j \geq j_0)$. In view of Claim \ref{clm:6},
	the proof is completed.
\end{proof}
From now on, if $(a_n : n \in \NN_0)$ and $(b_n : n \in \NN_0)$ are $N$-modulated Jacobi parameters 
satisfying \eqref{eq:40b} and \eqref{eq:40c}, for fixed $i \in \{0, 1, \ldots, N-1\}$ and a compact subset of
$K \subset \Lambda_-$ we use the diagonalization constructed at the beginning of the proof of Theorem \ref{thm:5}.
\begin{corollary}
	\label{cor:3}
	Let the hypotheses of Theorem~\ref{thm:5} be satisfied. For each compact subset $K \subset \Lambda_-$,
	there is a constant $c>0$, such that for every generalized eigenvector $u$ associated with $x \in K$,
	\[
		\sup_{j \in \NN_0}{
		\sqrt{a_{(j+1)N+i-1}} \big( u_{jN+i-1}^2 + u_{jN+i}^2 \big)} \leq c (u_0^2 + u_1^2).
	\]
\end{corollary}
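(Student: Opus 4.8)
The plan is to recycle the uniform diagonalization and the growth estimates set up at the beginning of the proof of Theorem~\ref{thm:5}, being careful this time to track the dependence on the initial data. First note that the quantity to be controlled is exactly $\sqrt{a_{(j+1)N+i-1}}\,\|\vec{u}_{jN+i}\|^2$, because $\vec{u}_{jN+i} = (u_{jN+i-1},u_{jN+i})^T$, while the right-hand side equals $\|\vec{u}_1\|^2$. Since the map $\eta \mapsto (u_n)$ is linear, the asserted inequality is homogeneous of degree two, so it suffices to produce a constant independent of $x\in K$ and of the eigenvector. Fixing a compact $K\subset\Lambda_-$ (so $K\subset\RR\setminus\{x_0\}$ and $|\tau|$ is bounded away from $0$ on $K$), I would recall $\vec{v}_j = Z_j^{-1}\vec{u}_{jN+i}$ and invoke Claim~\ref{clm:2} together with the determinant identity from the proof of Claim~\ref{clm:6}, which give, for $j\ge j_0$,
\[
\|\vec{v}_j\|^2 \le c\Big(\prod_{k=j_0}^{j-1}\|D_k\|^2\Big)\|\vec{v}_{j_0}\|^2
= c\,\frac{\det Z_{j_0}}{\det Z_j}\,\frac{a_{j_0N+i-1}}{a_{jN+i-1}}\,\|\vec{v}_{j_0}\|^2 .
\]

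Next I would evaluate the geometric factors. From \eqref{eq:46} one has $\det Z_j = -2(\det T_i)\sinh\vartheta_j$, so by the definition \eqref{eq:68} of $\vartheta_j$ and the bounds on $|\tau|$ over $K$,
\[
|\det Z_j| \asymp \vartheta_j \asymp a_{(j+1)N+i-1}^{-1/2}, \qquad \|Z_j\| \lesssim 1,
\]
all uniformly on $K$. Substituting this and absorbing the fixed quantities $\vartheta_{j_0}$ and $a_{j_0N+i-1}$ into the constant, I obtain for $j\ge j_0$
\[
\|\vec{u}_{jN+i}\|^2 \le \|Z_j\|^2\|\vec{v}_j\|^2 \lesssim \frac{a_{(j+1)N+i-1}^{1/2}}{a_{jN+i-1}}\,\|\vec{v}_{j_0}\|^2 ,
\]
whence $\sqrt{a_{(j+1)N+i-1}}\,\|\vec{u}_{jN+i}\|^2 \lesssim \frac{a_{(j+1)N+i-1}}{a_{jN+i-1}}\|\vec{v}_{j_0}\|^2$. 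Because $a_{(j+1)N+i-1}-a_{jN+i-1}$ converges by \eqref{eq:25} while $a_n\to\infty$, the ratio $a_{(j+1)N+i-1}/a_{jN+i-1}$ tends to $1$ and is therefore bounded uniformly in $j$.

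Finally I would reduce $\|\vec{v}_{j_0}\|$ and the finitely many indices $j<j_0$ to the initial data. Writing $\vec{u}_{j_0N+i} = B_{j_0N+i-1}\cdots B_1\,\vec{u}_1$, the product of transfer matrices and $\|Z_{j_0}^{-1}\|$ are fixed continuous, hence bounded, functions on $K$ (note $\det Z_{j_0}\ne 0$), which yields $\|\vec{v}_{j_0}\|^2 \lesssim \|\vec{u}_1\|^2 = u_0^2+u_1^2$; for each of the finitely many $j<j_0$ the term is likewise dominated by $\sup_{x\in K}\|B_{jN+i-1}\cdots B_1\|^2\,\|\vec{u}_1\|^2$ (using $\vec u_0=B_0^{-1}\vec u_1$ to cover $jN+i=0$). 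Combining all estimates produces the uniform bound. I expect the only genuinely delicate point to be the bookkeeping: one must verify that every implicit constant depends solely on $K$ (and on $N,i$) and not on $x\in K$ nor on $\eta$, which is guaranteed by the fact that all the convergences in Theorems~\ref{thm:2} and \ref{thm:1} — and hence in Claim~\ref{clm:2} — are uniform on $K$.
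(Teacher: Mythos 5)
Your argument is correct, and all the uniformity you worry about at the end is indeed available: the constant in Claim~\ref{clm:2} comes from an operator-norm bound on the product $Y_{j-1}\cdots Y_{j_0}$ and so is independent of the eigenvector, the identity $\prod_{k=j_0}^{j-1}\|D_k\|^2=\frac{\det Z_{j_0}}{\det Z_j}\frac{a_{j_0N+i-1}}{a_{jN+i-1}}$ is exactly the one recorded in the proof of Claim~\ref{clm:6}, and $|\det Z_j|=2|\det T_i|\sinh\vartheta_j\asymp\vartheta_j\asymp a_{(j+1)N+i-1}^{-1/2}$ uniformly on $K$ because $|\tau|$ is bounded away from $0$ and $\infty$ there. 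The route is, however, not the one the paper takes. The paper's proof normalizes $u_0^2+u_1^2=1$ and squeezes $\|\vec{v}_j\|^2$ between two bounds on the modified Tur\'an determinant: the lower bound $|\tilde{S}_j|\gtrsim a_{(j+1)N+i-1}^{3/2}\vartheta_j|\det Z_j|\,\|\vec{v}_j\|^2$, which comes from $\sym(E\calR_i)=-\Id$ (formula \eqref{eq:28}), and the uniform boundedness of $(|\tilde{S}_j|)$ on $\sS^1\times K$, which is the conclusion of Theorem~\ref{thm:5}; since $\vartheta_j|\det Z_j|\gtrsim a_{(j+1)N+i-1}^{-1}$, this yields $\|\vec{v}_j\|^2\lesssim a_{(j+1)N+i-1}^{-1/2}$ directly. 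You instead bypass the Tur\'an determinant and reuse the growth estimate $\|\vec{v}_j\|^2\lesssim\prod\|D_k\|^2\,\|\vec v_{j_0}\|^2\lesssim (a_{jN+i-1}|\det Z_j|)^{-1}\|\vec v_{j_0}\|^2$, which is precisely the intermediate step already appearing inside the proof of Claim~\ref{clm:6}. What your version buys is independence from the convergence and positivity of the limit of $|\tilde{S}_j|$ (only Claim~\ref{clm:2} and the determinant telescoping are needed, plus the elementary bookkeeping for $j<j_0$ that you spell out and the paper leaves implicit); what the paper's version buys is brevity, since it quotes the already-proved theorem rather than re-entering its proof. Both yield the same constant structure and the same final estimate.
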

\begin{proof}
	Without loss of generality we assume that $u_0^2 + u_1^2 = 1$. Let us fix a compact subset $K \subset \Lambda_-$.
	By Theorem~\ref{thm:1} we have
	\begin{align*}
		\tilde{S}_j 
		&= 
		a_{(j+1)N+i-1}^{3/2} (\det Z_j)
		\varepsilon \langle E (\Id + \vartheta_j R_j) \vec{v}_j, \vec{v}_j \rangle \\
		&=
		\varepsilon a_{(j+1)N+i-1}^{3/2} (\det Z_j) \vartheta_j 
		\langle E R_j \vec{v}_j, \vec{v}_j \rangle.
	\end{align*}
	thus in view of \eqref{eq:28},
	\[
		|\tilde{S}_j| \gtrsim
		a_{(j+1)N+i-1}^{3/2} \vartheta_j |\det(Z_j)| \cdot \| \vec{v}_j \|^2
	\]
	on $\sS^1 \times K$. Since $(|\tilde{S}_j| : j \in \NN_0)$ is uniformly bounded on $\sS^1 \times K$, and
	\[
		|\det Z_j | \vartheta_j \gtrsim a_{(j+1)N+i-1}^{-1},
	\]
	by \eqref{eq:30} we conclude that
	\begin{align*}
		\| \vec{u}_{jN+i} \|^2 
		&\leq \|Z_j\|^2 \| \vec{v}_j \|^2 \\
		&\leq \|Z_j\|^2 \frac{1}{\sqrt{a_{(j+1)N+i-1}}}.
	\end{align*}
	Because $(Z_j)$ is uniformly bounded on $K$, the proof is complete.
\end{proof}

\section{Asymptotics of the generalized eigenvectors}
\label{sec:6}
In this section we study the asymptotic behavior of generalized eigenvectors. We prove the following theorem.
\begin{theorem} 
	\label{thm:3}
	Let $N$ be a positive integer and $i \in \{0, 1, \ldots, N-1\}$. Let $(a_n : n \in \NN_0)$ and $(b_n : n \in \NN_0)$
	be $N$-periodically modulated Jacobi parameters so that $\frakX_0(0)$ is a non-trivial parabolic element.
	If
	\[
		\bigg(\frac{\alpha_{n-1}}{\alpha_n} a_n - a_{n-1} : n \in \NN \bigg),
		\bigg(\frac{\beta_n}{\alpha_n} a_n - b_n : n \in \NN\bigg),
		\bigg(\frac{1}{\sqrt{a_n}} : n \in \NN\bigg) \in \calD_1^N,
	\]
	then for each compact subset $K \subset \Lambda_-$ there are $j_0 \in \NN$, and a continuous function 
	$\vphi: \sS^1 \times K \rightarrow \CC$ such that every generalized eigenvector $(u_n : n \in \NN_0)$ 
	\[
		\lim_{j \to \infty}
		\sup_{\eta \in \sS^1} \sup_{x \in K}
		\bigg|
		\frac{\sqrt{a_{(j+1)N+i-1}}}{\prod_{k = j_0}^{j-1} \lambda_k(x)}
		\Big(
		u_{(j+1)N+i}(\eta, x) - \overline{\lambda_j(x)} u_{jN+i}(\eta, x)
		\Big)
		-
		\vphi(\eta, x)
		\bigg|
		=0.
	\]
	Moreover,
	\[ 
		\frac{u_{jN+i}(\eta, x)}{\prod_{k = j_0}^{j-1} |\lambda_k(x)|}
		=
		\frac{\abs{\vphi(\eta, x)}}{\sqrt{\alpha_{i-1} \big|\tau(x) \big|}}
		\sin\Big(\sum_{k=j_0}^{j-1} \theta_k(x) + \arg \vphi(\eta, x)\Big)
		+
		E_j(\eta, x)
	\]
	where
	\[
		\theta_k(x) = \arccos \bigg(\frac{\tr Y_k(x)}{2 \sqrt{\det Y_k(x)}} \bigg),
	\]
	and
	\[
		\sup_{\eta \in \sS^1} \sup_{x \in K} |E_j(\eta, x)|
		\leq
		c \sum_{k = j}^\infty
		\sup_{x \in K} \Big( \big\|\Delta C_k(x) \big\| + \big\|\Delta R_k(x) \big\|\Big).
	\]
\end{theorem}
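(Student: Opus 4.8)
The plan is to analyze the diagonalized recurrence set up at the beginning of the proof of Theorem~\ref{thm:5}. For a fixed compact $K \subset \Lambda_-$ and $j_0$ large, put $\vec{v}_j = Z_j^{-1}\vec{u}_{jN+i}$ as in \eqref{eq:30}, so that $\vec{v}_{j+1} = Y_j \vec{v}_j$ with $Y_j = C_j D_j C_j^{-1}$ by \eqref{eq:69}, where $D_j = \diag(\lambda_j, \overline{\lambda_j})$ is given in \eqref{eq:32}--\eqref{eq:65}, $C_j \to C_\infty$ as in \eqref{eq:36}, and both $(C_j)$ and $(D_j)$ lie in $\calD_1\big(K, \Mat(2,\CC)\big)$. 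First I would pass to the eigenbasis by putting $\vec{w}_j = C_j^{-1}\vec{v}_j$, which gives $\vec{w}_{j+1} = (\Id + F_j) D_j \vec{w}_j$ with $\Id + F_j = C_{j+1}^{-1} C_j$; since $C_\infty$ is invertible and $(C_j) \in \calD_1(K)$, one has $\|F_j\| \lesssim \|\Delta C_j\|$ and hence $\sum_j \sup_K \|F_j\| < \infty$.

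The decisive step is to strip off the diagonal growth. Writing $\Pi_j = \prod_{k=j_0}^{j-1} D_k = \diag\big(\prod_{k=j_0}^{j-1}\lambda_k,\ \prod_{k=j_0}^{j-1}\overline{\lambda_k}\big)$ and $\vec{z}_j = \Pi_j^{-1} \vec{w}_j$, the factor $D_j$ cancels and one obtains $\vec{z}_{j+1} = \big(\Id + \Pi_{j+1}^{-1} F_j \Pi_{j+1}\big)\vec{z}_j$. Because the two diagonal entries of $\Pi_{j+1}$ are complex conjugates of equal modulus, conjugation by $\Pi_{j+1}$ only multiplies the off-diagonal entries of $F_j$ by unimodular factors, so $\|\Pi_{j+1}^{-1} F_j \Pi_{j+1}\| = \|F_j\|$. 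Consequently $\sum_j \sup_K \|\Pi_{j+1}^{-1} F_j \Pi_{j+1}\| < \infty$, the product $\prod_{k \ge j_0}\big(\Id + \Pi_{k+1}^{-1} F_k \Pi_{k+1}\big)$ converges, and $\vec{z}_j$ tends to a limit $\vec{z}_\infty(\eta,x)$ uniformly on $\sS^1 \times K$, continuously in $(\eta,x)$, with rate $\sup_{\sS^1 \times K}\|\vec{z}_j - \vec{z}_\infty\| \lesssim \sum_{k \ge j}\sup_K \|\Delta C_k\|$. This reliance on the unimodularity of the conjugating phases --- that is, on the complex-conjugate (oscillatory, non-dichotomous) spectrum of $\calR_i$ on $\Lambda_-$ --- is precisely where working on $\Lambda_-$ is essential, and I expect making this convergence fully rigorous and uniform to be the main obstacle.

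With $\vec{z}_\infty$ in hand I would unwind the substitutions. Since $\vec{u}_{jN+i} = Z_j C_j \Pi_j \vec{z}_j$ and $\vartheta_j \to 0$, one has $Z_j C_j \to M_\infty := T_i \big(\begin{smallmatrix} 1 & 1 \\ 1 & 1\end{smallmatrix}\big) C_\infty$, whose columns are complex conjugates; moreover reality of $\vec{u}$ forces $z_\infty^{(2)} = \overline{z_\infty^{(1)}}$. For the first assertion I would use that $\vec{w}_{j+1} - \overline{\lambda_j}\vec{w}_j = \big(\begin{smallmatrix}(\lambda_j - \overline{\lambda_j})w_j^{(1)} \\ 0\end{smallmatrix}\big) + F_j D_j \vec{w}_j$ annihilates the second eigencoordinate, so that after applying $Z_{j+1}C_{j+1}$ (absorbing the telescoping discrepancy $Z_{j+1}C_{j+1} - Z_j C_j$ into the error) the quantity $u_{(j+1)N+i} - \overline{\lambda_j} u_{jN+i}$ is asymptotically $(\lambda_j - \overline{\lambda_j})\,[M_\infty]_{2,1}\,\big(\prod_{k=j_0}^{j-1}\lambda_k\big)\,z_\infty^{(1)}$. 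Since $\xi_j \to i$ on $\Lambda_-$ (because $\discr \calR_i = -4$) and $\vartheta_j = \sqrt{\alpha_{i-1}|\tau|/a_{(j+1)N+i-1}}$, we get $\sqrt{a_{(j+1)N+i-1}}(\lambda_j - \overline{\lambda_j}) \to 2i\varepsilon\sqrt{\alpha_{i-1}|\tau|}$, which yields the stated limit and defines the continuous function $\vphi(\eta,x) = 2i\varepsilon\sqrt{\alpha_{i-1}|\tau|}\,[M_\infty]_{2,1}\,z_\infty^{(1)}$ (linear, hence continuous, in $\eta$).

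For the second assertion I would instead expand $u_{jN+i} = \langle \vec{u}_{jN+i}, e_2 \rangle$ directly. Using $z_\infty^{(2)} = \overline{z_\infty^{(1)}}$, $[M_\infty]_{2,2} = \overline{[M_\infty]_{2,1}}$, and $\prod \overline{\lambda_k} = \overline{\prod \lambda_k}$, the two eigencontributions combine into $u_{jN+i} = 2\Re\big[[M_\infty]_{2,1}\big(\prod_{k=j_0}^{j-1}\lambda_k\big) z_\infty^{(1)}\big] + E_j$. Substituting $[M_\infty]_{2,1} z_\infty^{(1)} = \vphi / (2i\varepsilon\sqrt{\alpha_{i-1}|\tau|})$ turns the real part into $\tfrac{1}{\varepsilon\sqrt{\alpha_{i-1}|\tau|}}\Im\big[(\prod\lambda_k)\vphi\big]$; writing $\prod_{k=j_0}^{j-1}\lambda_k = \big(\prod_{k=j_0}^{j-1}|\lambda_k|\big) e^{i\arg\prod\lambda_k}$ and identifying $\theta_k = |\arg\lambda_k| = \arccos\big(\tr Y_k/(2\sqrt{\det Y_k})\big)$ with $\arg\lambda_k = \varepsilon\theta_k$, division by $\prod|\lambda_k|$ gives $\tfrac{|\vphi|}{\sqrt{\alpha_{i-1}|\tau|}}\sin\big(\sum_{k=j_0}^{j-1}\theta_k + \arg\vphi\big)$, the sign $\varepsilon$ being absorbed into the phase. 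Finally I would collect all discarded terms --- the $F_j D_j \vec{w}_j$ corrections, the tail $\vec{z}_j - \vec{z}_\infty$, the differences $C_j - C_\infty$ and $Z_{j+1}C_{j+1} - Z_j C_j$, and the increments $\Delta C_k, \Delta D_k$ --- and bound them uniformly; each is dominated by a tail of $\sum_{k \ge j}\sup_K\big(\|\Delta C_k\| + \|\Delta R_k\|\big)$, using $\|\Delta D_k\|, \|F_k\| \lesssim \|\Delta C_k\| + \|\Delta R_k\|$, which gives the asserted estimate on $E_j$.
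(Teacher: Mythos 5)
Your architecture is genuinely different from the paper's and is largely viable. The paper proves the first limit by a telescoping Cauchy estimate on $\sqrt{a_{(j+1)N+i-1}}\,\tilde{\phi}_j$, where $\tilde{\phi}_j$ pairs $(Y_j-\overline{\lambda_j}\Id)\vec{q}_j$ against a fixed vector; you instead strip the diagonal growth and prove convergence of $\vec{z}_j=\Pi_j^{-1}C_j^{-1}Z_j^{-1}\vec{u}_{jN+i}$ by a summable-perturbation infinite-product argument. Your key observation --- that conjugation by $\Pi_{j+1}=\diag\big(P_j,\overline{P_j}\big)$ with $|P_j|=|\overline{P_j}|$ is a unitary conjugation, so $\|\Pi_{j+1}^{-1}F_j\Pi_{j+1}\|=\|F_j\|$ --- is correct and is indeed exactly where membership in $\Lambda_-$ is used. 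The reality argument giving $z_\infty^{(2)}=\overline{z_\infty^{(1)}}$ and the extraction of the sine formula are also sound.

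There is, however, a genuine gap in the unwinding of the first limit. You discard $F_jD_j\vec{w}_j$ and $\overline{\lambda_j}\big(Z_{j+1}C_{j+1}-Z_jC_j\big)\vec{w}_j$ and assert that each is dominated by a tail of $\sum_{k\geq j}\sup_K\big(\|\Delta C_k\|+\|\Delta R_k\|\big)$. Each of these terms has norm of order $\|\Delta C_j\|\prod_{k=j_0}^{j-1}|\lambda_k|$, while the quantity being resolved, $u_{(j+1)N+i}-\overline{\lambda_j}u_{jN+i}$, is of order $\vartheta_j\prod_{k=j_0}^{j-1}|\lambda_k|\asymp a_{(j+1)N+i-1}^{-1/2}\prod_{k=j_0}^{j-1}|\lambda_k|$. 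After the normalization by $\sqrt{a_{(j+1)N+i-1}}\big/\prod_{k=j_0}^{j-1}\lambda_k$ these errors become $\sqrt{a_{(j+1)N+i-1}}\,\|\Delta C_j\|$, which under the hypotheses need not tend to zero: summability of $\|\Delta C_j\|$ is compatible with $\|\Delta C_j\|\gtrsim a_j^{-1/2}$ along a sparse subsequence, and $a_n$ may grow linearly. So, bounded separately, these two terms can swamp the main term and the claimed limit does not follow. The repair is to not separate them: since $Z_{j+1}C_{j+1}F_jD_j\vec{w}_j=-Z_{j+1}(\Delta C_j)D_j\vec{w}_j$ and the $\Delta C_j$-part of $\overline{\lambda_j}\big(Z_{j+1}C_{j+1}-Z_jC_j\big)\vec{w}_j$ equals $\overline{\lambda_j}Z_{j+1}(\Delta C_j)\vec{w}_j$, their sum is $Z_{j+1}(\Delta C_j)\big(\overline{\lambda_j}\Id-D_j\big)\vec{w}_j$, which carries the small factor $\|D_j-\overline{\lambda_j}\Id\|=|\lambda_j-\overline{\lambda_j}|=O(\vartheta_j)$ and hence contributes only $O(\|\Delta C_j\|)$ after normalization; equivalently, work with the exact identity $Y_j-\overline{\lambda_j}\Id=C_j(D_j-\overline{\lambda_j}\Id)C_j^{-1}$ without re-indexing $C$. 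This cancellation is precisely what the paper's telescoping estimate enforces (every $\Delta C$ there appears multiplied by $\|D_j-\overline{\lambda_j}\Id\|$). With that repair, your route closes.
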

\begin{proof}
	We use the diagonalization constructed at the beginning of the proof of Theorem \ref{thm:5} as well as the
	notation introduced there. For $j > j_0$, we set
	\begin{equation} 
		\label{eq:145}
		\phi_j = \frac{u_{(j+1)N+i} - \overline{\lambda_j} u_{jN+i}}{\prod_{k = j_0}^{j-1} \lambda_k}.
	\end{equation}
	Observe that there is $c > 0$ so that for all $j \in \NN_0$ and $x \in K$,
	\begin{equation}
		\label{eq:21}
		\left\|
		Z_j^t e_2 - 
		\begin{pmatrix}
			1 & 1 \\
			1 & 1
		\end{pmatrix}
		T_i^t e_2
		\right\|
		\leq
		c \vartheta_j.
	\end{equation}
	We are going to show that the sequence $(\sqrt{a_{(j+1)N+i-1}} \phi_j : j > j_0)$ converges uniformly on $K$. Let
	\[
		\vec{q}_j = Z_j^{-1} \vec{u}_{jN+i}.
	\]
	By \eqref{eq:32} we have $\|D_j\| = |\lambda_j|$. Hence,  by Claim \ref{clm:2}, we get
	\begin{align*}
		\big| u_{(j+1)N+i} - \langle \vec{q}_{j+1}, Z_j^t e_2 \rangle \big|
		&=
		\big|
		\big\langle \vec{q}_{j+1}, \big(Z_{j+1}^t - Z_j^t\big) e_2 \big\rangle
		\big| \\
		&\lesssim
		\big\|\vec{q}_{j+1} \big\| \cdot \big|\vartheta_{j+1} - \vartheta_j\big| \\
		&\lesssim
		\Big( \prod_{k = j_0}^{j-1} \abs{\lambda_k}\Big) \big|\vartheta_{j+1} - \vartheta_j\big|.
	\end{align*}
	Therefore,
	\[
		\lim_{j \to \infty}
		\sqrt{a_{(j+1)N+i-1}} \frac{\big| u_{(j+1)N+i} - \langle \vec{q}_{j+1}, Z_j^t e_2 \rangle \big|}
		{\prod_{k = j_0}^{j-1} \abs{\lambda_k}}
		=0
	\]
	uniformly on $K$. Next, by \eqref{eq:69}, we can write
	\begin{align*}
		\big(Y_j - \overline{\lambda_j} \Id \big) \vec{q}_j
		=
		C_j
		\begin{pmatrix}
			\lambda_j - \overline{\lambda_j} & 0 \\
			0 & 0
		\end{pmatrix}
		C_j^{-1}
		\vec{q}_j, 
	\end{align*}
	therefore, by \eqref{eq:21}, we get
	\begin{align*}
		\bigg|
		\bigg\langle \big(Y_j - \overline{\lambda_j} \Id \big) \vec{q}_j, Z_j^t e_2 \bigg\rangle
		-
		\bigg\langle \big(Y_j - \overline{\lambda_j} \Id \big) \vec{q}_j, 
			\begin{pmatrix}
				1 & 1 \\
				1 & 1
			\end{pmatrix}
			T_i^t e_2
		\bigg\rangle
		\bigg|
		&\leq
		\vartheta_j 
		\big|\lambda_j - \overline{\lambda_j}\big| \cdot \big\|\vec{q}_j \big\| \\
		&\lesssim
		\vartheta_j^2 \Big(\prod_{k = j_0}^{j-1} \abs{\lambda_k}\Big)
	\end{align*}
	where in the last estimate we have used
	\begin{equation}
		\label{eq:67}
		\lambda_j - \overline{\lambda_j} = i \vartheta_j \sqrt{-\discr R_j}
	\end{equation}
	which is a consequence of \eqref{eq:65} and \eqref{eq:41}. Hence, it is enough to show that the sequence 
	$\big(\sqrt{a_{(j+1)N+i-1}} \tilde{\phi}_j : j > j_0 \big)$ where
	\[
		\tilde{\phi}_j = 
		\frac{\bigg\langle \big(Y_j - \overline{\lambda_j} \Id \big) \vec{q}_j, 
			\begin{pmatrix}
				1 & 1 \\
				1 & 1
			\end{pmatrix}
			T_i^t
			e_2
		\bigg\rangle}
		{\prod_{k = j_0}^{j-1} \lambda_k}
	\]
	converges uniformly on $K$. For the proof, we write
	\begin{align*}
		&\frac{\sqrt{a_{(j+1)N+i-1}}}{\lambda_{j-1}}
		(Y_j - \overline{\lambda_j}\Id) Y_{j-1}
		-
		\sqrt{a_{jN+i-1}} (Y_{j-1} - \overline{\lambda_{j-1}}\Id) \\
		&\qquad=
		\frac{\sqrt{a_{(j+1)N+i-1}}}{\lambda_{j-1}}
		\Big(
		C_j \big(D_j - \overline{\lambda_j}\Id\big) C_j^{-1} Y_{j-1}
		-
		C_j \big(D_j - \overline{\lambda_j}\Id\big) D_{j-1} C_{j-1}^{-1}\Big) \\
		&\qquad\phantom{=}
		-
		\sqrt{a_{jN+i-1}}
		\Big(
		C_{j-1} \big(D_{j-1} - \overline{\lambda_{j-1}}\Id\big) C_{j-1}^{-1}
		-
		C_j \big(D_{j-1} - \overline{\lambda_{j-1}}\Id\big) C_{j-1}^{-1}
		\Big) \\
		&\qquad\phantom{=}
		+
		C_j
		\bigg(
		\frac{\sqrt{a_{(j+1)N+i-1}}}{\lambda_{j-1}}
		\big(D_j - \overline{\lambda_j}\Id\big) D_{j-1} 
		-
		\sqrt{a_{jN+i-1}}
		\big(D_{j-1} - \overline{\lambda_{j-1}}\Id\big)\bigg) C_{j-1}^{-1}.
	\end{align*}
	The first two terms are estimated as follows,
	\begin{align*}
		&
		\frac{\sqrt{a_{(j+1)N+i-1}}}{\abs{\lambda_{j-1}}}
		\left\|
		C_j \big(D_j - \overline{\lambda_j}\Id\big) C_j^{-1} Y_{j-1}
		-
		C_j \big(D_j - \overline{\lambda_j}\Id\big) D_{j-1} C_{j-1}^{-1}
		\right\| \\
		&\qquad\qquad\lesssim
		\sqrt{a_{(j+1)N+i-1}}
		\big\|D_j - \overline{\lambda_j}\Id\big\|
		\cdot
		\|\Delta C_{j-1}\| \\
		&\qquad\qquad\lesssim
		\|\Delta C_{j-1}\|,
	\end{align*}
	and
	\begin{align*}
		&
		\sqrt{a_{jN+i-1}}
		\left\|
		C_{j-1} \big(D_{j-1} - \overline{\lambda_{j-1}}\Id\big) C_{j-1}^{-1}
		-
		C_j \big(D_{j-1} - \overline{\lambda_{j-1}}\Id\big) C_{j-1}^{-1}
		\right\| \\
		&\qquad\qquad\lesssim
		\sqrt{a_{jN+i-1}}
		\big\|D_{j-1} - \overline{\lambda_{j-1}}\Id\big\| \cdot \big\|\Delta C_{j-1} \big\| \\
		&\qquad\qquad\lesssim
		\big\|\Delta C_{j-1} \big\|.
	\end{align*}
	Next, by \eqref{eq:67} and \eqref{eq:68}, we write
	\begin{align*}
		&
		\frac{\sqrt{a_{(j+1)N+i-1}}}{\lambda_{j-1}}
		\big(D_j - \overline{\lambda_j}\Id\big) D_{j-1}
		-
		\sqrt{a_{jN+i-1}}
		\big(D_{j-1} - \overline{\lambda_{j-1}}\Id\big) \\
		&\qquad\qquad=
		\Big(\sqrt{-\discr R_j} - \sqrt{-\discr{R_{j-1}}}\Big)
		\begin{pmatrix}
			i \sqrt{\alpha_{i-1} |\tau(x)|} & 0 \\
			0 & 0
		\end{pmatrix},
	\end{align*}
	thus for the last term we get
	\begin{align*}
		\left\|
		C_j
		\bigg(
		\frac{\sqrt{a_{(j+1)N+i-1}}}{\lambda_{j-1}}
		\big(D_j - \overline{\lambda_j} \Id \big) D_{j-1} 
		-
		\sqrt{a_{jN+i-1}}
		\big(D_{j-1} - \overline{\lambda_{j-1}}\Id\big)\bigg) C_{j-1}^{-1}
		\right\| \lesssim
		\|\Delta R_{j-1} \|.
	\end{align*}
	Therefore, by Claim \ref{clm:2}, we obtain
	\begin{align*}
		\big|
		\sqrt{a_{(j+1)N+i-1}} \tilde{\phi}_j - \sqrt{a_{jN+i-1}} \tilde{\phi}_{j-1}
		\big|
		\lesssim
		\big\|\Delta C_{j-1} \big\| + \big\|\Delta R_{j-1} \big\|.
	\end{align*}
	Consequently, the sequence $\big( \sqrt{a_{(j+1)N+i-1}} \tilde{\phi}_j : j > j_0 \big)$ converges uniformly on 
	$\sS^1 \times K$. Hence, there is a function $\vphi: \sS^1 \times K \rightarrow \RR$, so that
	\begin{equation} 
		\label{eq:146}
		\vphi = \lim_{j \to \infty} \sqrt{a_{(j+1)N+i-1}} \phi_j
	\end{equation}
	uniformly on $\sS^1 \times K$. In particular, we get
	\begin{align*}
		\lim_{j \to \infty}
		\sup_{\eta \in \sS^1}
		\sup_{x \in K}{
		\bigg|
		\sqrt{a_{(j+1)N+i-1}} 
		\frac{u_{(j+1)N+i}(\eta, x) - \overline{\lambda_j(x)} u_{jN+i}(\eta, x)}{\prod_{k = j_0}^{j-1} |\lambda_k(x)|}
		-
		\vphi(\eta, x) \prod_{k = j_0}^{j-1} \frac{\lambda_k(x)}{|\lambda_k(x)|}
		\bigg|}
		=0.
	\end{align*}
	Since $u_n(\eta, x) \in \RR$, by taking imaginary part we conclude that
	\begin{align*}
		\lim_{j \to \infty}
		\sup_{\eta \in \sS^1}
		\sup_{x \in K}
		\bigg|
		&
		\sqrt{a_{(j+1)N+i-1}} \vartheta_j(x) \sqrt{-\discr R_j(x)}
		\frac{u_{jN+i}(\eta, x)}{\prod_{k = j_0}^{j-1} |\lambda_k(x)|} \\
		&\qquad-
		2 \abs{\vphi(\eta, x)} 
		\sin\Big(\sum_{k = j_0}^{j-1} \theta_k(x) + \arg \vphi(\eta, x) \Big)
		\bigg|
		=0
	\end{align*}
	where we have also used that
	\[
		\Im(\lambda_j(x)) = \frac{1}{2} \vartheta_j \sqrt{-\discr(R_j(x))}.
	\]
	Lastly, observe that
	\[
		\bigg|
		\frac{1}{\sqrt{-\discr R_j(x)}} - \frac{1}{2}\bigg| 
		\lesssim
		\sum_{k = j}^\infty \big\|\Delta R_k(x)\big\|,
	\]
	which together with
	\[
		\sqrt{a_{(j+1)N+i-1}} \vartheta_j(x) = \sqrt{\alpha_{i-1} |\tau(x)|},
	\]
	completes the proof.
\end{proof}

\section{Approximation procedure}
\label{sec:7}
In this section we describe the approximation procedure which allows us to show that the measure $\mu$ is absolutely
continuous on $\Lambda_-$ as well as to find its density, see Theorem \ref{thm:4}. We can also identify the function
$\vphi$ in Theorem \ref{thm:3}, see Theorem \ref{thm:6} for details.

Let $(a_n : n \in \NN_0)$ and $(b_n : n \in \NN_0)$ be $N$-periodically modulated Jacobi parameters. For a given
$L \in \NN$, we consider the truncated sequences $(a^L_n : n \in \NN_0)$ and $(b^L_n : n \in \NN_0)$ that are defined as
\begin{subequations}
	\begin{equation}
	\label{eq:42a}
	a^L_n = 
	\begin{cases}
		a_n & \text{if } 0 \leq n < L+N, \\
		a_{L+i} & \text{if } L+N \leq n, \text{ and } n-L \equiv i \bmod N,
	\end{cases}
	\end{equation}
	and
	\begin{equation}
	\label{eq:42b}
	b^L_n =
	\begin{cases}
		b_n & \text{if } 0 \leq n < L+N, \\
		b_{L+i} & \text{if } L+N \leq n, \text{ and } n-L \equiv i \bmod N,
	\end{cases}
	\end{equation}
\end{subequations}
where $i \in \{0, 1, \ldots, N-1\}$. Let
\[
	X_n^L(x) = \prod_{j=n}^{n+N-1}
	\begin{pmatrix}
		0 & 1\\
		-\frac{a_{j-1}^L}{a_j^L} & \frac{x - b_j^L}{a_j^L}
	\end{pmatrix}.
\]
By $(p^L_n : n \in \NN_0)$ we denote the sequence of orthogonal polynomials corresponding to the 
sequences $a^L$ and $b^L$. Let $\mu_L$ be their orthonormalizing measure.
\begin{lemma}
	\label{lem:2}
	Let $(L_j : j \in \NN)$ be an increasing sequence of positive integers.
	Let $K$ be a compact subset of $\RR$. Suppose that
	\[
		\sup_{j \in \NN} {\sup_{x \in K} \| X_{L_j}(x) \|} < \infty.
	\]
	If
	\[
		\lim_{j \to \infty} a_{L_j-1} = \infty \quad \text{and} \quad
		\lim_{j \to \infty} \big( a_{L_j+N-1} - a_{L_j-1} \big) = 0,
	\]
	then
	\begin{equation}
		\label{eq:74}
		\lim_{j \to \infty} 
		a_{L_j+N-1} \cdot \sup_{x \in K}{\big\| X_{L_j+N}^{L_j}(x) - X_{L_j}(x) \big\|} 
		= 0.
	\end{equation}
	Moreover,
	\begin{align}
		\label{eq:48}
		&\lim_{j \to \infty} a_{L_j+N-1} \cdot \sup_{x \in K} \Big| \det X^{L_j}_{L_j+N}(x) - \det X_{L_j}(x) \Big| = 0, \\
		\label{eq:49}
		&\lim_{j \to \infty} a_{L_j+N-1} \cdot \sup_{x \in K} \Big| \discr X^{L_j}_{L_j+N}(x) - \discr X_{L_j}(x) \Big| = 0.
	\end{align}
\end{lemma}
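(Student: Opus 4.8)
The plan is to compare the two $N$-step transfer matrices factor by factor and to show that they differ only in a single, explicitly controllable entry. Write $B_n$ for the transfer matrices \eqref{eq:108} of the original sequence and $B_n^{L}$ for those of the truncated sequence \eqref{eq:42a}--\eqref{eq:42b}. Unwinding the definitions, for $n \ge L+N$ the matrices $B_n^{L}$ are exactly $N$-periodic in $n$, and a direct check of the index shifts shows that $B_{L+N+k}^{L} = B_{L+k}$ for every $k \in \{1, \ldots, N-1\}$, whereas for $k=0$ only the $(2,1)$ entry changes, namely
\[
	B_{L+N}^{L} - B_{L} = \frac{a_{L-1} - a_{L+N-1}}{a_L}
	\begin{pmatrix} 0 & 0 \\ 1 & 0 \end{pmatrix}.
\]
Cancelling the $N-1$ common left factors yields the structural identity
\[
	X_{L+N}^{L} - X_{L} = \big( B_{L+N-1} \cdots B_{L+1} \big)\big( B_{L+N}^{L} - B_L \big),
\]
so the entire difference equals the partial transfer matrix $B_{L+N-1}\cdots B_{L+1}$ times a rank-one matrix of norm $|a_{L-1}-a_{L+N-1}|/a_L$.

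Next I would bound the partial product uniformly. Since $B_{L+N-1}\cdots B_{L+1} = X_{L} B_{L}^{-1}$ and
\[
	B_{L}^{-1}(x) = \begin{pmatrix} \frac{x-b_L}{a_{L-1}} & -\frac{a_L}{a_{L-1}} \\ 1 & 0 \end{pmatrix},
\]
periodic modulation together with $a_{L_j-1}\to\infty$ gives $\sup_j \sup_{x\in K}\|B_{L_j}^{-1}(x)\|<\infty$; combined with the hypothesis $\sup_j\sup_{K}\|X_{L_j}\|<\infty$ this bounds $\sup_j\sup_{K}\|B_{L_j+N-1}\cdots B_{L_j+1}\|$. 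Hence
\[
	a_{L_j+N-1}\,\sup_{x\in K}\big\|X_{L_j+N}^{L_j}-X_{L_j}\big\|
	\lesssim \frac{a_{L_j+N-1}}{a_{L_j}}\,\big|a_{L_j-1}-a_{L_j+N-1}\big|.
\]
The factor $a_{L_j+N-1}/a_{L_j}$ stays bounded by periodic modulation (its limit points lie among the finitely many ratios $\alpha_{i-1}/\alpha_i$), while $|a_{L_j-1}-a_{L_j+N-1}|\to0$ by hypothesis, which proves \eqref{eq:74}.

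Finally, \eqref{eq:48} and \eqref{eq:49} follow from \eqref{eq:74}. Because $a_{L_j+N-1}\to\infty$, \eqref{eq:74} already forces $\|X_{L_j+N}^{L_j}-X_{L_j}\|\to0$, so both families $(X_{L_j})$ and $(X_{L_j+N}^{L_j})$ are uniformly bounded on $K$. On any bounded subset of $\Mat(2,\CC)$ the maps $A\mapsto\det A$ and $A\mapsto\discr A=(\tr A)^2-4\det A$ are Lipschitz, whence
\[
	\big|\det X_{L_j+N}^{L_j}-\det X_{L_j}\big|
	+\big|\discr X_{L_j+N}^{L_j}-\discr X_{L_j}\big|
	\lesssim \big\|X_{L_j+N}^{L_j}-X_{L_j}\big\|
\]
uniformly on $K$; multiplying by $a_{L_j+N-1}$ and invoking \eqref{eq:74} gives \eqref{eq:48} and \eqref{eq:49}.

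I expect the main obstacle to be the bookkeeping of the first step: one must match the index shifts in \eqref{eq:42a}--\eqref{eq:42b} carefully enough to confirm that precisely the rightmost factor ($k=0$) differs, and that it differs only in its $(2,1)$ entry. Once this structural identity is secured, the remainder is a short estimate driven entirely by the two hypotheses $a_{L_j-1}\to\infty$ and $a_{L_j+N-1}-a_{L_j-1}\to0$, together with the uniform bound on $\|X_{L_j}\|$.
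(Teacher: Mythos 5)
Your proof is correct, and its core coincides with the paper's: the paper's entire argument for \eqref{eq:74} is an appeal to \cite[Corollary 4]{SwiderskiTrojan2019}, which packages precisely the computation you carry out (only the rightmost factor $B^{L}_{L+N}$ differs from $B_L$, and only in its $(2,1)$ entry) into the inequality $\| X^{L}_{L+N}(x)-X_{L}(x)\| \le \|X_L(x)\|\,\bigl|a_{L+N-1}/a_{L-1}-1\bigr|$. The differences are in the packaging. First, your bound retains the prefactor $\|B_{L+N-1}\cdots B_{L+1}\|\cdot a_{L+N-1}/a_{L}$, and controlling $\|B_{L}^{-1}\|$ and $a_{L}/a_{L-1}$ forces you to invoke the standing assumption of periodic modulation, whereas the cited form (obtained by writing $X^{L}_{L+N}-X_L = X_L\cdot B_L^{-1}(B^{L}_{L+N}-B_L)$ and noting that $B_L^{-1}$ turns the rank-one perturbation into $\frac{a_{L+N-1}-a_{L-1}}{a_{L-1}}$ times a fixed idempotent) needs only the hypotheses listed in the lemma; since periodic modulation is in force throughout Section~\ref{sec:7}, this costs nothing here. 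Second, for \eqref{eq:48} the paper uses the exact identity $a_{L+N-1}\bigl(\det X^{L}_{L+N}-\det X_L\bigr)=a_{L+N-1}-a_{L-1}$ (both determinants are telescoping products of the $a$'s), and for \eqref{eq:49} the factorization of the difference of the squared traces; your substitute — local Lipschitz continuity of $\det$ and $\discr$ on bounded sets, which the paper itself uses later as \eqref{eq:114} — is softer but equally valid, because \eqref{eq:74} already controls $a_{L_j+N-1}\|X^{L_j}_{L_j+N}-X_{L_j}\|$ and, as you note, $a_{L_j+N-1}\to\infty$ makes both families uniformly bounded. In short: same mechanism, self-contained where the paper cites, and a generic continuity estimate where the paper computes exactly.
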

\begin{proof}
	Let $L \in \{ L_j : j \in \NN \}$.
	By \cite[Corollary 4]{SwiderskiTrojan2019}
	\[
		\big\| X_{L+N}^L(x) - X_L(x) \big\| \leq \big\| X_L(x) \big\|\cdot
		\bigg| \frac{a_{L+N-1}}{a_{L-1}} - 1 \bigg|,
	\]
	which easily leads to \eqref{eq:74}. Next, we write
	\[
		a_{L+N-1} \Big( \det X^L_{L+N} - \det X_L \Big) = a_{L+N-1} - a_{L-1},
	\]
	proving \eqref{eq:48}. Lastly,
	\[
		\Big( \tr X^L_{L+N}(x) \Big)^2 - \Big( \tr X_{L}(x) \Big)^2 =
		\tr \Big( X_{L+N}^L(x) - X_L(x) \Big) \tr \Big( X^L_{L+N}(x) + X_{L}(x) \Big)
	\]
	thus by \eqref{eq:74} and \eqref{eq:48}, we conclude \eqref{eq:49}.
\end{proof}

\begin{lemma} 
	\label{lem:3}
	Let $N$ be a positive integer. Suppose that $(a_n : n \in \NN_0)$ and $(b_n : n \in \NN_0)$ are $N$-periodically
	modulated Jacobi parameters such that 
	\[
		\lim_{n \to \infty} \bigg| \frac{\alpha_{n-1}}{\alpha_n} a_{n} - a_{n-1} - s_n \bigg| = 0 
		\qquad \text{and} \qquad
		\lim_{n \to \infty} \bigg| \frac{\beta_n}{\alpha_n} a_{n} - b_{n} -r_n \bigg| = 0
	\]
	for certain $N$-periodic sequences $(s_n : n \in \ZZ)$ and $(r_n : n \in \ZZ)$. Then for every compact 
	subset $K \subset \CC$,
	\[
		\lim_{L \to \infty} a_{L+N-1} \cdot \sup_{x \in K} \big\| X_{L+N} - X_L \big\| = 0.
	\]
	Moreover,
	\begin{align}
		\label{eq:60}
		&\lim_{L \to \infty} a_{L+N-1} \cdot \sup_{x \in K} \Big| \det X_{L+N}(x) - \det X_L(x) \Big| = 0, \\
		\label{eq:72}
		&\lim_{L \to \infty} a_{L+N-1} \cdot \sup_{x \in K} \Big| \discr X_{L+N}(x) - \discr X_L(x) \Big| = 0.
	\end{align}
\end{lemma}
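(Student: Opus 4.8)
The plan is to reduce everything to an entrywise estimate on the one–step transfer matrices $B_n$ and then propagate it through the $N$–fold products by a telescoping identity, after which the determinant and discriminant statements follow almost formally. Throughout I write the hypotheses as $a_{n-1} = \tfrac{\alpha_{n-1}}{\alpha_n} a_n - s_n - \epsilon_n$ and $b_n = \tfrac{\beta_n}{\alpha_n} a_n - r_n - \omega_n$, where $\epsilon_n, \omega_n \to 0$.

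First I would treat the norm bound. With the product convention $\prod_{k=n_0}^{n_1} C_k = C_{n_1}\cdots C_{n_0}$, the standard telescoping identity for products gives
\[
	X_{L+N} - X_L = \sum_{k=0}^{N-1} \Big(\prod_{j=k+1}^{N-1} B_{L+N+j}\Big)\big(B_{L+N+k} - B_{L+k}\big)\Big(\prod_{j=0}^{k-1} B_{L+j}\Big).
\]
Since $B_{jN+i'}(x) \to \frakB_{i'}(0)$ locally uniformly, the factors $B_m(x)$ with $m$ large and $x \in K$ are bounded by a constant independent of $m$ and $x$; hence the products of at most $N$ of them are uniformly bounded and $\|X_{L+N}-X_L\| \lesssim \sum_{k=0}^{N-1} \|B_{L+N+k}-B_{L+k}\|$ on $K$. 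Because the ratios $a_{m-1}/a_m$ converge to the positive numbers $\alpha_{m-1}/\alpha_m$, one has $a_{L+N-1} \asymp a_{L+k}$ uniformly in $L$ for each fixed $k$, so it suffices to prove $a_{n}\|B_{n+N}-B_n\| \to 0$ uniformly on $K$ with $n = L+k$.

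This is the heart of the matter. The first rows of $B_{n+N}$ and $B_n$ coincide, so only the second row contributes. Using the $N$–periodicity of $\alpha,\beta,s,r$ to rewrite the entries at $n+N$ and substituting the hypotheses, a direct computation gives
\[
	a_n\big[B_{n+N}-B_n\big]_{2,1} = -s_n\Big(1-\tfrac{a_n}{a_{n+N}}\Big) - \epsilon_n + \tfrac{a_n}{a_{n+N}}\,\epsilon_{n+N},
\]
\[
	a_n\big[B_{n+N}-B_n\big]_{2,2} = (x+r_n)\Big(\tfrac{a_n}{a_{n+N}}-1\Big) + \tfrac{a_n}{a_{n+N}}\,\omega_{n+N} - \omega_n.
\]
Now $a_{n+N}/a_n \to 1$ (the ratios over one full period telescope to $\alpha_{n+N}/\alpha_n = 1$), while $s_n, r_n$ are bounded and $\epsilon_n, \omega_n \to 0$; since $x$ ranges over the compact set $K$, both right-hand sides tend to $0$ uniformly on $K$, which proves the first assertion.

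For the determinant I would use $\det B_j = a_{j-1}/a_j$, so the product telescopes to $\det X_n = a_{n-1}/a_{n+N-1}$. Writing $d_1 = a_{L+N-1}-a_{L-1}$, $d_2 = a_{L+2N-1}-a_{L+N-1}$ and substituting $a_{L-1} = a_{L+N-1}-d_1$, $a_{L+2N-1}=a_{L+N-1}+d_2$,
\[
	a_{L+N-1}\big(\det X_{L+N}-\det X_L\big) = \frac{(d_1-d_2)\,a_{L+N-1} + d_1 d_2}{a_{L+2N-1}}.
\]
The increments $a_{n+N}-a_n$ converge along each residue class, as in \eqref{eq:25} (which rests only on the present hypotheses: setting $c_n = a_n/\alpha_n$ one gets $c_{n+N}-c_n = \sum_{k=1}^N (s_{n+k}+\epsilon_{n+k})/\alpha_{n+k-1}$, whence $a_{n+N}-a_n = \alpha_n(c_{n+N}-c_n)$ is convergent). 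Thus $d_1,d_2$ are bounded and $d_1-d_2\to 0$; since $a_{L+N-1}/a_{L+2N-1}$ is bounded and $a_{L+2N-1}\to\infty$, the displayed quantity tends to $0$, giving \eqref{eq:60}. Finally \eqref{eq:72} follows from $(\tr X_{L+N})^2-(\tr X_L)^2 = \tr(X_{L+N}-X_L)\cdot\tr(X_{L+N}+X_L)$: the second factor is bounded on $K$ (both traces converge to $\tr\frakX_{L\bmod N}(0)$), while $a_{L+N-1}|\tr(X_{L+N}-X_L)|\to0$ by the norm bound, so $a_{L+N-1}|\discr X_{L+N}-\discr X_L| \le a_{L+N-1}|(\tr X_{L+N})^2-(\tr X_L)^2| + 4\,a_{L+N-1}|\det X_{L+N}-\det X_L| \to 0$. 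The one genuinely delicate point is \eqref{eq:60}: naively the growing factor $a_{L+N-1}$ multiplies a difference of two quantities that individually grow, and the argument above shows that the telescoping of $\det X_n$ together with the boundedness of the increments $a_{n+N}-a_n$ keeps the product bounded and in fact forces it to zero; everything else is bookkeeping on top of the single estimate $a_n\|B_{n+N}-B_n\|\to 0$.
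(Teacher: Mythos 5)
Your proof is correct and follows essentially the same route as the paper: the telescoping decomposition of $X_{L+N}-X_L$ into one-step differences, the explicit computation of $a_n\big(B_{n+N}-B_n\big)$ using the periodicity of $\alpha,\beta,s,r$ together with $a_n/a_{n+N}\to 1$, and the reduction of the discriminant statement to the trace identity plus the determinant statement. The only place the paper is terse --- it dismisses \eqref{eq:60} and \eqref{eq:72} as ``analogous to Lemma~\ref{lem:2}'', even though there $\det X^{L}_{L+N}=1$ exactly --- is handled correctly by your telescoping $\det X_n = a_{n-1}/a_{n+N-1}$ combined with the convergence of the $N$-step increments $a_{n+N}-a_n$ along residue classes (i.e.\ \eqref{eq:25}), which is precisely the right way to fill that gap.
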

\begin{proof}
	We notice that
	\[
		X_{L+N} - X_L =
		\sum_{k=0}^{N-1} 
		\bigg( \prod_{j=k+1}^{N-1} B_{L+N+j} \bigg) 
		\big( B_{L+N+k} - B_{L+k} \big)
		\bigg( \prod_{j=k+1}^{N-1} B_{L+j} \bigg),
	\]
	thus
	\begin{equation}
		\label{eq:88}
		a_{L+N-1} \| X_{L+N} - X_L \| 
		\leq
		\sum_{k=0}^{N-1} \frac{a_{L+N-1}}{a_{L+k}}
		\bigg( \prod_{j=k+1}^{N-1} \| B_{L+N+j} \| \bigg) 
		a_{L+k} \big\| B_{L+N+k} - B_{L+k} \big\|
		\bigg( \prod_{j=k+1}^{N-1} \| B_{L+j} \| \bigg).
	\end{equation}
	Next, we compute
	\begin{equation}
		\label{eq:87}
		a_{L+k} \big( B_{L+N+k}(x) - B_{L+k}(x) \big) =
		\begin{pmatrix}
			0 & 0 \\
			a_{L+k-1} -\frac{a_{L+k+N-1}}{a_{L+k+N}} a_{L+k} &
			x \Big( \frac{a_{L+k}}{a_{L+k+N}} - 1 \Big) +
			b_{L+k} - \frac{b_{L+k+N}}{a_{L+k+N}} a_{L+k}
		\end{pmatrix}.
	\end{equation}
	Since $(a_n : n \in \NN_0)$ and $(b_n : n \in \NN_0)$ are $N$-periodically modulated
	\begin{equation}
		\label{eq:97}
		\lim_{L \to \infty} \frac{a_{L+k}}{a_{L+k+N}} = 1.
	\end{equation}
	By $N$-periodicity of $(\alpha_n : n \in \ZZ)$, we have
	\[
		a_{L+k-1} -\frac{a_{L+k+N-1}}{a_{L+k+N}} a_{L+k} = 
		a_{L+k-1} - \frac{\alpha_{L+k-1}}{\alpha_{L+k}} a_{L+k}
		+ \bigg( \frac{\alpha_{L+k+N-1}}{\alpha_{L+k+N}} a_{L+k+N} - a_{L+k+N-1} \bigg) 
		\frac{a_{L+k}}{a_{L+k+N}},
	\]
	hence, $N$-periodicity of $(s_n : n \in \ZZ)$ and \eqref{eq:97} leads to
	\begin{equation}
		\label{eq:53}
		\lim_{L \to \infty} \bigg( a_{L+k-1} -\frac{a_{L+k+N-1}}{a_{L+k+N}} a_{L+k} \bigg) =
		\lim_{L \to \infty} \bigg( -s_{L+k} + s_{L+k+N} \frac{a_{L+k}}{a_{L+k+N}} \bigg) = 0.
	\end{equation}
	Similarly, we write
	\[
		b_{L+k} - \frac{b_{L+k+N}}{a_{L+k+N}} a_{L+k} =
		b_{L+k} - \frac{\beta_{L+k}}{\alpha_{L+k}} a_{L+k} +
		\bigg( \frac{\beta_{L+k+N}}{\alpha_{L+k+N}} a_{L+k+N} - b_{L+k+N} \bigg) \frac{a_{L+k}}{a_{L+k+N}},
	\]
	and by $N$-periodicity of $(r_n : n \in \ZZ)$ and \eqref{eq:97},
	\begin{equation}
		\label{eq:54}
		\lim_{L \to \infty} \bigg( b_{L+k} - \frac{b_{L+k+N}}{a_{L+k+N}} a_{L+k} \bigg) =
		\lim_{L \to \infty} \bigg( -r_{L+k} + r_{L+k+N} \frac{a_{L+k}}{a_{L+k+N}} \bigg) = 0.
	\end{equation}
	Consequently, by inserting \eqref{eq:97}--\eqref{eq:54} into \eqref{eq:87}, we get
	\[
		\lim_{L \to \infty} a_{L+k} \cdot \sup_{x \in K} \| B_{L+k+N}(x) - B_{L+k}(x) \| = 0.
	\]
	Hence, by \eqref{eq:88} we obtain
	\[
		\lim_{L \to \infty} a_{L+N-1} \cdot \sup_{x \in K} \| X_{L+N}(x) - X_L(x) \| = 0.
	\]
	The proofs of \eqref{eq:60} and \eqref{eq:72} are analogous to the proof of Lemma \ref{lem:2}.
\end{proof}

\subsection{Tur\'an determinants}
Let us recall that $N$-shifted Tur\'an determinants are defined as
\begin{equation}
	\label{eq:135}
	\scrD_n(x) = p_n(x) p_{n+N-1}(x) - p_{n-1}(x) p_{n+N}(x),
\end{equation}
which in terms of the notation introduced in Section \ref{sec:5} takes a form
\begin{equation}
	\label{eq:44}
	\scrD_n(x) 
	= 
	a_{n+N-1}^{-3/2} S_n(e_2, x) 
	=
	\big\langle
	E \vec{p}_{n+N}(x), \vec{p}_n(x)
	\big\rangle
\end{equation}
where
\[
	\vec{p}_n(x) =
	\begin{pmatrix}
		p_{n-1}(x) \\
		p_n(x)
	\end{pmatrix}, \quad n \geq 1.
\]
Let us denote by $(\scrD_n^L : n \in \NN)$ the sequence \eqref{eq:135} associated to the polynomials
$(p_n^L : n \geq 0)$, namely
\[
	\scrD_n^L(x) =
	\big\langle
	E
	\vec{p}^L_{n+N}(x),
	\vec{p}^L_{n}(x)
	\big\rangle
\]
where
\[
	\vec{p}^L_n(x) =
	\begin{pmatrix}
		p^L_{n-1}(x) \\
		p^L_n(x)
	\end{pmatrix}, \quad n \geq 1.
\]
\begin{lemma} 
	\label{lem:1} 
	For all $k \in \NN, x \in \RR$ and $L \in \NN$, we have
	\begin{equation}
		\label{eq:45}
		\scrD_{L+kN}^L(x) = \scrD_{L+N}^L(x).
	\end{equation}
	Let $(L_j : j \in \NN)$ be an increasing sequence of positive integers. Suppose that for a compact set $K \subset \RR$,
	\[	
		\sup_{j \in \NN} \sup_{x \in K} \sqrt{a_{L_j+N-1}} 
		\big( p_{L_j+N-1}^2(x) + p_{L_j+N}^2(x) \big) < \infty,
	\]
	and
	\[
		\sup_{j \in \NN} \sup_{x \in K} \| X_{L_j}(x) \| < \infty.
	\]
	If
	\[
		\lim_{j \to \infty} a_{L_j-1} = \infty \quad \text{and} \quad
		\lim_{j \to \infty} \big( a_{L_j+N-1} - a_{L_j-1} \big) = 0,
	\]
	then
	\begin{equation}
		\label{eq:57}
		\lim_{j \to \infty} 
		a_{L_j+N-1}^{3/2} 
		\cdot \sup_{x \in K}{\Big| \scrD^{L_j}_{L_j+N}(x) - \scrD_{L_j}(x) \Big|} = 0.
	\end{equation}
\end{lemma}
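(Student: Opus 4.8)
The plan is to prove the two assertions separately: \eqref{eq:45} by exploiting the eventual periodicity of the truncated parameters, and \eqref{eq:57} by comparing both Tur\'an determinants on the common vector $\vec{p}_{L+N}$. For \eqref{eq:45}, I would first observe that by \eqref{eq:42a}--\eqref{eq:42b} the truncated parameters satisfy $a_n^L = a_{L+((n-L)\bmod N)}$ and $b_n^L = b_{L+((n-L)\bmod N)}$ for all $n \geq L$; in particular they are $N$-periodic on $\{n \geq L\}$. Consequently the transfer matrices $B_n^L$ are $N$-periodic for $n \geq L+1$, whence $X_{n+N}^L = X_n^L$ and $\det X_n^L = a_{n-1}^L/a_{n+N-1}^L = 1$ for $n \geq L+1$. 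Using $\vec{p}^L_{n+N} = X_n^L \vec{p}^L_n$ (so that $\vec{p}^L_{n+2N} = X^L_{n+N}\vec{p}^L_{n+N} = X_n^L \vec{p}^L_{n+N}$) together with the elementary identity $Y^* E Y = (\det Y) E$ for $Y \in \Mat(2,\RR)$, one computes for $n \geq L+1$
\[
\scrD^L_{n+N} = \big\langle E X^L_n \vec{p}^L_{n+N}, \vec{p}^L_{n+N}\big\rangle = \det(X_n^L)\, \big\langle E X_n^L \vec{p}^L_n, \vec{p}^L_n\big\rangle = \scrD^L_n.
\]
Applying this at $n = L+N, L+2N, \dots$ and iterating yields $\scrD^L_{L+kN} = \scrD^L_{L+N}$ for every $k \in \NN$.

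For \eqref{eq:57}, abbreviate $L = L_j$. The key starting remark is that the truncated and original parameters coincide up to index $L+N-1$, so $p_n^L = p_n$ for $0 \leq n \leq L+N$, and in particular $\vec{p}^L_{L+N} = \vec{p}_{L+N}$. I would then rewrite both determinants as quadratic forms in this single vector. On the one hand, $\scrD^L_{L+N} = \langle E X^L_{L+N}\vec{p}_{L+N}, \vec{p}_{L+N}\rangle$. On the other hand, writing $\vec{p}_L = X_L^{-1}\vec{p}_{L+N}$ in \eqref{eq:44} and using the identity $(Y^{-1})^* E = (\det Y)^{-1} E Y$ from the proof of Theorem~\ref{thm:5}, one gets $\scrD_L = (\det X_L)^{-1}\langle E X_L \vec{p}_{L+N}, \vec{p}_{L+N}\rangle$. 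Subtracting,
\[
\scrD^L_{L+N} - \scrD_L = \Big\langle E\Big(X^L_{L+N} - \tfrac{1}{\det X_L} X_L\Big)\vec{p}_{L+N}, \vec{p}_{L+N}\Big\rangle,
\]
so that $|\scrD^L_{L+N} - \scrD_L| \leq \|X^L_{L+N} - (\det X_L)^{-1} X_L\|\cdot \|\vec{p}_{L+N}\|^2$ uniformly on $K$.

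It then remains to balance the powers of $a_{L+N-1}$. The hypothesis on the polynomials gives $\|\vec{p}_{L+N}\|^2 = p_{L+N-1}^2 + p_{L+N}^2 \lesssim a_{L+N-1}^{-1/2}$ uniformly on $K$, which absorbs exactly one half-power, leaving $a_{L+N-1}^{3/2}|\scrD^L_{L+N} - \scrD_L| \lesssim a_{L+N-1}\,\|X^L_{L+N} - (\det X_L)^{-1}X_L\|$. Splitting the matrix difference as $(X^L_{L+N} - X_L) - ((\det X_L)^{-1} - 1) X_L$ and recalling $\det X_L = a_{L-1}/a_{L+N-1}$, I would bound
\[
a_{L+N-1}\,\big\|X^L_{L+N} - \tfrac{1}{\det X_L}X_L\big\| \leq a_{L+N-1}\,\|X^L_{L+N} - X_L\| + \tfrac{a_{L+N-1}}{a_{L-1}}\,|a_{L+N-1} - a_{L-1}|\,\|X_L\|.
\]
The first term tends to $0$ by \eqref{eq:74} of Lemma~\ref{lem:2} (whose hypotheses are exactly those assumed here); in the second term $\|X_L\|$ is bounded on $K$ by assumption, while $a_{L+N-1}/a_{L-1} \to 1$ and $|a_{L+N-1} - a_{L-1}| \to 0$, so it also vanishes. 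As all estimates are uniform in $x \in K$, this proves \eqref{eq:57}.

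The main obstacle is not any single estimate but the bookkeeping of the weight $a_{L+N-1}^{3/2}$: one must recognize that the determinant normalization $(\det X_L)^{-1}$ --- absent in the truncated periodic system but present for the original one --- contributes a term of exactly the right order, and that the $3/2$ power splits cleanly as $1/2$ (killed by the polynomial bound) plus $1$ (killed by Lemma~\ref{lem:2}). Verifying that the determinant-correction term $\frac{a_{L+N-1}}{a_{L-1}}|a_{L+N-1} - a_{L-1}|\,\|X_L\|$ vanishes is where one must use the boundedness of $\|X_L\|$ together with $a_{L-1} \to \infty$ and $a_{L+N-1} - a_{L-1} \to 0$ simultaneously.
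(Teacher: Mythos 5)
Your proof is correct and takes essentially the same route as the paper's: for \eqref{eq:45} you use the identical periodicity-plus-$\det X^L_{L+kN}=1$ telescoping based on the identity $\scrD_n = (\det X_n)^{-1}\langle E X_n \vec{p}_{n+N}, \vec{p}_{n+N}\rangle$, and for \eqref{eq:57} your decomposition into $a_{L+N-1}\|X^L_{L+N}-X_L\|$ (handled by \eqref{eq:74} of Lemma~\ref{lem:2}) plus the determinant-correction term $\frac{a_{L+N-1}}{a_{L-1}}|a_{L+N-1}-a_{L-1}|\,\|X_L\|$ is exactly the content of the estimate the paper imports as a citation, so you have merely re-derived that external bound in place.
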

\begin{proof}
	Since
	\begin{align*}
		\scrD_n &=
		\big\langle 
		E \vec{p}_{n+N}, X_n^{-1} \vec{p}_{n+N}
		\big\rangle \\
		&=
		\big\langle 
		(X_n^{-1})^* E \vec{p}_{n+N}, \vec{p}_{n+N}
		\big\rangle \\
		&=
		(\det X_n)^{-1}
		\big\langle 
		E X_n \vec{p}_{n+N}, \vec{p}_{n+N}
		\big\rangle,
	\end{align*}
	we have
	\begin{equation}
		\label{eq:55}
		\scrD_{L+(k+1)N}^L - \scrD_{L+kN}^L = 
		\Big\langle 
		E \Big( X_{L+(k+1)N}^L - \big(\det X_{L+kN}^L\big)^{-1} X_n \Big) \vec{p}^L_{L+(k+1)N}, \vec{p}_{L+(k+1)N}^L
		\Big\rangle.
	\end{equation}
	By \eqref{eq:42a} and \eqref{eq:42b}, for $k \in \NN$, we have
	\[
		 X_{L+(k+1)N}^L = X_{L+kN}^L,
	\]
	and
	\[
		\det X_{L+kN}^L = 1,
	\]
	thus \eqref{eq:45} can be deduce from \eqref{eq:55}. 

	Let $L \in \{L_j : j \in \NN\}$. To prove \eqref{eq:57}, we observe that \cite[Proposition 5]{SwiderskiTrojan2019} implies that
	\[
		\Big| \scrD^L_{L+N}(x) - \scrD_L(x) \Big| \leq 
		\big\| X_L(x) \big\| \cdot \bigg| \frac{a_{L+N-1}}{a_{L-1}} - 1 \bigg|
		\Big( p_{L+N-1}^2(x) + p_{L+N}^2(x) \Big).
	\]
	Therefore, for a certain constant $c>0$,
	\begin{align*}
		a_{L+N-1}^{3/2} \cdot \sup_{x \in K} \Big| \scrD^L_{L+N}(x) - \scrD_L(x) \Big| 
		&\leq 
		c a_{L+N-1} \bigg| \frac{a_{L+N-1}}{a_{L-1}} - 1 \bigg| \\
		&= c \frac{a_{L+N-1}}{a_{L-1}} \big| a_{L+N-1} - a_{L-1} \big|,
	\end{align*}
	which concludes the proof.
\end{proof}

\begin{theorem} 
	\label{thm:4}
	Let $N$ be a positive integer. Let $(a_n : n \in \NN_0)$ and $(b_n : n \in \NN_0)$ be $N$-periodically modulated
	Jacobi parameters. Suppose that there are $(L_j : j \in \NN)$ an increasing sequence of positive integers and $K$
	a compact interval with non-empty interior contained in 
	\[
		\Big\{ 
		x \in \RR : \lim_{j \to \infty} a_{L_j+N-1} \cdot \discr X_{L_j}(x) \text{ exists and is negative} 
		\Big\}
	\]
	such that
	\[
			\sup_{j \in \NN} \sup_{x \in K} \| X_{L_j}(x) \| < \infty,
	\]
	and
	\[
			\sup_{j \in \NN} \sup_{x \in K} \sqrt{a_{L_j+N-1}} \big( p_{L_j+N-1}^2(x) + p_{L_j+N}^2(x) \big) < \infty.
	\]
	Assume that there is a function $g : K \to (0, \infty)$ such that
	\[
		\lim_{j \to \infty} \sup_{x \in K} \Big| a_{L_j+N-1}^{3/2} \big| \scrD_{L_j}(x) \big| - g(x) \Big| = 0.
	\]
	If
	\[
		\lim_{j \to \infty} a_{L_j-1} = \infty, \quad\text{and}\quad
		\lim_{j \to \infty} \big( a_{L_j+N-1} - a_{L_j-1} \big) = 0,
	\]
	then each $\nu$, a weak accumulation point of the sequence $(\mu_{L_j} : j \in \NN)$, is absolutely continuous on $K$
	with the density
	\[
		\nu'(x) = \frac{\sqrt{-h(x)}}{2 \pi g(x)} \qquad x \in K
	\]
	where
	\begin{equation}
		\label{eq:38}
		h(x) = \lim_{j \to \infty} a_{L_j+N-1} \cdot \discr X_{L_j}(x) \qquad x \in K.
	\end{equation}
\end{theorem}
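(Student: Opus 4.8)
The plan is to reduce everything to the truncated measures $\mu_{L_j}$, whose Jacobi parameters $(a^{L_j}_n)$ and $(b^{L_j}_n)$ are \emph{eventually} $N$-periodic by \eqref{eq:42a} and \eqref{eq:42b}. Thus each $\mu_{L_j}$ is the spectral measure of an eventually periodic Jacobi matrix, whose absolutely continuous spectrum fills the bands of the periodic tail, that is the set $\{x : \discr X^{L_j}_{L_j+N}(x) < 0\}$ (note $\det X^{L_j}_{L_j+N} = 1$, so this is precisely $|\tr X^{L_j}_{L_j+N}| < 2$). On the interior of a band such a measure is purely absolutely continuous, and the Tur\'an-determinant formula for periodic Jacobi matrices (see \cite{SwiderskiTrojan2019}) gives the density
\[
	\mu_{L_j}'(x) = \frac{\sqrt{-\discr X^{L_j}_{L_j+N}(x)}}{2\pi\, a_{L_j+N-1}\, \bigl| \scrD^{L_j}_{L_j+N}(x) \bigr|}.
\]
Here I use \eqref{eq:45} of Lemma~\ref{lem:1}, which tells us that $\scrD^{L_j}_{L_j+kN}$ is independent of $k$, so that $\scrD^{L_j}_{L_j+N}$ is exactly the asymptotic Tur\'an determinant in the periodic region. (As a sanity check, for free Jacobi parameters this reduces to $\scrD^{L}_{L+N}\equiv 1$, $a_{L+N-1}=a$, and recovers the correct free Jacobi density.)

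Next I would transport the two hypotheses from the original system to the truncated one. Combining $a_{L_j+N-1}\,\discr X_{L_j}(x)\to h(x)$ with \eqref{eq:49} of Lemma~\ref{lem:2} yields
\[
	\lim_{j\to\infty} a_{L_j+N-1}\,\discr X^{L_j}_{L_j+N}(x) = h(x), \qquad x\in K,
\]
and since $a_{L_j+N-1}>0$ and $h<0$ on $K$, for all large $j$ we have $\discr X^{L_j}_{L_j+N}<0$ on $K$, i.e.\ $K$ sits in the interior of the band. Combining $a_{L_j+N-1}^{3/2}\bigl|\scrD_{L_j}\bigr|\to g$ with \eqref{eq:57} of Lemma~\ref{lem:1} (whose hypotheses are exactly the boundedness and growth assumptions made in the statement) gives
\[
	\lim_{j\to\infty} a_{L_j+N-1}^{3/2}\,\bigl| \scrD^{L_j}_{L_j+N}(x) \bigr| = g(x)
\]
uniformly on $K$, so $g$ is continuous and bounded below by a positive constant there. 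Substituting both limits into the density formula, the powers of $a_{L_j+N-1}$ cancel and I obtain that $\mu_{L_j}'(x)$ converges, as $j\to\infty$, to $f(x) := \sqrt{-h(x)}/\bigl(2\pi g(x)\bigr)$ for every $x\in K$; moreover, since $-\discr X^{L_j}_{L_j+N}$ is bounded above and $a_{L_j+N-1}^{3/2}|\scrD^{L_j}_{L_j+N}|$ bounded below on $K$, the densities $\mu_{L_j}'$ are uniformly bounded on $K$.

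Finally I would pass to a weak accumulation point $\nu$. Fix $\varphi\in C_c(\RR)$ with $\supp \varphi$ contained in the interior of $K$. Because every $\mu_{L_j}$ is purely absolutely continuous there, $\int \varphi\, d\mu_{L_j} = \int_K \varphi(x)\,\mu_{L_j}'(x)\,dx$; letting $j\to\infty$ along the subsequence defining $\nu$ and invoking the pointwise convergence $\mu_{L_j}'\to f$ together with the uniform bound just established, the dominated convergence theorem gives $\int \varphi\, d\nu = \int_K \varphi\, f\,dx$. As $\varphi$ was arbitrary, $\nu$ agrees with $f\,dx$ on the interior of $K$, which is the asserted absolute continuity with density $\nu'=f=\sqrt{-h}/(2\pi g)$, $h$ as in \eqref{eq:38}. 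I expect the main obstacle to be the very first step: having the exact density formula for eventually periodic Jacobi matrices at hand, and, just as importantly, ruling out any singular continuous or atomic contribution of $\mu_{L_j}$ inside $K$ — this is exactly where absolute continuity on the \emph{open} band of the periodic tail, and the uniform control of the densities needed for the dominated-convergence passage, are essential.
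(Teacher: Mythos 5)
Your proposal is correct and follows essentially the same route as the paper: pass to the truncated (eventually periodic) systems, invoke the exact Tur\'an-determinant density formula for $\mu_{L_j}$ on the band (the paper cites \cite[Theorem 3]{PeriodicII} for this), transport the two hypotheses via \eqref{eq:49} and \eqref{eq:57}, and let $j\to\infty$. The only cosmetic difference is that you carry out the final weak-limit step by hand with dominated convergence, whereas the paper delegates it to \cite[Proposition 4]{SwiderskiTrojan2019}.
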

\begin{proof}
	By Lemma~\ref{lem:2}, there are $\delta > 0$ and $j_0 > 0$ so that for
	$j \geq j_0$,
	\[
		a_{L_j+N-1} \cdot \discr X_{L_j+N}^{L_j} < -\delta.
	\]
	Therefore, in view of \eqref{eq:45}, \cite[Theorem 3]{PeriodicII} implies that the measure $\mu_{L_j}$, $j \geq j_0$,
	is purely absolutely continuous on $K$ with the density
	\[
		\mu_{L_j}'(x) =
		\frac{\sqrt{-a_{L_j+N-1} \cdot \discr \big(X^{L_j}_{L_j+N}(x)\big)}}{2 \pi g_j(x)}
	\]
	where
	\[
		g_j(x) = a_{L_j+N-1}^{3/2} \big| \scrD^{L_j}_{L_j+N}(x) \big|.
	\]
	Since $\discr X_{L_j}(x)$ is a polynomial of degree at most $2 N$, the convergence in \eqref{eq:38} is uniform with
	respect to $x \in K$. Hence, by Lemma~\ref{lem:2}, we have
	\[
		\lim_{j \to \infty} a_{L_j+N-1} \cdot \discr \big( X^{L_j}_{L_j+N}(x) \big) = h(x)
	\]
	uniformly with respect to $x \in K$. Next, by Lemma \ref{lem:1},
	\[
		\lim_{j \to \infty} g_j(x) = 
		g(x)
	\]
	uniformly with respect to $x \in K$. Since $g$ is positive continuous function on $K$,
	\[
		\lim_{j \to \infty} \mu_{L_j}'(x) = \frac{\sqrt{-h(x)}}{2 \pi g(x)}
	\]
	uniformly with respect to $x \in K$. Now, the theorem follows by \cite[Proposition 4]{SwiderskiTrojan2019}.
\end{proof}

\begin{corollary} 
	\label{cor:1}
	Suppose that the hypotheses of Theorem \ref{thm:4} are satisfied. Then
	\[
		\lim_{j \to \infty} \mu'_{L_j}(x) = \nu'(x)
	\]
	uniformly with respect to $x \in K$.
\end{corollary}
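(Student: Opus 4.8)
The plan is to observe that the required uniform convergence was in fact already derived inside the proof of Theorem \ref{thm:4}, so that the corollary amounts to reading off that proof together with the theorem's conclusion. First I would recall the explicit form of the approximating densities obtained there: for $j \geq j_0$,
\[
	\mu_{L_j}'(x) = \frac{\sqrt{-a_{L_j+N-1} \cdot \discr\big(X^{L_j}_{L_j+N}(x)\big)}}{2 \pi g_j(x)},
	\qquad
	g_j(x) = a_{L_j+N-1}^{3/2} \big| \scrD^{L_j}_{L_j+N}(x) \big|.
\]

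Next I would invoke the two uniform convergences already verified in that argument. By Lemma~\ref{lem:2}, combined with the uniformity of the limit \eqref{eq:38} (which is automatic since $\discr X_{L_j}$ is a polynomial of degree at most $2N$), the numerators satisfy $a_{L_j+N-1} \cdot \discr(X^{L_j}_{L_j+N}(x)) \to h(x)$ uniformly on $K$; and by Lemma~\ref{lem:1} the denominators satisfy $g_j(x) \to g(x)$ uniformly on $K$. Since $g$ is continuous and strictly positive on the compact set $K$, dividing and taking square roots preserves uniform convergence, yielding
\[
	\lim_{j \to \infty} \mu_{L_j}'(x) = \frac{\sqrt{-h(x)}}{2 \pi g(x)}
\]
uniformly in $x \in K$.

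Finally I would identify this limit with $\nu'$. Theorem~\ref{thm:4} asserts precisely that every weak accumulation point $\nu$ of $(\mu_{L_j} : j \in \NN)$ is absolutely continuous on $K$ with density $\nu'(x) = \frac{\sqrt{-h(x)}}{2 \pi g(x)}$; substituting this equality into the uniform limit above gives $\mu_{L_j}'(x) \to \nu'(x)$ uniformly on $K$, which is the assertion. I do not anticipate any genuine obstacle here: the entire analytic content lives in the proof of Theorem~\ref{thm:4}, and the corollary merely records that the convergence of the approximating densities is in fact \emph{uniform} --- a statement strictly stronger than, but not separately made explicit by, the weak-convergence formulation of that theorem.
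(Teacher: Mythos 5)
Your proposal is correct and coincides with the paper's (implicit) argument: the corollary is simply read off from the proof of Theorem~\ref{thm:4}, where the uniform convergence of $\mu_{L_j}'$ to $\frac{\sqrt{-h}}{2\pi g}$ on $K$ is established en route (via Lemmas~\ref{lem:2} and~\ref{lem:1} and the positivity and continuity of $g$), and the theorem identifies this limit with $\nu'$. No gap.
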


\subsection{Asymptotics of the polynomials}
In this section we study the asymptotic behavior of the orthogonal polynomials $(p_n : n \in \NN_0)$ corresponding to
$N$-periodically modulated Jacobi parameters $(a_n : n \in \NN_0)$ and $(b_n : n \in \NN_0)$.
Let us recall that the polynomials $(p_n : n \in \NN_0)$ satisfy 
\begin{align*}
	p_0(x) &= 1, \qquad p_1(x) = \frac{x- b_0}{a_0}, \\
	x p_n(x) &= a_n p_{n+1}(x) + b_n p_n(x) + a_{n-1} p_{n-1}(x), \qquad n \geq 1.
\end{align*}
In view of \eqref{eq:25}, the Carleman's
condition \eqref{eq:37} is satisfied, thus the measure $\mu$ is the unique orthogonality measure for
$(p_n : n \in \NN_0)$. 
\begin{theorem}
	\label{thm:6}
	Let $N$ be a positive integer and $i \in \{0, 1, \ldots, N-1\}$. Let $(a_n : n \in \NN_0)$ and $(b_n : n \in \NN_0)$
	be $N$-periodically modulated Jacobi parameters such that $\frakX_0(0)$ is a non-trivial parabolic element.
	If	
	\[
		\bigg(\frac{\alpha_{n-1}}{\alpha_n} a_n - a_{n-1} : n \in \NN \bigg),
		\bigg(\frac{\beta_n}{\alpha_n} a_n - b_n : n \in \NN\bigg),
		\bigg(\frac{1}{\sqrt{a_n}} : n \in \NN\bigg) \in \calD_1^N,
	\]
	and
	\[
		\lim_{n \to \infty} \big( a_{n+N} - a_n \big) = 0,
	\]
	then for each compact subset $K \subset \Lambda_-$,
	there are $j_0 \in \NN_0$ and $\chi: K \rightarrow \RR$, so that 
	\begin{equation}
		\label{eq:80}
		\lim_{j \to \infty}
		\sup_{x \in K}{
		\bigg|
		\sqrt[4]{a_{(j+1)N+i-1}} p_{jN+i}(x)
		-
		\sqrt{\frac{\big| [\frakX_i(0)]_{2,1} \big|}{ \pi \mu'(x) \sqrt{\alpha_{i-1} |\tau(x)|}}}
		\sin\Big(\sum_{k=j_0}^{j-1} \theta_k(x) + \chi(x)\Big)\bigg|
		}
		=0.
	\end{equation}
\end{theorem}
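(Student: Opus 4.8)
The plan is to feed the orthogonal polynomials into the eigenvector asymptotics of Theorem~\ref{thm:3} and then to evaluate the modulus $|\vphi|$, for which the approximation procedure of Section~\ref{sec:7} supplies the density. Since $(p_n : n \in \NN_0)$ is exactly the generalized eigenvector associated with $\eta = e_2$, I would fix a compact $K \subset \Lambda_-$ and apply Theorem~\ref{thm:3} with $\eta = e_2$. The sequences $(C_k)$ and $(R_k)$ from the diagonalization used there belong to $\calD_1\big(K, \Mat(2, \CC)\big)$ by Theorem~\ref{thm:1} and the construction in the proof of Theorem~\ref{thm:5}, so the error bound of Theorem~\ref{thm:3} gives $\sup_K |E_j| \lesssim \sum_{k \geq j} \sup_K (\|\Delta C_k\| + \|\Delta R_k\|) \to 0$. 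Hence, setting $\chi := \arg \vphi(e_2, \cdot)$, a continuous function on $K$,
\[
	\frac{p_{jN+i}(x)}{\prod_{k=j_0}^{j-1} |\lambda_k(x)|}
	=
	\frac{|\vphi(e_2, x)|}{\sqrt{\alpha_{i-1} |\tau(x)|}}
	\sin\Big( \sum_{k=j_0}^{j-1} \theta_k(x) + \chi(x) \Big) + o_K(1),
\]
so it remains to evaluate $\lim_j \sqrt[4]{a_{(j+1)N+i-1}} \prod_{k=j_0}^{j-1} |\lambda_k|$ and the modulus $|\vphi(e_2, \cdot)|$.

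Next I would compute the prefactor. From \eqref{eq:32} one has $|\lambda_k|^2 = \det Y_k = (\det Z_k/\det Z_{k+1}) \det X_{kN+i}$; since $\det X_{kN+i} = a_{kN+i-1}/a_{(k+1)N+i-1}$ and $\det Z_j = -2(\det T_i)\sinh \vartheta_j$, the product telescopes to
\[
	\prod_{k=j_0}^{j-1} |\lambda_k(x)|^2
	=
	\frac{\sinh \vartheta_{j_0}(x)}{\sinh \vartheta_j(x)} \cdot \frac{a_{j_0 N+i-1}}{a_{jN+i-1}}.
\]
Using $\sinh \vartheta_j \sim \vartheta_j = \sqrt{\alpha_{i-1}|\tau|/a_{(j+1)N+i-1}}$ together with $a_{jN+i-1}/a_{(j+1)N+i-1} \to 1$ (a consequence of \eqref{eq:25}) gives
\[
	\lim_{j \to \infty} \sqrt{a_{(j+1)N+i-1}} \prod_{k=j_0}^{j-1} |\lambda_k(x)|^2
	=
	\frac{a_{j_0 N+i-1} \sinh \vartheta_{j_0}(x)}{\sqrt{\alpha_{i-1}|\tau(x)|}} =: C(x)^2.
\]
This limit, and the square root producing $\sqrt[4]{a_{(j+1)N+i-1}}$, is what accounts for the exponent $\tfrac14$ in \eqref{eq:80}, in contrast to $\tfrac12$ in the regimes \ref{perMod:I} and \ref{perMod:IIa}.

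The crux is to identify $C(x)|\vphi(e_2, x)|$. I would use that $\vec{u}_{jN+i} = Z_j \vec{v}_j$ recovers the whole vector, and that by \eqref{eq:21} $Z_j^t e_2 \to \big((T_i)_{2,1}+(T_i)_{2,2}\big)(e_1 + e_2)$. Writing $\vec{v}_j$ in the limiting eigenbasis $C_\infty$ of \eqref{eq:36}, both the oscillation amplitude of $p_{jN+i} = \langle \vec{v}_j, Z_j^t e_2 \rangle$ and the generalized Tur\'an determinant $\tilde{S}_j$ — whose modulus converges to $g_i(x) = \lim_j |S_{jN+i}(e_2, x)|$ by Theorem~\ref{thm:5} — are expressed through one and the same complex amplitude of $\vec{v}_j$. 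Eliminating that common factor and invoking \eqref{eq:11}, namely $\big((T_i)_{2,1}+(T_i)_{2,2}\big)^2/\det T_i = -\varepsilon [\frakX_i(0)]_{2,1}$, I expect to obtain
\[
	\Big( C(x) \frac{|\vphi(e_2, x)|}{\sqrt{\alpha_{i-1}|\tau(x)|}} \Big)^2
	=
	\frac{\big| [\frakX_i(0)]_{2,1} \big| \, g_i(x)}{\alpha_{i-1} |\tau(x)|}.
\]
Substituting the density formula $\mu'(x) = \sqrt{\alpha_{i-1}|\tau(x)|}/\big(\pi g_i(x)\big)$ — which holds on $\Lambda_-$ precisely because of the hypothesis $\lim_n(a_{n+N}-a_n)=0$, via Theorems~\ref{thm:4} and \ref{thm:5} — turns the right-hand side into $\big|[\frakX_i(0)]_{2,1}\big|/\big(\pi \mu'(x)\sqrt{\alpha_{i-1}|\tau(x)|}\big)$, i.e. the square of the amplitude claimed in \eqref{eq:80}. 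Multiplying the display of the first paragraph by $\sqrt[4]{a_{(j+1)N+i-1}} \prod_{k=j_0}^{j-1}|\lambda_k|$ and letting $j \to \infty$ then yields \eqref{eq:80}.

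The main obstacle is this last identification: Theorem~\ref{thm:3} only produces $\vphi$ as an abstract limit, so $|\vphi(e_2, \cdot)|$ must be computed by an independent route. The cleanest realization is the approximation procedure — replacing $(a_n, b_n)$ by the eventually $N$-periodic $(a_n^L, b_n^L)$, for which the amplitude is explicitly governed by $\frakX_i$, and then passing $L \to \infty$ with the help of Corollary~\ref{cor:1} and Lemmas~\ref{lem:2}, \ref{lem:3} to compare the truncated and genuine systems. Throughout one must track the sign bookkeeping ($\varepsilon$, and $\sigma = \sign{\tau} = -1$ on $\Lambda_-$), verify that $\chi$ is continuous and that every passage to the limit is uniform on $K$, and confirm that $C(x)\,|\vphi(e_2,x)|$ is independent of the auxiliary index $j_0$.
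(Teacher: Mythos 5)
Your outline follows the paper's proof of Theorem~\ref{thm:6} almost step for step: feed $\eta = e_2$ into Theorem~\ref{thm:3}, compute the normalizing product $\sqrt{a_{(j+1)N+i-1}}\prod_{k=j_0}^{j-1}|\lambda_k(x)|^2 \to a_{j_0N+i-1}\sinh\vartheta_{j_0}(x)/\sqrt{\alpha_{i-1}|\tau(x)|}$ by telescoping $\det Y_k$ (this is exactly \eqref{eq:50}, and it is indeed the source of the exponent $\tfrac14$), and then identify $|\vphi(e_2,\cdot)|$ by the approximation procedure. The prefactor computation and the final amplitude are correct, and you have correctly located where the hypothesis $\lim_n(a_{n+N}-a_n)=0$ enters, namely through the density formula supplied by Theorems~\ref{thm:5} and~\ref{thm:4}.

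The gap is that the crux --- the evaluation of $|\vphi(e_2,x)|$ --- is only gestured at, in two different ways, and neither is carried out. Your first route (eliminating a common complex amplitude between $p_{jN+i}=\langle\vec{v}_j, Z_j^te_2\rangle$ and the generalized Tur\'an determinant, then invoking \eqref{eq:11}) asserts the identity $\big(C(x)|\vphi(e_2,x)|/\sqrt{\alpha_{i-1}|\tau(x)|}\big)^2=|[\frakX_i(0)]_{2,1}|\,g_i(x)/(\alpha_{i-1}|\tau(x)|)$ without proof; making it rigorous requires showing that the cross terms in the skew form $\langle E\vec{v}_{j+1},\vec{v}_j\rangle$ disappear in the limit and tracking the exact constant coming from $Z_j^te_2\to\big((T_i)_{21}+(T_i)_{22}\big)(e_1+e_2)$, which is work comparable to the paper's own argument. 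The paper instead introduces the auxiliary quantities $\phi^{L_j}_m$ built from the truncated matrices $X^{L_j}_{L_j+N}$ of \eqref{eq:42a}--\eqref{eq:42b}, uses their $m$-independence, proves the comparison $\sqrt{a_{L_j+N-1}}\sup_K|\phi^{L_j}_1-\phi_{L_j+N}(e_2,\cdot)|\to0$ (Claim~\ref{clm:3}, which in turn rests on the trace and discriminant comparisons \eqref{eq:51a}--\eqref{eq:51b} and Lemmas~\ref{lem:2} and~\ref{lem:3}), and only then evaluates $\big|\phi_1^{L_j}\prod_k\lambda_k\big|^2$ exactly via \cite[formula (6.14)]{SwiderskiTrojan2019} in terms of $\mu'_{L_j}$, $\big[X^{L_j}_{L_j+N}\big]_{2,1}$ and $\discr X^{L_j}_{L_j+N}$, passing to the limit with Corollaries~\ref{cor:1} and~\ref{cor:2}. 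That chain of estimates is the substance of the proof and is missing from your proposal; everything surrounding it is right.
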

\begin{proof}
	Let $K$ be a compact subset of $\Lambda_-$ and set $L_j = j N + i$. By Theorem~\ref{thm:3}, we have
	\begin{equation} 
		\label{eq:144}
		\frac{p_{L_j}(x)}{\prod_{k=j_0}^{j-1} |\lambda_{k}(x)|} =
		\frac{|\vphi(e_2,x)|}{\sqrt{\alpha_{i-1} |\tau(x)|}}
		\sin \Big( \sum_{k=j_0}^{j-1} \theta_k(x) + \arg \vphi(e_2,x) \Big) + o_K(1).
	\end{equation}
	Now, our aim is to identify the value $|\vphi(e_2,x)|$. By \eqref{eq:146},
	\begin{equation}
		\label{eq:151}
		\vphi(e_2,x) = 
		\lim_{j \to \infty} 
		\sqrt{a_{L_j+N-1}} \phi_{L_j}(e_2,x)
	\end{equation}
	where
	\[
		\phi_{L_j}(e_2,x) = 
		\frac{\big\langle \big( X_{L_j}(x) - \overline{\lambda_j(x)} \Id \big) \vec{p}_{L_j}(x), 
		e_2 \big\rangle}{\prod_{k=j_0}^{j-1} \lambda_k(x)}.
	\]
	We introduce the following auxiliary sequence of functions
	\begin{equation}
		\label{eq:147}
		\phi^{L_j}_{m}(x) =
		\frac{\big\langle \big( X_{L_j+N}^{L_j}(x) - \overline{\lambda^{L_j}_{L_j+N}(x)} \Id \big)
		\vec{p}^{L_j}_{L_j+mN}(x), 
		e_2 \big\rangle}{\big( \lambda^{L_j}_{L_j+N}(x) \big)^{m-1} \prod_{k=j_0}^j \lambda_k(x)},
		\qquad x \in K
	\end{equation}
	where $m \in \NN$, and $\lambda^{L_j}_{L_j+N}$ is the eigenvalue of $X^{L_j}_{L_j+N}$ with positive imaginary part.
	Following the same lines of reasoning as \cite[Claim 3]{SwiderskiTrojan2019}, one can show that
	$\phi^{L_j}_m = \phi^{L_j}_1$ for all $m \geq 1$. Next, we observe that
	\[
		\prod_{k=j_0}^{j-1} |\lambda_k(x)|^2 =
		\frac{\det Z_{j_0}}{\det Z_j} \frac{a_{j_0 N + i - 1}}{a_{jN+i-1}}.
	\]
	Since
	\[
		\frac{\det Z_{j_0}}{\det Z_j} = 
		\frac{\sinh \vartheta_{j_0} }{\vartheta_{j_0}} \cdot \frac{\vartheta_j}{\sinh \vartheta_j } 
		\cdot \frac{\vartheta_{j_0}}{\vartheta_j}
	\]
	and
	\[
		\sqrt{a_{(j+1)N+i-1}} \vartheta_j = \sqrt{\alpha_{i-1} |\tau|},
	\]
	we have
	\begin{equation}
		\label{eq:50}
		\lim_{j \to \infty} 
		\sqrt{a_{(j+1)N + i - 1 }} \prod_{k=j_0}^{j} |\lambda_{k}(x)|^2
		=
		\frac{a_{j_0 N + i -1}
		\sinh \vartheta_{j_0}(x) }{\sqrt{\alpha_{i-1} |\tau(x)|}}
	\end{equation}
	uniformly with respect to $x \in K$.
	\begin{claim}
		\label{clm:3}
		\begin{equation} 
			\label{eq:148}
			\lim_{j \to \infty} 
			\sqrt{a_{L_j+N-1}}
			\sup_{x \in K} \big| \phi^{L_j}_1(x) - \phi_{L_j+N}(e_2, x) \big| = 0.
		\end{equation}
	\end{claim}
	For the proof let us observe that
	\[
		\phi^{L_j}_1(x) - \phi_{L_j+N}(e_2, x) =
		\frac{\langle W_j(x) \vec{p}_{L_j+N}(x), e_2 \rangle}{\prod_{k=j_0}^j \lambda_j(x)}
	\]
	where
	\begin{equation}
		\label{eq:150}
		W_j = 
		\Big( X^{L_j}_{L_j+N} - X_{L_j+N} \Big) +
		\Big( \overline{\lambda_{j+1}} - \overline{\lambda^{L_j}_{L_j+N}} \Big) \Id.
	\end{equation}
	Thus,
	\[
		\big| \phi^{L_j}(x) - \phi_{L_j+N}(e_2, x) \big| \leq
		\| W_j(x) \| \frac{\| \vec{p}_{L_j+N}(x) \| }{\prod_{k=j_0}^{j} |\lambda_k(x)|}.
	\]
	By Corollary \ref{cor:3} together with \eqref{eq:50} we obtain
	\[
		\frac{\| \vec{p}_{L_j+N}(x) \| }{\prod_{k=j_0}^{j} |\lambda_k(x)|} \leq c
	\]
	for all $x \in K$ and $j > j_0$. Therefore, in order to prove \eqref{eq:148} it is enough to show that
	\begin{equation} 
		\label{eq:149}
		\lim_{j \to \infty} \sqrt{a_{L_j+N-1}} \sup_{x \in K} \| W_j(x) \| = 0,
	\end{equation}
	which by \eqref{eq:150}, easily follows from
	\begin{align}
		\label{eq:149a}
		&\lim_{j \to \infty} 
		\sqrt{a_{L_j+N-1}} 
		\sup_{x \in K}
		\big\| X^{L_j}_{L_j+N}(x) - X_{L_j+N}(x) \big\| = 0, \\
		\label{eq:149b}
		&\lim_{j \to \infty}
		\sqrt{a_{L_j+N-1}} 
		\sup_{x \in K} 
		\big| \lambda_{j+1}(x) - \lambda^{L_j}_{L_j+N}(x) \big| = 0.
	\end{align}
	To justify \eqref{eq:149a}, we write
	\[
		\big\| X^{L_j}_{L_j+N}(x) - X_{L_j+N}(x) \big\| \leq
		\big\| X^{L_j}_{L_j+N}(x) - X_{L_j}(x) \big\| +
		\big\| X_{L_j}(x) - X_{L_j+N}(x) \big\|,
	\]
	which by Lemmas \ref{lem:2} and \ref{lem:3} implies that
	\begin{equation}
		\label{eq:149a'}
		\lim_{j \to \infty} 
		a_{L_j+N-1}
		\sup_{x \in K}
		\big\| X^{L_j}_{L_j+N}(x) - X_{L_j+N}(x) \big\| = 0.
	\end{equation}
	To prove \eqref{eq:149b}, it is enough to show
	\begin{align}
		\label{eq:51a}
		&\lim_{j \to \infty} 
		\sqrt{a_{L_j+N-1}}
		\sup_K 
		\big| \tr X^{L_j}_{L_j+N} - \tr Y_{j+1} \big| = 0 \\
		\label{eq:51b}
		&\lim_{j \to \infty} 
		a_{L_j+N-1}
		\sup_K 
		\big| \discr X^{L_j}_{L_j+N} - \discr Y_{j+1} \big| = 0.
	\end{align}
	We write
	\begin{equation}
		\label{eq:152}
		X^{L_j}_{L_j+N} - Y_{j+1} =
		\big( X^{L_j}_{L_j+N} - X_{L_j+N} \big) + 
		\big( X_{L_j+N} - Y_{j+1} \big).
	\end{equation}
	By \eqref{eq:35} and Theorem~\ref{thm:2}
	\[
		\tr Y_{j+1} = 
		\tr \big( Z_{j+2}^{-1} Z_{j+1} X_{L_j+N} \big) =
		\tr X_{L_j+N} + \vartheta_{j+1} \cdot \tr(Q_{j+1} X_{L_j+N}).
	\]
	Since $(Q_j)$ uniformly tends to zero, we get
	\begin{equation} 
		\label{eq:51a'}
		\lim_{j \to \infty} 
		\sqrt{a_{L_j+N-1}}
		\sup_K 
		\big| \tr X_{L_j+N} - \tr Y_{j+1} \big| = 0,
	\end{equation}
	which together with \eqref{eq:149a'} leads to \eqref{eq:51a}.
	
	Next, by Theorem~\ref{thm:1}
	\[
		\discr Y_{j+1} = 
		\discr (\varepsilon(\Id + \vartheta_{j+1} R_{j+1})) =
		\vartheta_{j+1}^2 \discr(R_{j+1}).
	\]
	Since $(R_j)$ tends to $\calR$ uniformly and by Corollary \ref{cor:2}, we conclude that 
	\begin{equation}
		\label{eq:52}
		\lim_{j \to \infty} a_{L_j+N-1} \sup_{K} \big| \discr Y_{j+1} - \discr X_{L_j+N} \big| = 0.
	\end{equation}
	Since there is a constant $c>0$ such that for all $A,B \in \Mat(2, \RR)$,
	\begin{equation}
		\label{eq:114}
		|\discr A - \discr B| \leq c \big( \| A \| + \| B \| \big) \| A - B \|,
	\end{equation}
	by \eqref{eq:152} and \eqref{eq:149a'}, we get 
	\[
		\lim_{j \to \infty} a_{L_j+N-1} \sup_{K} \big| \discr X_{L_j+N} - \discr X^{L_j}_{L_j+N} \big| = 0,
	\]
	which together with \eqref{eq:52} implies \eqref{eq:51b}.
	\begin{claim}
		For $x \in K$,
		\begin{equation}
			\label{eq:118}
			|\vphi(e_2,x)|^2
			=
			\frac{1}{a_{j_0 N + i -1} \sinh \vartheta_{j_0}(x)} 
			\cdot
			\frac{| [\frakX_i(0)]_{2,1}| \cdot \alpha_{i-1} |\tau(x)|}{\pi \mu'(x)}.
		\end{equation}
	\end{claim}
	For the proof, by \eqref{eq:151} and Claim~\ref{clm:3} we get
	\[
		|\vphi(e_2,x)|^2 = 
		\lim_{j \to \infty}
		\big| \sqrt{a_{L_j+N-1}} \phi^{L_j}_1(x) \big|^2
	\]
	uniformly with respect to $x \in K$. Next, we observe that by \cite[formula (6.14)]{SwiderskiTrojan2019}
	\begin{align*}
		\bigg| \phi_1^{L_j}(x) \cdot \prod_{k=j_0}^j \lambda_k(x) \bigg|^2 
		&=
		\frac{1}{2 \pi a_{L_j+N-1} \mu'_{L_j}(x)} 
		\Big| \big[ X^{L_j}_{L_j+N}(x) \big]_{2,1} \Big|
		\sqrt{-\discr X^{L_j}_{L_j+N}(x)} \\
		&=
		\frac{1}{2 \pi a_{L_j+N-1}^{3/2} \mu'_{L_j}(x)} 
		\Big| \big[ X^{L_j}_{L_j+N}(x) \big]_{2,1} \Big|
		\sqrt{-a_{L_j+N-1} \discr X^{L_j}_{L_j+N}(x)}.
	\end{align*}
	Lemma~\ref{lem:2} and Corollary~\ref{cor:2} imply
	\[
		\lim_{j \to \infty} a_{L_j+N-1} \discr X^{L_j}_{L_j+N} =
		\lim_{j \to \infty} a_{L_j+N-1} \discr X_{L_j} = 4 \alpha_{i-1} \tau.
	\]
	Hence, by Lemma~\ref{lem:2} and Corollary~\ref{cor:1}
	\[
		\lim_{j \to \infty}
		\sqrt{a_{L_j+N-1}} 
		\bigg| 
		\sqrt{a_{L_j+N-1}} 
		\phi_1^{L_j}(x) \cdot \prod_{k=j_0}^j \lambda_k(x) 
		\bigg|^2 =
		\frac{\big| [\frakX_i(0)]_{2,1} \big|
		\sqrt{\alpha_{i-1} |\tau(x)|}}{\pi \mu'(x)}.
	\]
	Thus, by \eqref{eq:50} the claim follows.

	Now, to finish the proof of the theorem, we put \eqref{eq:118} into \eqref{eq:144} and apply \eqref{eq:50}.
\end{proof}

\section{The Christoffel--Darboux kernel}
\label{sec:8}
In this section we study the convergence of the Christoffel--Darboux kernel defined in \eqref{eq:83} for
$N$-periodically modulated Jacobi parameters $(a_n : n \in \NN_0)$ and $(b_n : n \in \NN_0)$. 

For $i \in \{0, 1, \ldots, N-1 \}$ and $j \in \NN$ we set
\[
	K_{i;j}(x,y) = \sum_{k=0}^j p_{kN+i}(x) p_{kN+i}(y), \qquad x, y \in \RR,
\]
and
\[
	\rho_{i;j} = \sum_{k=1}^j \frac{1}{\sqrt{a_{kN+i}}}.
\]
To describe the limits of $(K_{i; j} : j \in \NN)$, it is useful to define a function
\begin{equation} 
	\label{eq:85}
	\upsilon(x) = 
	\frac{1}{2 \pi N \sqrt{|\tau(x)|}} 
	\sum_{k=0}^{N-1} 
	\frac{|[\frakX_k(0)]_{2,1}|}{\alpha_{k-1}}, 
	\qquad x \in \Lambda_-.
\end{equation}
In view of Propositions \ref{prop:4} and \ref{prop:3}, we have
\begin{equation}
	\label{eq:85a}
	\upsilon(x) = \frac{|\tr \frakX_0'(0)|}{2 \pi N \sqrt{|\tau(x)|}}.
\end{equation}
The following proposition provides yet another way to compute $\upsilon(x)$ for $x \in \Lambda_-$.
\begin{proposition}
	\label{prop:1}
	Let $N$ be a positive integer and $i \in \{0, 1, \ldots N-1\}$. Let $(a_n : n \in \NN_0)$ and $(b_n : n \in \NN_0)$
	be $N$-periodically modulated Jacobi parameters so that $\frakX_0(0)$ is a non-trivial parabolic element. If
	\[
		\bigg(\frac{\alpha_{n-1}}{\alpha_n} a_n - a_{n-1} : n \in \NN \bigg),
		\bigg(\frac{\beta_n}{\alpha_n} a_n - b_n : n \in \NN\bigg),
		\bigg(\frac{1}{\sqrt{a_n}} : n \in \NN\bigg) \in \calD_1^N,
	\]
	then
	\[
		\upsilon(x) = 
		\lim_{j \to \infty}
		\sqrt{\frac{a_{jN+i}}{\alpha_i}} 
		\frac{|\tr X_{jN+i}'(x)|}{\pi N \sqrt{-\discr X_{jN+i}(x)}}, \qquad x \in \Lambda_-.
	\]
\end{proposition}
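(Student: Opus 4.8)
The plan is to find the asymptotics of the numerator and of the denominator separately and then combine them. For the denominator, note that $x \in \Lambda_-$ means $\tau(x) < 0$, and since $\Lambda_- \subset \RR \setminus \{x_0\}$, Corollary~\ref{cor:2} applies and gives $a_{(j+1)N+i-1} \discr X_{jN+i}(x) \to 4 \alpha_{i-1} \tau(x)$. Consequently,
\[
	\sqrt{-\discr X_{jN+i}(x)} = \frac{2 \sqrt{\alpha_{i-1} |\tau(x)|}}{\sqrt{a_{(j+1)N+i-1}}} \big( 1 + o_K(1) \big)
\]
locally uniformly on $\Lambda_-$.

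For the numerator I would differentiate $X_{jN+i} = B_{jN+i+N-1} \cdots B_{jN+i}$ in $x$. Since $B_n'(x) = \frac{1}{a_n} \begin{pmatrix} 0 & 0 \\ 0 & 1 \end{pmatrix}$, the Leibniz rule together with the cyclic invariance of the trace---exactly as in the proof of Proposition~\ref{prop:3}---yields
\[
	\tr X_{jN+i}'(x) = \sum_{i'=i}^{N+i-1} \frac{1}{a_{jN+i'}} \big[ M_{j,i'}(x) \big]_{2,2},
	\qquad
	M_{j,i'} = \bigg( \prod_{l=i}^{i'-1} B_{jN+l} \bigg) \bigg( \prod_{l=i'+1}^{N+i-1} B_{jN+l} \bigg),
\]
where $M_{j,i'}$ is the $(N-1)$-fold product obtained by deleting the $i'$-th factor of $X_{jN+i}$.

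Next I would pass to the limit. Because $a_n \to \infty$ and $B_{jN+l}(x) \to \frakB_l(0)$ locally uniformly, the $x$-dependence disappears and $M_{j,i'}(x) \to M_{\infty,i'} = \big( \prod_{l=i}^{i'-1} \frakB_l(0) \big) \big( \prod_{l=i'+1}^{N+i-1} \frakB_l(0) \big)$, while periodic modulation gives $a_{jN+i}/a_{jN+i'} \to \alpha_i/\alpha_{i'}$. Hence
\[
	\lim_{j \to \infty} a_{jN+i} \tr X_{jN+i}'(x) = \alpha_i \sum_{i'=i}^{N+i-1} \frac{1}{\alpha_{i'}} \big[ M_{\infty,i'} \big]_{2,2}.
\]
Recognizing $\frac{1}{\alpha_{i'}} [M_{\infty,i'}]_{2,2} = \tr \big\{ \frakB_{i'}'(0) M_{\infty,i'} \big\}$ and undoing the cyclic rearrangement, the sum is exactly the Leibniz expansion of $\tr \frakX_i'(0)$. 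Since $\frakX_i(x) = U(x) \frakX_0(x) U(x)^{-1}$ with $U(x) = \frakB_{i-1}(x) \cdots \frakB_0(x)$, we have $\tr \frakX_i'(0) = \tr \frakX_0'(0)$, so $a_{jN+i} \tr X_{jN+i}'(x) \to \alpha_i \tr \frakX_0'(0)$.

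Finally I would assemble the two estimates. Substituting them into the quotient and collecting the powers of $a$ leaves a factor $\sqrt{a_{(j+1)N+i-1}/a_{jN+i}}$, which tends to $\sqrt{\alpha_{i-1}/\alpha_i}$ by telescoping from periodic modulation and cancels exactly the residual factor $\sqrt{\alpha_i/\alpha_{i-1}}$ coming from the $\alpha$-bookkeeping; what remains is $\frac{|\tr \frakX_0'(0)|}{2 \pi N \sqrt{|\tau(x)|}}$, which equals $\upsilon(x)$ by \eqref{eq:85a}. The main obstacle is the bookkeeping in the middle step: identifying the $(2,2)$-entries of the deleted-factor products $M_{j,i'}$ with $\tr \frakX_i'(0)$ through the cyclic-trace device of Proposition~\ref{prop:3}, and keeping the two normalizations $a_{jN+i}$ and $a_{(j+1)N+i-1}$ consistent throughout.
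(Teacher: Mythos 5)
Your proposal is correct and follows essentially the same route as the paper: both split the quotient into the discriminant factor, handled via Corollary~\ref{cor:2} (giving $2\sqrt{|\tau(x)|}$ after the $\alpha$-normalization), and the trace-derivative factor, for which the limit $\tfrac{a_{jN+i}}{\alpha_i}\,|\tr X_{jN+i}'(x)| \to |\tr \frakX_0'(0)|$ is the key input. The only difference is that the paper simply cites \cite[Corollary 3.10]{ChristoffelI} for that limit, whereas you re-derive it from scratch via the Leibniz rule and cyclic invariance of the trace (as in Proposition~\ref{prop:3}); your derivation, including the identification $\tr\frakX_i'(0)=\tr\frakX_0'(0)$ by conjugation and the final cancellation of $\sqrt{a_{(j+1)N+i-1}/a_{jN+i}}$ against $\sqrt{\alpha_i/\alpha_{i-1}}$, is sound.
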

\begin{proof}
	Let us observe that
	\[
		\sqrt{\frac{a_{jN+i}}{\alpha_i}} 
		\frac{|\tr X_{jN+i}'(x)|}{\pi N \sqrt{-\discr X_{jN+i}(x)}} =
		\frac{\tfrac{a_{jN+i}}{\alpha_i} 
		|\tr X_{jN+i}'(x)|}{\pi N \sqrt{\tfrac{a_{jN+i}}{\alpha_i}} 
		\sqrt{-\discr X_{jN+i}(x)}}.
	\]
	By Corollary \ref{cor:2}, we have
	\[
		\lim_{j \to \infty}
		\sqrt{\frac{a_{jN+i}}{\alpha_i}} \sqrt{-\discr X_{jN+i}(x)}
		=
		2 \sqrt{|\tau(x)|}.
	\]
	In view of \cite[Corollary 3.10]{ChristoffelI},
	\[
		\lim_{j \to \infty} 
		\frac{a_{jN+i}}{\alpha_i} 
		\big| \tr X_{jN+i}'(x) \big| = 
		\big| \tr \frakX_0'(0) \big|,
	\]
	thus
	\[
		\lim_{j \to \infty}
		\sqrt{\frac{a_{jN+i}}{\alpha_i}}
		\frac{|\tr X_{jN+i}'(x)|}{\pi N \sqrt{-\discr X_{jN+i}(x)}}
		=
		\frac{|\tr \frakX_0'(0)|}{2 \pi N \sqrt{|\tau(x)|}},
	\]
	which together with \eqref{eq:85a} completes the proof.
\end{proof}

\begin{proposition} 
	\label{prop:2}
	Let $N$ be a positive integer and $i \in \{0, 1, \ldots, N-1\}$. Let $(a_n : n \in \NN_0)$ and $(b_n : n \in \NN_0)$
	be $N$-periodically modulated Jacobi parameters so that $\frakX_0(0)$ is a non-trivial parabolic element. If
	\[
		\bigg(\frac{\alpha_{n-1}}{\alpha_n} a_n - a_{n-1} : n \in \NN \bigg),
		\bigg(\frac{\beta_n}{\alpha_n} a_n - b_n : n \in \NN\bigg),
		\bigg(\frac{1}{\sqrt{a_n}} : n \in \NN\bigg) \in \calD_1^N,
	\]
	and
	\[
		\lim_{n \to \infty} \big(a_{n+N}-a_n\big) = 0,
	\]
	then
	\begin{align}
		\label{eq:86a}
		&\lim_{j \to \infty} 
		\sqrt{\frac{a_{(j+1)N+i-1}}{\alpha_{i-1}}} \theta_j(x) =
		\sqrt{|\tau(x)|}, \\
		\label{eq:86b}
		&\lim_{j \to \infty} 
		\sqrt{\frac{a_{(j+1)N+i-1}}{\alpha_{i-1}}} |\theta_j'(x)| =
		-N \pi \upsilon(x), \\
		\label{eq:86c}
		&\lim_{j \to \infty}
		\sqrt{\frac{a_{(j+1)N+i-1}}{\alpha_{i-1}}} |\theta_j''(x)|
		=
		\frac{(N \pi \upsilon(x))^2}{2 \sqrt{|\tau(x)|}}
	\end{align}
	locally uniformly with respect to $x \in \Lambda_-$.
\end{proposition}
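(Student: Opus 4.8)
The plan is to read off all three limits from the rotation angle of $Y_j$, for which Theorem~\ref{thm:1} and Corollary~\ref{cor:2} already supply the leading-order data. Writing $\theta_j = \arccos\bigl(\tr Y_j/(2\sqrt{\det Y_j})\bigr)$, one has the companion identity $\sin\theta_j = \sqrt{-\discr Y_j}/(2\sqrt{\det Y_j})$, where by Theorem~\ref{thm:1} $\discr Y_j = \vartheta_j^2\,\discr R_j$ with $R_j\to\calR_i$ locally uniformly on $K$, so that $\discr R_j\to\discr\calR_i=-4$. The structural facts I will exploit are that $\det Y_j$ is independent of $x$ up to negligible corrections — each transfer matrix has $x$-independent determinant, and the residual $x$-dependence of $\det Y_j$ enters only through the ratio $\sinh\vartheta_j/\sinh\vartheta_{j+1}$, which equals $1$ to leading order — together with the normalisation $\sqrt{a_{(j+1)N+i-1}/\alpha_{i-1}}\,\vartheta_j = \sqrt{|\tau|}$ coming from \eqref{eq:68}.

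For \eqref{eq:86a} I would first note that, since $\discr Y_j\to0$ and the normalised eigenvalues of $Y_j$ approach $\varepsilon$, the angle $\theta_j\to0^+$ locally uniformly, whence $\theta_j/\sin\theta_j\to1$. Combining $\sin\theta_j = \vartheta_j\sqrt{-\discr R_j}/(2\sqrt{\det Y_j})$ with $\discr R_j\to-4$, $\det Y_j\to1$, and the normalisation above gives $\sqrt{a_{(j+1)N+i-1}/\alpha_{i-1}}\,\theta_j = \sqrt{|\tau|}\cdot\frac{\sqrt{-\discr R_j}}{2\sqrt{\det Y_j}}\cdot\frac{\theta_j}{\sin\theta_j}\to\sqrt{|\tau|}$, uniformly on compacta.

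For the derivatives I would differentiate $\cos\theta_j = \tr Y_j/(2\sqrt{\det Y_j})$. Because $\det Y_j$ is $x$-independent to the relevant order, this yields $\theta_j' = -(\tr Y_j)'/\sqrt{-\discr Y_j}$ up to lower-order terms, and differentiating once more produces, as dominant contribution, the term $-\tr Y_j\,((\tr Y_j)')^2/(-\discr Y_j)^{3/2}$ (the $(\tr Y_j)''$-term being of smaller order after scaling). I would then replace $\tr Y_j$ and $\discr Y_j$ by $\tr X_{jN+i}$ and $\discr X_{jN+i}$: Theorem~\ref{thm:2} shows $Z_j^{-1}Z_{j+1}=\Id+\vartheta_j Q_j$ with $Q_j\to0$ in $\calD_1$, so $\tr Y_j$ agrees with $\tr X_{jN+i}$ to the needed accuracy, while Corollary~\ref{cor:2} gives $a_{(j+1)N+i-1}\discr X_{jN+i}\to4\alpha_{i-1}\tau$. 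Feeding in $\frac{a_{jN+i}}{\alpha_i}\,\tr X_{jN+i}'\to\tr\frakX_0'(0)$ (the input behind Proposition~\ref{prop:1}, from \cite[Corollary~3.10]{ChristoffelI}) together with \eqref{eq:85a}, the first scaled limit assembles to the right-hand side of \eqref{eq:86b}, and the scaled second derivative to that of \eqref{eq:86c}.

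The main obstacle is \eqref{eq:86c}. The naive guess obtained by differentiating the limit in \eqref{eq:86a} twice has the wrong constant, so the genuine second derivative of $\theta_j$ must be tracked, including the $x$-dependence entering through the conjugating matrices $Z_j,Z_{j+1}$ (equivalently, through $\det Y_j$ and through the difference $\tr Y_j-\tr X_{jN+i}$), not only through $X_{jN+i}$. Concretely, I must show that the derivatives of all the $o(1)$ error terms — those from $R_j\to\calR_i$, from $Q_j\to0$, and from the residual $x$-dependence of $\det Y_j$ — remain negligible after two differentiations. Since $\tr X_{jN+i}$ and $\discr X_{jN+i}$ are polynomials in $x$ that, suitably rescaled, converge locally uniformly, a Cauchy-estimate (Vitali) argument on a complex neighbourhood of $K$ upgrades these $C^0$ convergences to $C^2$ convergence; the only non-analytic factor, $\sqrt{|\tau(x)|}$, is smooth on $K$ because $0\notin\tau(K)$ for $K\subset\Lambda_-$, and is differentiated explicitly. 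Assembling these estimates, and using that $\tau$ is affine so that $\tau''=0$, yields the stated locally uniform limits.
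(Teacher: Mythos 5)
Your treatment of \eqref{eq:86a} is identical to the paper's, and for \eqref{eq:86b}--\eqref{eq:86c} your differentiation scheme is also the paper's: it writes $\cos\theta_j=\tr Y_j/(2\sqrt{\det Y_j})$, differentiates twice, identifies $-\tr Y_j\,((\tr Y_j)')^2(-\discr Y_j)^{-3/2}$ as the dominant contribution to $\theta_j''$ (the $(\tr Y_j)''$-term dying after rescaling because $a_{jN+i}\tr Y_j''\to 0$), and feeds in $\tfrac{a_{jN+i}}{\alpha_i}\tr X_{jN+i}'\to\tr\frakX_0'(0)$ together with Corollary~\ref{cor:2} and \eqref{eq:85a}. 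The one genuine divergence is how the derivatives of the error terms are controlled. The paper never differentiates $R_j$ or $Q_j$: it derives the closed-form identity \eqref{eq:102} expressing $\tr Y_j-\tr X_{jN+i}$ through the explicit factors $\sinh(\vartheta_{j+1}-\vartheta_j)/\sinh\vartheta_{j+1}$ and $\sinh\vartheta_j/\sinh\vartheta_{j+1}-1$ with polynomial coefficients, and differentiates these by hand (\eqref{eq:101}, \eqref{eq:99}), using that $\sinh\sqrt{x}/\sqrt{x}$ and $\cosh\sqrt{x}$ extend smoothly through the origin; likewise the $x$-dependence of $\det Y_j$ is kept inside the expression $\sqrt{\sinh\vartheta_{j+1}/\sinh\vartheta_j}\,\tr Y_j$ and differentiated explicitly. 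Your Vitali substitute is plausible in principle --- every object in sight is analytic in $\sqrt{-\tau(x)}$ on a complex neighbourhood of $K\subset\Lambda_-$, and for the polynomial parts $\tr X_{jN+i}$, $\discr X_{jN+i}$ (degree at most $2N$) locally uniform convergence upgrades to convergence of derivatives for free --- but it is not free for the conjugated quantities: Theorems~\ref{thm:1} and \ref{thm:2} give uniform convergence of $R_j$, $Q_j$ only on \emph{real} compact intervals, so before you may apply Cauchy estimates you must re-run those proofs for complex $x$ with $\Re(-\tau(x))$ bounded below and check that the generic $\calD_1$-sequences are uniform over the complex neighbourhood. That is precisely the work the paper's explicit route avoids; you should either carry it out or adopt \eqref{eq:102}.

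One concrete point to recheck: after multiplying by $\sqrt{a_{(j+1)N+i-1}/\alpha_{i-1}}$, your dominant term contributes $|{\tr Y_j}|\,((\tr Y_j)')^2(-\discr Y_j)^{-3/2}\cdot\sqrt{a/\alpha}\to(\tr\frakX_0'(0))^2/(4|\tau|^{3/2})=(N\pi\upsilon)^2/\sqrt{|\tau|}$, and the same value comes out of the paper's own displayed formula for $\theta_j''$. This is exactly what one also gets by naively differentiating the limit in \eqref{eq:86b}, so your remark that the genuine second derivative produces a constant different from the naive one is not borne out; moreover both derivations give twice the right-hand side displayed in \eqref{eq:86c}. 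The discrepancy lies between the statement and the computation (compare also the spurious minus sign in \eqref{eq:86b}), not between your route and the paper's, but since you assert that your assembly ``yields the stated locally uniform limits'' you should verify the constant rather than take the displayed one on faith.
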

\begin{proof}
	Let us begin with \eqref{eq:86a}. By Theorem~\ref{thm:1},
	\[
		\lim_{j \to \infty} Y_j = \varepsilon \Id
	\]
	locally uniformly on $\Lambda_-$. In particular,
	\[
		\lim_{j \to \infty} \frac{\tr Y_j(x)}{2 \sqrt{\det Y_j(x)}} = \varepsilon.
	\]
	Since
	\[
		\lim_{t \to 1^-} \frac{\arccos t}{\sqrt{1-t^2}} =1,
	\]
	we obtain
	\[
		\lim_{j \to \infty} \bigg(1-\bigg( \frac{\tr Y_j(x)}{2 \sqrt{\det Y_j(x)}} \bigg)^2  \bigg)^{-1/2} \theta_j(x) 
		= 1.
	\]
	Let us observe that by Theorem~\ref{thm:1}
	\[
		\sqrt{1-\bigg( \frac{\tr Y_j(x)}{2 \sqrt{\det Y_j(x)}} \bigg)^2} = 
		\frac{\sqrt{-\discr Y_j(x)}}{2 \sqrt{\det Y_j(x)}} =
		\vartheta_j(x) \frac{\sqrt{-\discr R_j(x)}}{2 \sqrt{\det Y_j(x)}}.
	\]
	Hence, by \eqref{eq:68}
	\begin{equation}
		\label{eq:105}
		\lim_{j \to \infty} \sqrt{\frac{a_{(j+1)N+i-1}}{\alpha_{i-1}}} 
		\sqrt{1-\bigg( \frac{\tr Y_j(x)}{2 \sqrt{\det Y_j(x)}} \bigg)^2} = \sqrt{|\tau(x)|},
	\end{equation}
	and the proof of \eqref{eq:86a} is complete.

	Next, by the direct computation, we obtain
	\begin{equation}
		\label{eq:102}
		\tr Y_j = \tr X_{jN+i} + \big[T_i^{-1} X_{jN+i} T_i \big]_{1, 2} 
		\frac{\sinh(\vartheta_{j+1} - \vartheta_j)}{\sinh \vartheta_{j+1}}
		+
		\big[T_i^{-1} X_{jN+i} T_i\big]_{2, 2} \bigg(\frac{\sinh \vartheta_j}{\sinh \vartheta_{j+1}} - 1\bigg).
	\end{equation}
	We write
	\[
		\frac{\sinh(\vartheta_{j+1} - \vartheta_j)}{\sinh \vartheta_{j+1}}
		=
		\frac{\vartheta_{j+1} - \vartheta_j}{\vartheta_{j+1}} 
		\cdot 
		\frac{\sinh(\vartheta_{j+1} - \vartheta_j)}{\vartheta_{j+1} - \vartheta_j}
		\cdot
		\frac{\vartheta_{j+1}}{\sinh \vartheta_{j+1}}.
	\]
	Notice that
	\begin{align*}
		\frac{\vartheta_{j+1} - \vartheta_j}{\vartheta_{j+1}}
		&=
		\sqrt{a_{(j+2)N+i-1}}\bigg(\frac{1}{\sqrt{a_{(j+2)N+i-1}}} - \frac{1}{\sqrt{a_{(j+1)N+i-1}}}\bigg)\\
		&=
		\frac{1}{\sqrt{a_{(j+1)N+i-1}}} \big(\sqrt{a_{(j+1)N+i-1}} - \sqrt{a_{(j+2)N+i-1}}\big) \\
		&=
		\frac{1}{\sqrt{a_{(j+1)N+i-1}}} \frac{a_{(j+1)N+i-1} - a_{(j+2)N+i-1}}
		{\sqrt{a_{(j+1)N+i-1}} + \sqrt{a_{(j+2)N+i-1}}}, 
	\end{align*}
	thus
	\[
		\lim_{j \to \infty} a_{(j+1)N+i-1} \frac{\vartheta_{j+1} - \vartheta_j}{\vartheta_{j+1}} = 0.
	\]
	Since the function
	\[
		F(x) = \frac{\sinh \sqrt{x}}{\sqrt{x}}, \quad x > 0,
	\]
	has smooth extension to $\RR$ which attains $1$ at the origin, for $k \in \{0, 1, 2\}$ we have
	\begin{equation}
		\label{eq:101}
		\begin{aligned}
		&\lim_{j \to \infty}
		a_{(j+1)N+i-1} \bigg(\frac{\sinh(\vartheta_{j+1} - \vartheta_j)}{\sinh \vartheta_{j+1}}\bigg)^{(k)} \\
		&\qquad\qquad=
		\lim_{j \to \infty}
		a_{(j+1)N+i-1}
		\frac{\vartheta_{j+1} - \vartheta_j}{\vartheta_{j+1}}
		\bigg(\frac{\sinh(\vartheta_{j+1} - \vartheta_j)}{\vartheta_{j+1} - \vartheta_j} \cdot 
		\frac{\vartheta_{j+1}}{\sinh \vartheta_{j+1}}\bigg)^{(k)}
		=0
		\end{aligned}
	\end{equation}
	locally uniformly on $\Lambda_-$. Similarly, we write
	\[
		\frac{\sinh \vartheta_{j+1}}{\sinh \vartheta_{j}} - 1 = 
		\frac{\cosh\left(\frac{\vartheta_{j+1} + \vartheta_j}{2}\right)}
		{\cosh\left(\frac{\vartheta_{j+1} - \vartheta_j}{2}\right)}
		\frac{\sinh(\vartheta_{j+1} - \vartheta_j)}{\sinh \vartheta_j},
	\]
	and observe that $G(x) = \cosh \sqrt{x}$, $x > 0$, has a smooth extension to $\RR$ and attains $1$ at the
	origin, hence for $k \in \{0, 1, 2\}$,
	\begin{equation}
		\label{eq:99}
		\lim_{j \to \infty} a_{(j+1)N+i-1} \bigg(\frac{\sinh \vartheta_{j+1}}{\sinh \vartheta_{j}} - 1\bigg)^{(k)}
		=0
	\end{equation}
	locally uniformly on $\Lambda_-$. In particular, by \eqref{eq:102} and \cite[Corollary 3.10]{ChristoffelI}
	\begin{equation}
		\label{eq:104}
		\lim_{j \to \infty} \frac{a_{(j+1)N+i-1}}{\alpha_{i-1}} \tr Y_j'(x) = \tr \frakX_0'(0)
	\end{equation}
	locally uniformly with respect to $x \in \Lambda_-$. Now, to prove \eqref{eq:86b}, we write
	\[
		\frac{\tr Y_j}{2 \sqrt{\det Y_j}} = \frac{1}{2} \sqrt{\frac{a_{(j+1)N+i-1}}{a_{jN+i-1}}}
	    \sqrt{\frac{\sinh \vartheta_{j+1}}{\sinh \vartheta_{j}}} \tr Y_j,
	\]
	and so
	\[
		\theta'_j = -\frac{1}{2} 
		\sqrt{\frac{a_{(j+1)N+i-1}}{a_{jN+i-1}}}
		\bigg(1 - \bigg(\frac{\tr Y_j}{2 \sqrt{\det Y_j}}\bigg)^2\bigg)^{-\frac{1}{2}}
		\bigg(\sqrt{\frac{\sinh \vartheta_{j+1}}{\sinh \vartheta_{j}}} \tr Y_j\bigg)'.
	\]
	By \eqref{eq:99} and \eqref{eq:104},
	\begin{equation}
		\label{eq:107}
		\lim_{j \to \infty} 
		\frac{a_{(j+1)N+i-1}}{\alpha_{i-1}}
		\bigg(\sqrt{\frac{\sinh \vartheta_{j+1}}{\sinh \vartheta_{j}}} \tr Y_j \bigg)' = 
		\tr \frakX_0'(0)
	\end{equation}
	which together with \eqref{eq:105} gives
	\[
		\lim_{j \to \infty}
		\sqrt{\frac{a_{(j+1)N+i-1}}{\alpha_{i-1}}}
		\big|\theta'_j(x)\big|
		=
		\frac{\big|\tr \frakX_0'(0)\big|}{2 \sqrt{|\tau(x)|}}
	\]
	locally uniformly with respect to $x \in \Lambda_-$ proving \eqref{eq:86b}.

	Finally, we turn to the proof of \eqref{eq:86c}. We have
	\begin{align*}
		\theta''_{j} 
		&= -\frac{1}{8} 
		\bigg(\frac{a_{(j+1)N+i-1}}{a_{jN+i-1}}\bigg)^{\frac{3}{2}}
		\bigg(1 - \bigg(\frac{\tr Y_j}{2 \sqrt{\det Y_j}}\bigg)^2\bigg)^{-\frac{3}{2}}
		\bigg(\sqrt{\frac{\sinh \vartheta_{j+1}}{\sinh \vartheta_{j}}} \tr Y_j\bigg)
		\bigg\{
		\bigg(\sqrt{\frac{\sinh \vartheta_{j+1}}{\sinh \vartheta_{j}}} \tr Y_j\bigg)'\bigg\}^2 \\
		&\phantom{=}
		-\frac{1}{2}
		\sqrt{\frac{a_{(j+1)N+i-1}}{a_{jN+i-1}}}
		\bigg(1 - \bigg(\frac{\tr Y_j}{2 \sqrt{\det Y_j}}\bigg)^2\bigg)^{-\frac{1}{2}}
		\bigg(\sqrt{\frac{\sinh \vartheta_{j+1}}{\sinh \vartheta_{j}}} \tr Y_j\bigg)''.
	\end{align*}
	By \cite[Corollary 3.10]{ChristoffelI} together with \eqref{eq:102}, \eqref{eq:101} and \eqref{eq:99},
	\begin{equation}
		\label{eq:106}
		\lim_{j \to \infty} a_{jN+i} \tr Y_j'' = 0,
	\end{equation}
	thus
	\[
		\lim_{j \to \infty}
		a_{jN+i} \bigg(\sqrt{\frac{\sinh \vartheta_{j+1}}{\sinh \vartheta_{j}}} \tr Y_j\bigg)'' = 0,
	\]
	locally uniformly on $\Lambda_-$, which together with \eqref{eq:105} and \eqref{eq:107} implies \eqref{eq:86c}.
\end{proof}

\subsection{Universality limits}

\begin{theorem} 
	\label{thm:7}
	Let $N$ be a positive integer. Suppose that $(a_n : n \in \NN_0)$ and $(b_n : n \in \NN_0)$ are $N$-periodically
	modulated Jacobi parameters so that $\frakX_0(0)$ is a non-trivial parabolic element. If
	\[
		\bigg(\frac{\alpha_{n-1}}{\alpha_n} a_n - a_{n-1} : n \in \NN \bigg),
		\bigg(\frac{\beta_n}{\alpha_n} a_n - b_n : n \in \NN\bigg),
		\bigg(\frac{1}{\sqrt{a_n}} : n \in \NN\bigg) \in \calD_1^N,
	\]
	and
	\[
		\lim_{n \to \infty} \big(a_{n+N}-a_n\big) = 0,
	\]
	then
	\[
		\lim_{n \to \infty} 
		\frac{1}{\rho_{n}} K_{n} \bigg(x + \frac{u}{\rho_n}, x + \frac{v}{\rho_n} \bigg) =
		\frac{\upsilon(x)}{\mu'(x)}
		\cdot
		\sinc\big((u-v) \pi \upsilon(x)\big)
	\]
	locally uniformly with respect to $(x, u, v) \in \Lambda_- \times \RR^2$, 
	where $\upsilon$ is defined in \eqref{eq:85} and
	\[
		\rho_n = \sum_{k=0}^n \sqrt{\frac{\alpha_k}{a_k}}.
	\]
\end{theorem}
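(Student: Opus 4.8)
The plan is to expand the Christoffel--Darboux kernel along residue classes modulo $N$, insert the polynomial asymptotics of Theorem~\ref{thm:6}, and recognise the result as a Riemann sum converging to the $\sinc$ kernel. Fix $(x,u,v)$ in a compact subset of $\Lambda_- \times \RR^2$ and choose a compact neighbourhood $K \subset \Lambda_-$ of $x$ containing $z_n := x + u/\rho_n$ and $z_n' := x + v/\rho_n$ for all large $n$. I split
\[
	K_n(z_n, z_n') = \sum_{i=0}^{N-1} \sum_{jN+i \leq n} p_{jN+i}(z_n)\, p_{jN+i}(z_n'),
\]
and apply Theorem~\ref{thm:6} on $K$. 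Writing $\Theta_j(z) = \sum_{k=j_0}^{j-1} \theta_k(z) + \chi(z)$ and $A_i(x) = |[\frakX_i(0)]_{2,1}| \big/ \big(\pi \mu'(x)\sqrt{\alpha_{i-1}|\tau(x)|}\big)$, each product equals $a_{(j+1)N+i-1}^{-1/2} A_i(x)\, \sin\Theta_j(z_n)\sin\Theta_j(z_n')$ up to an error that is $o_K(1)\,a_{(j+1)N+i-1}^{-1/2}$ uniformly, where I used continuity of $\mu'$ and $\tau$ to replace the arguments by $x$. The accumulated error is harmless: since $\tfrac1{\rho_n}\sum_j a_{(j+1)N+i-1}^{-1/2}$ stays bounded (it is asymptotically $\alpha_{i-1}^{-1/2}/N$) while the errors vanish uniformly, a Toeplitz averaging argument shows this part tends to $0$.

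The second step linearises the phases. By the product-to-sum identity, $\sin\Theta_j(z_n)\sin\Theta_j(z_n') = \tfrac12\cos\big(\Theta_j(z_n)-\Theta_j(z_n')\big) - \tfrac12\cos\big(\Theta_j(z_n)+\Theta_j(z_n')\big)$. A Taylor expansion gives $\Theta_j(z_n)-\Theta_j(z_n') = \Theta_j'(x)\tfrac{u-v}{\rho_n} + O\big(|\Theta_j''|\,\rho_n^{-2}\big)$, and by \eqref{eq:86c} the partial sums $\sum_{k=j_0}^{j-1}|\theta_k''|$ grow at most like $\rho_{jN+i} \lesssim \rho_n$, so the remainder is $O(\rho_n^{-1})$ uniformly. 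For the linear term, \eqref{eq:86b} gives $\theta_k'(x) = -N\pi\upsilon(x)\sqrt{\alpha_{i-1}/a_{(k+1)N+i-1}}\,(1+o(1))$; since all $N$ residue classes of increments of $\rho$ are asymptotically comparable (because $a_{n-1}/a_n \to \alpha_{n-1}/\alpha_n$, using also $\lim(a_{n+N}-a_n)=0$), a single class accounts for the fraction $\tfrac1N$ of $\rho$, whence $\Theta_j'(x) = -\pi\upsilon(x)\,\rho_{jN+i}\,(1+o(1))$. Setting $s_j = \rho_{jN+i}/\rho_n \in (0,1]$ and using that the limiting kernel is even in $u-v$, this yields $\cos\big(\Theta_j(z_n)-\Theta_j(z_n')\big) \to \cos\big(\pi\upsilon(x)(u-v)s_j\big)$.

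In the third step I discard the high-frequency term and pass to the integral. The sum $\tfrac1{\rho_n}\sum_j a_{(j+1)N+i-1}^{-1/2}\cos\big(\Theta_j(z_n)+\Theta_j(z_n')\big)$ has phase increasing by $\approx 2\theta_j(x)\to 0$ but diverging in total; a summation by parts, in which the amplitude-to-frequency ratio $a_{(j+1)N+i-1}^{-1/2}/\theta_j \to 1/\sqrt{\alpha_{i-1}|\tau(x)|}$ has bounded variation by the $\calD_1^N$ hypotheses together with \eqref{eq:86a}, bounds it by $O(1)$, hence $o(\rho_n)$. For the surviving term I write $a_{(j+1)N+i-1}^{-1/2} = \alpha_{i-1}^{-1/2}\big(\rho_{(j+1)N+i-1}-\rho_{(j+1)N+i-2}\big)$ and note $\tfrac1{\rho_n}\sqrt{\alpha_{i-1}/a_{(j+1)N+i-1}} \approx \tfrac1N(s_j - s_{j-1})$, so the class-$i$ contribution converges to
\[
	\frac{A_i(x)}{2N\sqrt{\alpha_{i-1}}}\int_0^1 \cos\big(\pi\upsilon(x)(u-v)s\big)\,\ud s
	= \frac{A_i(x)}{2N\sqrt{\alpha_{i-1}}}\,\sinc\big(\pi\upsilon(x)(u-v)\big).
\]
Summing over $i$ and substituting $A_i(x)/\sqrt{\alpha_{i-1}} = |[\frakX_i(0)]_{2,1}| \big/ \big(\pi\mu'(x)\alpha_{i-1}\sqrt{|\tau(x)|}\big)$, the definition \eqref{eq:85} of $\upsilon$ gives $\sum_{i=0}^{N-1}|[\frakX_i(0)]_{2,1}|/\alpha_{i-1} = 2\pi N\sqrt{|\tau(x)|}\,\upsilon(x)$, so the total equals $\tfrac{\upsilon(x)}{\mu'(x)}\sinc\big(\pi\upsilon(x)(u-v)\big)$, as claimed.

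The main obstacle is the transition to the Riemann sum while simultaneously discarding the oscillatory term: the single hardest estimate is the summation-by-parts bound on $\sum_j a_{(j+1)N+i-1}^{-1/2}\cos\big(\Theta_j(z_n)+\Theta_j(z_n')\big)$, which is where the bounded-variation ($\calD_1^N$) structure of the Jacobi parameters is essential, in conjunction with the uniform validity of \eqref{eq:86a}--\eqref{eq:86c}. Since every limit used (Theorem~\ref{thm:6} and Proposition~\ref{prop:2}) is locally uniform, uniformity propagates through the argument to give the asserted locally uniform convergence in $(x,u,v)$.
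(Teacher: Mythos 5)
Your proposal is correct and follows the same skeleton as the paper's proof: decompose $K_n$ along the residue classes modulo $N$, insert the asymptotics of Theorem~\ref{thm:6}, dispose of the accumulated $o_K(1)$ errors by Stolz--Ces\'aro averaging against the weights $a_{(k+1)N+i-1}^{-1/2}$, and reassemble the classes using $\sum_{i}|[\frakX_i(0)]_{2,1}|/\alpha_{i-1} = 2\pi N\sqrt{|\tau(x)|}\,\upsilon(x)$ together with the limit $\rho_{i-1;j}/\rho_{jN+i'} \to 1/(N\sqrt{\alpha_{i-1}})$. The only substantive difference is in the central oscillatory-sum step: the paper verifies, via Proposition~\ref{prop:2}, the hypotheses of \cite[Theorem 9]{SwiderskiTrojan2019} with $\xi_j=\theta_{jN+i}$, $\gamma_j=N\sqrt{\alpha_{i-1}/a_{(j+1)N+i-1}}$ and $|\psi|=\pi\upsilon$, and cites that result to obtain the $\tfrac12\sinc$ limit, whereas you re-derive that lemma inline (product-to-sum identity, Taylor expansion of the phase controlled by \eqref{eq:86b}--\eqref{eq:86c}, summation by parts to discard the high-frequency term, and a Riemann sum for the low-frequency term). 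Your inline argument is in substance the proof of the cited theorem, so nothing is gained or lost mathematically; what you would still have to write out carefully is precisely the bounded-variation bookkeeping (that $a_{(j+1)N+i-1}^{-1/2}/\theta_j$ and $\theta_j$ itself lie in $\calD_1(K)$, uniformly in the spectral parameter) that the citation packages up.
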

\begin{proof}
	Let $K$ be a compact interval with non-empty interior contained in $\Lambda_-$, and let $L > 0$. We select a
	compact interval $\tilde{K} \subset \Lambda_-$ containing $K$ in its interior. There is $j_1 > 0$ such that
	for all $x \in K$, $j \geq j_1$, $i \in \{0, 1, \ldots, N-1\}$, and $u \in [-L, L]$,
	\[
		x + \frac{u}{\rho_{j N + i}}, x + \frac{u}{N \alpha_i \rho_{i; j}} \in \tilde{K}.
	\]
	Given $x \in K$ and $u, v \in [-L, L]$, we set
	\begin{align*}
		x_{i; j} &= x + \frac{u}{N \sqrt{\alpha_i} \rho_{i; j}}, \qquad x_{jN+i} = x + \frac{u}{\rho_{jN+i}}, \\
		y_{i; j} &= x + \frac{v}{N \sqrt{\alpha_i} \rho_{i; j}}, \qquad y_{jN+i} = x + \frac{v}{\rho_{jN+i}}.
	\end{align*}
	By Theorem~\ref{thm:6}, there is $j_0 \geq j_1$ such that for all $x, y \in K$, and $k > j_0$,
	\[
		\begin{aligned}
		&\sqrt{a_{(k+1)N+i-1}} p_{kN+i}(x) p_{kN+i}(y) \\
		&\qquad\qquad= \frac{1}{\pi}
		\sqrt{\frac{|[\frakX_i(0)]_{2,1}|}{\mu'(x) \sqrt{\alpha_{i-1} |\tau(x)|}}}
		\sqrt{\frac{|[\frakX_i(0)]_{2,1}|}{\mu'(y) \sqrt{\alpha_{i-1} |\tau(y)|}}}\\
		&\qquad\qquad\phantom{=}\times
			\sin\Big(\sum_{\ell=j_0}^{k-1} \theta_{\ell N + i}(x) + \chi_i(x) \Big)
			\sin\Big(\sum_{\ell=j_0}^{k-1} \theta_{\ell N + i}(y) + \chi_i(y) \Big)
		+
		E_{kN+i}(x, y)
		\end{aligned}
	\]
	where
	\[
		\lim_{k \to \infty}
		\sup_{x, y \in K} |E_{kN+i}(x, y)| = 0.
	\]
	Therefore, we obtain
	\[
		\begin{aligned}
		&
		\sum_{k = j_0+1}^j
		p_{kN+i}(x) p_{kN+i}(y)
		=
		\frac{1}{\pi}
		\frac{|[\frakX_i(0)]_{2,1}|}{\alpha_{i-1}}
		\frac{1}{\sqrt{\mu'(x) \mu'(y) \sqrt{|\tau(x) \tau(y)|}}} \\
		&\qquad\qquad\phantom{=}\times
		\sum_{k = j_0+1}^j
		\sqrt{\frac{\alpha_{i-1}}{a_{(k+1)N+i-1}}}
		\sin\Big(\sum_{\ell=j_0}^{k-1} \theta_{\ell N + i}(x) + \chi_i(x) \Big)
		\sin\Big(\sum_{\ell=j_0}^{k-1} \theta_{\ell N + i}(y) + \chi_i(y) \Big) \\
		&\qquad\qquad\phantom{=}+
		\sum_{k = j_0+1}^j
		\frac{1}{\sqrt{a_{(k+1)N+i-1}}} E_{kN+i}(x, y).
		\end{aligned}
	\]
	Observe that by the Stolz--Ces\'aro theorem,
	\[
		\lim_{j \to \infty} \frac{1}{\rho_{i-1; j}} \sum_{k = j_0+1}^j \frac{1}{\sqrt{a_{(k+1)N+i-1}}} E_{kN+i}(x, y)
		=
		\lim_{j \to \infty} \sqrt{\frac{a_{jN+i-1}}{a_{(j+1)N+i-1}}} E_{jN+i}(x, y) = 0.
	\]
	In view of Proposition \ref{prop:2}, we can apply \cite[Theorem 9]{SwiderskiTrojan2019} with
	\[
		\xi_j(x) = \theta_{jN+i}(x),
		\qquad
		\gamma_j = N \sqrt{\frac{\alpha_{i-1}}{a_{(j+1)N+i-1}}},
		\qquad\text{and}\qquad
		|\psi(x)| = \pi \upsilon(x).
	\]
	Therefore, for any $i' \in \{0, 1, \ldots, N-1\}$, as $j$ tends to infinity
	\begin{align*}
		\frac{1}{N \sqrt{\alpha_{i-1}} \rho_{i-1; j}}
		\sum_{k = j_0+1}^j
		N\sqrt{\frac{\alpha_{i-1}}{a_{(k+1)N+i-1}}}
		&\sin\Big(\sum_{\ell=j_0}^{k-1} \theta_{\ell N + i}(x_{j N+i'}) + \chi_i(x_{j N+i'}) \Big) \\
		&\times
		\sin\Big(\sum_{\ell=j_0}^{k-1} \theta_{\ell N + i}(y_{j N+i'}) + \chi_i(y_{j N+i'}) \Big)
	\end{align*}
	approaches to
	\[
		\frac{1}{2} \sinc\big((v-u) \pi \upsilon(x) \big)
	\]
	uniformly with respect to $x \in K$ and $u, v \in [-L, L]$. Moreover,
	\begin{align*}
		\lim_{j \to \infty}
		\frac{1}{\mu'(x_{jN+i'}) \sqrt{|\tau(x_{jN+i'}|)}}
		&=
		\lim_{j \to \infty}
		\frac{1}{\mu'(y_{jN+i'}) \sqrt{|\tau(y_{jN+i'}|)}} \\
		&=
		\frac{1}{\mu'(x) \sqrt{|\tau(x)|}}.
	\end{align*}
	Hence,
	\begin{equation}
		\label{eq:75}
		\lim_{j \to \infty} 
		\frac{1}{\rho_{i-1; j}}
		K_{i; j}(x_{jN+i'}, y_{jN+i'}) 
		=
		\sinc\big((v-u)\pi \upsilon(x) \big)
		\frac{1}{2 \pi \mu'(x) \sqrt{|\tau(x)|}} 
        \frac{|[\frakX_i(0)]_{2,1}|}{\sqrt{\alpha_{i-1}}}.
	\end{equation}
	Finally, we write
	\[
		K_{jN+i'}(x, y) = \sum_{i = 0}^{N-1} K_{i; j}(x, y) 
		+ \sum_{i = i'+1}^{N-1} \big(K_{i; j-1}(x, y) - K_{i; j}(x, y)\big).
	\]
	Observe that
	\[
		\sup_{x, y \in K}{\big|K_{i; j-1}(x, y) - K_{i; j}(x, y)\big|} =
		\sup_{x, y \in K}{|p_{jN+i}(x) p_{jN+i}(y)|} \leq c.
	\]
	Moreover, by \cite[Proposition 3.7]{ChristoffelI}, for $m, m' \in \NN_0$,
	\[
		\lim_{j \to \infty} \frac{a_{jN+m'}}{a_{jN+m}} = \frac{\alpha_{m'}}{\alpha_m},
	\]
	thus, by the Stolz--Ces\'aro theorem, 
	\begin{align*}
		\lim_{j \to \infty} \frac{\rho_{i-1; j}}{\rho_{jN+i'}} 
		&=
		\lim_{j \to \infty} \frac{\frac{1}{\sqrt{a_{jN+i-1}}}}{\sum_{k = 1}^N \sqrt{\frac{\alpha_{i'+k}}{a_{jN+i'+k}}}} \\
		&=
		\frac{1}{N \sqrt{\alpha_{i-1}}}.
	\end{align*}
	Hence, by \eqref{eq:75} 
	\begin{align*}
		\lim_{j \to \infty} \frac{1}{\rho_{jN+i'}} K_{jN+i'}(x_{jN+i'}, y_{jN+i'})
		&=
		\lim_{j \to \infty} \sum_{i = 0}^{N-1} \frac{1}{\rho_{i-1; j}} K_{jN+i}(x_{jN+i'} , y_{jN+i'}) \cdot 
		\frac{\rho_{i-1; j}}{\rho_{jN+i'}} \\
		&=
		\frac{1}{\mu'(x)}
		\sinc\big((v-u)\pi \upsilon(x) \big)
		\frac{1}{2 N \pi \sqrt{|\tau(x)|}}
		\sum_{i = 0}^{N-1}
		\frac{|[\frakX_i(0)]_{2,1}|}{\alpha_{i-1}}.
	\end{align*}
	Therefore, in view of \eqref{eq:85}, we obtain
	\[
		\lim_{j \to \infty} \frac{1}{\rho_{jN+i'}} K_{jN+i'}(x_{jN+i'}, y_{jN+i'})
		=
		\frac{\upsilon(x)}{\mu'(x)} \cdot \sinc\big((v-u) \pi \upsilon(x) \big),
	\]
	and the theorem follows.
\end{proof}

\subsection{Applications to Ignjatovi\'c's conjecture}
In the following theorem we extend the results from \cite[Section 4.3]{ChristoffelI} and 
\cite[Section 8.1]{ChristoffelII} to the case when $N=1$ and $\frakX_0(0)$ is a non-trivial parabolic element
of $\SL(2, \RR)$. These results are motivated by \cite[Conjecture 1]{Ignjatovic2016}.
\begin{theorem}
	Let $q \in \{-2, 2\}$. Suppose that
	\[
		\big( a_n - a_{n-1} : n \in \NN \big),
		\big(q a_n - b_n : n \in \NN\big),
		\bigg(\frac{1}{\sqrt{a_n}} : n \in \NN\bigg) \in \calD_1
	\]
	and
	\[
		\lim_{n \to \infty} \big( a_n - a_{n-1} \big) = 0, \qquad
		\lim_{n \to \infty} \big( q a_n - b_n \big) = r, \qquad
		\lim_{n \to \infty} a_n = \infty.
	\]
	Then
	\[
		\lim_{n \to \infty} 
		\bigg( \sum_{j=0}^n \frac{1}{\sqrt{a_j}} \bigg)^{-1}
		\sum_{j=0}^n p^2_j(x) = \frac{1}{\pi \mu'(x) \sqrt{|x+r|}}
	\]
	locally uniformly with respect to $x \in \Lambda_-$ where
	\[
		\Lambda_- = 
		\begin{cases}
			(-r, +\infty) & q=2,\\
			(-\infty, -r) & q=-2.
		\end{cases}
	\]
\end{theorem}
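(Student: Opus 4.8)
The plan is to recognize this statement as the specialization of Theorem~\ref{thm:7} to $N=1$, evaluated on the diagonal $u=v=0$ of the Christoffel--Darboux kernel \eqref{eq:83}, and to check that the hypotheses stated here reduce verbatim to those of Theorem~\ref{thm:7}. First I would fix the periodic data by taking $\alpha_n \equiv 1$ and $\beta_n \equiv q$, so that $N=1$. The periodic-modulation conditions then hold: $a_n\to\infty$ is assumed, $|a_{n-1}/a_n - 1|\to 0$ follows from $a_n-a_{n-1}\to 0$ together with $a_n\to\infty$, and $|b_n/a_n - q|\to 0$ follows from $qa_n-b_n\to r$. Since for $N=1$ the class $\calD_1^N$ coincides with $\calD_1$, the three $\calD_1$-hypotheses are exactly \eqref{eq:40b}, while the remaining assumption $a_{n+1}-a_n\to 0$ is the hypothesis $a_{n+N}-a_n\to 0$ of Theorem~\ref{thm:7}. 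A direct computation gives $\frakX_0(0)=\frakB_0(0)=\begin{pmatrix}0&1\\-1&-q\end{pmatrix}$, whose trace is $-q$ with $|{-q}|=2$ and determinant $1$; since it is not a scalar matrix, it is a non-trivial parabolic element of $\SL(2,\RR)$ and $\varepsilon=\sign{-q}$ by \eqref{eq:61a}.

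Next I would compute $\tau$ and identify $\Lambda_-$. From \eqref{eq:40c}, the relevant periodic sequences are $s_0=\lim_n(a_n-a_{n-1})=0$ and $r_0=\lim_n(qa_n-b_n)=r$. Substituting into \eqref{eq:61b}, and using $[\frakX_0(0)]_{1,1}=0$, $[\frakX_0(0)]_{2,1}=-1$ and $\alpha_{-1}=1$, the single-term sum collapses to $\tau(x)=\varepsilon(x+r)$. Consequently $\Lambda_-=\tau^{-1}((-\infty,0))=\{x:\varepsilon(x+r)<0\}$ equals $(-r,+\infty)$ when $q=2$ (so $\varepsilon=-1$) and $(-\infty,-r)$ when $q=-2$ (so $\varepsilon=+1$), which matches the two cases in the statement.

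Having verified all hypotheses, I would invoke Theorem~\ref{thm:7}. On the diagonal $u=v=0$ one has $K_n(x,x)=\sum_{j=0}^n p_j^2(x)$, the normalizing sequence $\rho_n=\sum_{j=0}^n\sqrt{\alpha_j/a_j}$ reduces to $\sum_{j=0}^n 1/\sqrt{a_j}$, and $\sinc(0)=1$, so the limit equals $\upsilon(x)/\mu'(x)$. Computing $\upsilon$ from \eqref{eq:85a} with $\tr\frakX_0'(0)=1$ and $|\tau(x)|=|x+r|$, and substituting into $\upsilon/\mu'$, then yields the asserted density; alternatively the same conclusion can be read off from Theorem~\ref{thm:6}, where the square of the explicit amplitude combined with the averaging of $\sin^2$ produces the same function.

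The proof involves essentially no new analysis: every hypothesis of Theorem~\ref{thm:7} is, for $N=1$, literally one of the present assumptions. The only genuinely substantive steps --- and the closest thing to an obstacle --- are verifying that $\frakX_0(0)$ is a \emph{non-trivial} parabolic element (i.e.\ not a scalar matrix, which is what makes Theorem~\ref{thm:7} applicable rather than the case \ref{perMod:I} machinery), and carrying out the evaluation of \eqref{eq:61b} carefully enough to obtain $\tau(x)=\varepsilon(x+r)$ with the correct sign of $\varepsilon$, so that $\Lambda_-$ comes out exactly as stated in the two cases $q=\pm 2$.
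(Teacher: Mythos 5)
Your proof follows the paper's own argument verbatim: set $N=1$, $\alpha_n \equiv 1$, $\beta_n \equiv q$, note that $\frakX_0(0)=\left(\begin{smallmatrix}0 & 1\\ -1 & -q\end{smallmatrix}\right)$ is a non-trivial parabolic element with $\tau(x)=\sign{-q}\,(x+r)$, and specialize Theorem~\ref{thm:7} to the diagonal $u=v=0$; you merely spell out the hypothesis checks that the paper leaves implicit. (One caveat, inherited from the paper's own three-line proof rather than introduced by you: since $\tr \frakX_0'(0)=1$ and $|\tau(x)|=|x+r|$, formula \eqref{eq:85a} gives $\upsilon(x)/\mu'(x)=\tfrac{1}{2\pi \mu'(x)\sqrt{|x+r|}}$, which differs by a factor of $2$ from the constant displayed in the statement, so the claim that the substitution ``yields the asserted density'' deserves a second look --- one of the two displays must carry a spurious factor.)
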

\begin{proof}
	Let $N = 1$, $\alpha_n \equiv 1$, and $\beta_n \equiv q$. Then
	\[
		\frakX_0(0) =
		\begin{pmatrix}
			0 & 1 \\
			-1 & -q
		\end{pmatrix}.
	\]
	By \eqref{eq:61b} and \eqref{eq:25},
	\[	
		\tau(x) = (x+r) \sign{-q} = -(x+r) \sign{q}.
	\]
	Hence, the result follows by Theorem~\ref{thm:7}.
\end{proof}

\section{The $\ell^1$-type perturbations}
\label{sec:9}
In this section we show how to obtain the main results of the paper in the presence of $\ell^1$ perturbation.
We start by introducing notation. Let $(\tilde{a}_n : n \in \NN_0)$ and $(\tilde{b}_n : n \in \NN_0)$ be Jacobi parameters
satisfying
\[
	\tilde{a}_n = a_n \big(1 + \xi_n\big), \qquad \tilde{b}_n = b_n \big(1 + \zeta_n\big)
\]
where $(a_n : n \in \NN_0)$ and $(b_n : n \in \NN_0)$ are $N$-periodically modulated Jacobi parameters so that
$\frakX_0(0)$ is a non-trivial parabolic element, satisfying
\[
	\bigg(\frac{\alpha_{n-1}}{\alpha_n} a_n - a_{n-1} : n \in \NN \bigg),
	\bigg(\frac{\beta_n}{\alpha_n} a_n - b_n : n \in \NN\bigg),
	\bigg(\frac{1}{\sqrt{a_n}} : n \in \NN\bigg) \in \calD_1^N,
\]
and 
\[
	\sum_{n = 0}^\infty \sqrt{a_n} (|\xi_n| + |\zeta_n|)< \infty,
\]
for certain real sequences $(\xi_n : n \in \NN_0)$ and $(\zeta_n : n \in \NN_0)$. In this section we add tilde to
objects defined in terms of Jacobi parameters $(\tilde{a}_n)$ and $(\tilde{b}_n)$. 

Let $K$ be a compact subset of $\RR$. By $(\Delta_n)$ we denote any sequence of $2\times2$ matrices such that
\[
	\sum_{n = 0}^\infty \sup_K \|\Delta_n\| < \infty.
\]
We notice that
\begin{equation}
	\label{eq:117a}
	\tilde{B}_n(x) = B_n(x) + a_n^{-1/2} \Delta_n(x)
\end{equation}
where
\begin{align*}
	\tilde{B}_0(x) 
	&= 
	\begin{pmatrix}
		0 & 1 \\
		-\frac{1}{\tilde{a}_0} & \frac{x-\tilde{b}_0}{\tilde{a}_0}
	\end{pmatrix}, \\
	\tilde{B}_n(x) &= 
	\begin{pmatrix}
		0 & 1 \\
		-\frac{\tilde{a}_{n-1}}{\tilde{a}_n} & \frac{x-\tilde{b}_n}{\tilde{a}_n}
	\end{pmatrix}, \quad n \geq 1.
\end{align*}
Moreover, for
\[
	\tilde{X}_n = \tilde{B}_{n+N-1} \tilde{B}_{n+N-2} \cdots \tilde{B}_n,
\]
we have
\begin{align*}
	\tilde{X}_n - X_n
	=
	\sum_{k = n}^{n+N-1} a_k^{-1/2} \tilde{B}_{n+N-1} \cdots \tilde{B}_{k+1} \Delta_k
	B_{k-1} \cdots B_n,
\end{align*}
which together with
\[
	\sup_{n \in \NN_0}{ \sup_{x \in K}{\big(\|B_n(x)\| + \|\tilde{B}_n(x)\|}\big)} < \infty,
\]
implies that
\begin{equation}
	\label{eq:117}
	\tilde{X}_n = X_n + a_n^{-1/2} \Delta_n.
\end{equation}
Next, if $K \subset \Lambda_-$ then by Theorem \ref{thm:3} and \eqref{eq:50}, there is $c > 0$ such that for all $n \in \NN_0$,
\begin{equation}
	\label{eq:110}
	\sup_{K}{\|B_n B_{n-1} \cdots B_0\|} \leq c a_n^{-1/4},
\end{equation}
and since $\det B_n = \frac{a_{n-1}}{a_n}$, we get
\begin{equation}
	\label{eq:103}
	\sup_{K}{\|(B_n B_{n-1} \cdots B_0)^{-1} \|} \leq c a_n^{3/4}.
\end{equation}
Moreover, by \eqref{eq:117a}
\begin{align*}
	\tilde{B}_n \cdots \tilde{B}_1 \tilde{B}_0
	&=
	\tilde{B}_n \cdots \tilde{B}_1 B_0 \Big(\Id + a_0^{-1/2} B_0^{-1} \Delta_0\Big) \\
	&=
	\tilde{B}_n \cdots \tilde{B}_2 B_1 B_0 \Big(\Id + a_1^{-1/2} (B_1 B_0)^{-1} \Delta_1 B_0 \Big) 
	\Big(\Id + a_0^{-1/2} B_0^{-1} \Delta_0\Big) \\
	&=
	B_n \cdots B_1 B_0
	\prod_{j = 0}^n
	\Big(\Id + a_j^{-1/2} (B_j \cdots B_1 B_0)^{-1} \Delta_j (B_{j-1} \cdots B_1 B_0) \Big)
\end{align*}
thus by \eqref{eq:110} and \eqref{eq:103}
\begin{align*}
	\|\tilde{B}_n \cdots \tilde{B}_1 \tilde{B}_0\|
	&\leq
	\|B_n \cdots B_1 B_0\|
	\prod_{j = 0}^{n} \Big(1 + a_j^{-1/2} 
	\|(B_j \cdots B_1 B_0)^{-1} \| \cdot \|B_{j-1} \cdots B_1 B_0\| \cdot \|\Delta_j\|\Big) \\
	&\leq
	\|B_n \cdots B_1 B_0\|
	\prod_{j=0}^{n} \Big(1 + c a_j^{1/4} a_{j-1}^{-1/4} \|\Delta_j\|\Big) \\
	&\leq
	\|B_n \cdots B_1 B_0\|
	\exp\Big(c \sum_{j = 0}^n \|\Delta_j\|\Big),
\end{align*}
and so
\begin{equation}
	\label{eq:98}
	\sup_{K}{\|\tilde{B}_n \cdots \tilde{B}_1 \tilde{B}_0\|}
	\leq c a_n^{-1/4}.
\end{equation}
Next, let us introduce the following sequence of matrices
\begin{equation}
	\label{eq:115}
	M_j = \big(B_j B_{j-1} \cdots B_0 \big)^{-1} \big(\tilde{B}_j \tilde{B}_{j-1} \cdots \tilde{B}_0\big).
\end{equation}
Since
\[
	M_{j+1} - M_j = \big(B_{j+1} B_{j} \cdots B_0 \big)^{-1} \big(\tilde{B}_{j+1} - B_{j+1}\big)
	\big(\tilde{B}_j \tilde{B}_{j-1} \cdots \tilde{B}_0\big),
\]
by \eqref{eq:117a}, \eqref{eq:103} and \eqref{eq:98}, we obtain
\begin{align*}
	\sup_K{ \|M_{j+1} - M_j\| }
	&\leq
	c a_{j+1}^{3/4} a_{j+1}^{-1/2} a_j^{-1/4}
	\sup_K{\|\Delta_{j+1}\|} \\
	&\leq
	c \sup_K{\|\Delta_{j+1}\|}.
\end{align*}
Hence, the sequence of matrices $(M_j)$ converges uniformly on $K \subset \Lambda_-$ to a certain continuous mapping 
$M$, and
\begin{equation}
	\label{eq:112}
	\sup_K{ \big\|M - M_j \big\| } \leq c \sum_{k = j+1}^\infty \sup_K{\|\Delta_k\|}.
\end{equation}
Observe that for each $x \in K$ the matrix $M(x)$ is non-degenerate. Indeed, we have
\begin{align*}
	\det M(x) 
	&= \lim_{j \to \infty} \det M_j(x) \\
	&= \lim_{j \to \infty} \frac{a_j}{\tilde{a}_j} = 1.
\end{align*}
We set
\[
	\eta_n = \frac{M_{n-1} e_2}{\|M_{n-1} e_2\|}, \qquad 
	\eta = \frac{M e_2}{\|M e_2\|}.
\]
Let us denote by $(\tilde{p}_n : n \in \NN_0)$ orthogonal polynomials generated by $(\tilde{a}_n : n \in \NN_0)$,
and $(\tilde{b}_n : n \in \NN_0)$. Let $\tilde{\mu}$ denote their orthonormalizing measure. Notice that for all $n \in \NN$ and $x \in K$, by \eqref{eq:108a} and \eqref{eq:115}, we have
\begin{equation}
	\label{eq:63}
	\vec{u}_n\big(\eta_n(x), x\big) = 
	\frac{1}{\|M_{n-1}(x) e_2\|}
	\begin{pmatrix}
		\tilde{p}_{n-1}(x) \\
		\tilde{p}_{n}(x)
	\end{pmatrix}.
\end{equation}
By Corollary \ref{cor:3}, 
\[
	\sup_{n \in \NN} \sup_{x \in K} 
	\sqrt{a_{n+N-1}} \big\| \vec{u}_{n} \big( \eta_n(x),x \big) \big\|^2 < \infty,
\]
which together with \eqref{eq:63} implies
\begin{equation}
	\label{eq:119}
	\sup_{n \in \NN}\sup_{x \in K}{
	\sqrt{\tilde{a}_{n+N-1}} \big(\tilde{p}_{n-1}^2(x) + \tilde{p}_{n}^2(x) \big)} < \infty.
\end{equation}
We consider the corresponding $N$-shifted Tur\'an determinants,
\[
	\tilde{\scrD}_n(x) = \tilde{p}_n(x) \tilde{p}_{n+N-1} - \tilde{p}_{n-1}(x) \tilde{p}_{n+N}(x).
\]
By \eqref{eq:63} together with Theorem \ref{thm:3}, \eqref{eq:116} and \eqref{eq:117}, we obtain
\begin{align*}
	\big| \tilde{\scrD}_n(x) - a_{n+N-1}^{-3/2} \|M_{n-1}(x) e_2\|^{2} \cdot S_n(\eta_n(x), x) \big| 
	&=
	\big| \tilde{\scrD}_n(x) - \|M_{n-1}(x) e_2\|^{2}
	\sprod{E X_n(x) \vec{u}_n(\eta_n(x), x) }{\vec{u}_n(\eta_n(x), x) } \big| \\
	&\leq
	c a_{n+N-1}^{-1/2} \|X_n(x) - \tilde{X}_n(x)\| \\
	&\leq
	c a_{n+N-1}^{-1} \sup_K{\| \Delta_n \|}.
\end{align*}
Fix $i \in \{0, 1, \ldots, N-1\}$. Since $(a_n)$ is sublinear and $(\sup_K \|\Delta_n\| )$ belongs to $\ell^1$,
for each subsequence there is a further subsequence $(L_j : j \in \NN_0)$, such that
\begin{equation}
	\label{eq:71}
	\sup_K{\|\Delta_{L_j} \|} 
	\leq c a_{L_j+N-1}^{-1},
\end{equation}
thus we can guarantee that $L_j \equiv i \bmod N$. Moreover, if
\[
	\lim_{n \to \infty} (a_{n+N} - a_n) = 0,
\]
we can ensure that 
\[
	\lim_{j \to \infty} (\tilde{a}_{L_j+N-1} - \tilde{a}_{L_j-1}) = 0.
\]
Having chosen subsequence $(L_j : j \in \NN_0)$, we apply Theorem \ref{thm:5} to deduce that the sequence
$(|S_{L_j}| : j \in \NN_0)$ converges uniformly on $\sS^1 \times K$ to a continuous function $|S|$. Consequently,
by \eqref{eq:112},
\[
	\lim_{j \to \infty} \|M_{L_j-1}(x) e_2 \|^2 \cdot |S_{L_j}(\eta_{L_j}(x), x)| = \|M(x) e_2 \|^2 \cdot |S(\eta(x), x)|,
\]
which leads to
\[
	\lim_{j \to \infty}
	\sup_{x \in K}{
	\Big|
	a_{L_j+N-1}^{3/2} \big|\tilde{\scrD}_{L_j}(x)\big|
	-
	\|M(x) e_2 \|^2 \cdot \big|S(\eta(x), x)\big|
	\Big|}= 0.
\]
Let us recall the definition of $\tau$ and $\Lambda_-$ in \eqref{eq:61b} and \eqref{eq:56}, respectively.
In view of Theorem \ref{thm:4}, we obtain the following statement.
\begin{theorem}
	\label{thm:9}
	Let $N$ be a positive integer and $i \in \{0, 1, \ldots, N-1\}$. 
	Let $(\tilde{a}_n : n \in \NN_0)$ and $(\tilde{b}_n : n \in \NN_0)$ be Jacobi parameters such that
	\[
		\tilde{a}_n = a_n (1 + \xi_n), \qquad \tilde{b}_n = b_n (1 + \zeta_n),
	\]
	where $(a_n : n \in \NN_0)$ and $(b_n : n \in \NN_0)$ are $N$-periodically modulated Jacobi parameters so that 
	$\frakX_0(0)$ is a non-trivial parabolic element, satisfying
	\[
		\bigg(\frac{\alpha_{n-1}}{\alpha_n} a_n - a_{n-1} : n \in \NN \bigg),
		\bigg(\frac{\beta_n}{\alpha_n} a_n - b_n : n \in \NN\bigg),
		\bigg(\frac{1}{\sqrt{a_n}} : n \in \NN\bigg) \in \calD_1^N,
	\]
	and
	\[
		\sum_{n = 0}^\infty \sqrt{a_n} (|\xi_n| + |\zeta_n|) < \infty,
	\]
	for certain real sequences $(\xi_n : n \in \NN_0)$ and $(\zeta_n : n \in \NN_0)$.
	Then there is $(L_j : j \in \NN_0)$ an increasing sequence of integers, $L_j \equiv i \bmod N$, such that
	\[
		\tilde{g}_i(x) 
		= \lim_{j \to \infty} \tilde{a}_{L_j+N-1}^{3/2} \big|\tilde{\scrD}_{L_j}(x)\big|, \qquad x \in \Lambda_-
	\]
	where the sequence converges locally uniformly with respect to $x \in \Lambda_-$, defines a continuous positive
	function. If
	\[
		\lim_{j \to \infty} \big(\tilde{a}_{L_j+N-1} - \tilde{a}_{L_j-1}\big) = 0,
	\]
	then the measure $\tilde{\mu}$ is purely absolutely continuous on $\Lambda_-$ with the density
	\[
		\tilde{\mu}'(x) = \frac{\sqrt{\alpha_{i-1} |\tau(x)|}}{\pi \tilde{g}_i(x)},
		\qquad x \in \Lambda_-.
	\]
\end{theorem}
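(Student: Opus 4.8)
The entire analytic core of this statement has already been assembled in the displays preceding it; what remains is to package those estimates as the hypotheses of Theorem~\ref{thm:4}, applied now to the perturbed parameters $(\tilde{a}_n)$ and $(\tilde{b}_n)$ along the subsequence $(L_j)$ constructed via \eqref{eq:71}. First I would record that $(\tilde{a}_n)$ and $(\tilde{b}_n)$ are themselves $N$-periodically modulated with the \emph{same} periodic sequences $(\alpha_n)$ and $(\beta_n)$: since $\sum_n \sqrt{a_n}(|\xi_n| + |\zeta_n|) < \infty$ and $a_n \to \infty$, both $\xi_n$ and $\zeta_n$ tend to $0$, so conditions (a)--(c) of Section~\ref{sec:2d} are inherited verbatim. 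In particular $\tau$, $\Lambda_-$ and $\alpha_{i-1}$ are unchanged, and the Carleman condition persists, so $\tilde{\mu}$ is determinate.

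Next I would identify $\tilde{g}_i$. The computation just above the statement already yields
\[
	\lim_{j \to \infty} \sup_{x \in K} \Big| a_{L_j+N-1}^{3/2} \big|\tilde{\scrD}_{L_j}(x)\big| - \|M(x) e_2\|^2 \cdot \big|S(\eta(x), x)\big| \Big| = 0,
\]
and since $\tilde{a}_{L_j+N-1}/a_{L_j+N-1} \to 1$ the prefactor $a^{3/2}$ may be replaced by $\tilde{a}^{3/2}$ without affecting the limit. Hence $\tilde{g}_i(x) = \|M(x) e_2\|^2 \, |S(\eta(x), x)|$, which is continuous because $M$ is continuous (see \eqref{eq:112}) and $|S|$ is continuous by Theorem~\ref{thm:5}, and strictly positive because $M(x)$ is non-degenerate (so $\|M(x) e_2\| > 0$) while $|S|$ is positive by Theorem~\ref{thm:5}. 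Local uniformity follows because $K \subset \Lambda_-$ was an arbitrary compact set.

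The remaining hypotheses of Theorem~\ref{thm:4} are then checked in turn. The bound $\sup_j \sup_K \|\tilde{X}_{L_j}\| < \infty$ follows from \eqref{eq:117} and boundedness of $\|X_{L_j}\|$; the polynomial bound is exactly \eqref{eq:119}; and $\tilde{a}_{L_j-1} \to \infty$ together with $\tilde{a}_{L_j+N-1} - \tilde{a}_{L_j-1} \to 0$ is the standing hypothesis of the second assertion. The one computation that still requires attention is the discriminant: combining the Lipschitz estimate \eqref{eq:114}, the expansion \eqref{eq:117}, and the subsequence bound \eqref{eq:71}, one gets $a_{L_j+N-1} |\discr \tilde{X}_{L_j}(x) - \discr X_{L_j}(x)| \lesssim a_{L_j}^{-1/2} \to 0$, so that by Corollary~\ref{cor:2},
\[
	\tilde{h}(x) := \lim_{j \to \infty} \tilde{a}_{L_j+N-1} \discr \tilde{X}_{L_j}(x) = 4 \alpha_{i-1} \tau(x),
\]
which is negative on $\Lambda_-$ since $\tau < 0$ there; thus any compact $K \subset \Lambda_-$ lies in the admissible set of Theorem~\ref{thm:4}.

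Applying Theorem~\ref{thm:4} to $(\tilde{a}_n), (\tilde{b}_n)$ then shows that every weak accumulation point of the truncated measures $(\tilde{\mu}_{L_j})$ is purely absolutely continuous on $K$ with density $\sqrt{-\tilde{h}}/(2\pi \tilde{g}_i) = \sqrt{\alpha_{i-1} |\tau|}/(\pi \tilde{g}_i)$, using $\sqrt{-\tilde{h}(x)} = 2\sqrt{\alpha_{i-1} |\tau(x)|}$. Since the truncated parameters agree with $(\tilde{a}_n), (\tilde{b}_n)$ on any fixed initial block, the moments of $\tilde{\mu}_{L_j}$ converge to those of $\tilde{\mu}$, and determinacy upgrades this to weak convergence $\tilde{\mu}_{L_j} \to \tilde{\mu}$; hence $\tilde{\mu}$ is the unique accumulation point and inherits the stated density on $K$, and as $K$ was arbitrary, $\tilde{\mu}$ is purely absolutely continuous on all of $\Lambda_-$. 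I expect the only genuinely delicate point to be the discriminant estimate, where the perturbation $a_n^{-1/2} \Delta_n$ must be shown negligible after multiplication by $a_{L_j+N-1}$ — this is precisely what forces the passage to the subsequence satisfying \eqref{eq:71}; everything else is bookkeeping atop the estimates already in place.
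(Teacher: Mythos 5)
Your proposal is correct and follows essentially the same route as the paper: the displays preceding the statement establish the uniform convergence of $a_{L_j+N-1}^{3/2}\lvert\tilde{\scrD}_{L_j}\rvert$ to $\|M(\cdot)e_2\|^2\,\lvert S(\eta(\cdot),\cdot)\rvert$ along the subsequence from \eqref{eq:71}, and the theorem is then obtained by feeding this into Theorem~\ref{thm:4} applied to the perturbed parameters. You merely make explicit the hypothesis checks (boundedness of $\tilde{X}_{L_j}$ via \eqref{eq:117}, the polynomial bound \eqref{eq:119}, the discriminant limit $4\alpha_{i-1}\tau$ via \eqref{eq:114} and \eqref{eq:71}, and weak convergence $\tilde{\mu}_{L_j}\to\tilde{\mu}$ by determinacy) that the paper leaves implicit in its one-line deduction.
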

Next, let us observe that if $K \subset \RR \setminus \{x_0\}$ then by \eqref{eq:117} and Theorem \ref{thm:1}, we get
\begin{align*}
	Z_{j+1}^{-1} \tilde{X}_{jN+i} Z_j 
	&= Z_{j+1}^{-1} X_{jN+i} Z_j + a_{jN+i}^{-1/2} Z_{j+1}^{-1} \Delta_{jN+i} Z_j \\
	&= \varepsilon \big(\Id + \vartheta_j R_j\big) + a_{jN+i}^{-1/2} Z_{j+1}^{-1} \Delta_{jN+i} Z_j.
\end{align*}
Since there is $c > 0$ such that for all $j \in \NN$,
\[
	\| Z_{j+1}^{-1} \| \leq c  a_{jN+i}^{1/2},
	\quad\text{and}\quad
	\| Z_j \| \leq c,
\]
by setting $V_j = \varepsilon a_{jN+i}^{-1/2} Z_{j+1}^{-1} \Delta_{jN+i} Z_j$, we get
\begin{equation}
	\label{eq:58}
	Z_{j+1}^{-1} \tilde{X}_{jN+i} Z_j = \varepsilon \big(\Id + \vartheta_j R_j + V_j\big)
\end{equation}
where $(R_j)$ is a sequence from $\calD_1\big(K, \Mat(2, \RR)\big)$ convergent on $K$ to $\calR_i$, and
\begin{equation}
	\label{eq:59}
	\sum_{j = 1}^\infty \sup_K \|V_j\| < \infty.
\end{equation}
Moreover, by \eqref{eq:71} we have
\[
	\sup_K \| V_{L_j} \| \leq c a_{L_j+N-1}^{-1}.
\]
Since \cite[Theorem 4.4]{Discrete} allows perturbation satisfying \eqref{eq:59} we can repeat the proof of
Theorem~\ref{thm:8} to get the following result.
\begin{theorem}
	\label{thm:10}
	Let $N$ be a positive integer. Let $\tilde{A}$ be the Jacobi matrix associated with
	Jacobi parameters $(\tilde{a}_n : n \in \NN_0)$ and $(\tilde{b}_n : n \in \NN_0)$ such that
	\[
		\tilde{a}_n = a_n (1 + \xi_n), \qquad \tilde{b}_n = b_n (1 + \zeta_n),
	\]
	where $(a_n : n \in \NN_0)$ and $(b_n : n \in \NN_0)$ are $N$-periodically modulated Jacobi parameters so that 
	$\frakX_0(0)$ is a non-trivial parabolic element, satisfying
	\[
        \bigg(\frac{\alpha_{n-1}}{\alpha_n} a_n - a_{n-1} : n \in \NN \bigg),
        \bigg(\frac{\beta_n}{\alpha_n} a_n - b_n : n \in \NN\bigg),
        \bigg(\frac{1}{\sqrt{a_n}} : n \in \NN\bigg) \in \calD_1^N,
    \]
    and
	\[
		\sum_{n = 0}^\infty \sqrt{a_n} (|\xi_n| + |\zeta_n|) < \infty,
	\]
	for certain real sequences $(\xi_n : n \in \NN_0)$ and $(\zeta_n : n \in \NN_0)$.
	Then
	\[
		\sigmaEss(\tilde{A}) \cap \Lambda_+ = \emptyset.
	\]
\end{theorem}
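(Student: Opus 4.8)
The plan is to repeat the argument of Theorem~\ref{thm:8} verbatim, with the unperturbed $N$-step transfer matrix $X_{jN+i}$ replaced by $\tilde{X}_{jN+i}$, using the perturbed shifted conjugation \eqref{eq:58} in place of the diagonalization supplied by Theorem~\ref{thm:1}. First I would fix a compact interval $K \subset \Lambda_+$ with non-empty interior and $i \in \{0, 1, \ldots, N-1\}$, and set
\[
	\tilde{Y}_j = Z_{j+1}^{-1} \tilde{X}_{jN+i} Z_j.
\]
By \eqref{eq:58} we have $\tilde{Y}_j = \varepsilon(\Id + \vartheta_j R_j + V_j)$, where $(R_j)$ is a sequence from $\calD_1(K, \Mat(2,\RR))$ converging uniformly on $K$ to $\calR_i = \diag(1,-1)$ (this is the $\sigma = 1$ form valid on $\Lambda_+$), and $(V_j)$ satisfies \eqref{eq:59}. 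As in Theorem~\ref{thm:8}, $\discr \calR_i = 4 > 0$ forces $\discr R_j(x) \geq \delta > 0$ for all $j \geq j_0$ and $x \in K$, so $R_j$ has real eigenvalues $\xi_j^\pm$ and the leading part $\varepsilon(\Id + \vartheta_j R_j)$ has real eigenvalues $\lambda_j^\pm = \varepsilon(1 + \vartheta_j \xi_j^\pm)$.

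The crucial point is that \cite[Theorem 4.4]{Discrete} is stated with room for an $\ell^1$-summable perturbation of exactly the type $(V_j)$ appearing in \eqref{eq:58}; this is precisely where the hypothesis $\sum_n \sqrt{a_n}(|\xi_n| + |\zeta_n|) < \infty$ enters, through \eqref{eq:59}. Applying it to the system $\tilde{\Psi}_{j+1} = \tilde{Y}_j \tilde{\Psi}_j$ yields a subdominant solution $(\tilde{\Psi}_j^-)$ with $\tilde{\Psi}_j^-/\prod_{k=j_0}^{j-1}\lambda_k^- \to e_2$ uniformly on $K$. Undoing the conjugation, $\tilde{\Phi}_j^- = Z_j \tilde{\Psi}_j^-$ solves $\tilde{\Phi}_{j+1} = \tilde{X}_{jN+i}\tilde{\Phi}_j$, and — propagating through the intermediate matrices $\tilde{B}_n$ and using $Z_j e_2 \to T_i(e_1 + e_2)$ together with the invertibility of $T_0$, exactly as in Theorem~\ref{thm:8} — it generates a genuine generalized eigenvector $(\tilde{u}_n)$ of $\tilde{A}$ that does not vanish on $K$.

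It then remains to prove the square-summability $\sum_n \sup_K |\tilde{u}_n|^2 < \infty$, which again follows the proof of Theorem~\ref{thm:8} with no change: since $\tr \calR_i = 0$ and $\sqrt{a_{(j+1)N+i-1}}/j \to 0$ by the Stolz--Ces\'aro theorem and \eqref{eq:25}, the subdominant eigenvalue obeys $\sup_K |\lambda_j^-| \leq 1 - 1/j$ for large $j$, whence $\sup_K |\tilde{u}_{jN+i'}| \lesssim \prod_{k}(1 - 1/k) \lesssim 1/j$ for every $i' \in \{0,1,\ldots,N-1\}$. The summable perturbation $V_j$ leaves this bound intact because the $\lambda_j^-$ are the eigenvalues of the unperturbed leading part $\varepsilon(\Id + \vartheta_j R_j)$ alone. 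Invoking the proof of \cite[Theorem 5.3]{Silva2007} gives $\sigmaEss(\tilde{A}) \cap K = \emptyset$, and since $K$ is an arbitrary compact subinterval of $\Lambda_+$, the theorem follows.

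I expect the main obstacle to be confirming that the $\ell^1$ perturbation $(V_j)$ is genuinely harmless on both fronts: that the Levinson-type theorem \cite[Theorem 4.4]{Discrete} tolerates it (which is exactly the content of the summability \eqref{eq:59}), and that the decay estimate $\sup_K|\lambda_j^-| \leq 1 - 1/j$ driving square-summability is controlled by the unperturbed eigenvalues and therefore survives the perturbation. Both of these are already prepared in the displays \eqref{eq:58} and \eqref{eq:59} preceding the statement, so the remaining work is essentially the transcription of the proof of Theorem~\ref{thm:8}.
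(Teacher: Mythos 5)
Your proposal is correct and follows essentially the same route as the paper, whose entire proof is the observation that \cite[Theorem 4.4]{Discrete} tolerates a perturbation satisfying \eqref{eq:59}, so the proof of Theorem~\ref{thm:8} can be repeated verbatim with \eqref{eq:58} in place of Theorem~\ref{thm:1}. Your expansion of that sentence — including the check that the subdominant eigenvalues $\lambda_j^-$, and hence the bound $\sup_K|\lambda_j^-|\leq 1-1/j$ driving square-summability, come from the unperturbed leading part alone — is exactly what the paper intends.
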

Next, we study the asymptotic behavior of polynomials $(\tilde{p}_n : n \in \NN_0)$. Since the Carleman's condition
\eqref{eq:37} is satisfied, the orthonormalizing measure $\tilde{\mu}$ is unique.
\begin{theorem}
	\label{thm:11}
	Let $N$ be a positive integer and $i \in \{0, 1, \ldots, N-1\}$. Let $(\tilde{a}_n : n \in \NN_0)$ and
	$(\tilde{b}_n : n \in \NN_0)$ be Jacobi parameters such that
	\[
		\tilde{a}_n = a_n (1 + \xi_n), \qquad \tilde{b}_n = b_n (1 + \zeta_n),
	\]
	where $(a_n : n \in \NN_0)$ and $(b_n : n \in \NN_0)$ are $N$-periodically modulated Jacobi parameters so that
	$\frakX_0(0)$ is a non-trivial parabolic element, satisfying
	\[
        \bigg(\frac{\alpha_{n-1}}{\alpha_n} a_n - a_{n-1} : n \in \NN \bigg),
	    \bigg(\frac{\beta_n}{\alpha_n} a_n - b_n : n \in \NN\bigg),
	    \bigg(\frac{1}{\sqrt{a_n}} : n \in \NN\bigg) \in \calD_1^N,
	\]
	and
	\[
		\sum_{n = 0}^\infty \sqrt{a_n} (|\xi_n| + |\zeta_n|) < \infty,
	\]
	for certain real sequences $(\xi_n : n \in \NN_0)$ and $(\zeta_n : n \in \NN_0)$.
	If there is $(L_j : j \in \NN_0)$ an increasing sequence of integers, $L_j \equiv i \bmod N$, such that
	\[
		\lim_{j \to \infty} \big(\tilde{a}_{L_j + N-1} -\tilde{a}_{L_j-1}\big) = 0,
	\]
	then for each compact subset $K \subset \Lambda_-$, there are $j_0 \in \NN$ and $\tilde{\chi}: K \rightarrow \RR$,
	so that
	\begin{equation}
		\label{eq:62}
		\lim_{j \to \infty}
		\sup_{x \in K}{
		\bigg|
		\sqrt[4]{\tilde{a}_{(j+1)N+i-1}} \tilde{p}_{jN+i}(x)
		-
		\sqrt{\frac{\big| [\frakX_i(0)]_{2,1} \big|}{ \pi \tilde{\mu}'(x) \sqrt{\alpha_{i-1} |\tau(x)|}}}
		\sin\Big(\sum_{k=j_0}^{j-1} \theta_k(x) + \tilde{\chi}(x)\Big)\bigg|
		}
		=0
	\end{equation}
	where $\theta_k$ are determined in Theorem \ref{thm:3}.
\end{theorem}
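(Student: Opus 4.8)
The plan is to reduce the asymptotics of the perturbed polynomials $(\tilde p_n)$ to the already-known asymptotics of the \emph{unperturbed} generalized eigenvectors through the identity \eqref{eq:63}, and then to identify the amplitude of the resulting oscillation with the density $\tilde\mu'$ supplied by Theorem~\ref{thm:9}. Fix a compact $K\subset\Lambda_-$ and set $L_j=jN+i$. By \eqref{eq:63}, $\tilde p_{jN+i}(x)=\|M_{jN+i-1}(x)e_2\|\,u_{jN+i}\big(\eta_{jN+i}(x),x\big)$, where $(u_n(\eta,x))$ is the generalized eigenvector of the unperturbed Jacobi parameters. Since $M_n\to M$ uniformly on $K$ with $M(x)$ non-degenerate (see \eqref{eq:112}), we have $\|M_{jN+i-1}e_2\|\to\|Me_2\|$ and $\eta_n\to\eta:=Me_2/\|Me_2\|$ uniformly on $K$. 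Using that $\eta\mapsto u_{jN+i}(\eta,x)$ is linear together with the uniform-in-$\eta\in\sS^1$ control of the error $E_j$ in Theorem~\ref{thm:3}, the contribution of $\eta_{jN+i}-\eta$ to $u_{jN+i}/\prod_{k=j_0}^{j-1}|\lambda_k|$ is $O(\|\eta_{jN+i}-\eta\|)=o_K(1)$, so I may freeze the initial condition at $\eta$ and invoke the sine form of Theorem~\ref{thm:3}.

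For the normalization I would use \eqref{eq:50} with upper index $j-1$, giving $a_{jN+i-1}^{1/4}\prod_{k=j_0}^{j-1}|\lambda_k(x)|\to\big(a_{j_0N+i-1}\sinh\vartheta_{j_0}(x)/\sqrt{\alpha_{i-1}|\tau(x)|}\big)^{1/2}$ uniformly on $K$. Because $\sqrt{a_n}(|\xi_n|+|\zeta_n|)$ is summable, $\xi_n,\zeta_n\to0$ and hence $\tilde a_n\sim a_n$; moreover \eqref{eq:25} forces $a_{(j+1)N+i-1}/a_{jN+i-1}\to1$, so $\tilde a_{(j+1)N+i-1}^{1/4}$ may replace $a_{jN+i-1}^{1/4}$. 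Collecting these limits yields
\[
\sqrt[4]{\tilde a_{(j+1)N+i-1}}\,\tilde p_{jN+i}(x)=A(x)\sin\Big(\sum_{k=j_0}^{j-1}\theta_k(x)+\tilde\chi(x)\Big)+o_K(1),
\]
with $\tilde\chi(x)=\arg\vphi(\eta,x)$ continuous and $A(x)^2=\|Me_2\|^2\,\dfrac{a_{j_0N+i-1}\sinh\vartheta_{j_0}(x)}{\alpha_{i-1}|\tau(x)|\sqrt{\alpha_{i-1}|\tau(x)|}}\,|\vphi(\eta,x)|^2$. It remains to prove that $A(x)^2=\dfrac{|[\frakX_i(0)]_{2,1}|}{\pi\tilde\mu'(x)\sqrt{\alpha_{i-1}|\tau(x)|}}$.

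I would obtain this from two inputs. First, the computation preceding Theorem~\ref{thm:9}, combined with Theorem~\ref{thm:9} itself, gives the measure-side identity $\|M(x)e_2\|^2\,|S(\eta(x),x)|=\sqrt{\alpha_{i-1}|\tau(x)|}/(\pi\tilde\mu'(x))$, where $|S(\eta,x)|=\lim_n|S_{nN+i}(\eta,x)|$ is the limiting $N$-shifted generalized Tur\'an determinant of the unperturbed system (Theorem~\ref{thm:5}). Second, I need the \emph{unperturbed and $\eta$-independent} structural identity $|S(\eta,x)|=c(x)\,|\vphi(\eta,x)|^2$ with $c(x)=a_{j_0N+i-1}\sinh\vartheta_{j_0}(x)/\big(|[\frakX_i(0)]_{2,1}|\sqrt{\alpha_{i-1}|\tau(x)|}\big)$. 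Substituting the second into the first expresses $\|Me_2\|^2|\vphi(\eta,x)|^2$ through $\tilde\mu'$, and inserting this into $A(x)^2$ collapses, after using $C(x)/c(x)=|[\frakX_i(0)]_{2,1}|$, to exactly the claimed amplitude. The passage from the subsequence $(L_j)$, used only to pin down $\tilde\mu'$ via Theorem~\ref{thm:9}, to the full sequence in \eqref{eq:62} is then automatic, since $\tilde\mu'$, $\|Me_2\|$ and $|S(\eta,\cdot)|$ are fixed continuous functions on $\Lambda_-$.

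The main obstacle is establishing the structural identity $|S(\eta,x)|=c(x)|\vphi(\eta,x)|^2$ for \emph{every} initial condition $\eta$, since a priori both sides are merely quadratic forms in $\eta$. I would prove it by inserting the diagonalization of Theorem~\ref{thm:5} into the auxiliary Tur\'an determinant \eqref{eq:66}: writing $\langle E\vec v_{j+1},\vec v_j\rangle=\varepsilon\vartheta_j\langle E R_j\vec v_j,\vec v_j\rangle$ and using $\sym(E R_j)\to-\Id$ from \eqref{eq:28} reduces $|S(\eta,x)|$ to $2|\det T_i|\,\alpha_{i-1}|\tau|\,\lim_j\sqrt{a_{(j+1)N+i-1}}\,\|\vec v_j\|^2$; expressing $\vec v_j$ in the limiting eigenbasis $C_\infty$ of \eqref{eq:36} gives $\|\vec v_j\|^2\sim4|w|^2\prod_{k=j_0}^{j-1}|\lambda_k|^2$ for the complex amplitude $w=w(\eta,x)$ of the growing eigendirection, and the \emph{same} amplitude governs the sine form of $u_{jN+i}$, so that $|\vphi(\eta,x)|$ and $|w|$ are proportional with an $\eta$-independent factor. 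The precise value of $c(x)$ then follows by combining \eqref{eq:11}, which yields $|\det T_i|/(T_{21}+T_{22})^2=1/|[\frakX_i(0)]_{2,1}|$, with the normalization limit \eqref{eq:50}. This argument is purely asymptotic, and therefore does not require the hypothesis $\lim_n(a_{n+N}-a_n)=0$ for the unperturbed sequences, which need not hold in the present setting; this is precisely why I avoid citing the unperturbed Theorem~\ref{thm:6} directly and instead recompute the proportionality from the diagonalization.
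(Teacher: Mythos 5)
Your first half coincides with the paper's: both reduce to the unperturbed system through \eqref{eq:63}, freeze the initial condition at $\eta = Me_2/\|Me_2\|$ using \eqref{eq:112} and the uniformity in $\eta\in\sS^1$ of Theorem~\ref{thm:3}, and normalize by \eqref{eq:50}; this is exactly the paper's formula \eqref{eq:77}. Where you genuinely diverge is in identifying the amplitude $\|M e_2\|^2|\vphi(\eta,\cdot)|^2$. The paper re-runs the approximation procedure for the perturbed parameters: it introduces the truncated transfer matrices $\tilde X^{L_j}_{L_j+N}$, proves Claim~\ref{clm:4} to pass from $\phi_{L_j+N}$ to $\tilde\phi^{L_j}_1$, and evaluates $|\tilde\phi^{L_j}_1|^2$ via the exact periodic formula \cite[formula (6.14)]{SwiderskiTrojan2019} together with Lemma~\ref{lem:2} and Corollary~\ref{cor:1}. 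You instead establish a structural identity $|S(\eta,x)|=c(x)|\vphi(\eta,x)|^2$ directly from the diagonalization of Theorem~\ref{thm:5} (using $\sym(ER_j)\to-\Id$, the Levinson-type limit $\|\vec v_j\|^2\sim 4|w|^2\prod|\lambda_k|^2$, \eqref{eq:50} and \eqref{eq:11}), and then feed in $\|Me_2\|^2|S(\eta,\cdot)|=\sqrt{\alpha_{i-1}|\tau|}/(\pi\tilde\mu')$, which is exactly what the computation preceding Theorem~\ref{thm:9} plus Theorem~\ref{thm:9} give. I checked the constants: your $c(x)$ is correct and both routes yield \eqref{eq:76}, after which the passage to the full sequence is indeed automatic. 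Your route confines the truncation machinery to Theorem~\ref{thm:9}, used as a black box, and you are right that none of its ingredients need $\lim_n(a_{n+N}-a_n)=0$, so Theorem~\ref{thm:6} cannot and need not be invoked; the price is that the proportionality between $|\vphi(\eta,x)|$ and the leading Levinson coefficient $|w(\eta,x)|$ must be made rigorous. Two minor repairs: the matching of the two sine expansions requires noting that $\sum_{k\le j}\theta_k$ tends to infinity with steps tending to zero, so the phases are dense modulo $2\pi$ (or, more cleanly, compute $\vphi$ directly from $(Y_j-\overline{\lambda_j}\Id)\vec q_j=(\lambda_j-\overline{\lambda_j})w^+_j\,C_je_1$, which gives $|\vphi|=2\sqrt2\,|[T_i]_{21}+[T_i]_{22}|\sqrt{\alpha_{i-1}|\tau|}\,|w|$ without any equidistribution argument); and the intermediate ratio you quote, $C(x)/c(x)=|[\frakX_i(0)]_{2,1}|$, is off by the factor $\alpha_{i-1}|\tau(x)|$ under the natural reading of $C(x)$ from your expression for $A(x)^2$, though the final amplitude still collapses to the claimed $|[\frakX_i(0)]_{2,1}|/\big(\pi\tilde\mu'(x)\sqrt{\alpha_{i-1}|\tau(x)|}\big)$ when the algebra is carried through.
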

\begin{proof}
	Fix a compact set $K \subset \Lambda_-$. In view of \eqref{eq:63}, Theorem \ref{thm:3} implies that
	\begin{align}
		\nonumber
		\frac{\tilde{p}_{jN+i}(x)}{\prod_{k = j_0}^j \lambda_k(x)}
		&=
		\big\|M_{jN+i-1}(x) e_2 \big\|
		\frac{|\vphi(\eta_{jN+i}(x), x)|}{\sqrt{\alpha_{i-1} |\tau(x)|}} 
		\sin
		\Big(
		\sum_{k = j_0}^j \theta_k(x) + \arg \vphi(\eta_{jN+i}(x), x)\Big)
		+o_K(1) \\
		\label{eq:77}
		&=
		\big\| M(x) e_2 \big\|
		\frac{|\vphi(\eta(x), x)|}{\sqrt{\alpha_{i-1} |\tau(x)|}}
		\sin
		\Big(
		\sum_{k = j_0}^j \theta_k(x) + \arg \vphi(\eta_{jN+i}(x), x)\Big)
		+o_K(1)
	\end{align}
	where we have used \eqref{eq:112} and continuity of $\vphi$. Our aim is to compute the function
	$|\vphi(\eta(x), x)|$. To do so, we can work with the subsequence $(L_j : j \in \NN)$. With no loss of generality
	we assume \eqref{eq:71}. The reasoning follows the same method as in Theorem \ref{thm:6}. In view of \eqref{eq:146}
	\[
		\vphi(\eta(x), x) = \lim_{j \to \infty} \sqrt{a_{L_j+N-1}} 
		\phi_{L_j}\big(\eta(x), x\big), \qquad x \in K,
	\]
	where
	\[
		\phi_{L}(\eta(x), x) = 
		\frac{\big\langle \big( X_{L}(x) - \overline{\lambda_{\lfloor L/N\rfloor} (x)} \Id \big) 
		\vec{u}_{L}(\eta(x), x), 
		e_2 \big\rangle}{\prod_{k=j_0}^{\lfloor L/N\rfloor -1} \lambda_k(x)}.
	\]
	Observe that by \eqref{eq:50} and Corollary \ref{cor:3}, we have
	\begin{align*}
		\sqrt{a_{L_j+N-1}}
		\Big|
		\phi_{L_j}(\eta(x), x) - \phi_{L_j}(\eta_{L_j}(x), x)
		\Big|
		\leq
		c \sup_{x \in K} \big\|\eta(x) - \eta_{L_j}(x)\big\|,
	\end{align*}
	thus by \eqref{eq:112}
	\begin{equation}
		\label{eq:70}
		\big\|M(x) e_2 \big\| \vphi(\eta(x), x) 
		=
		\lim_{j \to \infty} \sqrt{a_{L_j+N-1}} \big\|M_{L_j-1}(x) e_2 \big\| \phi_{L_j}\big(\eta_{L_j}(x), x\big).
	\end{equation}
	Observe that by \eqref{eq:63}
	\begin{equation}
		\label{eq:79}
		\big\|M_{L-1}(x) e_2 \big\|\phi_L\big(\eta_L(x), x\big)
		=
		\frac{\big\langle \big( X_L(x) - \overline{\lambda_{\lfloor L/N\rfloor}(x)} \Id \big) 
		\vec{\tilde{p}}_{L}(x), 
		e_2 \big\rangle}{\prod_{k=j_0}^{\lfloor L/N\rfloor-1} \lambda_k(x)}.
	\end{equation}
	For $m \in \NN$ and $L \equiv i \bmod N$, we set
	\begin{equation}
		\label{eq:81}
		\tilde{\phi}^{L}_m(x) 
		=
		\frac{\big\langle \big( \tilde{X}_{L+N}^{L}(x) - \overline{\tilde{\lambda}^{L}_{L+N}(x)} \Id \big)
		\vec{\tilde{p}}^{L}_{L+mN}(x), 
		e_2 \big\rangle}{\big( \tilde{\lambda}^{L}_{L+N}(x) \big)^{m-1} \prod_{k=j_0}^{\lfloor L/N\rfloor} 
		\lambda_k(x)},
		\qquad x \in K,
	\end{equation}
	where $\tilde{\lambda}^L_{L+N}$ is the eigenvalue of $\tilde{X}^L_{L+N}$ with positive imaginary part.
	By the same lines of reasoning as in \cite[Claim 3]{SwiderskiTrojan2019}, one can show that 
	$\tilde{\phi}^{L}_m = \tilde{\phi}^{L}_1$ for all $m \geq 1$. Next, we claim that the following holds true.
	\begin{claim}
		\label{clm:4}
		\begin{equation}
			\label{eq:78}
			\lim_{j \to \infty} \sqrt{\tilde{a}_{L_j + N-1}} 
			\sup_{x \in K}{\Big|
			\tilde{\phi}^{L_j}_1(x) - 
			\big\|M_{L_j+N-1}(x) e_2 \big\|\phi_{L_j+N}\big(\eta_{L_j+N}(x), x\big) \Big|} = 0.
		\end{equation}
	\end{claim}
	By \eqref{eq:79} and \eqref{eq:81}, we have
	\[
		\tilde{\phi}^{L_j}_1(x)
		-
		\big\|M_{L_j+N-1}(x) e_2 \big\|\phi_{L_j+N}\big(\eta_{L_j+N}(x), x\big) 
		=
		\frac{\langle W_j(x) \vec{\tilde{p}}_{L_j+N}(x), e_2 \rangle}{\prod_{k=j_0}^{\lfloor L_j/N\rfloor} \lambda_k(x)}
	\]
	where
	\begin{equation}
		\label{eq:82}
		W_j = 
		\Big( \tilde{X}^{L_j}_{L_j+N} - X_{L_j+N} \Big) +
		\Big( \overline{\lambda_{\lfloor L_j/N \rfloor +1}} - \overline{\tilde{\lambda}^{L_j}_{L_j+N}} \Big) \Id.
	\end{equation}
	Hence,
	\[
		\Big|
		\tilde{\phi}^{L_j}_1(x) -
		\big\|M_{L_j+N-1}(x) e_2 \big\|\phi_{L_j+N}\big(\eta_{L_j+N}(x), x\big) 
		\Big|
		\leq 
		c \big\|W_j(x) \big\| \frac{\big\|\vec{\tilde{p}}_{L_j+N}(x)\big\|}{\prod_{k = j_0}^{\lfloor L_j/N\rfloor} 
		|\lambda_k(x)|}.
	\]
	By \eqref{eq:119} and \eqref{eq:50} we get
	\[
		\frac{\big\|\vec{\tilde{p}}_{L_j+N}(x)\big\|}{\prod_{k = j_0}^{\lfloor L_j/N\rfloor} |\lambda_k(x)|}
		\leq c
	\]
	for all $x \in K$ and $j > j_0$. Next, we write
	\begin{align*}
		\big\|\tilde{X}^{L_j}_{L_j+N} - X_{L_j+N}\big\|
		\leq
		\big\|\tilde{X}^{L_j}_{L_j+N} - \tilde{X}_{L_j+N}\big\|
		+
		\big\|\tilde{X}_{L_j+N} - X_{L_j+N}\big\|,
	\end{align*}
	thus by Lemma \ref{lem:2}, \eqref{eq:117} and \eqref{eq:71}, we obtain
	\begin{equation}
		\label{eq:111}
		\lim_{j \to \infty} 
		\tilde{a}_{L_j + N-1} \big\|\tilde{X}^{L_j}_{L_j+N} - X_{L_j+N}\big\| = 0.
	\end{equation}
	It remains to show that
	\[
		\lim_{j \to \infty}
		\sqrt{\tilde{a}_{L_j+N-1}} \Big| \lambda_{\lfloor L_j/N \rfloor+1} - \tilde{\lambda}^{L_j}_{L_j+N} \Big| = 0,
	\]
	which can be deduced from
	\begin{equation}
		\label{eq:84}
		\lim_{j \to \infty} \sqrt{\tilde{a}_{L_j+N-1}} \sup_K \big| \tr \tilde{X}^{L_j}_{L_j+N} 
		- \tr Y_{\lfloor L_j/N \rfloor + 1}\big| = 0,
	\end{equation}
	and
	\begin{equation}
		\label{eq:89}
		\lim_{j \to \infty} \tilde{a}_{L_j+N-1} \sup_K \big| \discr \tilde{X}^{L_j}_{L_j+N}
		-
		\discr Y_{\lfloor L_j/N \rfloor + 1}\big| = 0.
	\end{equation}
	We write
	\[
		\tilde{X}^{L_j}_{L_j+N} - Y_{\lfloor L_j/N \rfloor + 1}
		=
		\big(\tilde{X}^{L_j}_{L_j+N} - X_{L_j+N}\big) + \big(X_{L_j+N} - Y_{\lfloor L_j/N \rfloor + 1}\big),
	\]
	thus \eqref{eq:84} is a consequence of \eqref{eq:111} and \eqref{eq:51a'}. Next, by \eqref{eq:114},
	\[
		\big|
		\discr \tilde{X}^{L_j}_{L_j+N} - \discr X_{L_j+N}
		\big|
		\leq
		c
		\big\|\tilde{X}^{L_j}_{L_j+N} - X_{L_j+N}\big\|
	\]
	thus \eqref{eq:89} follows by \eqref{eq:111} and \eqref{eq:52}. Summarizing, we showed that
	\[
		\lim_{j \to \infty} \sqrt{\tilde{a}_{L_j+N-1}} \sup_{x \in K} \|W_j(x) \| = 0,
	\]
	and hence \eqref{eq:78} follows. 

	Our last step is to justify the following claim. 
	\begin{claim}
		\begin{equation}
			\label{eq:76}
			\big\|M(x) e_2 \big\|^2
			|\vphi(\eta(x), x)|^2
			=
			\frac{1}{a_{j_0 N + i-1} \sinh \vartheta_{j_0}(x)} 
			\cdot
			\frac{\big| [\frakX_i(0)]_{2,1} \big| \cdot \alpha_{i-1} |\tau(x)|}{\pi \tilde{\mu}'(x)}.
		\end{equation}
	\end{claim}
	For the proof, let us observe that by \eqref{eq:70} and Claim \ref{clm:4} we have
	\[
		\big\|M(x) e_2 \big\|^2 |\vphi(\eta(x), x)|^2 = 
		\lim_{j \to \infty}
		\Big| \sqrt{\tilde{a}_{L_j+N-1}} \phi^{L_j}_1(x) \Big|^2.
	\]
	In view of \cite[formula (6.14)]{SwiderskiTrojan2019}
	\begin{align*}
		\bigg| 
		\phi^{L_j}_1(x) \cdot
		\prod_{k=j_0}^{\lfloor L_j/N \rfloor} \lambda_k(x) 
		\bigg|^2 
		&=
		\frac{1}{2 \pi \tilde{a}_{L_j+N-1} \tilde{\mu}'_{L_j}(x)} 
		\Big| \big[ \tilde{X}^{L_j}_{L_j+N}(x) \big]_{2,1} \Big|
		\sqrt{-\discr \tilde{X}^{L_j}_{L_j+N}(x)} \\
		&=
		\frac{1}{2 \pi \tilde{a}_{L_j+N-1}^{3/2} \tilde{\mu}'_{L_j}(x)} 
		\Big| \big[ \tilde{X}^{L_j}_{L_j+N}(x) \big]_{2,1} \Big|
		\sqrt{-\tilde{a}_{L_j+N-1} \discr \tilde{X}^{L_j}_{L_j+N}(x)}.
	\end{align*}
	By Lemma \ref{lem:2}
	\[
		\lim_{j \to \infty} \tilde{a}_{L_j+N-1} \discr \tilde{X}^{L_j}_{L_j+N}(x)
		=
		\lim_{j \to \infty} \tilde{a}_{L_j+N-1} \discr \tilde{X}_{L_j+N}(x).
	\]
	Using \eqref{eq:58} and \eqref{eq:71}, we can repeat the proof of Corollary \ref{cor:2}, to get
	\[
		\lim_{j \to \infty} \tilde{a}_{L_j+N-1} \discr \tilde{X}_{L_j+N}(x)
		=
		4 \alpha_{i-1} \tau(x).
	\]
	By Theorem \ref{thm:9} and \eqref{eq:119}, we can apply Corollary \ref{cor:1} to obtain
	\[
		\lim_{j \to \infty}
		\sqrt{a_{L_j+N-1}}
		\bigg| 
		\sqrt{\tilde{a}_{L_j+N-1}} 
		\phi^{L_j}_1(x) \cdot
		\prod_{k=j_0}^{\lfloor L_j/N \rfloor} \lambda_k(x) 
		\bigg|^2
		=
		\frac{\big| [ \frakX_i(0)]_{2,1} \big| \sqrt{\alpha_{i-1} |\tau(x)|}}{\pi \tilde{\mu}'(x)}.
	\]
	Finally, the claim follows by \eqref{eq:50}. 

	Now, by inserting \eqref{eq:76} into \eqref{eq:77} and using \eqref{eq:50} we conclude the proof of the theorem.
\end{proof}

Having proven asymptotic formula for orthogonal polynomials $(\tilde{p}_n : n \in \NN_0)$, we can repeat the 
proof of Theorem \ref{thm:7} to get the following result. 
\begin{theorem}
	\label{thm:12}
	Let $N$ be a positive integer and $i \in \{0, 1, \ldots, N-1\}$. Let $(\tilde{a}_n : n \in \NN_0)$ and
	$(\tilde{b}_n : n \in \NN_0)$ be Jacobi parameters such that
	\[
		\tilde{a}_n = a_n (1 + \xi_n), \qquad \tilde{b}_n = b_n (1 + \zeta_n),
	\]
	where $(a_n : n \in \NN_0)$ and $(b_n : n \in \NN_0)$ are $N$-periodically modulated Jacobi parameters so that
	$\frakX_0(0)$ is a non-trivial parabolic element, satisfying
	\[
        \bigg(\frac{\alpha_{n-1}}{\alpha_n} a_n - a_{n-1} : n \in \NN \bigg),
	    \bigg(\frac{\beta_n}{\alpha_n} a_n - b_n : n \in \NN\bigg),
	    \bigg(\frac{1}{\sqrt{a_n}} : n \in \NN\bigg) \in \calD_1^N,
	\]
	and
	\[
		\sum_{n = 0}^\infty \sqrt{a_n} (|\xi_n| + |\zeta_n|) < \infty,
	\]
	for certain real sequences $(\xi_n : n \in \NN_0)$ and $(\zeta_n : n \in \NN_0)$.
	If there is $(L_j : j \in \NN_0)$ a sequence of integers $L_j \equiv i \bmod N$, such that
	\[
		\lim_{j \to \infty} \big(\tilde{a}_{L_j+N-1} -\tilde{a}_{L_j-1}\big) = 0,
	\]
	then
	\[
		\lim_{n \to \infty} 
		\frac{1}{\tilde{\rho}_{n}} \tilde{K}_{n} \bigg(x + \frac{u}{\tilde{\rho}_n}, x + \frac{v}{\tilde{\rho}_n} \bigg)
		=
		\frac{\upsilon(x)}{\tilde{\mu}'(x)}
		\cdot
		\sinc\big((u-v) \pi \upsilon(x)\big)
	\]
	locally uniformly with respect to $(x, u, v) \in \Lambda_- \times \RR^2$, where $\upsilon$ is defined in 
	\eqref{eq:85} and
	\[
		\tilde{\rho}_n = \sum_{k=0}^n \sqrt{\frac{\alpha_k}{\tilde{a}_k}}.
	\]
\end{theorem}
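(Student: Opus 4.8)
The plan is to follow the proof of Theorem~\ref{thm:7} essentially line by line, replacing the unperturbed polynomial asymptotics of Theorem~\ref{thm:6} by their perturbed counterpart Theorem~\ref{thm:11}, and $a_n,\mu,\rho_n$ by $\tilde{a}_n,\tilde{\mu},\tilde{\rho}_n$ throughout. First I would fix a compact interval $K\subset\Lambda_-$ with non-empty interior and $L>0$, and choose a compact interval $\tilde{K}\subset\Lambda_-$ containing $K$ in its interior so that all scaled points $x+u/\tilde{\rho}_{jN+i}$ and $x+u/(N\sqrt{\alpha_i}\,\tilde{\rho}_{i;j})$ lie in $\tilde{K}$ for $x\in K$, $u\in[-L,L]$ and $j$ large, where $\tilde{\rho}_{i;j}=\sum_{k=1}^{j}\tilde{a}_{kN+i}^{-1/2}$. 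The crucial structural point is that the phases $\theta_k$ in Theorem~\ref{thm:11} are identical to those in Theorem~\ref{thm:6}, since both are the functions determined in Theorem~\ref{thm:3}, and the function $\upsilon$ of \eqref{eq:85} is unchanged; only the prefactor now involves $\tilde{\mu}'$ in place of $\mu'$.

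Next I would apply Theorem~\ref{thm:11} in each residue class $i\in\{0,\ldots,N-1\}$ to expand $\sqrt{\tilde{a}_{(k+1)N+i-1}}\,\tilde{p}_{kN+i}(x)\tilde{p}_{kN+i}(y)$ as a product of two sines with phases $\sum_{\ell}\theta_{\ell N+i}+\tilde{\chi}_i$ plus a uniform error $\tilde{E}_{kN+i}(x,y)$. Summing over $k$ and applying the Stolz--Cesàro theorem as in Theorem~\ref{thm:7} shows that, after dividing by $\tilde{\rho}_{i-1;j}$, the error contribution vanishes. To pass the main term to the limit I would invoke \cite[Theorem 9]{SwiderskiTrojan2019} with $\xi_j(x)=\theta_{jN+i}(x)$, $\gamma_j=N\sqrt{\alpha_{i-1}/\tilde{a}_{(j+1)N+i-1}}$ and $|\psi(x)|=\pi\upsilon(x)$, obtaining the $\tfrac12\sinc$ kernel for each partial sum $\tilde{K}_{i;j}$. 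Recombining through $\tilde{K}_{jN+i'}=\sum_{i=0}^{N-1}\tilde{K}_{i;j}+\sum_{i=i'+1}^{N-1}(\tilde{K}_{i;j-1}-\tilde{K}_{i;j})$ and identifying the total constant via \eqref{eq:85} then yields the claimed limit $\upsilon(x)/\tilde{\mu}'(x)\cdot\sinc((u-v)\pi\upsilon(x))$, exactly as in the proof of Theorem~\ref{thm:7}.

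For this transcription to be legitimate one must verify that every auxiliary asymptotic used in Theorem~\ref{thm:7} is stable under the $\ell^1$-perturbation. The key observation is that $\tilde{a}_n/a_n=1+\xi_n\to1$, because $a_n\to\infty$ together with $\sum_n\sqrt{a_n}\,|\xi_n|<\infty$ forces $\xi_n\to0$; the same summability gives $\sum_n|\tilde{a}_n^{-1/2}-a_n^{-1/2}|<\infty$ and $\sum_n\sqrt{\alpha_n/a_n}\,|\xi_n|<\infty$. Consequently the conclusions of Proposition~\ref{prop:2} hold verbatim with $\tilde{a}$ in place of $a$, the consecutive-ratio limits $\tilde{a}_{jN+m'}/\tilde{a}_{jN+m}\to\alpha_{m'}/\alpha_m$ persist by \cite[Proposition 3.7]{ChristoffelI}, and $\tilde{\rho}_n/\rho_n\to1$, so that $\tilde{\rho}_{i-1;j}/\tilde{\rho}_{jN+i'}\to1/(N\sqrt{\alpha_{i-1}})$ by Stolz--Cesàro. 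The subsequence hypothesis $\tilde{a}_{L_j+N-1}-\tilde{a}_{L_j-1}\to0$ is precisely what makes Theorem~\ref{thm:11} applicable and pins the prefactor to the genuine absolutely continuous density $\tilde{\mu}'$ furnished by Theorem~\ref{thm:9}. The main obstacle I anticipate is bookkeeping rather than conceptual: one must confirm that the error terms $\tilde{E}_{kN+i}$ are uniform in $(x,y)$ over $\tilde{K}$ and that the perturbed normalizers enter each Stolz--Cesàro step with the correct weights, so that the reassembly of the $N$ residue classes leaves no residual perturbation-dependent factor. Once the stability of Proposition~\ref{prop:2} and of the ratio limits is in hand, the remainder is a direct repetition of the proof of Theorem~\ref{thm:7}.
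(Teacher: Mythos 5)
Your proposal is correct and coincides with the paper's own treatment: the paper proves Theorem~\ref{thm:12} by exactly the route you describe, namely repeating the proof of Theorem~\ref{thm:7} verbatim with Theorem~\ref{thm:11} supplying the perturbed polynomial asymptotics (same phases $\theta_k$, same $\upsilon$, prefactor now involving $\tilde{\mu}'$) and with the tilded normalizers, whose ratios to the untilded ones tend to $1$ since $\xi_n \to 0$. Your additional verification that the auxiliary limits (Proposition~\ref{prop:2}, the ratio limits, $\tilde{\rho}_n/\rho_n \to 1$) survive the $\ell^1$-perturbation is precisely the bookkeeping the paper leaves implicit.
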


\section{Examples}
\label{sec:10}
\subsection{Period $N=1$}
\label{sec:10:1}
The following corollary is an easy consequence of Theorems \ref{thm:9} and \ref{thm:10}.

\begin{corollary} 
	\label{cor:4}
	Let $(\tilde{a}_n : n \in \NN_0)$ and $(\tilde{b}_n : n \in \NN_0)$ be Jacobi parameters such that
	\begin{equation} 
		\label{eq:91a}
		\tilde{a}_n = a_n (1 + \xi_n), \qquad 
		\tilde{b}_n = b_n (1 + \zeta_n)
	\end{equation}
	where $(a_n : n \in \NN_0)$ and $(b_n : n \in \NN_0)$ are Jacobi parameters satisfying
	\begin{equation} 
		\label{eq:91b}
        \big(a_n - a_{n-1} : n \in \NN \big),
	    \big(q a_n - b_n : n \in \NN \big),
	    \bigg(\frac{1}{\sqrt{a_n}} : n \in \NN\bigg) \in \calD_1,
	\end{equation}
	and
	\begin{equation} 
		\label{eq:91c}
		\sum_{n = 0}^\infty \sqrt{a_n} (|\xi_n| + |\zeta_n|) < \infty,
	\end{equation}
	for certain real sequences $(\xi_n : n \in \NN_0)$ and $(\zeta_n : n \in \NN_0)$
	and some $q \in \{-2,2\}$. 
	Suppose that
	\begin{equation} 
		\label{eq:91d}
		\lim_{n \to \infty} \big( a_n - a_{n-1} \big) = 0, \qquad
		\lim_{n \to \infty} \big( q a_n - b_n \big) = r, \qquad
		\lim_{n \to \infty} a_n = \infty.
	\end{equation}
	Then the corresponding Jacobi matrix $\tilde{A}$ satisfies
	\[
		\sigmaAC(\tilde{A}) = \sigmaEss(\tilde{A}) = \overline{\Lambda_-} 
		\quad \text{and} \quad 
		\sigmaS(\tilde{A}) \cap \Lambda_- = \emptyset
	\]
	where
	\[
		\Lambda_- =
		\begin{cases}
			(-r, \infty) & q = 2,\\
			(-\infty, -r) & q = -2.
		\end{cases}
	\]
\end{corollary}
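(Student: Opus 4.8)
The plan is to specialize the $\ell^1$-perturbation machinery of Section~\ref{sec:9} to period $N=1$, with $\alpha_n\equiv 1$ and $\beta_n\equiv q$ as the periodic background. With this choice $(a_n),(b_n)$ are $1$-periodically modulated: conditions (a)--(c) of Section~\ref{sec:2d} hold because $\alpha_{n-1}/\alpha_n=1$ and $\beta_n/\alpha_n=q$, while \eqref{eq:91d} supplies $\lim_n a_n=\infty$ and $\lim_n(qa_n-b_n)=r$. Moreover
\[
	\frakX_0(0) = \frakB_0(0) = \begin{pmatrix} 0 & 1 \\ -1 & -q \end{pmatrix},
\]
which lies in $\SL(2,\RR)$, has trace $-q$ with $|{-q}|=2$, and is not scalar; hence it is a non-trivial parabolic element. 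The regularity assumptions \eqref{eq:91b} are precisely the $\calD_1^1=\calD_1$ hypotheses required by Theorems~\ref{thm:9} and~\ref{thm:10}, and \eqref{eq:91c} is the $\ell^1$-weighting. Finally, a direct computation from \eqref{eq:61b} and \eqref{eq:25}, identical to the one in the proof closing Section~\ref{sec:8}, gives $\tau(x)=-(x+r)\sign{q}$, so $\tau$ is affine with unique zero $x_0=-r$; thus $\Lambda_-=\tau^{-1}\big((-\infty,0)\big)$ is exactly the stated half-line, $\Lambda_+=\tau^{-1}\big((0,\infty)\big)$ is the complementary open half-line, and $\RR\setminus\Lambda_+=\overline{\Lambda_-}$.

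Next I would invoke Theorem~\ref{thm:9} with $i=0$. Since for $N=1$ the hypothesis $\lim_n(a_{n+N}-a_n)=0$ is the condition $\lim_n(a_n-a_{n-1})=0$ from \eqref{eq:91d}, the subsequence $(L_j)$ produced there can be taken so that $\lim_j(\tilde a_{L_j}-\tilde a_{L_j-1})=0$, exactly as in the construction preceding Theorem~\ref{thm:9}. Consequently $\tilde\mu$ is purely absolutely continuous on $\Lambda_-$ with a density that is continuous and strictly positive throughout the open interval $\Lambda_-$ (there $\tau(x)\ne0$ and $\tilde g_0(x)>0$). Pure absolute continuity on $\Lambda_-$ gives at once $\sigmaS(\tilde A)\cap\Lambda_-=\emptyset$, and positivity of the a.c.\ density on all of $\Lambda_-$ shows that the essential support of the a.c.\ part contains $\Lambda_-$, so that $\overline{\Lambda_-}\subseteq\sigmaAC(\tilde A)$.

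Then I would apply Theorem~\ref{thm:10}, which yields $\sigmaEss(\tilde A)\cap\Lambda_+=\emptyset$, i.e.\ $\sigmaEss(\tilde A)\subseteq\RR\setminus\Lambda_+=\overline{\Lambda_-}$. Combining this with the chain $\overline{\Lambda_-}\subseteq\sigmaAC(\tilde A)\subseteq\sigmaEss(\tilde A)$ (the last inclusion being the standard fact that the absolutely continuous spectrum is contained in the essential spectrum) squeezes everything to $\sigmaAC(\tilde A)=\sigmaEss(\tilde A)=\overline{\Lambda_-}$, which together with $\sigmaS(\tilde A)\cap\Lambda_-=\emptyset$ is the claim.

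The only nonroutine point is the verification that the conditional hypothesis $\lim_j(\tilde a_{L_j}-\tilde a_{L_j-1})=0$ of Theorem~\ref{thm:9} can actually be arranged; everything after that is bookkeeping of the three spectral components from the two cited theorems. This step rests on the sublinearity of $(a_n)$: by Stolz--Ces\`aro $a_n=o(n)$, hence $\sum_n 1/\sqrt{a_n}=\infty$, whereas for each $\epsilon>0$ the set $\{n: a_n|\xi_n|\ge\epsilon\}$ (together with its shift, and the analogous sets for $\zeta_n$) has finite $1/\sqrt{a_n}$-weighted mass because $\sum_n\sqrt{a_n}(|\xi_n|+|\zeta_n|)<\infty$; diagonalizing over $\epsilon\to0$ extracts indices along which $a_n\xi_n$ and $a_{n-1}\xi_{n-1}$ vanish, so that $\tilde a_{L_j}-\tilde a_{L_j-1}=(a_{L_j}-a_{L_j-1})+(a_{L_j}\xi_{L_j}-a_{L_j-1}\xi_{L_j-1})\to0$. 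I expect this combinatorial extraction, rather than the spectral synthesis, to be the one part needing care, but it is precisely the mechanism already built into the setup of Section~\ref{sec:9}.
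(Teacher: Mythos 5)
Your proposal is correct and takes essentially the same route as the paper, which simply observes that the corollary follows from Theorems~\ref{thm:9} and~\ref{thm:10} specialized to $N=1$, $\alpha_n\equiv 1$, $\beta_n\equiv q$ (with $\tau(x)=-(x+r)\sign{q}$ computed exactly as in the Ignjatovi\'c application). Your extra care about extracting the subsequence with $\tilde a_{L_j}-\tilde a_{L_j-1}\to 0$ is the same mechanism already built into the construction preceding Theorem~\ref{thm:9}, so nothing further is needed.
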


Let us compare Corollary~\ref{cor:4} with the results already known in the literature. In the article \cite{Motyka2014}
the author studied Jacobi parameters of the form \eqref{eq:91a} for $b_n = -2 a_n$ and the sequence $(a_n : n \in \NN_0)$ 
satisfying \eqref{eq:91c}, \eqref{eq:91d} and
\begin{equation}
	\label{eq:92a}
	\bigg( \frac{a_n - a_{n-1}}{a_n^{3/2}} : n \in \NN \bigg) \in \ell^1, 
	\qquad
	(a_n - a_{n-1} : n \in \NN) \in \calD_1.
\end{equation}
Under the above hypotheses the asymptotic formula for generalized eigenvectors of $\tilde{A}$ is obtained in 
\cite{Motyka2014}. Let us observe that
\begin{align*}
	\bigg| \frac{1}{\sqrt{a_{n+1}}} - \frac{1}{\sqrt{a_{n}}} \bigg| 
	&=
	\frac{|a_{n+1} - a_n|}{(\sqrt{a_{n+1}} + \sqrt{a_n}) \sqrt{a_{n+1} a_n}} \\
	&=
	\frac{|a_{n+1} - a_n|}{a_{n+1}^{3/2} \big( 1 + \sqrt{\tfrac{a_n}{a_{n+1}}} \big) \sqrt{\tfrac{a_n}{a_{n+1}}}} \\
	&\asymp
	\frac{|a_{n+1} - a_n|}{a_{n+1}^{3/2}},
\end{align*}
that is \eqref{eq:92a} and \eqref{eq:91b} are equivalent. Consequently, we can apply Corollary~\ref{cor:4}. Moreover,
in view of Theorem~\ref{thm:11}, we obtain the asymptotic behavior of the corresponding orthogonal polynomials
$(\tilde{p}_n : n \in \NN_0)$. 

The Jacobi parameters satisfying the hypotheses of \cite{Motyka2014} are further studied in \cite{Motyka2015}.
In particular, it is proved that $\sigmaP(\tilde{A}) \subset (0, \infty)$, and moreover,
$\sigmaAC(\tilde{A}) = (-\infty, 0]$ and $\sigmaS(\tilde{A}) \cap (-\infty, 0) = \emptyset$ provided that
\begin{equation} 
	\label{eq:92b}
	\bigg( \frac{1}{a_n} : n \in \NN \bigg),
	\bigg( \frac{a_n - a_{n-1}}{\sqrt{a_n}} : n \in \NN \bigg) \in \ell^2.
\end{equation}
Our Corollary shows that the hypothesis \eqref{eq:92b} can be dropped and at the same time it provides a stronger
conclusion that $\sigmaEss(\tilde{A}) \cap (0, \infty) = \emptyset$. It is also more flexible because we do not need
to assume that $a_n = -2b_n$. Let us emphasize that no analogue of Theorem~\ref{thm:12} was studied before.

Let us also mention two earlier articles \cite{Janas2006} and \cite{Janas2009} where the authors study Jacobi
parameters falling into the class considered in Corollary~\ref{cor:4} for 
\begin{equation}
	\label{eq:120}
	a_n = (n+1)^\gamma, \qquad
	b_n = -2(n+1)^\gamma,
\end{equation}
where $\gamma \in (\tfrac{1}{3},\tfrac{2}{3})$. The results proven there are analogues of 
\cite{Motyka2014, Motyka2015}. Recently, in \cite{Naboko2019}, a variant of Theorem~\ref{thm:11} is obtained for
Jacobi parameters \eqref{eq:120} and $\gamma \in (0,1)$.

In \cite{Janas2001}, the authors proved that for Jacobi parameters
\[
	a_n = n+1+\gamma, \qquad
	b_n = -2(n+1 + \gamma)
\]
where $\gamma \in (-1, \infty)$, we have $\sigmaAC(A) = \sigmaEss(A) = (-\infty, -1]$. This case lies on the borderline
of our methods, and it is \emph{not} covered by Corollary~\ref{cor:4}. Finally, let us mention the recent article
\cite{Yafaev2020a} studying the Jacobi parameters of the form
\[
	\tilde{a}_n = (n+1)^\gamma \big( 1 + \calO(n^{-2}) \big), \qquad
	\tilde{b}_n = q (n+1)^\gamma \big( 1 + \calO(n^{-2}) \big),
\]
for $q \in \{-2,2\}$ and $\gamma \in \big( \tfrac{3}{2}, \infty \big)$. The author describes the asymptotic formula
for $(\tilde{p}_n : n \in \NN_0)$, and shows that $\sigmaEss(A) = \emptyset$ provided that $\tilde{A}$ is self-adjoint.
This case is also \emph{not} covered by our results.

The following proposition allows to construct a large class of sequences $(a_n : n \in \NN_0)$ satisfying the hypotheses of Corollary~\ref{cor:4}.
\begin{proposition} \label{prop:5}
Let $(a_n : n \in \NN_0)$ be a positive sequence such that:
\begin{enumerate}[(a)]
\item it is eventually increasing, i.e. there exists $n_0 \geq 1$ such that
$\begin{aligned}[b]
	a_n \leq a_{n+1}
\end{aligned}$
for any $n \geq n_0$, \label{eq:prop:5a}

\item it is eventually concave, i.e. there exists $n_0 \geq 1$ such that
$\begin{aligned}[b]
	\frac{a_{n-1} + a_{n+1}}{2} \leq a_{n}
\end{aligned}$
for any $n \geq n_0$. \label{eq:prop:5b}
\end{enumerate}
Then
\begin{equation} \label{eq:123}
	\big( a_n - a_{n-1} : n \in \NN \big), 
	\bigg( \frac{1}{\sqrt{a_n}} : n \in \NN \bigg) \in \calD_1.
\end{equation}
If additionally
\begin{equation} \label{eq:124}
	\lim_{n \to \infty} \frac{a_n}{n} = 0, 
\end{equation}
then
\begin{equation} \label{eq:125}
	\lim_{n \to \infty} (a_{n+1} - a_n) = 0.
\end{equation}
\end{proposition}
\begin{proof}
Without loss of generality let $n_0 \geq 1$ be such that both \eqref{eq:prop:5a} and \eqref{eq:prop:5b} are satisfied.

By \eqref{eq:prop:5a} the following limit
\[
	a_\infty = \lim_{n \to \infty} a_n
\]
exists and $a_\infty \in (0, +\infty]$. Observe that by \eqref{eq:prop:5a}
\begin{equation} \label{eq:121}
	\sum_{n=n_0}^\infty 
	\bigg| 
		\frac{1}{\sqrt{a_{n+1}}} - \frac{1}{\sqrt{a_{n}}} 
	\bigg|
	=
	\sum_{n=n_0}^\infty 
	\bigg( 
		\frac{1}{\sqrt{a_{n}}}  - \frac{1}{\sqrt{a_{n+1}}}
	\bigg)
	=
	\frac{1}{\sqrt{a_{n_0}}} - \frac{1}{\sqrt{a_{\infty}}} < \infty.
\end{equation}
Next, set 
\begin{equation} \label{eq:126}
	d_n = a_n - a_{n-1}.
\end{equation}
Then by \eqref{eq:prop:5a} and \eqref{eq:prop:5b} we have
\[
	0 \leq d_{n+1} 
	\leq 
	d_n, 
	\quad n \geq n_0,
\]
which implies that the following limit exists
\begin{equation} \label{eq:122}
	d_\infty = \lim_{n \to \infty} d_n
\end{equation}
and $d_\infty \in [0, d_{n_0}]$.
Thus
\[
	\sum_{n=n_0}^\infty 
	| d_{n+1} - d_n |
	=
	\sum_{n=n_0}^\infty 
	( d_n - d_{n+1} )
	=
	d_{n_0} - d_\infty < \infty,
\]
which together with \eqref{eq:121} and \eqref{eq:126} implies \eqref{eq:123}. Finally, if \eqref{eq:124} is satisfied, then by Stolz--Ces\'aro theorem and \eqref{eq:122}
\[
	0 = 
	\lim_{n \to \infty} \frac{a_n}{n} =
	\lim_{n \to \infty} \big( a_{n+1} - a_{n} \big) = d_\infty,
\]
which implies \eqref{eq:125}. 
\end{proof}

By means of Proposition~\ref{prop:5} we immediately obtain
\begin{corollary} \label{cor:6}
Suppose that $f : [n_0-1, \infty) \to (0,\infty)$ for some $n_0 \geq 1$ is a twice differentiable function such that
\begin{enumerate}[(a)]
	\item for any $x \in (n_0-1,\infty)$ one has $f'(x) > 0$ and $f''(x) < 0$,
	\item $\lim_{x \to \infty} f(x) = \infty$ and $\lim_{x \to \infty} f'(x) = 0$.
\end{enumerate}
Define
\[
	a_n =
	\begin{cases}
		1 & n < n_0 - 1 \\
		f(n) & n \geq n_0-1
	\end{cases}.
\]
Then
\[
	\big( a_n - a_{n-1} : n \in \NN \big), 
	\bigg( \frac{1}{\sqrt{a_n}} : n \in \NN \bigg) \in \calD_1.
\]
Moreover,
\[
	\lim_{n \to \infty} a_n = \infty \quad \text{and} \quad
	\lim_{n \to \infty} (a_n - a_{n-1}) = 0.
\]
\end{corollary}

Below, we provide a few examples satisfying the hypotheses of Corollary~\ref{cor:6}.
\begin{example} \label{ex:8a}
Let $f(x) = \log{x}$. Then it is immediate that the hypotheses of Corollary~\ref{cor:6} are satisfied for any $n_0 \geq 3$.
\end{example}

\begin{example} \label{ex:8b}
Let $f(x) = x^\gamma$ for some $\gamma \in (0,1)$. Then it is immediate that the hypotheses of Corollary~\ref{cor:6} are satisfied for any $n_0 \geq 2$.
\end{example}

\begin{example} \label{ex:8c}
Let $f(x) = x^\gamma \log{x}$ for some $\gamma \in (0,1)$. Since
\begin{align*}
	f'(x) &= x^{\gamma-1} ( \gamma \log{x} + 1 ) \\
	f''(x) &= -x^{\gamma-2} \big( \gamma (1-\gamma) \log{x} + 1-2 \gamma \big)
\end{align*}
the hypotheses of Corollary~\ref{cor:6} are satisfied by taking large enough $n_0$.
\end{example}

\begin{example} \label{ex:8d}
Let $f(x) = \frac{x}{\log{x}}$. Since
\begin{align*}
	f'(x) &= \frac{1}{\log{x}} \bigg( 1 - \frac{1}{\log{x}} \bigg) \\
	f''(x) &= -\frac{1}{x (\log{x})^2} \bigg( 1 - \frac{2}{\log{x}} \bigg)
\end{align*}
the hypotheses of Corollary~\ref{cor:6} are satisfied for any $n_0 \geq 9$.
\end{example}

\subsubsection{Laguerre-type orthogonal polynomials} \label{sec:laguerre}
In this section we provide examples of measures $\mu$ which give rise to Jacobi parameters that satisfy
the hypotheses of Corollary~\ref{cor:4} for $\xi_n \equiv 0$ and $\zeta_n \equiv 0$.

Take $\gamma > -1$ and $\kappa \in \NN$, and consider the purely absolutely continuous probability measure $\mu$ with
the density
\[
	\mu'(x) =
	\begin{cases}
		c_{\gamma, \kappa}
		x^\gamma \exp\big(-x^\kappa\big) & \text{if } x > 0,\\
		0 & \text{otherwise.}
	\end{cases}
\]
where $c_{\gamma, \kappa}$ is the normalizing constant. The case $\kappa = 1$ corresponds to the well-known Laguerre
polynomials. According to \cite[Theorem 2.1 and Remark 2.3]{Vanlessen2007},
\begin{align*}
	a_{n-1} = d_n \bigg(\frac{1}{4} + \frac{\gamma}{8 \kappa n} + \calO \Big( \frac{1}{n^2} \Big)\bigg)
	\qquad\text{and}\qquad
	b_n = d_n \bigg(\frac{1}{2} + \frac{\gamma + 1}{4 \kappa n} + \calO \Big( \frac{1}{n^2} \Big)\bigg)
\end{align*}
where
\[
	d_n = c_0 n^{1/\kappa} \quad \text{for} \quad 
	c_0 = \bigg( \frac{2 (2 \kappa)!!}{\kappa (2 \kappa - 1)!!} \bigg)^{1/\kappa}.
\]
Observe that for $\kappa \geq 2$ it implies
\begin{equation} \label{eq:90}
	a_{n-1} = \frac{1}{4} d_n + e_n, \qquad
	b_n = \frac{1}{2} d_n + \tilde{e}_n,
\end{equation}
where both $(e_n : n \in \NN)$ and $(\tilde{e}_n : n \in \NN_0)$ belong to $\calD_1$ and tend to $0$. We notice that
\begin{align*}
	d_{n+1} - d_n 
	&= 
	c_0 \big( (n+1)^{1/\kappa} - n^{1/\kappa} \big) \\
	&= 
	c_0 n^{1/\kappa} \bigg( \Big( 1+\frac{1}{n} \Big)^{1/\kappa} - 1 \bigg) \\
	&=
	\frac{c_0}{\kappa} n^{1/\kappa - 1} + \calO ( n^{1/\kappa -2}).
\end{align*}
Hence $(d_{n+1} - d_n : n \in \NN_0)$ belongs to $\calD_1$ and tends to $0$. Since
\begin{align*}
	a_n - a_{n-1} &= 
	\frac{1}{4} \big( d_{n+1} - d_n \big) +
	e_{n+1} - e_{n} \\
	\intertext{and}
	2 a_n - b_n &= \frac{1}{2} \big( d_{n+1} - d_n \big) +
	e_{n+1} - \tilde{e}_n,
\end{align*}
we conclude that
\[
	(a_n - a_{n-1} : n \in \NN), (2 a_n - b_n : n \in \NN) \in \calD_1
\]
and
\[
	\lim_{n \to \infty} (a_n - a_{n-1}) = 0, \qquad
	\lim_{n \to \infty} (2 a_n - b_n) = 0.
\]
Furthermore, by \eqref{eq:90} the sequence $(a_n : n \in \NN_0)$ is unbounded and eventually increasing, thus
\[
	\bigg( \frac{1}{\sqrt{a_n}} : n \in \NN \bigg) \in \calD_1.
\]
Summarizing, we showed that the hypotheses of Corollary~\ref{cor:4} are satisfied with $\xi_n \equiv 0, \zeta_n \equiv 0$ for any $\kappa \geq 2$.

\subsection{Periodic modulations}
\label{sec:10:2}
The following corollary easily follows from Theorems \ref{thm:A} and \ref{thm:B}.

\begin{corollary}
	\label{cor:5}
	Let $N$ be a positive integer. Let $(\alpha_n : n \in \ZZ)$ and $(\beta_n : n \in \ZZ)$ be $N$-periodic Jacobi
	parameters such that $\frakX_0(0)$ is a non-trivial parabolic element. Set
	\begin{equation} 
		\label{eq:31}
		a_n = \alpha_n \tilde{a}_n, \qquad 
		b_n = \beta_n \tilde{a}_n
	\end{equation}
	where the sequence $(\tilde{a}_n : n \in \NN_0)$ satisfies
	\begin{equation}
		\label{eq:94a}
		\big( \tilde{a}_{n} - \tilde{a}_{n-1} : n \in \NN \big), 
		\bigg( \frac{1}{\sqrt{\tilde{a}}_n} : n \in \NN \bigg) \in \calD_1^1,
	\end{equation}
	and
	\begin{equation}
		\label{eq:94b}
		\lim_{n \to \infty} \tilde{a}_n = \infty, \qquad
		\lim_{n \to \infty} \big( \tilde{a}_{n+1} - \tilde{a}_n \big) = 0.
	\end{equation}
	Then the corresponding Jacobi matrix $A$ satisfies
	\[
		\sigmaAC(A) = \sigmaEss(A) = \overline{\Lambda_-} 
		\quad \text{and} \quad
		\sigmaS(A) \cap \Lambda_- = \emptyset.
	\]
\end{corollary}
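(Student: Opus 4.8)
The plan is to show that the Jacobi parameters defined in \eqref{eq:31} fall within the scope of Theorems~\ref{thm:A} and \ref{thm:B}, and then to translate the measure-theoretic conclusions of those theorems into the spectral statements of the corollary. First I would verify that $(a_n)$ and $(b_n)$ given by $a_n=\alpha_n\tilde a_n$, $b_n=\beta_n\tilde a_n$ are $N$-periodically modulated with modulating sequences $(\alpha_n)$ and $(\beta_n)$: condition (a) is immediate from $\lim\tilde a_n=\infty$ together with the positivity and $N$-periodicity of $(\alpha_n)$; condition (c) holds \emph{exactly} because $\frac{b_n}{a_n}=\frac{\beta_n}{\alpha_n}$; and condition (b) follows from
\[
	\frac{a_{n-1}}{a_n}-\frac{\alpha_{n-1}}{\alpha_n}
	=-\frac{\alpha_{n-1}}{\alpha_n}\cdot\frac{\tilde a_n-\tilde a_{n-1}}{\tilde a_n},
\]
since $(\tilde a_n-\tilde a_{n-1})$ is bounded (being of bounded variation along each residue class mod $N$ by \eqref{eq:94a}, hence convergent and bounded) while $\tilde a_n\to\infty$.

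Next I would check the three $\calD_1^N$ regularity conditions \eqref{eq:40b}. Here the ansatz \eqref{eq:31} is tailor-made: direct substitution gives
\[
	\frac{\alpha_{n-1}}{\alpha_n}a_n-a_{n-1}=\alpha_{n-1}\big(\tilde a_n-\tilde a_{n-1}\big),
	\qquad
	\frac{\beta_n}{\alpha_n}a_n-b_n=0,
	\qquad
	\frac{1}{\sqrt{a_n}}=\frac{1}{\sqrt{\alpha_n}}\cdot\frac{1}{\sqrt{\tilde a_n}}.
\]
The middle term vanishes identically, and the other two are products of an $N$-periodic sequence with a member of $\calD_1^N$; since $\calD_1^N$ is a module over the algebra of $N$-periodic sequences (because $c_{n+N}x_{n+N}-c_nx_n=c_n(x_{n+N}-x_n)$ when $c$ is $N$-periodic), both lie in $\calD_1^N$ by \eqref{eq:94a}. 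Finally $a_{n+N}-a_n=\alpha_n(\tilde a_{n+N}-\tilde a_n)\to 0$ by \eqref{eq:94b}, so the extra hypothesis \eqref{eq:95a} of Theorem~\ref{thm:B} is met; and $\frakX_0(0)$ being a non-trivial parabolic element of $\SL(2,\RR)$ is exactly the non-diagonalizability required in Theorem~\ref{thm:A}.

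With the hypotheses verified, Theorem~\ref{thm:A} yields that $\mu$ restricted to $\Lambda_+$ is purely discrete with no accumulation point of its support in $\Lambda_+$, that is $\sigmaEss(A)\cap\Lambda_+=\emptyset$. Since $A$ is self-adjoint and $\tau$ is a degree-$1$ polynomial vanishing only at $x_0$ (cf.\ \eqref{eq:61d}), the sets $\Lambda_-$ and $\Lambda_+$ are complementary open half-lines meeting only at $x_0$, so $\RR\setminus\Lambda_+=\overline{\Lambda_-}$ and hence $\sigmaEss(A)\subseteq\overline{\Lambda_-}$. Theorem~\ref{thm:B} yields that $\mu$ is purely absolutely continuous on $\Lambda_-$ with density $\mu'(x)=\sqrt{\alpha_{i-1}|\tau(x)|}/(\pi g_i(x))$, positive and continuous throughout $\Lambda_-$. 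Positivity on the open half-line $\Lambda_-$ makes the essential support of the a.c.\ part equal to all of $\Lambda_-$, whence $\sigmaAC(A)=\overline{\Lambda_-}$, while pure absolute continuity of $\mu|_{\Lambda_-}$ gives $\sigmaS(A)\cap\Lambda_-=\emptyset$. Combining with $\sigmaAC(A)\subseteq\sigmaEss(A)\subseteq\overline{\Lambda_-}$ closes the chain $\overline{\Lambda_-}=\sigmaAC(A)\subseteq\sigmaEss(A)\subseteq\overline{\Lambda_-}$, giving $\sigmaAC(A)=\sigmaEss(A)=\overline{\Lambda_-}$.

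The substance of the argument lies in the bookkeeping of the first two paragraphs: confirming that the multiplicative structure \eqref{eq:31} collapses the general modulation and regularity hypotheses to the clean conditions \eqref{eq:94a}--\eqref{eq:94b} on $\tilde a$ alone. I expect no genuine obstacle; the only care needed is the standard fact that $\calD_1^N$ is stable under multiplication by $N$-periodic factors, and the routine passage between statements about $\mu$ being absolutely continuous or discrete on a set and the corresponding spectral-type assertions, using that $\delta_0$ is a cyclic vector so that the spectral type of $A$ is governed by $\mu$.
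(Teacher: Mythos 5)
Your proposal is correct and follows essentially the same route as the paper, which simply observes that the multiplicative ansatz \eqref{eq:31} reduces the hypotheses of Theorems~\ref{thm:A} and \ref{thm:B} to \eqref{eq:94a}--\eqref{eq:94b} and then reads off the spectral statements; your verification of the modulation and $\calD_1^N$ conditions (in particular the identities $\frac{\alpha_{n-1}}{\alpha_n}a_n-a_{n-1}=\alpha_{n-1}(\tilde a_n-\tilde a_{n-1})$ and $\frac{\beta_n}{\alpha_n}a_n-b_n=0$, and the module property of $\calD_1^N$ over $N$-periodic factors) is exactly the intended bookkeeping. The passage from the measure-theoretic conclusions to $\sigmaAC(A)=\sigmaEss(A)=\overline{\Lambda_-}$ and $\sigmaS(A)\cap\Lambda_-=\emptyset$ via cyclicity of $\delta_0$ is also as the authors intend.
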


Let us observe that by means of Corollary~\ref{cor:6} we can construct a large class of sequences $(\tilde{a}_n : n \in \NN_0)$ satisfying \eqref{eq:94a} and \eqref{eq:94b}, see in particular Examples \ref{ex:8a}--\ref{ex:8d}. In the next sections we provide a few classes of $(\alpha_n : n \in \ZZ)$ and $(\beta_n : n \in \ZZ)$ for $N=2$ such that $\frakX_0(0)$ is a non-trivial parabolic element. Consequently, it will allow us to compare Corollary~\ref{cor:5} with the results known in the literature.

\subsubsection{Modulation of the main diagonal}
Let $N = 2$, and
\[
	\alpha = (1, 1, 1, 1, \ldots), \qquad
	\beta = (\beta_0, \beta_1, \beta_0, \beta_1, \ldots)
\]
for certain $\beta_0, \beta_1 \in \RR$. Then
\[
	\frakX_0(0) =
	\begin{pmatrix}
		-1 & -\beta_0 \\
		\beta_1 & \beta_0 \beta_1 - 1
	\end{pmatrix} 
	\quad \text{and} \quad
	\frakX_1(0) =
	\begin{pmatrix}
		-1 & -\beta_1 \\
		\beta_0 & \beta_0 \beta_1 - 1
	\end{pmatrix}.
\]
Thus $\det \frakX_0(0) = 1$ and $\tr \frakX_0(0) = \pm 2$, if and only if
\[
	\beta_0 \beta_1 = 0 \quad \text{or} \quad
	\beta_0 \beta_1 = 4.
\]
\begin{example}
	\label{ex:4}
	Take $\beta_0 = q$ and $\beta_1 = 0$ for certain $q > 0$, and select any sequence $(\tilde{a}_n : n \in \NN_0)$
	satisfying \eqref{eq:94a} and \eqref{eq:94b}. Then the Jacobi matrix corresponding to \eqref{eq:31} satisfies
	\[
		\sigmaAC(A) = \sigmaEss(A) = (-\infty, 0] 
		\quad \text{and} \quad
		\sigmaS(A) \cap (-\infty, 0) = \emptyset.	
	\]
\end{example}
Sequences of a form similar to that described in Example \ref{ex:4} were studied in \cite{Damanik2007} where
it was additionally assumed that
\[
	\tilde{a}_n = (n+1)^\gamma
\]
for $\gamma \in (0,1]$. In particular, it was shown that
\begin{itemize}
	\item the Jacobi matrix $A$ is absolutely continuous on $(-\infty, 0)$ if $\gamma \in (\tfrac{2}{3}, 1]$, and
	\item $\sigmaEss(A) \subset (-\infty, 0]$ for any $\gamma \in (0,1]$.
\end{itemize}
In Example~\ref{ex:4} we recover those results for $\gamma \in (0,1)$.
\begin{example} 
	\label{ex:5}
	Take $\beta_0 = q$ and $\beta_1 = 4/q$ for certain $q > 0$, and select a sequence $(\tilde{a}_n : n \in \NN_0)$
	satisfying \eqref{eq:94a} and \eqref{eq:94b}. Then the Jacobi matrix corresponding to \eqref{eq:31} satisfies
	\[
		\sigmaAC(A) = \sigmaEss(A) = [0, \infty) 
		\quad \text{and} \quad
		\sigmaS(A) \cap (0, \infty) = \emptyset.
	\]
\end{example}
Example \ref{ex:5} extends results obtained in \cite{Pchelintseva2008} to sequences
\[
	\tilde{a}_n = (n+1)^\gamma, \quad \gamma \in (0, 1).
\]
Recall that in \cite{Pchelintseva2008} it was proved that if $\gamma = 1$ then the corresponding Jacobi matrix satisfies
\[
	\sigmaAC(A) = \sigmaEss(A) = \big[ \tfrac{4}{\beta_0 + \beta_1}, \infty \big)
	\quad \text{and} \quad
	\sigmaS(A) \cap \big( \tfrac{4}{\beta_0 + \beta_1}, \infty \big) = \emptyset.
\]

\subsubsection{Modulation of the off-diagonal}
Let us consider the following $2$-periodic Jacobi parameters
\[
	\alpha = (\alpha_0, \alpha_1, \alpha_0, \alpha_1, \ldots), \qquad
	\beta = (1, 1, 1, 1, \ldots)
\]
for certain $\alpha_0, \alpha_1 > 0$. Then
\[
	\frakX_0(0) =
	\begin{pmatrix}
		-\frac{\alpha_1}{\alpha_0}	& -\frac{1}{\alpha_0} \\
		\frac{1}{\alpha_0} & -\frac{\alpha_0}{\alpha_1} + \frac{1}{\alpha_0 \alpha_1}
	\end{pmatrix} 
	\quad \text{and} \quad
	\frakX_1(0) =
	\begin{pmatrix}
		-\frac{\alpha_0}{\alpha_1}	& -\frac{1}{\alpha_1} \\
		\frac{1}{\alpha_1} & -\frac{\alpha_1}{\alpha_0} + \frac{1}{\alpha_0 \alpha_1}
	\end{pmatrix}.
\]
The determinant of $\frakX_0(0)$ always equals $1$. For the trace, we have
\[
	\tr \frakX_0(0) = -\frac{\alpha_1}{\alpha_0} -\frac{\alpha_0}{\alpha_1} + \frac{1}{\alpha_0 \alpha_1},
\]
thus $\tr \frakX_0(0) = \pm 2$, if and only if
\[ 
	\frac{|\alpha_0^2 + \alpha_1^2-1|}{\alpha_0 \alpha_1} = 2,
\]
that is
\[
	\alpha_0 + \alpha_1 = 1 \quad \text{or} \quad
	|\alpha_0 - \alpha_1| = 1.
\]

\begin{example}
	\label{ex:6}
	Take $\alpha_0 = 1$ and $\alpha_1 = 1-q$ for certain $q \in (0, 1)$. Let $(\tilde{a}_n \in \NN_0)$ be a
	sequence satisfying \eqref{eq:94a} and \eqref{eq:94b}. Then the Jacobi matrix corresponding to \eqref{eq:31} satisfies
	\[
		\sigmaAC(A) = \sigmaEss(A) = [0, \infty) 
		\quad \text{and} \quad
		\sigmaS(A) \cap (0, \infty) = \emptyset.
	\]
\end{example}
In \cite{Simonov2007}, the author studied Jacobi parameters of the form similar to that described in Example \ref{ex:6}
for
\[
	\tilde{a}_n = n+1.
\]
He proved that 
\[
	\sigmaAC(A) = \sigmaEss(A) = [\tfrac{1}{2}, \infty) 
	\quad \text{and} \quad
	\sigmaS(A) \cap \big( \tfrac{1}{2}, \infty \big) = \emptyset.
\]
Hence, the statements in Example \ref{ex:6} extend the results of \cite{Simonov2007} to sublinear sequences
$(\tilde{a}_n : n \in \NN_0)$.

\begin{example}
	\label{ex:7}
	Take $\alpha_0 = q$ and $\alpha_1 = 1+q$, for certain $q > 0$, and select $(\tilde{a}_n : n \in \NN_0)$ 
	satisfying \eqref{eq:94a} and \eqref{eq:94b}. Then the Jacobi matrix corresponding to \eqref{eq:31} satisfies
	\[
		\sigmaAC(A) = \sigmaEss(A) = (-\infty, 0] 
		\quad \text{and} \quad
		\sigmaS(A) \cap (-\infty, 0) = \emptyset.	
	\]
\end{example}
In \cite{Naboko2009}, the authors investigated Jacobi parameters of the form similar to that described in Example
\ref{ex:7} by taking
\[
	\tilde{a}_n = (n+1)^\gamma, \quad \gamma \in (\tfrac{1}{2}, \tfrac{2}{3}).
\]
They proved that
\[
	\sigmaEss(A) \subset (-\infty, 0]
\]
which is extended and generalized in Example \ref{ex:7}.

\begin{bibliography}{jacobi}
	\bibliographystyle{amsplain}
\end{bibliography}

\end{document}